\definecolor{darkblue}{rgb}{0,0,0.4}
\theoremstyle{remark}
\newtheorem{theorem}{Theorem}[section]
\newtheorem{lemma}[theorem]{Lemma}
\newtheorem{corollary}[theorem]{Corollary}
\theoremstyle{definition}
\newtheorem{definition}[theorem]{Definition}
\theoremstyle{remark}
\numberwithin{equation}{section}
\begin{document}
\title{Bloch Waves in High Contrast Electromagnetic Crystals}


\author{Robert Lipton}\address{Department of Mathematics, 
Louisiana State University,
Baton Rouge, LA 70803, USA,
{\tt lipton@lsu.edu}}
\author{Robert Viator Jr.}
\address{Department of Mathematics \& Statistics,
Swarthmore College,
Swarthmore, PA 19081, USA,
{\tt rviator1@swarthmore.edu}}
\author{Silvia Jim\'enez Bola\~nos}
\address{Department of Mathematics,
Colgate University University,
Hamilton, NY 13346, USA,{\tt sjimenez@colgate.edu}}
\author{Abiti Adili}
\address{Department of Mathematics,
Franklin and Marshall College,
Lancaster, PA 17604-3003, USA,
{\tt abiti.adili@fandm.edu}}

\date{\today}

\begin{abstract}
 Analytic representation formulas and power series are developed describing the band structure inside non-magnetic  periodic photonic three-dimensional crystals made from high dielectric contrast inclusions. Central to this approach is the identification and utilization of a resonance spectrum for quasiperiodic source-free modes. These modes are used to represent solution operators associated with electromagnetic and acoustic waves inside periodic high contrast media. A convergent power series for the Bloch wave spectrum is recovered from the representation formulas. Explicit conditions on the contrast are found that provide lower bounds on the convergence radius. These conditions are sufficient for the separation of spectral branches of the dispersion relation for any fixed quasi-momentum.
\end{abstract}

\subjclass{35J15, 78A40, 78A45}
\keywords{Bloch waves, band structure, high contrast, periodic medium}
\maketitle

\section{Introduction}
\label{introduction}

We are interested in photonic crystals, or photonic band-gap materials, and their use in controlling the propagation of light.  A photonic crystal is an artificially created optical material, which can be considered as the optical analog of a semiconductor, since it behaves with respect to photon propagation in a similar fashion as the semiconductor behaves with respect to electron propagation.  Developments in optical materials provide benefits to a number of fields, including spectroscopy and high-speed computing, for example.  Several books and surveys have been written about the subject; see, for instance, \cite{Joannopoulos2008,Johnson2002,Kuchment2001,Kuchment2004,Sakoda2001,Slusher2003}.

A  photonic crystal is a periodic lattice of inclusions surrounded by a connected phase with the property that the contrast $k$ between the dielectric properties of the inclusions and the connected phase can be quite large.  Understanding the propagation of electromagnetic waves in photonic crystals is crucial since it might allow tailoring materials to obtain desired properties.  The Maxwell system is given by:
\begin{equation} 
\left\{
	\begin{array}{ll}
		\nabla\times\mathbf{E}=-\frac{1}{c}\frac{\partial\mathbf{B}}{\partial t},  & \nabla\cdot\mathbf{B}=0 \\\\
		\nabla\times\mathbf{H}=\frac{1}{c}\frac{\partial\mathbf{D}}{\partial t},  & \nabla\cdot\mathbf{D}=0,
	\end{array}
\right.
\label{Maxwell}
\end{equation}
\noindent where $c$ is the speed of light in free space, the vector-valued functions $\mathbf{E}$ and $\mathbf{H}$ are the macroscopic electric and magnetic fields, and $\mathbf{D}$ and $\mathbf{B}$ are the displacement and magnetic induction fields, respectively \cite{Jackson1962}.  To complete the Maxwell system the \textit{constitutive relations} describing the dependence of $\mathbf{D}$ and $\mathbf{B}$ on $\mathbf{E}$ and $\mathbf{H}$ are supplied.  We apply the linear constitutive relations, given by:
\begin{equation*}
\mathbf{D}=\epsilon\mathbf{E}, \,\,\,\mathbf{B}=\mu\mathbf{H},
\end{equation*}
where $\epsilon$ is the dielectric constant and $\mu$ is the magnetic permeability.  In this treatment, it is assumed that the media is isotropic, the material is non-magnetic (i.e. $\mu=1$), and  the dielectric constant $\epsilon(x)$ is periodic.

We consider the case of monochromatic waves $\mathbf{E}(x,t)=e^{i\omega t}\mathbf{E}(x)$, $\mathbf{H}(x,t)=e^{i\omega t}\mathbf{H}(x)$, where $\omega$ is the time frequency, and the system (\ref{Maxwell}) becomes:
\begin{equation*} 
\left\{
	\begin{array}{ll}
		\nabla\times\mathbf{E}=-\frac{i\omega}{c}\frac{\partial\mathbf{H}}{\partial t},  & \nabla\cdot\mathbf{H}=0 \\\\
		\nabla\times\mathbf{H}=\frac{i\omega}{c}\epsilon(x)\frac{\partial\mathbf{E}}{\partial t},  & \nabla\cdot\epsilon\mathbf{E}=0 
	\end{array}
\right.,
\end{equation*}
\noindent which, after eliminating the electric field $\mathbf{E}$, reduces to:
\vspace{-3pt}\begin{equation} 
\nabla\times\frac{1}{\epsilon(x)}\nabla\times\mathbf{H}=\xi\mathbf{H}, \hspace{1cm} \nabla\cdot\mathbf{H}=0, \quad \text{where $\xi=(\omega/c)^2$.}
\label{eigprov1}
\end{equation}

In a two-dimensional periodic medium (where $\epsilon(x)$ is periodic with respect to $x$ and $y$ and homogeneous with respect to $z$, for example), problem (\ref{eigprov1}) reduces to scalar equations $-\Delta E=\lambda\epsilon(x)E$ and:
\vspace{-3pt}\begin{equation} 
-\nabla\cdot\frac{1}{\epsilon(x)}\nabla H=\xi H, \qquad \text{where $\xi=(\omega/c)^2$. }
\label{scalareigprob}
\end{equation}

One of the main goals of the photonic crystals theory is to choose $\epsilon(x)\geq1$ such that the spectrum of the corresponding problem, scalar (\ref{scalareigprob}) or vectorial (\ref{eigprov1}), has a gap.  Existence of a gap delivers a frequency interval (band) over which electromagnetic waves cannot propagate in the material.  A complete band gap is a range of frequencies for which no  Bloch wave of any wavelength or direction can  propagate through the crystal.  Band gaps have many interesting and useful applications ranging from efficient photovoltaic cells to power electronics and optical computers, see \cite{Joannopoulos2008,Johnson2002}.

Most of the state-of-the-art developments \cite{AmmariKang1,AmmariKangLee,Bouchitte2017,FigKuch2,FigKuch3,FigKuch1,HempelLienau} have been restricted to the asymptotic theory of band gaps at infinite contrast.  For the scalar case (\ref{scalareigprob}), the authors exploited structural resonances associated with the Neumann-Poincar\'e operator to develop new techniques for complex operator valued functions, which delivered explicit formulas for band gaps at finite contrast.  This provides mathematically rigorous and explicit formulas for the size of band gaps and pass bands, given in terms of the contrast, shape and configuration of scatters, and lattice parameters, see \cite{RobertRobert1,LV2017b}.  

In this paper, we lay the foundation for the analytical methods to obtain the corresponding results to the ones obtained in \cite{RobertRobert1} for the fully three-dimensional electromagnetic photonic crystals lattices, via the vector wave equation (\ref{eigprov1}).  In particular, we establish an analytic representation for the periodic and quasiperiodic spectra of (\ref{eigprov1}) in terms of the contrast between the dielectric constants of the two material components, together with a radius of convergence described in terms of the crystal geometry by way of the associated Neumann-Poincar\'e spectrum.

We consider a Bloch wave $\mathbf{h}(x)$, with Bloch eigenvalue $\xi=(\omega/c)^2$, propagating through a three-dimensional photonic crystal, characterized by the periodic relative dielectric constant $a^{-1}(x)=\epsilon(x)=\epsilon(x+p)$, $p\in\mathbb{Z}^3$, with unit cell $Y=(0,1]^3$, defined by:
\begin{equation*}
\epsilon(x) = 
     \begin{cases}
      1  &\quad\text{inside the inclusion $D$}\\
      \epsilon=1/k &\quad\text{in the host material $H:=Y\setminus D$.}
     \end{cases}
\end{equation*}
The magnetic field $\mathbf{h}(x)$ inside ``non-magnetic media'' solves the vector Helmholtz equation: 
\begin{equation}
	\label{Helm}
	\nabla\times\left(a(x)\nabla\times \mathbf{h}(x)\right)=\xi\mathbf{h}(x), \hspace{2mm}x\in\mathbb{R}^3,
\end{equation}
together with the $\alpha$-quasiperiodicity condition $\mathbf{h}(x+p)=\mathbf{h}(x)e^{i\alpha\cdot\mathbf{p}}$.  Here, $\alpha$ lies in the first Brillouin zone of the reciprocal lattice given by $Y^*=(-\pi,\pi]^3$.  Equation (\ref{Helm}) describes time harmonic wave propagation for the magnetic field in non-magnetic media, i.e., for heterogeneous media with relative magnetic permeability $\mu=1$ everywhere.  

We examine Bloch wave propagation through high contrast crystals made from periodic configurations of two dielectric materials.  The inclusion $D$ contained within the interior of the period cell $Y$ and surrounded by the second ``host'' material, $H:=Y\setminus D$, see Figure~\ref{plane}.   
\begin{figure}[ht]
  \centering
  \includegraphics[width=0.25\linewidth]{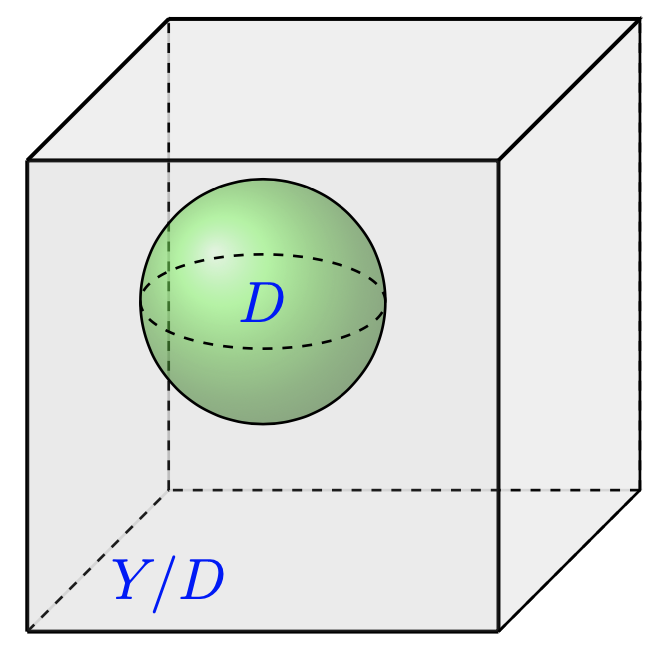}
  \caption{Period cell}
  \label{plane}
\end{figure}

The coefficient $a(x)$ is then specified on the unit cell by:
\begin{equation*}
	a(x)=k\chi_{H}(x)+\chi_{D}(x),
\end{equation*}
where $\chi_{H}$ and $\chi_{D}$ are the indicator functions for the sets $H$ and $D$, and are extended by periodicity to $\mathbb{R}^3$.  In this paper, we  consider periodic crystals made from finite collections of separated inclusions, each with $C^{1,\gamma}$ boundary, where $\gamma >0$.

For each $\alpha\in Y^\star$, the Bloch eigenvalues $\xi$  are of finite multiplicity and denoted by $\lambda_j(k,\alpha)$, $j\in \mathbb{N}$. We develop power series expansions for each branch of the dispersion relation:
\begin{equation}
\lambda_j(k,\alpha)=\xi,\,\,\hbox{ $j\in\mathbb{N}$}
\label{DispersionRelations}
\end{equation}
that are valid for $k$ in a neighborhood of infinity.

To proceed, we complexify the problem and consider $k\in\mathbb{C}$.  Now $a(x)$ takes on complex values inside $H$ and the operator $-\nabla\times (k \chi_{H} + \chi_{D})\nabla\times$
is no longer uniformly elliptic.  Our approach develops an explicit representation formula for $-\nabla\times (k \chi_{H} + \chi_{D})\nabla\times$ that holds for complex values of $k$.  We identify the subset $z={1}/{k}\in\Omega_0$ of $\mathbb{C}$ where this operator is invertible. The explicit formula shows that the solution  operator $(-\nabla\times(k \chi_{H} + \chi_{D})\nabla\times)^{-1}$ may be regarded more generally as a meromorphic operator valued function of $z$, for $z\in\Omega_0=\mathbb{C}\setminus S$, see Section~\ref{asymptotic}  and Lemma~\ref{inverseoperator}. Here, the set $S$ is discrete and consists of poles lying on the negative real axis with only one accumulation point at $z=-1$. For the problem treated here, we expand about $z=0$, and the distance between $z=0$ and the set $S$ is used to bound the radius of convergence for the power series. The spectral representation for $-\nabla\times (k \chi_{H} + \chi_{D})\nabla\times$ follows from the existence of a complete orthonormal set of $\alpha$-quasiperiodic functions associated with the {\em $\alpha$-quasiperiodic resonances of the crystal}, i.e., $\alpha$-quasiperiodic functions $\mathbf{v}$ and real eigenvalues 
$\lambda_i(\alpha)$, $i\in\mathbb{N}$, for which:
\begin{equation*}
-\nabla\times(\chi_{D})\nabla\times \mathbf{v}=-\lambda_i(\alpha) \Delta \mathbf{v}.
\end{equation*}
The collection of these eigenvalues, for $\alpha\in Y^\ast$, comprises the \textit{structural spectrum} of the crystal. 
The structural spectrum encodes the geometry of the crystal and inclusions independently of dielectric properties. 
These resonances are shown to be connected to the spectra of Neumann-Poincar\'e operators associated with $\alpha$-quasiperiodic double layer potentials. The formal definition of the structural spectrum given in terms of the Neumann-Poincar\'e eigenvalues, for $\alpha\in Y^\ast$, is provided in Definition \ref{DefStruct}.

For $\alpha=\mathbf{0}$, these eigenvalues are the well known electrostatic resonances identified in \cite{BergmanES}, \cite{BergmanC}, \cite{MiltonES}, and \cite{Milton}. Other electrostatic resonances for a vectorial Helmholtz equation are introduced and explored in \cite{Bergman19}. Both Neumann-Poincar\'e operators and the associated electrostatic resonances have been the focus of  theoretical investigations \cite{Kang}, \cite{Shapero} and applied in analysis of plasmonic excitations for suspensions of noble metal particles \cite{Mayorgoz} and electrostatic breakdown \cite{JagerMosko}.  The explicit spectral representation for the operator  $-\nabla\times (k \chi_{H} + \chi_{D})\nabla\times$ is crucial for elucidating the interaction between the contrast $k$ and the quasiperiodic resonances of the crystal, see Theorem~\ref{representation}.  

The spectral representation is applied to analytically continue the band structure $\lambda_j(k,\alpha)=\xi$, $j\in\mathbb{N}$, $\alpha\in Y^\star$ for $k \in \mathbb{N}$ to $\mathbb{C}$, see Theorem~\ref{extension}. 
On setting $z=1/k$, the spectral representation for the inverse operator written as $A_\alpha(z)=(-\nabla\times (k \chi_{H} + \chi_{D})\nabla\times)^{-1}$  shows it to be a meromorphic operator valued function of $z=1/k$, see Section~\ref{asymptotic}  and Lemma~\ref{inverseoperator}. 
Application of  the contour integral formula for spectral projections \cite{SzNagy}, \cite{TKato1}, \cite{TKato2} delivers an analytic representation formula for the band structure, see Section~\ref{asymptotic}.  We apply perturbation theory in Section~\ref{asymptotic}, together with a calculation provided in Section~\ref{derivation}, to find an explicit formula for the radii of convergence for the power series  $\lambda_j(k,\alpha)$  about $1/k=0$. The formula shows that the radius of convergence and the separation between different branches of the dispersion relation for any fixed $\alpha\in Y^\ast$ are determined by: 1) the distance of the origin to the nearest pole $z^\ast$ of $(-\nabla\times (k \chi_{H} + \chi_{D})\nabla\times)^{-1}$, and 2) the separation between distinct eigenvalues in the $z=1/k\rightarrow 0$ limit, see Theorem~\ref{separationandraduus-alphanotzero} and Theorem~\ref{separationandraduus-alphazero}. These theorems provide conditions on the contrast guaranteeing the separation of the $j$-th and $j+1$-th eigenvalue groups that depend explicitly upon  $z^\ast$, $j \in \mathbb{N}$ and $\alpha\in Y^\star$. Error estimates for series truncated after $N$ terms follow directly from the formulation.

The paper is organized as follows: In the next section, we introduce the Hilbert space formulation of the problem and the variational formulation of the quasi-static resonance problem. The completeness of the eigenfunctions associated with the quasi-static spectrum is established and a spectral representation for the operator $-\nabla\times (k \chi_{H} + \chi_{D})\nabla\times$ is obtained. These results are collected and used to continue the frequency band structure into the complex plane, see  Theorem~\ref{extension} of Section~\ref{bandstructure}. Spectral perturbation theory \cite{KatoPerturb} is applied to recover the power series expansion for Bloch spectra in Section~\ref{asymptotic}. The leading order spectral theory is developed for quasiperiodic $\alpha\not=0$ and periodic $\alpha=\mathbf{0}$ problems in Section~\ref{limitspectra} and Section~\ref{limitspeczero}, respectively.  The main theorems on radius of convergence and separation of spectra, given by Theorem~\ref{separationandraduus-alphanotzero} and Theorem~\ref{separationandraduus-alphazero}, are presented in Section~\ref{radius}. A large class of geometries for which an $\alpha$-independent lower bound on the quasi-static resonances is introduced in Section~\ref{radiusgeneralshape}. Explicit formulas for each term of the power series expansion is recovered and expressed in terms of layer potentials in Section~\ref{leading-order}.  The explicit formulas for the convergence radii are derived in Section~\ref{derivation} as well as hands-on proofs of Theorem~\ref{separationandraduus-alphanotzero}, Theorem~\ref{separationandraduus-alphazero} and the explicit error estimates for $N$-th order truncations.

\section{Hilbert space setting, quasiperiodic resonances and representation formulas}
\label{layers}

The space of all $\alpha$-quasiperiodic complex vector valued functions belonging to $L^2_{loc}(\mathbb{R}^3,\mathbb{C}^3)$ is denoted by $L^2_\#(\alpha,Y,\mathbb{C}^3)$ and the $L^2$-inner product is defined by:
\begin{equation}
\label{L2innerproduct}
(\mathbf{u},\mathbf{v})=\int_Y\,\mathbf{u}\cdot\overline{\mathbf{v}}\,dx.
\end{equation}

For $\mathbf{h}\in L^2_\#(\alpha,Y,\mathbb{C}^3)$, its Helmholtz decomposition is given by:
\begin{equation}
\mathbf{h}=\nabla h_{\rm pot}+\nabla\times\mathbf{h}_{\rm curl},
\label{Helmoltz1}
\end{equation}
where $h_{\rm pot}$ is an $\alpha$-quasiperiodic scalar field belonging to  $H^1_{loc}(\mathbb{R}^3,\mathbb{C})$ and  $\mathbf{h}_{\rm curl}\in L^2_\#(\alpha,Y,\mathbb{C}^3)$, with $\nabla\times\mathbf{h}_{\rm curl}\in L^2_\#(\alpha,Y,\mathbb{C}^3)$. The subspaces of gradients and curls are orthogonal with respect to the $L^2$-inner product (\ref{L2innerproduct}). The Helmholtz decomposition (\ref{Helmoltz1}) is shown in Appendix~\ref{app:helm}. 

For $\alpha\neq\mathbf{0}$, the eigenfunctions $\mathbf{h}$ of \eqref{Helm} belong to the space $J_{\#}(\alpha,Y,\mathbb{C}^3)\subset L^2_\#(\alpha,Y,\mathbb{C}^3)$ given by:
\begin{equation}
J_{\#}(\alpha,Y,\mathbb{C}^3) = \{ \mathbf{h} \in  H_{loc}^1(\mathbb{R}^3,\mathbb{C}^3): \,\mathbf{h}\text{ is $\alpha$-quasiperiodic on   $Y$},\,\,{\rm div}\,\mathbf{h}=0 \text{ in }Y\}.
\label{H1}
\end{equation}

A simple calculation, found in Appendix~\ref{app:nhp=0}, shows that, for $\mathbf{h}\in J_{\#}(\alpha,Y,\mathbb{C}^3)$, we have $\nabla h_{pot}=0$ in (\ref{Helmoltz1}).  Hence, $\mathbf{h}=\nabla\times\mathbf{h}_{\rm curl}$ for $\mathbf{h}\in J_{\#}(\alpha,Y,\mathbb{C}^3)$.
Another straightforward calculation, given in  Appendix~\ref{app:h=0inY}, delivers the following result:
\begin{theorem}
For $\mathbf{u}\in J_{\#}(\alpha,Y,\mathbb{C}^3)$, the null space of $\nabla \times \mathbf{u}$, for $\alpha\not=0$, is $\{0\}$ and the bilinear form given by: 
\begin{equation}
\label{innerproduct}
\begin{aligned}
\langle \mathbf{u},\mathbf{v}\rangle=\int_Y\,\nabla\times\mathbf{u}\cdot\nabla\times\overline{\mathbf{v}}\,dx
\end{aligned}
\end{equation}
is an inner product on $J_{\#}(\alpha,Y,\mathbb{C}^3)$, with norm defined by $\Vert\mathbf{u}\Vert^2=\langle \mathbf{u},\mathbf{u}\rangle$. The space  $J_{\#}(\alpha,Y,\mathbb{C}^3)$ is a Hilbert space under the inner product \eqref{innerproduct}, with $J_{\#}(\alpha,Y,\mathbb{C}^3)\subset W_{\#}^1(\alpha,Y,\mathbb{C}^3)$ and: $$\int_Y\,\nabla\times\mathbf{u}\cdot\nabla\times\overline{\mathbf{v}}\,dx=\int_Y\,\nabla\mathbf{u}:\nabla\overline{\mathbf{v}}\,dx$$ for $\mathbf{u},\,\mathbf{v}\in J_{\#}(\alpha,Y,\mathbb{C}^3)$, where `` $:$" represents the Frobenius inner product (see Appendix~\ref{app:lemmas}). Moreover, the null space corresponding to the operator on the left hand side of \eqref{Helm} is identically zero.
\end{theorem}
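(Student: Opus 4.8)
The plan is to pass to Fourier series, which diagonalizes every operator involved and reduces the claims to elementary estimates. Write $\kappa_n = 2\pi n + \alpha$ for $n\in\mathbb{Z}^3$; an $\alpha$-quasiperiodic field $\mathbf{u}\in L^2_\#(\alpha,Y,\mathbb{C}^3)$ expands as $\mathbf{u} = \sum_{n}\mathbf{c}_n e^{i\kappa_n\cdot x}$ with $\sum_n|\mathbf{c}_n|^2<\infty$. Since $\alpha\in Y^\star=(-\pi,\pi]^3$ with $\alpha\neq\mathbf{0}$, one has $\kappa_n\neq\mathbf{0}$ for every $n$, so $\delta:=\inf_n|\kappa_n|$ is strictly positive (the infimum is attained because $|\kappa_n|\to\infty$). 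The divergence-free constraint is equivalent to $\kappa_n\cdot\mathbf{c}_n = 0$ for all $n$, so $J_{\#}(\alpha,Y,\mathbb{C}^3)$ is identified with $\{(\mathbf{c}_n):\sum_n(1+|\kappa_n|^2)|\mathbf{c}_n|^2<\infty,\ \kappa_n\cdot\mathbf{c}_n=0\ \forall n\}$.

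First I would dispose of the null space of the curl and the inner-product property simultaneously. Because $\nabla\times\mathbf{u} = \sum_n i(\kappa_n\times\mathbf{c}_n)e^{i\kappa_n\cdot x}$, the identity $\nabla\times\mathbf{u}=\mathbf{0}$ forces $\kappa_n\times\mathbf{c}_n=\mathbf{0}$, i.e.\ $\mathbf{c}_n$ is parallel to $\kappa_n$; combined with $\kappa_n\cdot\mathbf{c}_n=0$ and $\kappa_n\neq\mathbf{0}$ this gives $\mathbf{c}_n=\mathbf{0}$ for every $n$, hence $\mathbf{u}=\mathbf{0}$. Therefore $\langle\mathbf{u},\mathbf{u}\rangle=\|\nabla\times\mathbf{u}\|_{L^2(Y)}^2=0$ implies $\mathbf{u}=\mathbf{0}$, and since sesquilinearity and Hermitian symmetry of \eqref{innerproduct} are immediate, $\langle\cdot,\cdot\rangle$ is an inner product with $\|\mathbf{u}\|^2=\langle\mathbf{u},\mathbf{u}\rangle$.

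Next I would establish the identity $\int_Y\nabla\times\mathbf{u}\cdot\nabla\times\overline{\mathbf{v}}\,dx=\int_Y\nabla\mathbf{u}:\nabla\overline{\mathbf{v}}\,dx$ and the equivalence of norms together. By Plancherel, with $\mathbf{v}=\sum_n\mathbf{d}_ne^{i\kappa_n\cdot x}$, the left-hand side equals $\sum_n(\kappa_n\times\mathbf{c}_n)\cdot\overline{(\kappa_n\times\mathbf{d}_n)}$ while $\int_Y\nabla\mathbf{u}:\nabla\overline{\mathbf{v}}\,dx=\sum_n|\kappa_n|^2\,\mathbf{c}_n\cdot\overline{\mathbf{d}_n}$; the Lagrange identity $(\kappa_n\times\mathbf{c}_n)\cdot\overline{(\kappa_n\times\mathbf{d}_n)}=|\kappa_n|^2\,\mathbf{c}_n\cdot\overline{\mathbf{d}_n}-(\kappa_n\cdot\mathbf{c}_n)\,\overline{(\kappa_n\cdot\mathbf{d}_n)}$ (valid since $\kappa_n$ is real) together with $\kappa_n\cdot\mathbf{c}_n=\kappa_n\cdot\mathbf{d}_n=0$ makes the two sums agree term by term, and taking $\mathbf{v}=\mathbf{u}$ shows $\|\mathbf{u}\|^2=\sum_n|\kappa_n|^2|\mathbf{c}_n|^2=\|\nabla\mathbf{u}\|_{L^2(Y)}^2$. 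Combining this with the Poincar\'e inequality $\|\mathbf{u}\|_{L^2(Y)}^2=\sum_n|\mathbf{c}_n|^2\le\delta^{-2}\sum_n|\kappa_n|^2|\mathbf{c}_n|^2=\delta^{-2}\|\mathbf{u}\|^2$, which uses $\delta>0$, shows the curl-norm is equivalent on $J_{\#}(\alpha,Y,\mathbb{C}^3)$ to the full $W^1_\#(\alpha,Y,\mathbb{C}^3)$ norm; this yields both the continuous inclusion $J_{\#}(\alpha,Y,\mathbb{C}^3)\subset W^1_\#(\alpha,Y,\mathbb{C}^3)$ and completeness, since in the Fourier picture $J_{\#}(\alpha,Y,\mathbb{C}^3)$ is the intersection of the weighted space $\{(\mathbf{c}_n):\sum_n|\kappa_n|^2|\mathbf{c}_n|^2<\infty\}$ with the closed subspace $\{(\mathbf{c}_n):\kappa_n\cdot\mathbf{c}_n=0\ \forall n\}$, hence a Hilbert space (equivalently, a $\|\cdot\|$-Cauchy sequence is $W^1_\#$-Cauchy, converges in $W^1_\#$, and both $\alpha$-quasiperiodicity and the divergence-free condition are closed under $W^1$-convergence). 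The termwise Plancherel manipulations are justified for merely $H^1$ fields by density of finite trigonometric sums.

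Finally, for the null space of the operator $\mathbf{h}\mapsto\nabla\times(a(x)\nabla\times\mathbf{h})$ on the left-hand side of \eqref{Helm}: if this vanishes for some $\mathbf{h}\in J_{\#}(\alpha,Y,\mathbb{C}^3)$, pairing with $\overline{\mathbf{h}}$ in $L^2(Y)$ and integrating by parts over $Y$ (the $\alpha$-quasiperiodic boundary contributions cancel) gives $\int_Y a(x)\,|\nabla\times\mathbf{h}|^2\,dx=0$; since $a=k\chi_H+\chi_D$ is bounded below by a positive constant for physical $k>0$ (and one takes real parts when $\mathrm{Re}\,k>0$), this forces $\nabla\times\mathbf{h}=\mathbf{0}$, and triviality of the curl null space on $J_{\#}(\alpha,Y,\mathbb{C}^3)$ then gives $\mathbf{h}=\mathbf{0}$. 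The step I expect to be the crux is the Poincar\'e inequality $\|\mathbf{u}\|_{L^2(Y)}\le\delta^{-1}\|\nabla\mathbf{u}\|_{L^2(Y)}$: everything downstream --- equivalence of norms, hence completeness, hence the Hilbert-space assertion --- rests on $\delta=\inf_n|2\pi n+\alpha|>0$, which is exactly where the hypothesis $\alpha\neq\mathbf{0}$ enters and which fails for $\alpha=\mathbf{0}$, where the constant fields lie in the null space of the curl and $J_{\#}$ must be defined modulo gradients.
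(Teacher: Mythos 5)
Your proposal is correct and follows essentially the same route as the paper: a Fourier/Plancherel computation with the Lagrange identity $(\kappa\times\mathbf{c})\cdot\overline{(\kappa\times\mathbf{d})}=|\kappa|^2\,\mathbf{c}\cdot\overline{\mathbf{d}}-(\kappa\cdot\mathbf{c})\overline{(\kappa\cdot\mathbf{d})}$ and the divergence-free constraint yields the curl--gradient identity (the paper's Lemma~\ref{lem:app:c1}), the triviality of the curl null space then comes from $\kappa_n=2\pi n+\alpha\neq\mathbf{0}$ (equivalently, constants being excluded by $\alpha$-quasiperiodicity, as in Appendix~\ref{app:h=0inY}), and the Poincar\'e bound with constant $\delta^{-1}=|\alpha|^{-1}$ matches the paper's Lemma~\ref{poincarealpha}. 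Your explicit completeness argument and the integration-by-parts proof that the operator in \eqref{Helm} has trivial null space simply fill in details the paper leaves as ``straightforward,'' so no substantive difference or gap.
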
 

For $\alpha=\mathbf{0}$, one has that $L^2_\#(0,Y,\mathbb{C}^3)$ is the space of periodic $L^2$- vector fields on $Y$.  For this case, $\mathbf{h}\in L^2_\#(0,Y,\mathbb{C}^3)$ has the Helmholtz decomposition into $L^2$- orthogonal components given by:
\begin{equation}
\mathbf{h}=\nabla h_{\rm pot}+\nabla\times\mathbf{h}_{\rm curl}+\mathbf{c},
\label{Helmoltz2}
\end{equation}
where $h_{\rm pot}$ is a periodic scalar field belonging to $H^1_{loc}(\mathbb{R}^3,\mathbb{C})$,  $\mathbf{h}_{\rm curl}\in L^2_\#(0,Y,\mathbb{C}^3)$, with $\nabla\times\mathbf{h}_{\rm curl}\in L^2_\#(0,Y,\mathbb{C}^3)$, and $\mathbf{c}$ is a constant vector in $\mathbb{C}^3$, see Appendix~\ref{app:helm}. For $\alpha=\mathbf{0}$, the eigenfunctions $\mathbf{h}$ for \eqref{Helm} belong to the space:
\begin{equation*}
\{ \mathbf{h} \in  H_{loc}^1(\mathbb{R}^3,\mathbb{C}^3): \text{$\mathbf{h}$ periodic on $Y$},\,\,\nabla\cdot\mathbf{h}=0\text{ in }Y\}.
\end{equation*}
A simple calculation, given in Appendix~\ref{app:nhp=0}, shows that $\nabla h_{pot}=0$ and $\mathbf{h}=\nabla\times\mathbf{h}_{curl}+\boldsymbol{c}$.  We introduce the  space $J_{\#}(0,Y,\mathbb{C}^3) \subset L^2_{\#}(0,Y,\mathbb{C}^3)$ given by:
\begin{equation}
J_{\#}(0,Y,\mathbb{C}^3) =\{ \mathbf{h} \in  H_{loc}^1(\mathbb{R}^3,\mathbb{C}^3): \text{$\mathbf{h}$ is periodic},\,\,\nabla\cdot\mathbf{h}=0\text{ in }Y, \text{ and } \int_Y\,\mathbf{h}\,dx=0\}.
\label{H1-0}
\end{equation}

\begin{theorem}
For $\mathbf{u}\in J_{\#}(0,Y,\mathbb{C}^3)$, the null space of $\nabla \times \mathbf{u}$ is $\{0\}$ and the bilinear form: 
\begin{equation}\label{innerproduct2}
\begin{aligned}
\langle \mathbf{u},\mathbf{v}\rangle=\int_Y\,\nabla\times\mathbf{u}\cdot\nabla\times\overline{\mathbf{v}}\,dx,
\end{aligned}
\end{equation}
is an inner product on $J_{\#}(0,Y,\mathbb{C}^3)$, with norm defined by $\Vert\mathbf{u}\Vert^2=\langle \mathbf{u},\mathbf{u}\rangle$. The space $J_{\#}(0,Y,\mathbb{C}^3)\subset W_{\#}^1(0,Y,\mathbb{C}^3)$ with inner product \eqref{innerproduct2} is a Hilbert space and: $$\int_Y\,\nabla\times\mathbf{u}\cdot\nabla\times\overline{\mathbf{v}}\,dx=\int_Y\,\nabla\mathbf{u}:\nabla\overline{\mathbf{v}}\,dx$$ for $\mathbf{u},\,\mathbf{v}\in J_{\#}(0,Y,\mathbb{C}^3)$. Moreover, the null space corresponding to the operator on the left hand side of \eqref{Helm}, for $\mathbf{h}\in J_{\#}(0,Y,\mathbb{C}^3)$, is $\{0\}$.
\end{theorem}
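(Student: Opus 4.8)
The plan is to mirror the proof of the preceding theorem for $\alpha\neq\mathbf{0}$, modifying the argument only where the presence of constant vector fields in the Helmholtz decomposition \eqref{Helmoltz2} intervenes. First I would establish that the null space of $\nabla\times$ restricted to $J_{\#}(0,Y,\mathbb{C}^3)$ is trivial. If $\mathbf{u}\in J_{\#}(0,Y,\mathbb{C}^3)$ and $\nabla\times\mathbf{u}=0$, then $\mathbf{u}=\nabla u_{\rm pot}$ for some periodic scalar potential; but $\mathrm{div}\,\mathbf{u}=0$ forces $u_{\rm pot}$ to be harmonic and periodic, hence constant, so $\mathbf{u}$ is a constant vector $\mathbf{c}$. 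The extra constraint $\int_Y\mathbf{u}\,dx=0$ that is built into the definition \eqref{H1-0} then forces $\mathbf{c}=0$. This is precisely the role of the zero-average condition: it removes the kernel that would otherwise obstruct \eqref{innerproduct2} from being definite.

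Next I would verify that \eqref{innerproduct2} is an inner product. Bilinearity (sesquilinearity) and Hermitian symmetry are immediate; positivity $\langle\mathbf{u},\mathbf{u}\rangle=\int_Y|\nabla\times\mathbf{u}|^2\,dx\geq 0$ is clear; and $\langle\mathbf{u},\mathbf{u}\rangle=0$ implies $\nabla\times\mathbf{u}=0$, which by the previous step gives $\mathbf{u}=0$. For the norm equivalence and completeness I would invoke the identity $\int_Y\nabla\times\mathbf{u}\cdot\nabla\times\overline{\mathbf{v}}\,dx=\int_Y\nabla\mathbf{u}:\nabla\overline{\mathbf{v}}\,dx$; this follows by integration by parts using $\mathrm{div}\,\mathbf{u}=\mathrm{div}\,\mathbf{v}=0$ and the periodicity of the fields, which kills the boundary terms on $\partial Y$ (the relevant lemma is in Appendix~\ref{app:lemmas}). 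Combined with the Poincar\'e inequality on the zero-average space $J_{\#}(0,Y,\mathbb{C}^3)$, this shows $\Vert\cdot\Vert$ is equivalent to the $W^1_{\#}$-norm, whence $J_{\#}(0,Y,\mathbb{C}^3)$ is a closed subspace of the Hilbert space $W^1_{\#}(0,Y,\mathbb{C}^3)$ and therefore itself a Hilbert space under \eqref{innerproduct2}.

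Finally, for the statement about the operator on the left side of \eqref{Helm}: if $\nabla\times(a(x)\nabla\times\mathbf{h})=0$ with $\mathbf{h}\in J_{\#}(0,Y,\mathbb{C}^3)$, I would pair against $\mathbf{h}$, integrate by parts, and obtain $\int_Y a(x)|\nabla\times\mathbf{h}|^2\,dx=0$; since $a(x)=k\chi_H+\chi_D$ is bounded below by a positive constant (for real $k\geq 1$), this gives $\nabla\times\mathbf{h}=0$, hence $\mathbf{h}=0$ by the kernel computation above. The main obstacle — really the only subtle point — is the bookkeeping around the constant vectors: one must be careful that the decomposition \eqref{Helmoltz2} genuinely has the curl-free-and-divergence-free part reduce to a constant, and that the zero-average normalization in \eqref{H1-0} is exactly what is needed, neither too weak (leaving a kernel) nor incompatible with the eigenvalue problem (the constants $\mathbf{c}$ are handled separately, as noted before the theorem statement). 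Everything else is a routine transcription of the $\alpha\neq\mathbf{0}$ case.
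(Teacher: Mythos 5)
Your proposal is correct and follows essentially the same route as the paper: the kernel of $\nabla\times$ on $J_{\#}(0,Y,\mathbb{C}^3)$ is shown to consist of constant vectors (via divergence-free plus curl-free, as in the decomposition \eqref{Helmoltz2}), which the mean-zero condition in \eqref{H1-0} annihilates, and the identity $\int_Y\nabla\times\mathbf{u}\cdot\nabla\times\overline{\mathbf{v}}\,dx=\int_Y\nabla\mathbf{u}:\nabla\overline{\mathbf{v}}\,dx$ is the content of the appendix lemma (proved there by Fourier series, equivalently by your integration by parts), after which definiteness, completeness, and the triviality of the null space of $\nabla\times(a\nabla\times\cdot)$ follow exactly as you describe. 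The only cosmetic slip is the phrase ``$\mathbf{u}=\nabla u_{\rm pot}$ for some periodic scalar potential'' (a curl-free periodic field is a gradient plus a constant), but your subsequent conclusion that $\mathbf{u}$ reduces to a constant vector shows you used the correct decomposition, so this is not a gap.
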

This theorem follows from a calculation given in Appendix~\ref{app:h=0inY}.
From now on, we will refer to $J_{\#}(\alpha,Y,\mathbb{C}^3)$ for all $\alpha \in Y^\ast$, with the special choice of $J_{\#}(\alpha,Y,\mathbb{C}^3)$ for $\alpha=\mathbf{0}$ defined as in \eqref{H1-0}.

The  weak form of equation \eqref{Helm} is given by:
\begin{equation}\label{weak}
\epsilon^{-1}\int_{H}(\nabla\times \mathbf{h})\cdot (\nabla\times \overline{\mathbf{w}})\ dx +  \int_{D}(\nabla\times \mathbf{h})\cdot (\nabla\times \overline{\mathbf{w}})\ dx=\xi \int_{Y}\mathbf{h}\cdot \overline{\mathbf{w}}\ dx,
\end{equation}
for all $\mathbf{w}\in J_{\#}(\alpha,Y,\mathbb{C}^3)$.  We set $k=\epsilon^{-1}$, and the left hand side of \eqref{weak} is given by the sesquilinear form $B_{k}: J_{\#}(\alpha,Y,\mathbb{C}^3)\times J_{\#}(\alpha,Y,\mathbb{C}^3) \rightarrow \mathbb{C}$, defined as:
 \begin{equation}\label{ses}
 B_{k}(\mathbf{u,w}):= k\int_{H}(\nabla\times \mathbf{u})\cdot (\nabla\times \overline{\mathbf{w}})\ dx +  \int_{D}(\nabla\times \mathbf{u})\cdot (\nabla\times \overline{\mathbf{w}})\ dx.
 \end{equation}
The linear  operator $T_{k}^\alpha$, associated with the sesquilinear form $B_{k}$, is defined by: \begin{equation}
    \label{tkassocbk}
\langle T_{k}^\alpha\mathbf{u},\mathbf{w}\rangle:=B_{k}(\mathbf{u},\mathbf{w}),
\end{equation}
for all $\mathbf{u}$ and $\mathbf{w}$ in $J_{\#}(\alpha,Y,\mathbb{C}^3)$.

Our goal is to  rewrite \eqref{Helm} in terms of a spectral representation formula for the differential operator $\nabla\times\left( k\nabla \times\cdot\right)$.  We will do this by developing the spectral representation of $T_k^\alpha$, which can be directly linked to the following eigenvalue problem:
\begin{equation}\label{ypproblem}
\lambda\langle\mathbf{u},\mathbf{w}\rangle=\lambda\int_{Y}(\nabla \times \mathbf{u})\cdot (\nabla \times \overline{\mathbf{w}})\,dx=\int_{D}(\nabla \times \mathbf{u})\cdot (\nabla \times \overline{\mathbf{w}})\,dx, 
\end{equation}
for all $\mathbf{u,w}\in J_{\#}(\alpha,Y,\mathbb{C}^3)$; which will be shown to possess countably many real eigenvalues $\lambda_{n}$, with  corresponding eigenfunctions $\psi_{n}\in J_{\#}(\alpha,Y,\mathbb{C}^3)$, that satisfy:
 \begin{equation*}
 \lambda_{n} \int_{Y}(\nabla\times \psi_{n})\cdot (\nabla\times \overline{\mathbf{w}})\,dx= \int_{D}(\nabla\times \psi_{n})\cdot (\nabla\times \overline{\mathbf{w}})\,dx,\qquad  \forall\mathbf{w}\in J_{\#}(\alpha,Y,\mathbb{C}^3).
 \end{equation*} 
The eigenspaces associated with different eigenvalues are easily seen to be orthogonal in the inner product \eqref{innerproduct}.
 We apply these eigenfunctions to introduce a different decomposition of $J_\#(\alpha, Y, \mathbb{C}^3)$ that is orthogonal in the inner product \eqref{innerproduct}. We introduce the three subspaces denoted by $W_1^\alpha$, $W_2^\alpha$, $W_3^\alpha$ that are mutually orthogonal with respect to the inner product \eqref{innerproduct} and defined as:
\begin{equation}\label{w1}
W_{1}^\alpha =  \left\{\mathbf{u}\in J_{\#}(\alpha,Y,\mathbb{C}^3),\,\,\nabla\times \mathbf{u}=0 \text{ in }D\right\}, 
\end{equation}
\begin{equation}\label{w2}
W_{2}^\alpha = \left\{\mathbf{u}\in J_{\#}(\alpha,Y,\mathbb{C}^3),\,\,\nabla\times \mathbf{u}=0 \text{ in }H\right\},
\end{equation}
and  $W_{3}^\alpha\subset J_{\#}(\alpha,Y,\mathbb{C}^3)$ is the subspace perpendicular to  the direct sum $(W_{1}^\alpha\oplus W_{2}^\alpha)$. 

The decomposition of $J_{\#}(\alpha,Y,\mathbb{C}^3)$ is recorded in the following lemma.

 \begin{lemma}\label{ortho} The space $J_{\#}(\alpha,Y,\mathbb{C}^3)$ can be decomposed into orthogonal  invariant subspaces  spanned by eigenfunctions of the eigenvalues of problem   \eqref{ypproblem} and: $$J_{\#}(\alpha,Y,\mathbb{C}^3)=W_1^\alpha\oplus W_2^\alpha\oplus W_3^\alpha.$$ \end{lemma}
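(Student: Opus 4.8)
\emph{Proof plan.} The plan is to realize \eqref{ypproblem} as the eigenvalue problem for a single bounded self-adjoint operator on the Hilbert space $\bigl(J_{\#}(\alpha,Y,\mathbb{C}^3),\langle\cdot,\cdot\rangle\bigr)$ and then read the decomposition off its spectral structure. Define $T\colon J_{\#}(\alpha,Y,\mathbb{C}^3)\to J_{\#}(\alpha,Y,\mathbb{C}^3)$ by
\begin{equation*}
\langle T\mathbf{u},\mathbf{w}\rangle=\int_{D}(\nabla\times\mathbf{u})\cdot(\nabla\times\overline{\mathbf{w}})\,dx .
\end{equation*}
By the Riesz representation theorem $T$ is a well-defined bounded linear operator, since $\mathbf{w}\mapsto\int_{D}(\nabla\times\mathbf{u})\cdot(\nabla\times\overline{\mathbf{w}})\,dx$ is an antilinear functional whose norm is at most $\|\mathbf{u}\|$ (Cauchy--Schwarz, the $D$-integral being dominated by the $Y$-integral); it is self-adjoint because the underlying form is Hermitian; and
\begin{equation*}
0\le\langle T\mathbf{u},\mathbf{u}\rangle=\int_{D}|\nabla\times\mathbf{u}|^{2}\,dx\le\int_{Y}|\nabla\times\mathbf{u}|^{2}\,dx=\|\mathbf{u}\|^{2},
\end{equation*}
so $0\le T\le I$. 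Thus \eqref{ypproblem} is exactly $T\psi_{n}=\lambda_{n}\psi_{n}$, every eigenvalue lies in $[0,1]$, and eigenspaces for distinct eigenvalues are $\langle\cdot,\cdot\rangle$-orthogonal.

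Next I would identify the two extreme eigenspaces with $W_{1}^{\alpha}$ and $W_{2}^{\alpha}$. From the bound above, $\langle T\mathbf{u},\mathbf{u}\rangle=0$ iff $\nabla\times\mathbf{u}=0$ in $D$, and since $T\ge 0$ this is equivalent to $T\mathbf{u}=0$; hence $W_{1}^{\alpha}=\ker T$ is the eigenspace at $\lambda=0$. Likewise $\langle(I-T)\mathbf{u},\mathbf{u}\rangle=\int_{H}|\nabla\times\mathbf{u}|^{2}\,dx$ vanishes iff $\nabla\times\mathbf{u}=0$ in $H$, and since $I-T\ge 0$ this is equivalent to $T\mathbf{u}=\mathbf{u}$; hence $W_{2}^{\alpha}=\ker(I-T)$ is the eigenspace at $\lambda=1$. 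Both subspaces are closed (kernels of bounded operators) and mutually orthogonal (being eigenspaces of the self-adjoint $T$ at distinct eigenvalues; equivalently, splitting $\langle\mathbf{u},\mathbf{v}\rangle$ over $D$ and $H$ gives zero). Since $W_{1}^{\alpha}$ and $W_{2}^{\alpha}$ are closed, mutually orthogonal, $T$-invariant subspaces and $T$ is self-adjoint, the orthogonal complement $W_{3}^{\alpha}$ of $W_{1}^{\alpha}\oplus W_{2}^{\alpha}$ is closed and $T$-invariant as well, so $J_{\#}(\alpha,Y,\mathbb{C}^3)=W_{1}^{\alpha}\oplus W_{2}^{\alpha}\oplus W_{3}^{\alpha}$ is an orthogonal direct sum into $T$-invariant subspaces, with $W_{1}^{\alpha}$ spanned by eigenfunctions at $\lambda=0$ and $W_{2}^{\alpha}$ by eigenfunctions at $\lambda=1$.

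The remaining content of the lemma is that $W_{3}^{\alpha}$ too is spanned by eigenfunctions of \eqref{ypproblem}, equivalently that $T$ restricted to $W_{3}^{\alpha}$ has purely discrete spectrum contained in the open interval $(0,1)$, and this is the main obstacle. The clean route is to prove that $T(I-T)$ is a compact operator on $J_{\#}(\alpha,Y,\mathbb{C}^3)$: granting this, for each $0<\delta<1/2$ the spectral projection $P_{\delta}$ of $T$ associated with $[\delta,1-\delta]$ has finite-dimensional range, because $T(I-T)$ is simultaneously compact and bounded below by $\delta(1-\delta)>0$ there; letting $\delta\downarrow 0$, $\sigma(T)\cap(0,1)$ consists of eigenvalues of finite multiplicity accumulating only at $0$ and $1$, and the corresponding eigenfunctions, being orthogonal to $\ker T=W_{1}^{\alpha}$ and to $\ker(I-T)=W_{2}^{\alpha}$ and hence lying in $W_{3}^{\alpha}$, form a complete orthogonal system in $W_{3}^{\alpha}$ by the spectral theorem. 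To obtain the compactness of $T(I-T)$ I would characterize $T\mathbf{u}$ as the unique element of $J_{\#}(\alpha,Y,\mathbb{C}^3)$ whose double curl equals $\nabla\times(\chi_{D}\nabla\times\mathbf{u})$ (in the weak sense appropriate to $J_{\#}$), so that $\nabla\times(T\mathbf{u})$ is curl-free off $\partial D$ with a prescribed jump across the interface and can be represented by an $\alpha$-quasiperiodic double layer potential supported on the $C^{1,\gamma}$ boundary $\partial D$; this exhibits $T$, modulo $W_{1}^{\alpha}\oplus W_{2}^{\alpha}$, in terms of a Neumann--Poincar\'e-type boundary operator whose mapping properties on a $C^{1,\gamma}$ interface gain a fractional derivative, so that $T(I-T)$ is compact by Rellich's theorem. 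This is precisely the layer-potential machinery developed in the later sections and connected there to the Neumann--Poincar\'e spectrum and the structural spectrum of the crystal.
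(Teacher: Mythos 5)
Your set-up is sound and, up to that point, matches what the paper does implicitly: the operator $T$ defined by $\langle T\mathbf{u},\mathbf{w}\rangle=\int_D\nabla\times\mathbf{u}\cdot\nabla\times\overline{\mathbf{w}}\,dx$ is bounded, self-adjoint, $0\le T\le I$, problem \eqref{ypproblem} is its eigenvalue problem, $W_1^\alpha=\ker T$, $W_2^\alpha=\ker(I-T)$, and $W_3^\alpha$ is a closed invariant orthogonal complement. The genuine gap is in your "clean route": the key lemma you propose, compactness of $T(I-T)$, is false, and with it the claim that the spectrum in $(0,1)$ accumulates only at $0$ and $1$. What the layer-potential machinery actually yields (Lemma~\ref{mapfroml2t0}, Theorems~\ref{thm:extW3}, \ref{thm:iso}, \ref{mcompact}, \ref{tcompact} and identity \eqref{T}) is that the restriction of $T$ to $W_3^\alpha$ equals $\tfrac{1}{2}I-T^\alpha$, where $T^\alpha=S^\alpha M^\alpha (S^\alpha)^{-1}$ is compact and its spectrum is the Neumann--Poincar\'e spectrum, which for a $C^{1,\gamma}$ interface accumulates only at $0$. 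Hence the eigenvalues $\lambda_n=\tfrac{1}{2}-\mu_n(\alpha)$ of \eqref{ypproblem} on $W_3^\alpha$ accumulate at $\tfrac{1}{2}$ (this is also visible in Lemma~\ref{inverseoperator}, where the poles accumulate at $z=-1$), so on $W_3^\alpha$ one has $T(I-T)=\tfrac{1}{4}I-(T^\alpha)^2$, whose essential spectrum contains $\tfrac{1}{4}$; since $W_3^\alpha$ is infinite-dimensional (it is the bijective image of $V^{-\frac12}_t(\partial D)^3$ under $S^\alpha$), $T(I-T)$ cannot be compact, and your assertion that the spectral projection of $T$ onto $[\delta,1-\delta]$ is finite-dimensional fails for small $\delta$.

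The correct target, and the route the paper takes, is compactness of $T-\tfrac{1}{2}I$ restricted to $W_3^\alpha$ rather than of $T(I-T)$: one parametrizes $W_3^\alpha$ by single layer potentials, proves $S^\alpha:V_t^{-\frac12}(\partial D)^3\to W_3^\alpha$ is a bounded bijection with bounded inverse, shows $M^\alpha$ is compact via the Plemelj-type intertwining with the scalar Neumann--Poincar\'e operator, and verifies through \eqref{T} that the eigenvalue problem for the compact Hermitian operator $T^\alpha$ on $W_3^\alpha$ is equivalent to \eqref{ypproblem}; the spectral theorem for compact self-adjoint operators then gives a complete orthonormal system of eigenfunctions spanning $W_3^\alpha$, which is the whole content of the lemma beyond your (correct) elementary first part. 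Your final paragraph gestures at exactly this machinery, but the operator to which the compactness attaches is misidentified, and the analytic substance (mapping properties and bijectivity of $S^\alpha$ onto $W_3^\alpha$, the intertwining identity, compactness of the Neumann--Poincar\'e operator) is only sketched, so as written the completeness of eigenfunctions in $W_3^\alpha$ is not established. Replacing "$T(I-T)$ compact" by "$(T-\tfrac{1}{2}I)$ compact on $W_3^\alpha$" and carrying out the layer-potential argument would bring your plan in line with the paper's proof.
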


It follows from the definitions of $W_{1}^\alpha$ and $W_{2}^\alpha$ that they are subspaces of the eigenspaces of \eqref{ypproblem} associated with the eigenvalues  $0$ and $1$, respectively. From \eqref{ypproblem}, we easily deduce that the eigenvalues $\lambda$ belong to $[0,1]$.
To proceed, we must provide the explicit characterization of functions in $W_3$ in terms of eigenspaces. To do this, we introduce the appropriate differential operators defined on the surface of the dielectric inclusion $\partial D$. We begin by defining the surface differential operators for smooth functions. 
The surface divergence $Div_S$ for smooth complex-valued tangential vector fields $\mathbf{v}$ is defined over the surface $\partial D$ by: 
\begin{equation*}
Div_S\mathbf{v}:=\sum_{j,i}n_i(n_i\partial_j-n_j\partial_i)v_j,
\end{equation*}
where $n_i,\,i=1,2,3$, are the components of the unit outward normal vector $\mathbf{n}$ to the surface.
The operator:
\begin{equation*}
\begin{aligned}
\mathbf{n}\cdot\nabla\times\mathbf{v}&:=&(n_2\partial_3-n_3\partial_2,n_3\partial_1-n_1\partial_3,n_1\partial_2-n_2\partial_1)\cdot\mathbf{v}
\end{aligned}
\end{equation*}
is only composed of tangential derivatives and can be viewed as an operator defined on $\partial D$. 
For every vector field $\mathbf{v}$ in $L^2(\partial D)^3$, we have the relation between $Div_S$ and $\mathbf{n}\cdot\nabla\times$ given by:
\begin{equation*}
\begin{aligned}
Div_S(\mathbf{n}\times\mathbf{v})=-\mathbf{n}\cdot\nabla\times\mathbf{v},
\end{aligned}
\end{equation*}
see \cite{Mitrea1996}. Also, see \cite{Mitrea1996}, for a scalar function $f\in W^{s,2}(\partial D)$ and a vector function $\mathbf{g}\in W^{1-s,2}(\partial D)^3$, for $0\leq s \leq 1$, we have the identity: 
\begin{equation}
\label{sid}
\begin{split}
\int_{\partial D}\mathbf{g}\cdot \mathbf{n}\times \nabla f\,ds=-\int_{\partial D} f(\mathbf{n}\cdot \nabla\times \mathbf{g})\,ds.
\end{split}
\end{equation}

To complete the set up, we introduce the spaces: 
\begin{align*}
 L^{2}_{t}(\partial D)^{3}&\,=\,\left\{\boldsymbol{\rho} \in L^{2}(\partial D)^{3}\middle|  \ \  \mathbf{n}\cdot \boldsymbol{\rho}=0 \ \ \text{on} \ \ \partial D\right\},\\
L^{2}_{t,0}(\partial D)^{3}&\,=\,\left\{\boldsymbol{\rho} \in L^{2}_t(\partial D)^{3}\middle|  \ \  Div_S \boldsymbol{\rho}=0 \ \ \text{on} \ \ \partial D\right\},\\
L^{2}_{0}(\partial D)&\,=\,\left\{{\rho} \in L^{2}(\partial D)\middle|  \ \  (\rho,1)_{\partial D}=0  \right\},\\
 H^{-1/2}_{0}(\partial D)&\,=\,\left\{{\rho} \in H^{-1/2}(\partial D)\middle|  \ \  (\rho,1)_{\partial D}=0  \right\},
\end{align*}
where $\displaystyle(\rho,1)_{\partial D}:=\int_{\partial D}\rho\,ds$.
 

In order to relate $W_3^\alpha$ to the invariant subspaces of the eigenvalue problem \eqref{ypproblem}, we will introduce a representation of $W_3^\alpha$ given by single layer potentials parameterized by densities on $\partial D$. This is done in the next section.
\subsection{Mapping Properties of the Single Layer Potential Operator}
 
We start by introducing the $\alpha$-quasiperiodic Green's function:
\begin{equation}
\label{Green-quasi}
{G^\alpha}(x,y)=-\sum_{\mathbf{n}\in \mathbf{Z}^{3}}\dfrac{e^{i(2\pi \mathbf{n}+\alpha)\cdot (x-y)}}{|{\alpha}+2\pi \mathbf{n}|^{2}}I_{3\times3},\,\,\,\,\hbox{for $\alpha\not=0$},
\end{equation}
and the periodic Green's function:
\begin{equation}
\label{Green-periodic}
{G^0}(x,y)=-\sum_{\mathbf{n}\in \mathbf{Z}^{3}\setminus\{0\}}\dfrac{e^{i(2\pi \mathbf{n})\cdot (x-y)}}{|2\pi \mathbf{n}|^{2}}I_{3\times3},\,\,\,\,\hbox{for $\alpha=\mathbf{0}$},
\end{equation}
where $|\cdot|$ is the usual norm of a vector in $\mathbb{R}^3$.
For $\alpha\in Y^\ast$ and $\boldsymbol{\rho}\in  L^{2}_{t,0}(\partial D)^{3}$, we define the $\alpha$-quasiperiodic single layer potential as: 
\begin{equation}\label{S}
\begin{split}
 {S^\alpha}(\boldsymbol{\rho})(x)=\int_{\partial D}{G^\alpha}(x,y)\boldsymbol{\rho}(y)ds_{ y}, \ \ \ x\not \in \partial D.
 \end{split}
 \end{equation}
 
 The single layer potential operator satisfies the continuity condition at $x\in\partial D$:
 \begin{equation}\label{jump1}
\displaystyle S^\alpha(\boldsymbol\rho)\big|_{\partial D}^+=S^\alpha(\boldsymbol\rho)\big|_{\partial D}^-,
\end{equation}
\begin{equation}\label{laplace}
-\Delta S^\alpha(\boldsymbol\rho)=0 \text{ for $x\in H\cup D$,}
\end{equation}
 and $S^\alpha(\boldsymbol\rho)\in W_{\#}(\alpha,Y,\mathbb{C}^3)$ with $ S^\alpha(\boldsymbol\rho)\big|_{\partial D}$ in $W^{1/2,2}(\partial D)^3$.  Let $\Gamma^-(x)$ be a truncated circular cone in the interior of $D$ with vertex $x$ and let $\Gamma^+(x)$ be a truncated circular cone in the interior of $H$ with vertex $x$.  Now consider these cones with common vertex $\mathbf{p}$ on $\partial D$.  The boundary trace of a function $f$ at $\mathbf{p}$, $f(\mathbf{p})\big|_{\partial D}^{\pm}$, is given by:
\begin{equation*}
\lim_{\stackrel{x\rightarrow\mathbf{p}}{x\,\in\,\Gamma^+(\mathbf{p})}}f(x)=f(\mathbf{p})\big|_{\partial D}^{+}\,\,,\qquad \lim_{\stackrel{x\rightarrow\mathbf{p}}{x\,\in\,\Gamma^-(\mathbf{p})}}f(x)=f(\mathbf{p})\big|_{\partial D}^{-}\,.
\end{equation*}

 We introduce the magnetic dipole operator $M^\alpha: L^{2}_{t,0}( \partial D)^{3}\rightarrow L^{2}_{t,0}( \partial D)^{3}$ given by:
\begin{equation}
    \label{magdiepole}
 M^\alpha(\boldsymbol\rho)=\mathbf{n}\times \left({\rm p.v.}\int_{\partial D} \nabla_{x}\times \left({G}^\alpha (x,y)\  \boldsymbol\rho(y)\right) ds_{y}\right), \qquad x\in \partial D\text{ and }\alpha\in Y^*.
 \end{equation}  We have the following jump conditions for $x\in\partial D$:
 \begin{equation}\label{jump16}
\displaystyle \mathbf{n}\times \nabla_{x}\times S^\alpha(\boldsymbol\rho)\big|_{\partial D}^{\pm}=\pm\frac{1}{2}\boldsymbol\rho+M^\alpha(\boldsymbol\rho).
\end{equation}
For scalar densities $\rho\in L^2(\partial D)$, we recall the jump conditions for $x\in\partial D$:
\begin{equation*}
\displaystyle \mathbf{n}\cdot \nabla_{x} S^\alpha(\rho)\big|_{\partial D}^{\pm}=\mp\frac{1}{2}\rho+(K^{-\alpha})^\ast(\rho),
\end{equation*}
where the Neumann–Poincar\'e operator $(K^{-\alpha})^\ast$ is defined by:
\begin{equation*}
\displaystyle (K^{-\alpha})^\ast(\rho)={\rm p.v.}\int_{\partial D}\frac{\partial G^\alpha(x,y)}{\partial\mathbf{n}(x)}\rho(y)\,ds_y.
\end{equation*}
Applying Lemma~4.2 of \cite{Mitrea1996} we obtain:
   \begin{equation*}
    {\rm div}\, S^\alpha(\boldsymbol\rho)(x)=\int_{\partial D}{G^\alpha}(x,y)(\text{Div}_S^\alpha\boldsymbol\rho(y))ds_y,
  \end{equation*}
and:  
 \begin{equation}\label{divergencezero}
    {\rm div}\, S^\alpha(\boldsymbol\rho)(x)=0,
  \end{equation}
since $\boldsymbol\rho$ in $L^{2}_{t,0}(\partial D)^3$. We may extend
Lemma~4.4 of \cite{Mitrea1996} to the periodic and $\alpha$-quasiperiodic cases, see Appendix~\ref{app:extension4.4}, to deliver a commutation relation between the surface divergence, the magnetic dipole and the Neumann–Poincar\'e operator given by:
\begin{equation}\label{dividentity}
{Div}_S M^\alpha(\boldsymbol{\rho})=(K^{-\alpha})^\ast(Div_S\boldsymbol{\rho}),
\end{equation}
where equality holds as elements of $W^{-1}(\partial D)$. It is noted, for future reference, that:
\begin{equation}\label{mapl2}
\boldsymbol{n}\cdot\nabla\times S^\alpha(\boldsymbol{\rho})\,:\,L^2_{t,0}(\partial D)^3\rightarrow L_0^2(\partial D),
\end{equation}
is an isomorphism, see \cite{Mitrea1996}.

The following two lemmas are crucial for the parametrization of $W_{3}^\alpha$ by single layer potentials.
\begin{lemma}\label{mapfroml2t0}
Let the single layer potential operator $S^\alpha$ be defined as in \eqref{S}.  For every $\boldsymbol\rho\in  L^{2}_{t,0}( \partial D)^{ 3}$, we have that   $S^\alpha(\boldsymbol\rho)\in W_{3}^\alpha$.
\end{lemma}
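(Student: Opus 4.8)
The plan is to show that for $\boldsymbol\rho\in L^2_{t,0}(\partial D)^3$ the single layer potential $S^\alpha(\boldsymbol\rho)$ lies in $W_3^\alpha$, i.e.\ that it belongs to $J_\#(\alpha,Y,\mathbb C^3)$ and is orthogonal, in the inner product \eqref{innerproduct}, to both $W_1^\alpha$ and $W_2^\alpha$. First I would verify membership in $J_\#(\alpha,Y,\mathbb C^3)$: by \eqref{laplace} and \eqref{jump1}, $S^\alpha(\boldsymbol\rho)$ is $\alpha$-quasiperiodic, continuous across $\partial D$, and harmonic off $\partial D$; together with the stated regularity $S^\alpha(\boldsymbol\rho)\in W_\#(\alpha,Y,\mathbb C^3)$ with boundary trace in $W^{1/2,2}(\partial D)^3$ this gives $H^1_{loc}$ regularity, and \eqref{divergencezero} gives ${\rm div}\,S^\alpha(\boldsymbol\rho)=0$. (For $\alpha=0$ one must additionally subtract the mean; since $G^0$ is defined with the zero mode omitted, $\int_Y S^0(\boldsymbol\rho)\,dx=0$ automatically, so the membership in \eqref{H1-0} holds.)

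Next, the orthogonality to $W_1^\alpha\oplus W_2^\alpha$. Take $\mathbf u\in J_\#(\alpha,Y,\mathbb C^3)$ and compute $\langle S^\alpha(\boldsymbol\rho),\mathbf u\rangle=\int_Y \nabla\times S^\alpha(\boldsymbol\rho)\cdot\nabla\times\overline{\mathbf u}\,dx$. The idea is to split the integral over $H$ and $D$, integrate by parts on each piece using the vector identity $\int_\Omega \nabla\times\mathbf F\cdot\nabla\times\overline{\mathbf u}\,dx=\int_\Omega(\nabla\times\nabla\times\mathbf F)\cdot\overline{\mathbf u}\,dx-\int_{\partial\Omega}(\mathbf n\times(\nabla\times\mathbf F))\cdot\overline{\mathbf u}\,ds$, and exploit that on $H\cup D$ one has $\nabla\times\nabla\times S^\alpha(\boldsymbol\rho)=-\Delta S^\alpha(\boldsymbol\rho)+\nabla\,{\rm div}\,S^\alpha(\boldsymbol\rho)=0$ by \eqref{laplace} and \eqref{divergencezero}. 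Only the boundary term on $\partial D$ survives (the outer boundary terms cancel by $\alpha$-quasiperiodicity), and using the jump relation \eqref{jump16} it reduces to
\begin{equation*}
\langle S^\alpha(\boldsymbol\rho),\mathbf u\rangle=-\int_{\partial D}\boldsymbol\rho\cdot\overline{\mathbf u}\big|_{\partial D}\,ds,
\end{equation*}
where the $M^\alpha(\boldsymbol\rho)$ contributions from the $\pm$ sides cancel and the $\pm\tfrac12\boldsymbol\rho$ terms add (with attention to the orientation of $\mathbf n$ as the outward normal of $D$). Now if $\mathbf u\in W_1^\alpha$, then $\nabla\times\mathbf u=0$ in $D$, so $\mathbf u=\nabla\varphi$ in $D$ for an $\alpha$-quasiperiodic scalar $\varphi$; since $\boldsymbol\rho$ is tangential ($\mathbf n\cdot\boldsymbol\rho=0$) and $Div_S\boldsymbol\rho=0$, integrating by parts on $\partial D$ with the surface identity \eqref{sid} gives $\int_{\partial D}\boldsymbol\rho\cdot\nabla\varphi\,ds=\int_{\partial D}\boldsymbol\rho\cdot\nabla_S\varphi\,ds=-\int_{\partial D}\varphi\,Div_S\boldsymbol\rho\,ds=0$, hence $\langle S^\alpha(\boldsymbol\rho),\mathbf u\rangle=0$. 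If instead $\mathbf u\in W_2^\alpha$, then $\nabla\times\mathbf u=0$ in $H$; here I would instead integrate by parts starting from the exterior region $H$, so that the surviving boundary term involves the $+$ trace, and the same surface integration-by-parts argument (using that $\mathbf u|_{\partial D}^+$ is the tangential gradient of a scalar on $\partial D$, modulo the harmonic part, together with $\mathbf n\cdot\boldsymbol\rho=0$ and $Div_S\boldsymbol\rho=0$) forces the pairing to vanish. Since $W_3^\alpha$ is by definition the orthogonal complement of $W_1^\alpha\oplus W_2^\alpha$ in $J_\#(\alpha,Y,\mathbb C^3)$, this proves $S^\alpha(\boldsymbol\rho)\in W_3^\alpha$.

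The main obstacle I anticipate is making the integration-by-parts rigorous at the regularity available: $S^\alpha(\boldsymbol\rho)$ is only $H^1_{loc}$ with trace in $W^{1/2,2}$, and $\nabla\times S^\alpha(\boldsymbol\rho)$ has a jump across $\partial D$, so the boundary pairings must be interpreted as $W^{-1/2,2}$–$W^{1/2,2}$ dualities and the curl–curl integration by parts justified by density of smooth functions and the mapping properties recorded in \eqref{jump16}, \eqref{dividentity}, \eqref{mapl2}. A secondary subtlety is bookkeeping the two one-sided limits and the sign of the normal so that the $M^\alpha$ terms genuinely cancel; the identity \eqref{dividentity} and the isomorphism \eqref{mapl2} are the tools that keep the surface computation under control. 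Once the boundary term is correctly identified as the $L^2(\partial D)$ pairing of $\boldsymbol\rho$ with the trace of $\mathbf u$, the orthogonality to $W_1^\alpha$ and $W_2^\alpha$ is a short consequence of $\boldsymbol\rho\in L^2_{t,0}(\partial D)^3$ via \eqref{sid}.
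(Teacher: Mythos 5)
Your proposal is correct, and its skeleton (curl--curl integration by parts off $\partial D$, the jump relation \eqref{jump16}, surface integration by parts, and $Div_S\boldsymbol\rho=0$) matches the paper's, but the execution differs in one genuine way. The paper treats $W_1^\alpha$ and $W_2^\alpha$ separately: for $\boldsymbol{w}_2\in W_2^\alpha$ it first discards the $H$-integral (where $\nabla\times\boldsymbol{w}_2=0$), integrates by parts only over $D$, and is therefore left with the \emph{one-sided} trace $-\tfrac12\boldsymbol\rho+M^\alpha(\boldsymbol\rho)$; the term $Div_S M^\alpha(\boldsymbol\rho)$ then survives the surface integration by parts and must be killed by the commutation identity \eqref{dividentity}, $Div_S M^\alpha=(K^{-\alpha})^\ast Div_S$ (whose extension to the periodic/quasiperiodic setting the paper establishes in Appendix~\ref{app:extension4.4}). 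You instead keep both subdomains, so the two one-sided traces combine into the full jump $[\mathbf n\times\nabla\times S^\alpha(\boldsymbol\rho)]^+_-=\boldsymbol\rho$: the $M^\alpha$ contributions cancel and \eqref{dividentity} is never needed, only $Div_S\boldsymbol\rho=0$. This buys a shorter argument and the clean intermediate identity $\langle S^\alpha(\boldsymbol\rho),\mathbf u\rangle=-\int_{\partial D}\boldsymbol\rho\cdot\overline{\mathbf u}\,ds$ valid for all $\mathbf u\in J_\#(\alpha,Y,\mathbb C^3)$ --- which is in fact the same computation the paper performs later, in \eqref{srhonorm}, inside the proof of Theorem~\ref{thm:extW3}; the paper's route, by contrast, isolates each subspace and exhibits the role of the Neumann--Poincar\'e commutation relation, which it has on hand anyway. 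Your added check that $S^\alpha(\boldsymbol\rho)$ actually lies in $J_\#(\alpha,Y,\mathbb C^3)$ (including the mean-zero condition for $\alpha=\mathbf 0$, automatic since $G^0$ omits the zero Fourier mode) is a point the paper leaves implicit, and your representation of the test field as a gradient on $D$ (resp.\ $H$) carries exactly the same topological caveat (simple connectedness) as the paper's own ``$\boldsymbol{w}_2=\nabla\phi$ in $H$'' step, so no new gap is introduced; the regularity/density issues you flag for the boundary pairings are likewise handled in the paper only at the level of ``a dense set of test fields,'' so your proof is at the same level of rigor.
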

\begin{proof}
First, recall that $[S^\alpha(\boldsymbol \rho)] |_{\partial D}^{\pm}=0$ from \eqref{jump1}, ${\rm div} S^\alpha(\boldsymbol \rho)=0$ in $Y$ from \eqref{divergencezero}, and from \eqref{laplace} it follows that: 
\begin{equation}\label{vectorharmonic}
\nabla\times\nabla\times S^\alpha(\boldsymbol \rho)=\nabla(\nabla\cdot S^\alpha(\boldsymbol \rho))-\Delta S^\alpha(\boldsymbol\rho)=-\Delta S^\alpha(\boldsymbol\rho)=0 \text{,\,\,\, for $x\in H\cup D$.}
\end{equation}
Choosing a smooth $\boldsymbol{w}_2$  in $W_2^\alpha$, we get:
\begin{equation}\label{w2perp}
\int_Y\,\nabla \times S^\alpha(\boldsymbol\rho)\cdot\nabla\times\overline{\boldsymbol{w}}_2\,dx=\int_D\,\nabla \times S^\alpha(\boldsymbol\rho) \cdot\nabla\times\overline{\boldsymbol{w}}_2\,dx.
\end{equation}
Since $\boldsymbol{w}_2 \in W_2^\alpha$, we have that $\nabla\times\boldsymbol{w}_2=0$ in $H$ and, since $H$ is connected, we have $\boldsymbol{w}_2=\nabla \phi$ in $H$, for some scalar potential $\phi$, with $\boldsymbol{w}_2\vert_{\partial D}^-=\boldsymbol{w}_2\vert_{\partial D}^+=\nabla \phi\vert_{\partial D}^+$.  
Integration by parts in \eqref{w2perp}, the application of \eqref{vectorharmonic}, and the fact that            $\boldsymbol{w}_2\vert_{\partial D}^-=\nabla \phi\vert_{\partial D}^+$ give:
\begin{align}
\int_D\nabla \times S^\alpha(\boldsymbol{\rho})\cdot\nabla\times\overline{\mathbf{w}}_2\,dx&=\int_D\nabla\times\nabla\times S^\alpha(\boldsymbol{\rho})\cdot\overline{\mathbf{w}}_2\,dx-
\int_{\partial D^-}\mathbf{n}\times\nabla\times S^\alpha(\boldsymbol{\rho})\cdot\overline{\mathbf{w}}_2\,d s_{x}\notag\\
&=-\int_{\partial D^-}\mathbf{n}\times\nabla\times S^\alpha(\boldsymbol{\rho})\cdot\nabla\phi\,d s_{x}
\label{w2parts}
\end{align}
and, from \eqref{jump16}, we see that:
\begin{align}
\label{BoundaryofD}
\int_{\partial D^-}\,\boldsymbol{n}\times\nabla \times S^\alpha(\boldsymbol\rho)\cdot\nabla\phi\,ds_{x}&=\int_{\partial D^-}\,\left(-\frac{1}2\boldsymbol{\rho}+M^\alpha(\boldsymbol{\rho})\right)\cdot\nabla\phi\,ds_{x}\notag\\
&=\int_{\partial D^-}\,\left(\frac{1}2Div_S\boldsymbol{\rho}-Div_S M^\alpha(\boldsymbol{\rho})\right)\phi\,ds_{x}.
\end{align}
Since $\boldsymbol{\rho}\in  L^{2}_{t,0}(\partial D)^{3}$, from \eqref{dividentity}
we obtain:
\begin{equation}\label{dividentity0}
{Div}_S M^\alpha(\boldsymbol{\rho})=(K^{-\alpha})^\ast(Div_S^\alpha(\boldsymbol{\rho})=0.
\end{equation}
It now follows immediately, from \eqref{w2perp}, \eqref{w2parts}, \eqref{BoundaryofD} and \eqref{dividentity0}, that:
\begin{equation}\label{w2perp0}
\int_Y\,\nabla \times S^\alpha(\boldsymbol\rho)\cdot\nabla\times\overline{\boldsymbol{w}}_2\,dx=0,
\end{equation}
for a dense set of test fields $\boldsymbol{w}_2$ in $W_2^\alpha$, and we conclude that $S^\alpha(\boldsymbol{\rho}) \perp W_2^\alpha$.
Identical arguments can be made for $\boldsymbol{w}_1 \in W_1^\alpha$, to find that:
\begin{equation*}
\int_Y\,\nabla \times S^\alpha(\boldsymbol\rho)\cdot\nabla\times\overline{\boldsymbol{w}}_1\,dx=0,
\end{equation*}
and the lemma follows.
\end{proof}

Define the Sobolev space:
  \begin{equation*}
  V^{^{-\frac{1}{2}}}_{t}(\partial D)^{3}:=\left\{(\mathbf{n}\times \nabla)f\ : \ f\in W^{{1}/{2},2}(\partial D) \right\},
 \end{equation*}
 with the norm $\|A\|_{V^{^{-\frac{1}{2}}}_{t}(\partial D)^{3}}$ given by:
  \begin{equation*}
  \|A\|_{V^{^{-\frac{1}{2}}}_{t}(\partial D)^{3}}=\text{inf}\left\{\| \sigma+ f\|_{W^{{1}/{2},2}(\partial D)}\ : \  \sigma \in \mathbb{C}, \ \ f\in W^{\frac{1}{2},2}(\partial D),\ \  (\mathbf{n}\times \nabla)f=A \right\}.
  \end{equation*}
 Moreover, from \cite{Mitrea1996}, we have:
   \begin{equation*}
  L^{2}_{t,0}( \partial D)^{3} = V^{0}_{t}(\partial D)^{3}=\left\{(\mathbf{n}\times \nabla)f\ : \ f\in W^{1,2}(\partial D) \right\},
 \end{equation*}
with:
\begin{align}
\mathbf{n}\times \nabla&: W^{1,2}(\partial D)\setminus \mathbb{C}\rightarrow L^{2}_{t,0}( \partial D)^{3},\label{isomorphL20tan}\\
\mathbf{n}\times \nabla&: W^{1/2,2}(\partial D)\setminus \mathbb{C}\rightarrow V^{^{-\frac{1}{2}}}_{t}(\partial D)^3,\label{isomorphv-1half}
\end{align}
isomorphisms, and:
  \begin{equation*}
 L^{2}_{t,0}( \partial D)^{3} \subset  V^{^{-\frac{1}{2}}}_{t}(\partial D)^{3}\subset W^{-{1}/{2},2}(\partial D)^{3}.
 \end{equation*}

We now present the  mapping property of the single layer potential operator necessary for   characterizing the spectrum of the sesquilinear operator $T^\alpha=S^\alpha M^\alpha(S^\alpha)^{-1}$.

\begin{theorem}\label{thm:extW3}
The single layer potential operator can be extended as a bounded linear map from $V^{^{-\frac{1}{2}}}_{t}(\partial D)^3$ to $W_3^\alpha$.
\end{theorem}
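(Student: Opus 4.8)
The plan is to prove the operator estimate $\|S^\alpha(\boldsymbol\rho)\|_{W_3^\alpha}\le C\,\|\boldsymbol\rho\|_{V^{^{-1/2}}_{t}(\partial D)^{3}}$ for $\boldsymbol\rho$ in the subspace $L^{2}_{t,0}(\partial D)^{3}$, and then to extend $S^\alpha$ by continuity. Density of $L^{2}_{t,0}(\partial D)^{3}$ in $V^{^{-1/2}}_{t}(\partial D)^{3}$ follows from the density of $W^{1,2}(\partial D)$ in $W^{1/2,2}(\partial D)$ together with the isomorphisms \eqref{isomorphL20tan} and \eqref{isomorphv-1half}: the map $\mathbf n\times\nabla$ carries the dense subspace $W^{1,2}(\partial D)\setminus\mathbb C$ of $W^{1/2,2}(\partial D)\setminus\mathbb C$ onto $L^{2}_{t,0}(\partial D)^{3}$, whose closure in $V^{^{-1/2}}_{t}(\partial D)^{3}$ is therefore everything. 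Since $W_3^\alpha$ is a closed subspace of the Hilbert space $J_\#(\alpha,Y,\mathbb C^3)$, hence complete, the estimate yields a unique bounded linear extension; and because each value of the extension is a limit in $W_3^\alpha$ of elements $S^\alpha(\boldsymbol\rho_j)\in W_3^\alpha$ (Lemma~\ref{mapfroml2t0}) and $W_3^\alpha$ is closed, the extension takes values in $W_3^\alpha$, as claimed.

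To obtain the estimate, fix $\boldsymbol\rho\in L^{2}_{t,0}(\partial D)^{3}$, put $\mathbf u=S^\alpha(\boldsymbol\rho)\in W_3^\alpha$, and write $\boldsymbol\rho=(\mathbf n\times\nabla)f$ with $f\in W^{1/2,2}(\partial D)$. The first step is the energy identity $\|\mathbf u\|^{2}=\int_{\partial D}\boldsymbol\rho\cdot\overline{\mathbf u}\,ds$. Splitting $\|\mathbf u\|^{2}=\int_H|\nabla\times\mathbf u|^{2}\,dx+\int_D|\nabla\times\mathbf u|^{2}\,dx$ and integrating by parts over $D$ and over $H=Y\setminus D$, the volume terms vanish because $\nabla\times\nabla\times S^\alpha(\boldsymbol\rho)=-\Delta S^\alpha(\boldsymbol\rho)=0$ in $H\cup D$ by \eqref{divergencezero}, \eqref{laplace} and \eqref{vectorharmonic}; the contributions along $\partial Y$ cancel by $\alpha$-quasiperiodicity of $\mathbf u$ and $\nabla\times\mathbf u$; and the two remaining surface integrals on $\partial D$ combine, using the continuity \eqref{jump1} of $\mathbf u$ across $\partial D$ and the jump relation \eqref{jump16}, into $\int_{\partial D}\big([\mathbf n\times\nabla\times\mathbf u]^{+}-[\mathbf n\times\nabla\times\mathbf u]^{-}\big)\cdot\overline{\mathbf u}\,ds=\int_{\partial D}\boldsymbol\rho\cdot\overline{\mathbf u}\,ds$. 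Next, since $\mathbf u|_{\partial D}\in W^{1/2,2}(\partial D)^{3}$, the surface integration-by-parts identity \eqref{sid} with $s=1/2$ (taking $\mathbf g=\overline{\mathbf u}|_{\partial D}$) rewrites this as $\|\mathbf u\|^{2}=\int_{\partial D}\overline{\mathbf u}\cdot(\mathbf n\times\nabla f)\,ds=-\int_{\partial D}f\,(\mathbf n\cdot\nabla\times\overline{\mathbf u})\,ds$. As $\nabla\times\mathbf u$ is divergence free on $D$, the trace theorem for $H(\mathrm{div};D)$ gives $\|\mathbf n\cdot\nabla\times\mathbf u\|_{W^{-1/2,2}(\partial D)}\le C\|\nabla\times\mathbf u\|_{L^{2}(D)}\le C\|\mathbf u\|$, so the $W^{1/2,2}$–$W^{-1/2,2}$ duality pairing yields $\|\mathbf u\|^{2}\le C\|f\|_{W^{1/2,2}(\partial D)}\|\mathbf u\|$, i.e. $\|\mathbf u\|\le C\|f\|_{W^{1/2,2}(\partial D)}$. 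Finally, $S^\alpha(\boldsymbol\rho)$ depends only on $\boldsymbol\rho=(\mathbf n\times\nabla)f$, not on the representative $f$, so replacing $f$ by $f+\sigma$ and minimizing over $\sigma\in\mathbb C$ gives $\|S^\alpha(\boldsymbol\rho)\|\le C\|\boldsymbol\rho\|_{V^{^{-1/2}}_{t}(\partial D)^{3}}$.

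The step most in need of care is the energy identity: justifying the integrations by parts over $D$ and $H$ when $S^\alpha(\boldsymbol\rho)$ has only $W^{1/2,2}$ boundary traces (handled by a Green's-formula/limiting argument, exploiting that $S^\alpha(\boldsymbol\rho)$ is harmonic and $C^\infty$ away from $\partial D$, e.g. integrating over $H$ and $D$ slightly retracted from $\partial D$ and passing to the limit), confirming the cancellation of the $\partial Y$ terms via $\alpha$-quasiperiodicity, and bookkeeping the outward normals and $\pm$ traces so that \eqref{jump16} delivers precisely $\boldsymbol\rho$ as the jump of $\mathbf n\times\nabla\times\mathbf u$ across $\partial D$. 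The remaining functional-analytic steps are routine.
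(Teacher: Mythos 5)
Your proposal is correct and follows essentially the same route as the paper: density of $L^{2}_{t,0}(\partial D)^{3}$ in $V^{-\frac{1}{2}}_{t}(\partial D)^{3}$ (the paper's Lemma~\ref{dense1}), the energy identity obtained from the continuity \eqref{jump1} and jump relation \eqref{jump16}, the surface identity \eqref{sid}, a duality bound, and extension by the BLT theorem into the closed subspace $W_3^\alpha$. The only divergence is minor: you close the duality estimate with the $H(\mathrm{div};D)$ normal-trace bound $\|\mathbf{n}\cdot\nabla\times S^\alpha(\boldsymbol{\rho})\|_{W^{-1/2,2}(\partial D)}\le C\|\nabla\times S^\alpha(\boldsymbol{\rho})\|_{L^{2}(D)}$ and then divide by $\|S^\alpha(\boldsymbol{\rho})\|$, whereas the paper invokes the boundedness of $\mathbf{n}\cdot\nabla\times S^\alpha\colon V^{-\frac{1}{2}}_{t}(\partial D)^{3}\to W^{-\frac{1}{2},2}_{0}(\partial D)$ from \cite{Mitrea1996}, and your sign in the energy identity differs from the paper's $[\,\cdot\,]^{-}_{+}$ convention only through the normal-orientation bookkeeping you already flag, which does not affect the final estimate.
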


\begin{proof}
To prove this theorem, we first show the following lemma.
\begin{lemma}\label{dense1}
The space of tangential vector fields  $L^{2}_{t,0}( \partial D)^{3}$ is a dense subspace of $V^{^{-\frac{1}{2}}}_{t}(\partial D)^{3}$.
 \end{lemma}
\begin{proof}  
Note that, from \eqref{isomorphv-1half}, for ${\boldsymbol{g}}\in V^{^{-\frac{1}{2}}}_{t}(\partial D)^3$ we can write ${\boldsymbol{g}}=\mathbf{n}\times \nabla f$, for some $f\in W^{1/2,2}(\partial D)\setminus \mathbb{C}$.  From the density of $W^{1,2}(\partial D)$ in $W^{1/2,2}(\partial D)$, there exists a sequence $\left\{f_{j}\right\}_{j=1}^{\infty}\in W^{1,2}(\partial D)^2\setminus \mathbb{C}\ \subset\  W^{1/2,2}(\partial D)\setminus \mathbb{C}$ converging to $f$ in $W^{1/2,2}(\partial D)\setminus \mathbb{C}$.  From \eqref{isomorphL20tan}, there are associated functions ${\boldsymbol{g}}_j$ in $L^2_{t,0}(\partial D)^3$ such that ${\boldsymbol{g}}_j={\boldsymbol{n}}\times\nabla f_j$. By the continuity of  the map $\mathbf{n}\times \nabla: W^{1/2,2}(\partial D)\rightarrow V^{^{-\frac{1}{2}}}_{t}(\partial D)^{3}$, we have the existence of a positive constant $C$ such that:
\begin{equation*}
\|{\boldsymbol{g}}-{\boldsymbol{g}}_{j}\|_{V^{-\frac{1}{2}}_{t}(\partial D)}=\|\mathbf{n}\times \nabla f-\mathbf{n}\times \nabla f_{j}\|_{V^{-\frac{1}{2}}_{t}(\partial D)}\leq C\|f-f_{j}\|_{W^{1/2,2}(\partial D)\setminus \mathbb{C}}\,,
\end{equation*}
and it follows that  $L^{2}_{t,0}( \partial D)^{3}$ is dense in $V^{^{-\frac{1}{2}}}_{t}(\partial D)^{3}.$ 
\end{proof}

With Lemma~\ref{dense1} in hand, we prove Theorem~\ref{thm:extW3}.  Given ${\boldsymbol{\rho}} \in  L^{2}_{t,0}( \partial D)^{3}$ and   $S^\alpha({\boldsymbol{\rho}})\in W_{3}^\alpha$, we have:
\begin{equation}\label{srhonorm}
\begin{aligned}
\|S^\alpha({\boldsymbol{\rho}})\|^2
&=\int_{H}\nabla\times S^\alpha({\boldsymbol{\rho}})\cdot \nabla\times \overline{S^\alpha({\boldsymbol{\rho}})}\,d\mathbf{x}+\int_{D}\nabla\times S^\alpha({\boldsymbol{\rho}})\cdot \nabla\times \overline{S^\alpha({\boldsymbol{\rho}})}\,d\mathbf{x}\\
&=\int_{\partial D}[\mathbf{n}\times\nabla \times S^\alpha({\boldsymbol{\rho}})]^{-}_{+}\cdot\ \overline{S^\alpha({\boldsymbol{\rho}})}\,ds_\mathbf{x} \\
&=-\int_{\partial D} {\boldsymbol{\rho}}\cdot \ \overline{S^\alpha({\boldsymbol{\rho}})}\,ds_\mathbf{x}.
\end{aligned}
\end{equation}

Writing ${\boldsymbol{\rho}}=\mathbf{n}\times \nabla f$, for $f\in W^{1,2}(\partial D)\setminus \mathbb{C}$, and using \eqref{sid} in \eqref{srhonorm}, we get:
\begin{equation*}
-\int_{\partial D} {\boldsymbol{\rho}} \ \cdot \overline{ S^\alpha({\boldsymbol{\rho}})}\,ds_\mathbf{x} =-\int_{\partial D}  \mathbf{n}\times \nabla f \cdot \ \overline{S^\alpha({\boldsymbol{\rho}})}\,ds_\mathbf{x}=\int_{\partial D} f \  \mathbf{n}\cdot \nabla \times \overline{S^\alpha({\boldsymbol{\rho}})}\,ds_\mathbf{x}.
\end{equation*}

From \eqref{mapl2}, $\mathbf{n}\cdot\nabla\times S^\alpha({\boldsymbol{\rho}})\in  L^2_{0}(\partial D)$, so it also belongs to $W^{-\frac{1}{2},2}_0(\partial D)=(W^{\frac{1}{2},2}(\partial D)\setminus \mathbb{C})'$, where the notation `` $'$ " is used to indicate the dual space.
From \eqref{srhonorm} and the last equation above, for $f \in W^{{1,2}}(\partial D)\setminus \mathbb{C}$, we have:
\begin{equation*}
\|S^\alpha({\boldsymbol{\rho}})\|^{2} 
=\int_{\partial D} f \  \mathbf{n}\cdot \nabla \times \overline{S^\alpha({\boldsymbol{\rho}})}\,ds_\mathbf{x}\leq \inf_{\sigma\in\mathbb{C}}\|f+\sigma\|_{W^{\frac{1}{2},2}(\partial D)}\| \mathbf{n}\cdot\nabla\times S^\alpha({\boldsymbol{\rho}})\|_{W^{-\frac{1}{2},2}_0(\partial D)}\,,
\end{equation*}
where $\inf_{\sigma\in \mathbb{C}}\|f+\sigma\|_{W^{\frac{1}{2},2}(\partial D)}$ is the norm for  $W^{{1,2}}(\partial D)\setminus \mathbb{C}$. 
Since the map $ \mathbf{n}\cdot \nabla\times S^\alpha: V^{^{-\frac{1}{2}}}_{t}(\partial D)^{3}\rightarrow W^{-\frac{1}{2},2}_0(\partial D)$ is bounded (see \cite{Mitrea1996}), we have that $\|\mathbf{n}\cdot \nabla\times S^\alpha(\boldsymbol{\rho})\|_{W^{-\frac{1}{2},2}(\partial D)}\leq C \|\boldsymbol{\rho}\|_{V^{^{-\frac{1}{2}}}_{t}(\partial D)^{3}}$ and also $\inf_{\sigma\in\mathbb{C}}\|f+\sigma\|_{W^{\frac{1}{2},2}(\partial D)}=\|\boldsymbol{\rho}\|_{V^{-\frac{1}{2}}(\partial D)^3}$, so it follows that: 
\begin{equation*}
\|S^\alpha({\boldsymbol{\rho}})\|^{2} 
\leq C\inf_{\sigma\in\mathbb{C}}\|f+\sigma\|_{W^{\frac{1}{2},2}(\partial D)}\|{\boldsymbol{\rho}}\|_{V^{-\frac{1}{2}}_{t}(\partial D)^{3}}
\leq C\|{\boldsymbol{\rho}}\|_{V^{-\frac{1}{2}}_{t}(\partial D)^{3}}^{2}
\end{equation*}
and, therefore:
\begin{equation}\label{sbdd}
\|S^\alpha({\boldsymbol{\rho}})\|\leq C \|{\boldsymbol{\rho}}\|_{V^{-\frac{1}{2}}_{t}(\partial D)^{3}}.
\end{equation}

The inequality \eqref{sbdd} implies that $S^\alpha(\boldsymbol{\rho})$ is a bounded operator mapping into $W_3^\alpha$  for the densely defined subspace $L^2_{t,0}(\partial D)^3$ of $ V^{^{-\frac{1}{2}}}_{t}(\partial D)^3$.
Then, we extend the densely defined map $S^\alpha$ to $V_t^{\frac{1}{2}}(\partial D)^3$, using the BLT theorem, to deduce that its extension $S^\alpha: V^{^{-\frac{1}{2}}}_{t}(\partial D)^{3}\rightarrow W_{3}^\alpha$ is bounded.
\end{proof}

\begin{theorem}\label{thm:iso}
The single layer potential operator  $S^\alpha: V^{^{-\frac{1}{2}}}_{t}(\partial D)^{3}\rightarrow W_{3}^\alpha$ is a bijection. 
\end{theorem}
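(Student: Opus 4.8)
The plan is to prove injectivity and surjectivity separately, building on the boundedness of $S^\alpha\colon V^{-1/2}_t(\partial D)^3\to W_3^\alpha$ from Theorem~\ref{thm:extW3}, the jump relations \eqref{jump1} and \eqref{jump16}, the triviality of the null space of $\nabla\times$ on $J_\#(\alpha,Y,\mathbb{C}^3)$, and a Liouville-type argument for $\alpha$-quasiperiodic harmonic fields.

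For injectivity, suppose $S^\alpha(\boldsymbol\rho)=0$ in $W_3^\alpha$ with $\boldsymbol\rho\in V^{-1/2}_t(\partial D)^3$. Since $\|\mathbf{u}\|^2=\int_Y|\nabla\times\mathbf{u}|^2\,dx$ on $W_3^\alpha$, this forces $\nabla\times S^\alpha(\boldsymbol\rho)=0$ in $H$ and in $D$, so both one-sided traces $\mathbf{n}\times\nabla\times S^\alpha(\boldsymbol\rho)\big|_{\partial D}^{\pm}$ vanish. The jump relation \eqref{jump16}, extended from $L^2_{t,0}(\partial D)^3$ to $V^{-1/2}_t(\partial D)^3$ via the density Lemma~\ref{dense1} and the continuity of $S^\alpha$ and $M^\alpha$, then yields $\boldsymbol\rho=\mathbf{n}\times\nabla\times S^\alpha(\boldsymbol\rho)\big|_{\partial D}^{+}-\mathbf{n}\times\nabla\times S^\alpha(\boldsymbol\rho)\big|_{\partial D}^{-}=0$; equivalently, one may invoke the isomorphism $\mathbf{n}\cdot\nabla\times S^\alpha\colon V^{-1/2}_t(\partial D)^3\to W^{-1/2,2}_0(\partial D)$ extending \eqref{mapl2} (see \cite{Mitrea1996}) and read off injectivity directly.

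For surjectivity, fix $\mathbf{u}\in W_3^\alpha$. The crucial first step is to upgrade the abstract orthogonality conditions $\mathbf{u}\perp W_1^\alpha$ and $\mathbf{u}\perp W_2^\alpha$ to the pointwise statement that $\mathbf{u}$ is divergence free and componentwise harmonic, $\Delta\mathbf{u}=0$, in each of $D$ and $H$. Testing against divergence-free fields compactly supported in $D$ (which lie in $W_2^\alpha$) and using $\mathrm{div}\,\mathbf{u}=0$ gives, by de Rham's lemma, $-\Delta\mathbf{u}=\nabla p$ in $D$ with $\Delta p=0$; testing against the remaining non-compactly-supported elements of $W_2^\alpha$, integrating by parts over $D$, and using the surface identities \eqref{sid} and $Div_S(\mathbf{n}\times\mathbf{v})=-\mathbf{n}\cdot\nabla\times\mathbf{v}$, one finds that $p$ has matching Dirichlet and Neumann data on $\partial D$ with its harmonic extension into $H$, hence extends to a globally $\alpha$-quasiperiodic harmonic function, hence is constant, so $\Delta\mathbf{u}=0$ in $D$; the symmetric argument with $W_1^\alpha$ gives $\Delta\mathbf{u}=0$ in $H$. \emph{This is the step I expect to be the main obstacle:} passing from the variational orthogonality to the pointwise PDE requires carefully unwinding the boundary terms produced by the non-compactly-supported test fields, and it is precisely here that the quasiperiodic Liouville property and the normalization built into $J_\#(0,Y,\mathbb{C}^3)$ are used.

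Once this characterization is in hand, set $\boldsymbol\rho:=\mathbf{n}\times\nabla\times\mathbf{u}\big|_{\partial D}^{+}-\mathbf{n}\times\nabla\times\mathbf{u}\big|_{\partial D}^{-}$. It is tangential, and since $\nabla\times\nabla\times\mathbf{u}=-\Delta\mathbf{u}=0$ on each side we obtain $Div_S\boldsymbol\rho=-[\mathbf{n}\cdot\nabla\times\nabla\times\mathbf{u}]_{\partial D}=0$, so $\boldsymbol\rho$ is a surface-divergence-free tangential field of $W^{-1/2,2}$ regularity, hence an element of $V^{-1/2}_t(\partial D)^3$. Put $\mathbf{v}:=\mathbf{u}-S^\alpha(\boldsymbol\rho)$. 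Then $\mathrm{div}\,\mathbf{v}=0$ in $Y$, $\Delta\mathbf{v}=0$ in $D\cup H$, $\mathbf{v}$ has no jump across $\partial D$ by \eqref{jump1}, and by \eqref{jump16} the jump of $\mathbf{n}\times\nabla\times S^\alpha(\boldsymbol\rho)$ is exactly $\boldsymbol\rho$, so $[\mathbf{n}\times\nabla\times\mathbf{v}]_{\partial D}=0$. Continuity of $\mathbf{v}$, of $\mathrm{div}\,\mathbf{v}$, and of $\mathbf{n}\times\nabla\times\mathbf{v}$ across $\partial D$ imply continuity of the full normal derivative $\partial_{\mathbf n}\mathbf{v}$ (write $\partial_{\mathbf n}\mathbf{v}=\nabla(\mathbf{n}\cdot\mathbf{v})-\mathbf{n}\times(\nabla\times\mathbf{v})$, with $\mathbf{n}$ extended constant along normals, and use $\mathrm{div}\,\mathbf{v}=0$ to control the normal component of $\nabla(\mathbf{n}\cdot\mathbf{v})$), hence $\mathbf{v}$ is componentwise harmonic across $\partial D$, thus on all of $\mathbb{R}^3$, and $\alpha$-quasiperiodic. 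A bounded harmonic function is constant, quasiperiodicity forces this constant to vanish for $\alpha\neq\mathbf{0}$, and the zero-mean constraint in $J_\#(0,Y,\mathbb{C}^3)$ forces it to vanish for $\alpha=\mathbf{0}$; therefore $\mathbf{v}=0$ and $\mathbf{u}=S^\alpha(\boldsymbol\rho)$, proving surjectivity.
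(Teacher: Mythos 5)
Your proof has the same skeleton as the paper's (identify $\boldsymbol{\rho}$ with the jump of the tangential trace of $\nabla\times\mathbf{u}$ across $\partial D$; show $\mathbf{u}-S^\alpha(\boldsymbol{\rho})$ vanishes), and two of its three pieces are sound: the injectivity argument is the qualitative version of the paper's stability estimate $\|\boldsymbol{\rho}\|_{W^{-\frac{1}{2},2}(\partial D)^{3}}\leq C\|S^\alpha(\boldsymbol{\rho})\|$, and your endgame for surjectivity (matching Cauchy data of $\mathbf{v}=\mathbf{u}-S^\alpha(\boldsymbol{\rho})$ across $\partial D$, so $\mathbf{v}$ is globally harmonic, quasiperiodic, hence constant, hence zero) is a legitimate alternative to the paper's energy identity $\Vert\mathbf{v}-\mathbf{u}\Vert=0$, which is obtained by integrating by parts in $D$ and $H$ and cancelling the $\partial D$ and $\partial Y$ boundary terms via \eqref{identies}--\eqref{boundary}.

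The genuine gap is exactly the step you flagged: the claim that every $\mathbf{u}\in W_3^\alpha$ satisfies $\nabla\times\nabla\times\mathbf{u}=0$ (equivalently $\Delta\mathbf{u}=0$) in $D$ and in $H$. Your route --- de Rham to get $\nabla\times\nabla\times\mathbf{u}=\nabla p$ in $D$ with $p$ harmonic, then an argument that $p$ "has matching Dirichlet and Neumann data with its harmonic extension into $H$'' --- is not carried out, and as sketched it does not close: testing $\langle\mathbf{u},\mathbf{w}\rangle=0$ against non-compactly-supported $\mathbf{w}\in W_2^\alpha$ and integrating by parts over $D$ only yields a relation coupling $\int_{\partial D}p\,\overline{\mathbf{w}}\cdot\mathbf{n}\,ds$ to $\int_{\partial D}(\mathbf{n}\times\nabla\times\mathbf{u})\cdot\overline{\mathbf{w}}\,ds$, in which the unknown tangential trace of $\nabla\times\mathbf{u}$ appears; no Dirichlet/Neumann matching of $p$ (which is only defined in $D$) with anything in $H$ falls out, so "hence $p$ is constant'' is unsupported, and everything downstream ($\boldsymbol{\rho}$, $Div_S\boldsymbol{\rho}=0$, the transmission argument) presupposes it. The step can be closed more directly, avoiding $\nabla p$ altogether: for $\boldsymbol{\psi}\in C_c^\infty(D)^3$, take its quasiperiodic Helmholtz decomposition $\boldsymbol{\psi}=\nabla\phi+\nabla\times\mathbf{a}$ (Appendix~\ref{app:helm}) and set $\mathbf{w}=\nabla\times\mathbf{a}$; then $\mathbf{w}$ is divergence free, smooth, quasiperiodic (mean zero when $\alpha=\mathbf{0}$), and $\nabla\times\mathbf{w}=\nabla\times\boldsymbol{\psi}$ vanishes in $H$, so $\mathbf{w}\in W_2^\alpha$ and orthogonality gives $\int_D\nabla\times\mathbf{u}\cdot\overline{\nabla\times\boldsymbol{\psi}}\,dx=\langle\mathbf{u},\mathbf{w}\rangle=0$, i.e.\ $\nabla\times\nabla\times\mathbf{u}=0$ in $D$; the symmetric argument with $W_1^\alpha$ handles $H$. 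Two smaller remarks: membership $\boldsymbol{\rho}\in V_t^{-\frac{1}{2}}(\partial D)^3$ is immediate once you write $\nabla\times\mathbf{u}=\nabla q_1$ in $H$ and $\nabla q_2$ in $D$, since then $\boldsymbol{\rho}=\mathbf{n}\times\nabla(q_1-q_2)\vert_{\partial D}$ is of the form required by \eqref{isomorphv-1half} (this is the paper's route), whereas your appeal to a surface Hodge decomposition of $W^{-\frac{1}{2},2}$ surface-divergence-free fields is an extra fact the paper never establishes; and in the $\alpha=\mathbf{0}$ case you should note that $S^0(\boldsymbol{\rho})\in W_3^0\subset J_\#(0,Y,\mathbb{C}^3)$ already has zero mean, so the zero-mean constraint indeed kills the constant.
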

\begin{proof}
We first show  that $S^\alpha$ is one-to-one. For a given   $\boldsymbol{\rho}\in V^{^{-\frac{1}{2}}}_{t}(\partial D)^{3}$, we have  $ \mathbf{u}=S^\alpha(\boldsymbol{\rho})\in W_{3}^\alpha$.  Furthermore:
\begin{equation*}
\begin{split}
\boldsymbol{\rho}
& =  \mathbf{n}\times \nabla\times  \mathbf{u}\big|_{\partial D^{+}}-  \mathbf{n}\times \nabla\times  \mathbf{u}\big|_{\partial D^{-}}
+ \mathbf{n}\times \nabla\times  \mathbf{u}\big|_{\partial Y}-  \mathbf{n}\times \nabla\times  \mathbf{u}\big|_{\partial Y}\\
&= \mathbf{n}\times \nabla\times  \mathbf{u}\big|_{\partial H}- \mathbf{n}\times \nabla\times  \mathbf{u}\big|_{\partial D^{-}}- \mathbf{n}\times \nabla\times  \mathbf{u}\big|_{\partial Y}.
\end{split}
 \end{equation*}
Given a bounded Lipschitz domain $\Omega\in \mathbb{R}^3$, if $ \mathbf{f} \in L^{2}(\Omega)^3$ and $\nabla\times  \mathbf{f} \in L^{2}(\Omega)^3$, then $ \mathbf{n}\times  \mathbf{f}\in W^{-\frac{1}{2},2}(\partial \Omega)^3$.  As a consequence, there is a $C>0$, depending only on $\partial \Omega$, such that:
 \begin{equation*}
 \| \mathbf{n}\times  \mathbf{f}\|_{W^{-\frac{1}{2},2}(\partial \Omega)^3}\leq C(\| \mathbf{f}\|_{L^{2}(\Omega)^3}+\|\nabla\times  \mathbf{f}\|_{L^{2}(\Omega)^3}).
 \end{equation*}
 Set $\mathbf{f}=\nabla\times \mathbf{u}$ and, since $\nabla\times \nabla\times  \mathbf{u}=0 $ in $H\cup D$, $\boldsymbol{\rho} \in V^{^{-\frac{1}{2}}}_{t}(\partial D)^{3}\subset W^{-\frac{1}{2},2}(\partial D)^3$, one has:
\begin{equation*}
 \begin{split}
 & \|\boldsymbol{\rho}\|_{W^{-\frac{1}{2},2}(\partial D)^{3}} \\
 & = \| \mathbf{n}\times \nabla\times  \mathbf{u}\big|_{\partial D^{+}}-  \mathbf{n}\times \nabla\times \mathbf{ u}\big|_{\partial D^{-}}\|_{W^{-\frac{1}{2},2}(\partial D)^{3}}\\
 &\leq \| \mathbf{n}\times \nabla\times  \mathbf{u}\|_{W^{-\frac{1}{2},2}(\partial H)^{3}}+ \| \mathbf{n}\times \nabla\times  \mathbf{u}\|_{W^{-\frac{1}{2},2}(\partial D)^{3}}+\| \mathbf{n}\times \nabla\times  \mathbf{u}\|_{W^{-\frac{1}{2},2}(\partial (Y))^{3}}\\
 &\leq C(\| \nabla\times  \mathbf{u}\|_{L^{2}( H)^{3}}+\|\nabla\times  \mathbf{u}\|_{L^{2}( D)^{3}}+\|\nabla\times  \mathbf{u}\|_{L^{2}( Y)^{3}})\\
& \leq C\|  \mathbf{u}\|=C\| S^\alpha(\boldsymbol{\rho})\|.
\end{split}
 \end{equation*}
 
 Now, for  $\boldsymbol{\rho}_{1}, \boldsymbol{\rho}_{2} \in V^{^{-\frac{1}{2}}}_{t}(\partial D)^{3}\subset W^{-\frac{1}{2},2}(\partial D)^3$, we obtain:  
 \begin{equation*}
 0\leq  \|\boldsymbol{\rho}_{1}-\boldsymbol{\rho}_{2}\|_{W^{-\frac{1}{2},2}(\partial D)^{3}}\leq C_{2}\| S^\alpha(\boldsymbol{\rho}_{1})-S^\alpha(\boldsymbol{\rho}_{2})\|,
 \end{equation*}
to conclude that $S^\alpha: V^{^{-\frac{1}{2}}}_{t}(\partial D)^{3}\rightarrow W_{3}^\alpha$ is one-to-one. 

To show the surjectivity of $S^\alpha$, assume that $\mathbf{u}\in W_{3}^\alpha$ is given.  From the definition of $W_3^\alpha$ and integration by parts, we have: 
\begin{equation*}
\nabla\cdot  \mathbf{u} =0, \quad \quad \nabla\times \nabla\times  \mathbf{u}=0, \text{ on $H\cup D$}.
\end{equation*}
Writing ${\mathbf{w}}=\nabla\times\mathbf{u}$, we see that $\nabla\times\mathbf{w}=0$ in $H\cup D$ so $\mathbf{w}=\nabla q_1$, for $q_1\in W^{1,2}(H)$, and $\mathbf{w}=\nabla q_2$, for $q_2\in W^{1,2}(D)$.  Let $\Gamma^-(x)$ be a truncated circular cone in the interior of $D$ with vertex $x$ and let $\Gamma^+(x)$ be a truncated circular cone in the interior of $H$ with vertex $x$.  Now consider these cones with common vertex $\mathbf{p}$ on $\partial D$.  Taking the cross product of $\mathbf{w}=\nabla\times \mathbf{u}$ with the normal to the surface $\partial D$ given by $\mathbf{n}(\mathbf{p})$, we get:
\begin{equation*}
\lim_{\stackrel{x\rightarrow\mathbf{p}}{x\,\in\,\Gamma^+(\mathbf{p})}}\mathbf{n}(\mathbf{p})\times\nabla\times  \mathbf{u}(x)=\mathbf{n}(\mathbf{p})\times   \nabla q_1(\mathbf{p}),\qquad \lim_{\stackrel{x\rightarrow\mathbf{p}}{x\,\in\,\Gamma^-(\mathbf{p})}}\mathbf{n}(\mathbf{p})\times \nabla\times  \mathbf{u}(x)=\mathbf{n}(\mathbf{p})\times   \nabla q_2(\mathbf{p}).
\end{equation*}
From \eqref{isomorphv-1half}, we have that $\mathbf{n}\times\nabla:W^{\frac{1}{2},2}(\partial D)/\mathbb{C}\rightarrow V^{-\frac{1}{2}}_t(\partial D)^3$ is an isomorphism, and we choose:
\begin{equation*}
{\boldsymbol{\rho}}_{ \mathbf{u}}= \mathbf{n}\times   \nabla q_1\big|_{\partial D^{+}}- \mathbf{n}\times   \nabla q_2\big|_{\partial D^{-}} \,\,\, \in V^{-\frac{1}{2}}(\partial D)^3.
\end{equation*}
Setting $ \mathbf{v}=S^\alpha({\boldsymbol{\rho}}_{ \mathbf{u}})$ gives:
\begin{equation}\label{identies}
\begin{aligned}
&\nabla\times \nabla\times  \mathbf{v}=0  \ \ \text{in} \ \  D\cup H,  \qquad \nabla\cdot  \mathbf{v}=0\ \ \text{ in} \ \  Y, \qquad[ \mathbf{n}\times\nabla\times  \mathbf{v}]^{+}_{-}={\boldsymbol{\rho}}_ \mathbf{u},
\end{aligned}
\end{equation}
and:
\begin{equation}
\label{boundary}
\begin{aligned}
\int_{\partial Y}\,\mathbf{n}\times \nabla\times  (\mathbf{v}-\mathbf{u})\cdot (\overline{\mathbf{v}}-\overline{\mathbf{u}})\,ds=0,\qquad \hbox{for $\mathbf{v},\mathbf{u}\in W_3^\alpha$}.
\end{aligned}
\end{equation}
Using integration by parts and applying \eqref{identies} and \eqref{boundary}, we discover:
\begin{equation*}
\Vert\mathbf{v}- \mathbf{u}\Vert=0.
\end{equation*}
For $\alpha\not=0$, this implies $\mathbf{v}=\mathbf{u}$ and, for $\alpha=\mathbf{0}$, we have $\mathbf{u}-\mathbf{v}=\mathbf{c}$, where $\mathbf{c}$ is a constant vector. But, for $\alpha=\mathbf{0}$, we have
 $\displaystyle 0= \int_{Y}\mathbf{w}\ dx=\int_{Y} \mathbf{u}\ dx$,  to conclude $\mathbf{c}=0$ and $\mathbf{v}=\mathbf{u}$.  This  shows that $S^\alpha$ is surjective.
\end{proof}
From Theorem~\ref{thm:iso}, we see that the inverse map $(S^\alpha)^{-1}\,:W_3^\alpha\rightarrow {V^{\frac{1}{2}}_t}(\partial D)^3$ exists. Finally, we apply the open mapping theorem to derive the following theorem.
\begin{theorem}
The inverse $(S^\alpha)^{-1}\,:W_3^\alpha\rightarrow V_t^{\frac{1}{2}}(\partial D)^3$ is  bounded.
\end{theorem}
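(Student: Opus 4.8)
The plan is to obtain this purely from the open mapping theorem, so the substance of the argument is checking that its hypotheses hold; no further computation is required. First I would note that both spaces involved are Banach spaces. On the one hand, $W_3^\alpha$ is by construction the orthogonal complement, inside the Hilbert space $\bigl(J_\#(\alpha,Y,\mathbb{C}^3),\langle\cdot,\cdot\rangle\bigr)$ of the Hilbert-space theorems at the start of Section~\ref{layers}, of the subspace $W_1^\alpha\oplus W_2^\alpha$; being an orthogonal complement it is closed, hence a Hilbert space in its own right. On the other hand, by the isomorphism \eqref{isomorphv-1half}, the map $\mathbf{n}\times\nabla$ identifies $V^{-1/2}_t(\partial D)^3$, equipped with the infimum norm defined just above \eqref{isomorphv-1half}, with the quotient $W^{1/2,2}(\partial D)\setminus\mathbb{C}$; since $\mathbb{C}$ is a finite-dimensional (hence closed) subspace of the Hilbert space $W^{1/2,2}(\partial D)$, this quotient is complete, and therefore $V^{-1/2}_t(\partial D)^3$ is a Banach space (in fact isometrically isomorphic to $W^{1/2,2}(\partial D)\setminus\mathbb{C}$).

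With both spaces complete, I would invoke the two preceding results: Theorem~\ref{thm:extW3} shows that $S^\alpha\colon V^{-1/2}_t(\partial D)^3\to W_3^\alpha$ is a bounded linear operator, and Theorem~\ref{thm:iso} shows it is a bijection. A continuous linear bijection between Banach spaces has continuous inverse --- this is the bounded inverse theorem, an immediate consequence of the open mapping theorem --- so $(S^\alpha)^{-1}\colon W_3^\alpha\to V^{-1/2}_t(\partial D)^3$ is bounded, which is the claim.

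Finally, I would add a word on why the abstract route is the natural one here. The injectivity half of Theorem~\ref{thm:iso} already yields an estimate of the form $\|\boldsymbol\rho\|_{W^{-1/2,2}(\partial D)^3}\le C\,\|S^\alpha(\boldsymbol\rho)\|$, but this controls only the weaker $W^{-1/2,2}$-norm, whereas the target $V^{-1/2}_t(\partial D)^3$ carries the finer norm under the continuous (and in general strict) inclusion $V^{-1/2}_t(\partial D)^3\subset W^{-1/2,2}(\partial D)^3$; tracing through the construction of $\boldsymbol\rho_{\mathbf{u}}$ in the surjectivity proof to obtain a bound directly in the $V^{-1/2}_t$-norm would require additional trace estimates. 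The completeness of the two spaces, recorded in the first step, is precisely what lets the open mapping theorem bypass this, and verifying that completeness is the only point that needs any care.
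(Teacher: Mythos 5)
Your proposal is correct and follows essentially the same route as the paper: the paper deduces existence of the inverse from Theorem~\ref{thm:iso} and then cites the open mapping theorem applied to the bounded bijection $S^\alpha$ from Theorem~\ref{thm:extW3}. Your additional verification that $W_3^\alpha$ (a closed orthogonal complement in a Hilbert space) and $V^{-1/2}_t(\partial D)^3$ (isometric to the quotient $W^{1/2,2}(\partial D)\setminus\mathbb{C}$) are complete is a useful explicit check of hypotheses the paper leaves implicit, but it does not change the argument.
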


\subsection{Compactness of Magnetic Dipole Operator}
In this section, we show that the magnetic dipole operator $M^\alpha$ is compact.  

\begin{theorem}\label{mcompact}
The operator $M^\alpha: V^{^{-\frac{1}{2}}}_{t}(\partial D)^{3}\rightarrow V^{^{-\frac{1}{2}}}_{t}(\partial D)^{3}$ is compact and satisfies:
 \begin{equation}\label{specmk}
\sigma(M^\alpha;\ V^{^{-\frac{1}{2}}}_{t}(\partial D)^{3})=\sigma((K^{-\alpha})^\ast;\ H^{-\frac{1}{2}}_{0}(\partial D)),
\end{equation}
 where  $(K^{-\alpha})^\ast$ is the scalar valued  Neumann–Poincar\'e operator defined on $H^{-\frac{1}{2}}_{0}(\partial D)$ and where $\sigma(M^\alpha;\ V^{^{-\frac{1}{2}}}_{t}(\partial D)^{3})$ and $\sigma((K^{-\alpha})^\ast;\ H^{-\frac{1}{2}}_{0}(\partial D))$ are the spectra of  $M^\alpha$ and $(K^{-\alpha})^\ast$, respectively.
  \end{theorem}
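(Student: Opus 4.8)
The plan is to reduce the statement to the classical spectral theory of the scalar Neumann--Poincar\'e operator: I will realize $M^\alpha$ on $V_t^{-1/2}(\partial D)^3$ as similar, through a bicontinuous bijection, to $(K^{-\alpha})^\ast$ on $H_0^{-1/2}(\partial D)$, and then transport compactness and spectrum across the similarity. The conjugating map will be (an extension of) $\Phi:=\mathbf n\cdot\nabla\times S^\alpha$, which by \eqref{mapl2} is already an isomorphism $L_{t,0}^2(\partial D)^3\to L_0^2(\partial D)$; the mapping properties in \cite{Mitrea1996}, together with Theorems~\ref{thm:extW3} and \ref{thm:iso}, promote it to an isomorphism $\Phi:V_t^{-1/2}(\partial D)^3\to H_0^{-1/2}(\partial D)$.

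Next I would record how $M^\alpha$ acts under the parametrization \eqref{isomorphv-1half}. Writing $\boldsymbol\rho=\mathbf n\times\nabla f$ with $f\in W^{1/2,2}(\partial D)\setminus\mathbb C$, the field $\mathbf u=S^\alpha(\boldsymbol\rho)\in W_3^\alpha$ is curl--curl free off $\partial D$, so $\nabla\times\mathbf u=\nabla q$ on $H$ and on $D$ for a function $q$ that is harmonic there and $\alpha$-quasiperiodic, has single-valued normal derivative $\partial_nq=\mathbf n\cdot\nabla\times S^\alpha(\boldsymbol\rho)=\Phi\boldsymbol\rho$ across $\partial D$, and has Dirichlet jump equal to $f$ by \eqref{jump16}; thus $q$ is the $\alpha$-quasiperiodic double-layer potential with density $f$. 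Averaging the two tangential traces in \eqref{jump16} then yields $M^\alpha(\mathbf n\times\nabla f)=\mathbf n\times\nabla(K^\alpha f)$, where $K^\alpha$ is the scalar double-layer Neumann--Poincar\'e operator, and boundedness of $K^\alpha$ on $W^{1/2,2}(\partial D)\setminus\mathbb C$ shows that $M^\alpha$ extends to a bounded operator on $V_t^{-1/2}(\partial D)^3$, consistently with Lemma~\ref{dense1}.

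Combining the previous display with the identity $\mathbf n\cdot\nabla\times\mathbf v=-Div_S(\mathbf n\times\mathbf v)$, the commutation relation \eqref{dividentity}, and the Calder\'on identity tying $K^\alpha$, its $L^2$-adjoint, and the hypersingular operator ($=\partial_n$ of the double layer $=\Phi$ in the coordinates above), I would obtain $\Phi\, M^\alpha=(K^{-\alpha})^\ast\,\Phi$ on $V_t^{-1/2}(\partial D)^3$; for $\alpha=\mathbf 0$ this is read modulo constants, consistently with the zero-mean spaces $L_0^2$, $H_0^{-1/2}$ and with \eqref{H1-0}, and the interchange of $\pm\alpha$ and of $K^\alpha$ with $(K^{-\alpha})^\ast$ is harmless since these operators are complex conjugates of one another with real spectrum. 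Hence $M^\alpha=\Phi^{-1}(K^{-\alpha})^\ast\Phi$, which gives \eqref{specmk}.

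Finally, compactness: because $\partial D$ is $C^{1,\gamma}$ with $\gamma>0$, one has $|\mathbf n(x)\cdot(x-y)|\le C|x-y|^{1+\gamma}$, so the kernel of $(K^{-\alpha})^\ast$ is weakly singular; therefore $(K^{-\alpha})^\ast$ is compact on $L_0^2(\partial D)$ and, by duality and interpolation, on $H_0^{-1/2}(\partial D)$, and the similarity $M^\alpha=\Phi^{-1}(K^{-\alpha})^\ast\Phi$ transfers compactness to $M^\alpha$ on $V_t^{-1/2}(\partial D)^3$ (alternatively, the same $C^{1,\gamma}$ estimate shows directly that the kernel of $M^\alpha$ is weakly singular). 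I expect the main obstacle to be the rigorous derivation of $\Phi M^\alpha=(K^{-\alpha})^\ast\Phi$ on the fractional scale --- controlling the principal-value operators and confirming that $\Phi$ really maps \emph{onto} $H_0^{-1/2}(\partial D)$, not merely into it --- together with the careful handling of null spaces and of the sign and parameter conventions in the periodic case $\alpha=\mathbf 0$.
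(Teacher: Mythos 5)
Your proposal is correct and follows essentially the same route as the paper: the conjugating map $\Phi=\mathbf n\cdot\nabla\times S^\alpha$, viewed as an isomorphism from $V^{-\frac12}_t(\partial D)^3$ onto $H^{-\frac12}_0(\partial D)$, together with the intertwining relation $\Phi\,M^\alpha=(K^{-\alpha})^\ast\,\Phi$, is exactly the Plemelj-like identity \eqref{plid} the paper uses to extend $M^\alpha$ boundedly, transfer compactness from the scalar Neumann--Poincar\'e operator, and deduce \eqref{specmk}. The only differences are expository: you sketch a derivation of the intertwining identity via double-layer potentials and Calder\'on identities and justify compactness of $(K^{-\alpha})^\ast$ by a kernel estimate, where the paper simply cites \cite{Mitrea1996} and \cite{Shapero} for these facts.
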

  
\begin{proof} We first establish that the magnetic dipole operator $M^\alpha$ is  a bounded map of $V^{^{-\frac{1}{2}}}_{t}(\partial D)^{3}$.  To do this, we start with the following Plemelj-like identity, that can be derived as in\cite{Mitrea1996}:
\begin{equation}\label{plid}
(K^{-\alpha})^\ast(\mathbf{n}\cdot \nabla\times S^\alpha)=\mathbf{n}\cdot \nabla\times S^\alpha M^\alpha, \qquad \text{for} \ \ \boldsymbol{\rho} \in L^{2}_{t,0}( \partial D)^{3},
\end{equation}
The scalar valued Neumann–Poincar\'e  operator is bounded and compact on $H^{-\frac{1}{2}}_{0}(\partial D)$, see \cite{Shapero}.  The map $\mathbf{n}\cdot \nabla\times S^\alpha: V^{^{-\frac{1}{2}}}_{t}(\partial D)^{3}\rightarrow H^{-\frac{1}{2}}_{0}(\partial D)$ can be shown to be an isomorphism, as in \cite{Mitrea1996}. The boundedness of $(K^{-\alpha})^\ast$ and the boundedness of the operator $\mathbf{n}\cdot \nabla\times S^\alpha$ imply that:
\begin{equation}\label{kstar}
\begin{split}
\|(K^{-\alpha})^\ast(\mathbf{n}\cdot \nabla\times S^\alpha(\boldsymbol{\rho}))\|_{H^{-\frac{1}{2}}_{0}(\partial D)}
&\leq C \|\mathbf{n}\cdot \nabla\times S^\alpha(\boldsymbol{\rho})\|_{H^{-\frac{1}{2}}_{0}(\partial D)}\leq C\|\boldsymbol{\rho}\|_{V^{^{-\frac{1}{2}}}_{t}(\partial D)^{3}}.
\end{split}
\end{equation}
On the other hand, the boundedness  of $\mathbf{n}\cdot \nabla\times S^\alpha$ also implies the following: 
\begin{equation}\label{kstar1}
\|M^\alpha(\boldsymbol{\rho})\|_{V^{^{-\frac{1}{2}}}_{t}(\partial D)^{3}}\leq C\|\mathbf{n}\cdot \nabla\times S^\alpha M^\alpha(\boldsymbol{\rho})\|_{H^{-\frac{1}{2}}_{0}(\partial D)}.
\end{equation}
In view of \eqref{plid}, \eqref{kstar}, and \eqref{kstar1}, we have: 
\begin{equation*}
\|M^\alpha(\boldsymbol{\rho})\|_{V^{^{-\frac{1}{2}}}_{t}(\partial D)^{3}}\leq C \|\boldsymbol{\rho}\|_{V^{^{-\frac{1}{2}}}_{t}(\partial D)^{3}},
\end{equation*}
and we conclude that $M^\alpha(\boldsymbol{\rho})$ is bounded, for $\boldsymbol{\rho} \in L^{2}_{t,0}( \partial D)^{3}\subset V^{^{-\frac{1}{2}}}_{t}(\partial D)^{3}$. Since $L^{2}_{t,0}( \partial D)^{3}$ is dense in $V^{^{-\frac{1}{2}}}_{t}(\partial D)^{3}$, we can extend $M^\alpha$ as a bounded linear map of $V^{^{-\frac{1}{2}}}_{t}(\partial D)^{3}$. 
  %

Next, observe that $\mathbf{n}\cdot\nabla\times S^\alpha: V^{^{-\frac{1}{2}}}_{t}(\partial D)^{3}\rightarrow H^{-\frac{1}{2}}_{0}(\partial D)$ is an isomorphism, so for a bounded sequence $\left\{\boldsymbol{\rho}_{n}\right\}\in V^{^{-\frac{1}{2}}}_{t}(\partial D)^{3}$, we have:
 \begin{equation*}
 \|\mathbf{n}\cdot \nabla\times S^\alpha(\boldsymbol{\rho}_{n})\|_{H^{-\frac{1}{2}}_{0}(\partial D)}\leq C\|\boldsymbol{\rho}_{n}\|_{V^{^{-\frac{1}{2}}}_{t}(\partial D)^{3}},
 \end{equation*}
which shows that $\left\{\mathbf{n}\cdot \nabla\times S^\alpha(\boldsymbol{\rho}_{n})\right\}_{n=1}^{\infty}\in H^{-\frac{1}{2}}_{0}(\partial D)$ is bounded. 
By the compactness of $(K^{-\alpha})^*$, we have that the subsequence $\left\{(K^{-\alpha})^{*}(\mathbf{n}\cdot \nabla\times S^\alpha(\boldsymbol{\rho}_{n_{k}}))\right\}_{k=1}^{\infty}\in H^{-\frac{1}{2}}_{0}(\partial D)$ is Cauchy, 
which in turn, by \eqref{plid}, implies that $\left\{\mathbf{n}\cdot \nabla\times S^\alpha(M^\alpha(\boldsymbol{\rho}_{n_{k}}))\right\}_{k=1}^{\infty}\in H^{-\frac{1}{2}}_{0}(\partial D)$ is also Cauchy. 
 Because $\mathbf{n}\cdot\nabla\times S^\alpha: V^{^{-\frac{1}{2}}}_{t}(\partial D)^{3}\rightarrow H^{-\frac{1}{2}}_{0}(\partial D)$ is an isomorphism and $(K^\alpha)^{*}$ is a continuous map, we have for $\{\boldsymbol{\rho}_{n_{k}}\}_{k=1}^\infty$:
   \begin{equation*}
 \|M^\alpha(\boldsymbol{\rho}_{n_{k}})-M^\alpha(\boldsymbol{\rho}_{n_{l}})\|_{V^{^{-\frac{1}{2}}}_{t}(\partial D)^{3}}\\
 \leq C\|\mathbf{n}\cdot \nabla\times S^\alpha(M^\alpha(\boldsymbol{\rho}_{n_{k}}))-\mathbf{n}\cdot \nabla\times S^\alpha(M^\alpha(\boldsymbol{\rho}_{n_{l}}))\|_{H^{-\frac{1}{2}}_{0}(\partial D)}\,,
 \end{equation*}
 and we conclude that the sequence  $\left\{M^\alpha(\boldsymbol{\rho}_{n_{k}})\right\}_{n=1}^{\infty}\in V^{^{-\frac{1}{2}}}_{t}(\partial D)^{3}$ is Cauchy, and thus, $M^\alpha$ is a compact operator on $V^{^{-\frac{1}{2}}}_{t}(\partial D)^{3}$.
Finally, the identity \eqref{specmk} is the direct consequence of \eqref{plid}, and the isomorphic  map $\mathbf{n}\cdot \nabla\times S^\alpha: V^{^{-\frac{1}{2}}}_{t}(\partial D)^{3}\rightarrow H^{-\frac{1}{2}}_{0}(\partial D)$.
\end{proof}
It is noted that the spectrum of $(K^{-\alpha})^*$ lies in $[-1/2,1/2]$ (see e.g., \cite{Shapero}) and, by the previous theorem, 
we see that:
\begin{equation}\label{specbd}
 \sigma(M^\alpha;\ V^{^{-\frac{1}{2}}}_{t}(\partial D)^{3})\subset[-1/2,1/2].
 \end{equation}

\subsection{Spectral Property of the operator $T^\alpha=S^\alpha M^\alpha(S^\alpha)^{-1}$}

\begin{theorem} \label{tcompact} The operator $T^\alpha=S^\alpha M^\alpha(S^\alpha)^{-1}: W_{3}^\alpha\rightarrow W_{3}^\alpha$  is Hermitian, compact, and satisfies:
\begin{equation}\label{tm}
\sigma\left(T^\alpha;\ W_{3}^\alpha\right)=\sigma(M^\alpha;\ V^{^{-\frac{1}{2}}}_{t}(\partial D)^{3}).
\end{equation}
\end{theorem}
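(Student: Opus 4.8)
The plan is to dispatch the three assertions separately, obtaining compactness and the spectral identity as essentially formal consequences of the mapping properties of $S^\alpha$ and $M^\alpha$ established above, and reserving the real work for the Hermitian property. For compactness: by Theorem~\ref{thm:extW3}, Theorem~\ref{thm:iso} and the bounded inverse theorem, $S^\alpha\colon V^{-1/2}_t(\partial D)^3\to W_3^\alpha$ is an isomorphism with bounded inverse, while $M^\alpha$ is compact on $V^{-1/2}_t(\partial D)^3$ by Theorem~\ref{mcompact}. Writing $T^\alpha=S^\alpha\circ M^\alpha\circ(S^\alpha)^{-1}$, the operator $M^\alpha\circ(S^\alpha)^{-1}\colon W_3^\alpha\to V^{-1/2}_t(\partial D)^3$ is compact (a compact operator composed with a bounded one), hence so is $T^\alpha=S^\alpha\circ\bigl(M^\alpha\circ(S^\alpha)^{-1}\bigr)$. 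For the spectral identity \eqref{tm}: for each $\lambda\in\mathbb{C}$ one has the factorization $\lambda I-T^\alpha=S^\alpha(\lambda I-M^\alpha)(S^\alpha)^{-1}$ of bounded operators on $W_3^\alpha$ (the two identities being on the respective spaces); since $S^\alpha$ and $(S^\alpha)^{-1}$ are bounded, the left side is boundedly invertible exactly when $\lambda I-M^\alpha$ is, so the resolvent sets, and hence the spectra, of $T^\alpha$ and $M^\alpha$ coincide. Together with \eqref{specbd} this also yields $\sigma(T^\alpha;W_3^\alpha)\subset[-1/2,1/2]$.

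The main obstacle is the Hermitian property: conjugation by $S^\alpha$ does not preserve symmetry a priori, since $M^\alpha$ is not symmetric on $V^{-1/2}_t(\partial D)^3$, so one must exploit the concrete structure. I would fix $\mathbf{u},\mathbf{w}\in W_3^\alpha$ and write $\mathbf{u}=S^\alpha(\boldsymbol\rho)$, $\mathbf{w}=S^\alpha(\boldsymbol\tau)$ with $\boldsymbol\rho,\boldsymbol\tau\in L^2_{t,0}(\partial D)^3$, which is a dense subclass by Lemma~\ref{dense1} and the isomorphism property, and on which the jump relations \eqref{jump16} hold classically. Using the polarized form of \eqref{srhonorm}, namely $\langle S^\alpha\boldsymbol\sigma,S^\alpha\boldsymbol\eta\rangle=-\int_{\partial D}\boldsymbol\sigma\cdot\overline{S^\alpha\boldsymbol\eta}\,ds$, gives $\langle T^\alpha\mathbf{u},\mathbf{w}\rangle=-\int_{\partial D}M^\alpha\boldsymbol\rho\cdot\overline{\mathbf{w}}\,ds$. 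On the other hand, integrating by parts over $D$ as in \eqref{w2parts}, using $\nabla\times\nabla\times S^\alpha\boldsymbol\rho=0$ in $D$ together with the interior trace $(\mathbf{n}\times\nabla\times S^\alpha\boldsymbol\rho)|^-_{\partial D}=-\tfrac12\boldsymbol\rho+M^\alpha\boldsymbol\rho$ from \eqref{jump16}, expresses $\int_D\nabla\times\mathbf{u}\cdot\nabla\times\overline{\mathbf{w}}\,dx$ as a combination of $\int_{\partial D}\boldsymbol\rho\cdot\overline{\mathbf{w}}\,ds$ and $\int_{\partial D}M^\alpha\boldsymbol\rho\cdot\overline{\mathbf{w}}\,ds$; eliminating both boundary integrals with \eqref{srhonorm} produces an identity of the form
\begin{equation*}
\langle T^\alpha\mathbf{u},\mathbf{w}\rangle=c_1\langle\mathbf{u},\mathbf{w}\rangle+c_2\int_D\nabla\times\mathbf{u}\cdot\nabla\times\overline{\mathbf{w}}\,dx
\end{equation*}
with real constants $c_1,c_2$. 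In particular $T^\alpha$ restricted to $W_3^\alpha$ agrees, up to an affine change of variable, with the operator of the eigenvalue problem \eqref{ypproblem}, which is consistent with $\sigma(T^\alpha)\subset[-1/2,1/2]$ and pins down $(c_1,c_2)$. Since the right-hand side is a Hermitian sesquilinear form in $(\mathbf{u},\mathbf{w})$, we get $\langle T^\alpha\mathbf{u},\mathbf{w}\rangle=\overline{\langle T^\alpha\mathbf{w},\mathbf{u}\rangle}$ on the dense subclass, hence on all of $W_3^\alpha$ by continuity of $T^\alpha$; as $T^\alpha$ is bounded, this is self-adjointness.

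The one genuinely delicate point is sign bookkeeping: in carrying out the two integrations by parts one must track carefully the orientation of $\partial D$ and the signs built into the Green's functions \eqref{Green-quasi}–\eqref{Green-periodic} and the jump relations \eqref{jump16}, and the requirement that the resulting constants $c_1,c_2$ be compatible with $\sigma(M^\alpha)\subset[-1/2,1/2]$ serves as a convenient check that all signs have been handled consistently.
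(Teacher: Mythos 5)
Your proposal is correct and takes essentially the same route as the paper: compactness by composing the compact $M^\alpha$ with the bounded isomorphism $S^\alpha$ and its bounded inverse, the spectral identity by conjugation (the paper phrases it through eigenvectors, you through resolvents, which is equivalent here since both operators are compact), and Hermiticity by the same integration-by-parts plus jump-relation computation with densities in $L^2_{t,0}(\partial D)^3$ extended by density. Carrying out your elimination with consistent signs gives exactly the paper's identity \eqref{T}, namely $\langle T^\alpha\mathbf{u},\mathbf{w}\rangle=\tfrac12\int_H\nabla\times\mathbf{u}\cdot\nabla\times\overline{\mathbf{w}}\,dx-\tfrac12\int_D\nabla\times\mathbf{u}\cdot\nabla\times\overline{\mathbf{w}}\,dx$, so $(c_1,c_2)=(\tfrac12,-1)$; and since the constants are real under any of the sign conventions you flag, the Hermitian conclusion is unaffected by that bookkeeping.
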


\begin{proof} First, we show that $T^\alpha:W_{3}^\alpha\rightarrow W_3^\alpha$ is Hermitian. For $\mathbf{u},\,\mathbf{w} \in W_{3}^\alpha$, we have:
\begin{equation*}
\begin{split}
\langle T^\alpha\mathbf{u},\mathbf{ w}\rangle
&=\int_{Y}(\nabla\times S^\alpha M^\alpha(S^\alpha)^{-1}\mathbf{u})\cdot (\nabla\times \overline{\mathbf{w}})\ dx\\
&=\int_{H}(\nabla\times S^\alpha M^\alpha (S^\alpha)^{-1}\mathbf{u})\cdot (\nabla\times \overline{\mathbf{w}})\ dx+\int_{D}(\nabla\times S^\alpha M^\alpha(S^\alpha)^{-1}\mathbf{u})\cdot (\nabla\times \overline{\mathbf{w}})\ dx.
\end{split}
\end{equation*}
Using integration by parts and since $\nabla\times\nabla\times S^\alpha M^\alpha (S^\alpha)^{-1}\mathbf{u})=0$ in $H\cup D$, we see that:
\begin{equation*}
\int_{Y}(\nabla\times S^\alpha M^\alpha(S^\alpha)^{-1}\mathbf{u})\cdot (\nabla\times \overline{\mathbf{w}})\ dx=\int_{\partial D}\left[\mathbf{n}\times \nabla\times  S^\alpha M^\alpha(S^\alpha)^{-1}\mathbf{u}\right]^{+}_{-}\cdot \overline{\mathbf{w}}\ ds_\mathbf{x}.
\end{equation*}
Then, using the jump condition \eqref{jump16}, 
we obtain $\langle T^\alpha\mathbf{u},\mathbf{ w}\rangle=\int_{\partial D} M^\alpha(S^\alpha)^{-1}\mathbf{u}\cdot \overline{\mathbf{w}}\ ds_\mathbf{x}$.  We can write $\mathbf{u}=S^\alpha\boldsymbol{\beta}$, for some $ \boldsymbol{\beta}\in V^{^{-\frac{1}{2}}}_{t}(\partial D)^{3} $, to get:
\begin{equation*}
\begin{split}
\langle T^\alpha\mathbf{u}, \mathbf{w}\rangle
&=\int_{\partial D} M^\alpha\boldsymbol{\beta}\cdot \overline{\mathbf{w}}\ ds_{x}\\
&=\dfrac{1}{2}\int_{\partial D} [\mathbf{n}\times\nabla \times S^\alpha\boldsymbol{\beta}\big|_{+}-\mathbf{n}\times\nabla \times  S^\alpha\boldsymbol{\beta}\big|_{-}]\cdot \overline{\mathbf{w}}\ ds_{x}.
\end{split}
\end{equation*}
Integration by parts gives:
\begin{equation*}
\begin{split}
&\dfrac{1}{2}\int_{\partial D} [\mathbf{n}\times\nabla\times S^\alpha\boldsymbol{\beta}\big|_{+}-\mathbf{n}\times\nabla\times S^\alpha\boldsymbol{\beta}\big|_{-}]\cdot  \overline{\mathbf{w}}\ ds_{x}\\
&=\dfrac{1}{2}\int_{H}(\nabla\times S^\alpha\boldsymbol{\beta})\cdot (\nabla\times \overline{\mathbf{w}})\ dx- \dfrac{1}{2}\int_{D}(\nabla\times S^\alpha\boldsymbol{\beta})\cdot (\nabla\times \overline{\mathbf{w}})\ dx.
\end{split}
\end{equation*}
Therefore:
\begin{equation}\label{T}
\langle T^\alpha\mathbf{u}, \mathbf{w}\rangle=\dfrac{1}{2}\int_{H}(\nabla\times\mathbf{ u})\cdot (\nabla \times  \overline{\mathbf{w}})\ dx- \dfrac{1}{2}\int_{D}(\nabla\times \mathbf{u})\cdot (\nabla\times \overline{\mathbf{w}})\ dx,
\end{equation}
and $T^\alpha$ is seen to be Hermitian.

Now, the identity given by \eqref{tm} is established.  Consider the eigenvalue eigenvector pair $(\mu,\boldsymbol{\rho})\in \sigma\left(M^\alpha;\ V^{^{-\frac{1}{2}}}_{t}(\partial D)^{3}\right)\times V^{^{-\frac{1}{2}}}_{t}(\partial D)^{3}$ of $M^\alpha\boldsymbol{\rho}=\mu \boldsymbol{\rho}$.  There exists $\mathbf{u}\in W_{3}^\alpha$ such that $\mathbf{u}=S^\alpha\boldsymbol{\rho}$,  and $\boldsymbol{\rho}=(S^\alpha)^{-1}\mathbf{u}$. Therefore, we have $M^\alpha(S^\alpha)^{-1}\mathbf{u}=\mu S^{-1}\mathbf{u}$.  This implies that: 
\begin{equation*}
S^\alpha M^\alpha(S^\alpha)^{-1}\mathbf{u}=\mu S^\alpha(S^\alpha)^{-1}\mathbf{u} \ \ \Rightarrow \ \ \ T^\alpha\mathbf{u}=\mu \mathbf{u},
\end{equation*}
which shows that $\sigma\left(M^\alpha;\ V^{^{-\frac{1}{2}}}_{t}(\partial D)^{3}\right)\subset  \sigma\left(T^\alpha;\ W_{3}^\alpha\right)$.

On the other hand, if we have $T^\alpha\mathbf{u}=\mu \mathbf{u}$, then $S^\alpha M^\alpha(S^\alpha)^{-1}\mathbf{u}=\mu \mathbf{u}$; therefore, multiplying both sides by $(S^\alpha)^{-1}$ gives $M^\alpha(S^\alpha)^{-1}\mathbf{u}=\mu (S^\alpha)^{-1}\mathbf{u}$, and we obtain: $$\sigma\left(T^\alpha;\ W_{3}^\alpha\right)\subset \sigma(M^\alpha;\ V^{^{-\frac{1}{2}}}_{t}(\partial D)^{3}).$$
Finally, the compactness of $T^\alpha=S^\alpha M^\alpha(S^\alpha)^{-1}$ easily follows from the compactness of $M^\alpha$.
\end{proof}
\noindent It now follows from \eqref{T} that the eigenvalue problem $T^\alpha\boldsymbol{u}=\mu\boldsymbol{u}$ is equivalent to \eqref{ypproblem}, so the eigenfunctions form a complete orthonormal system that span $W_3^\alpha$. 

It is clear from Theorems~\ref{mcompact} and \ref{tcompact} that:
\begin{equation*}
\sigma\left(T^\alpha;\ W_{3}^\alpha\right)=\sigma((K^{-\alpha})^{*};\ H^{-\frac{1}{2}}_{0}(\partial D)),
\end{equation*}
and we denote dependence on $\alpha$ explicitly and write $\mu_i(\alpha)$, $i\in \mathbb{N}$, $\alpha\in Y^\ast$ and make the following definition.
\begin{definition}
\label{DefStruct}
The \textit{structural spectra} for the crystal is given by $\cup_{\alpha\in Y^*}\{\mu_i(\alpha)\}_{i\in \mathbb{N}}$,
where the pairs $\mu_i(\alpha)$, $\boldsymbol{u}_i\in W_2^\alpha$ satisfy:
\begin{equation*}
T^\alpha\boldsymbol{u}_i=\mu_i(\alpha)\boldsymbol{u}_i.
\end{equation*}
\end{definition}

\subsection{Spectral Representation Theorem}

We present a spectral representation of the differential operator appearing in \eqref{Helm}. With this in mind, by Theorem~\ref{tcompact} and \eqref{specbd}, the invariant subspace associated with each eigenvalue $\mu_n(\alpha)$ of $T^\alpha$ is denoted by $E_n=\{u\in W^\alpha_3\,:
\,T^\alpha \boldsymbol{u}=\mu_n(\alpha) \boldsymbol{u}\}$
and the orthogonal projection onto this subspace is denoted by $P_{\mu_n}^\alpha$; here, orthogonality is with respect to the $\langle\cdot,\cdot\rangle$ inner product. We write the projections onto $W_1^\alpha$ and $W_2^\alpha$ as $P^\alpha_1$ and $P^\alpha_2$, respectively. The differential operator appearing in \eqref{Helm} can be factored into the form given by the following theorem.

\begin{theorem}\label{representation}
The vector Laplacian in a photonic crystal admits the representation:
$$\nabla\times\left(a(x)\nabla\times \mathbf{u}(x)\right) =-\Delta_\alpha T_{k}^\alpha\mathbf{u}(x),$$
where $\Delta_\alpha$ is the $\alpha$-quasiperiodic Laplace operator defined on $Y$ and $T_k^\alpha$ is the linear transform associated with the bilinear form $B_{ k}$ defined for $\mathbf{u}(x)\in J_{\#}(\alpha,Y,\mathbb{C}^3)$, see (\ref{ses}). The linear operator $T_{k}^\alpha$ (\ref{tkassocbk}) has the spectral representation, which separates the effect of the contrast $k$ from the underlying geometry of the photonic crystal, given by:
\begin{equation*}
T_{k}^\alpha\mathbf{u}=k\,P^\alpha_{1}\mathbf{u}+P^\alpha_{2}\mathbf{u}+\sum_{\frac{-1}{2}<\mu_{n}(\alpha)<\frac{1}{2}}\left[k\,\left(\frac{1}{2}+\mu_{n}(\alpha)\right)+\left(\dfrac{1}{2}-\mu_{n}(\alpha)\right)\right]P^\alpha_{\mu_{n}}\mathbf{u},
\end{equation*}
where $\{\mu_n(\alpha)\}=\sigma\left(T^\alpha;\ W_{3}^\alpha\right)$, with $W_3^\alpha\subset J_{\#}(\alpha,Y,\mathbb{C}^3)$.
If $k\in\mathbb{C}\setminus Z$, where: \begin{equation}
\label{Zeta}
    Z=\left\{\frac{\mu_n(\alpha)-1/2}{\mu_n(\alpha)+1/2}\right\}_{-1/2\leq\mu_n(\alpha)\leq1/2},
\end{equation}
then $T_{k}^\alpha$ has an inverse and, for $z=k^{-1}$, it is given by:
\begin{equation}
  (T_{k}^\alpha)^{-1}\mathbf{u}=z\,P^\alpha_{1}\mathbf{u}+P^\alpha_{2}\mathbf{u}+\sum_{\frac{-1}{2}<\mu_{n}(\alpha)<\frac{1}{2}}z\left[\left(\frac{1}{2}+\mu_{n}(\alpha)\right)+z\left(\dfrac{1}{2}-\mu_{n}(\alpha)\right)\right]^{-1}P^\alpha_{\mu_{n}} \mathbf{u}. 
\label{Takinv}
\end{equation} 
\end{theorem}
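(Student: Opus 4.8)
The plan is to build the spectral representation of $T_k^\alpha$ directly from the decomposition $J_\#(\alpha,Y,\mathbb{C}^3)=W_1^\alpha\oplus W_2^\alpha\oplus W_3^\alpha$ of Lemma~\ref{ortho} together with the identity \eqref{T} for $T^\alpha$ on $W_3^\alpha$, and then to invert term-by-term on each invariant subspace. First I would record that, for $\mathbf{u}\in J_\#(\alpha,Y,\mathbb{C}^3)$, the definition \eqref{tkassocbk} and the sesquilinear form \eqref{ses} give
\begin{equation*}
\langle T_k^\alpha\mathbf{u},\mathbf{w}\rangle
= k\int_H(\nabla\times\mathbf{u})\cdot(\nabla\times\overline{\mathbf{w}})\,dx
+\int_D(\nabla\times\mathbf{u})\cdot(\nabla\times\overline{\mathbf{w}})\,dx .
\end{equation*}
Using $\langle\mathbf{u},\mathbf{w}\rangle=\int_Y(\nabla\times\mathbf{u})\cdot(\nabla\times\overline{\mathbf{w}})\,dx$, one can rewrite the right-hand side as $\tfrac{k+1}{2}\langle\mathbf{u},\mathbf{w}\rangle+\tfrac{k-1}{2}\big(\int_H-\int_D\big)(\nabla\times\mathbf{u})\cdot(\nabla\times\overline{\mathbf{w}})\,dx$, and the last bracket is exactly $2\langle T^\alpha\mathbf{u},\mathbf{w}\rangle$ on $W_3^\alpha$ by \eqref{T} (and has the obvious values on $W_1^\alpha$, $W_2^\alpha$ where $\nabla\times\mathbf{u}$ vanishes on $D$, resp.\ $H$). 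Hence $T_k^\alpha=\tfrac{k+1}{2}I+(k-1)T^\alpha$ when restricted to $W_3^\alpha$, $T_k^\alpha=kI$ on $W_1^\alpha$, and $T_k^\alpha=I$ on $W_2^\alpha$; the three subspaces are invariant because they are invariant for $T^\alpha$ (Theorem~\ref{tcompact}) and are mutually orthogonal in $\langle\cdot,\cdot\rangle$.

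Next I would diagonalize on $W_3^\alpha$. By Theorem~\ref{tcompact}, $T^\alpha$ is compact and Hermitian with spectrum $\{\mu_n(\alpha)\}\subset[-1/2,1/2]$, so $W_3^\alpha=\bigoplus_n E_n$ with $E_n=\ker(T^\alpha-\mu_n(\alpha)I)$ and orthogonal projections $P^\alpha_{\mu_n}$; on $E_n$ the operator $T_k^\alpha$ acts as the scalar $\tfrac{k+1}{2}+(k-1)\mu_n(\alpha)=k\big(\tfrac12+\mu_n(\alpha)\big)+\big(\tfrac12-\mu_n(\alpha)\big)$. Writing the identity on $J_\#$ as $\mathbf{u}=P^\alpha_1\mathbf{u}+P^\alpha_2\mathbf{u}+\sum_n P^\alpha_{\mu_n}\mathbf{u}$ (the eigenvalues $\mu_n=\pm\tfrac12$ being absorbed into $W_2^\alpha$, $W_1^\alpha$ respectively, as already noted in the text after Lemma~\ref{ortho}), one gets the stated spectral formula for $T_k^\alpha$, with the sum running over the open range $-\tfrac12<\mu_n(\alpha)<\tfrac12$. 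For the inverse: since $T_k^\alpha$ is block-diagonal with scalar action on each block, it is invertible iff every block scalar is nonzero, i.e.\ iff $k\neq0$ and $k\big(\tfrac12+\mu_n\big)+\big(\tfrac12-\mu_n\big)\neq0$ for all $n$ with $-\tfrac12<\mu_n<\tfrac12$; solving the latter for $k$ gives exactly $k\notin Z$ with $Z$ as in \eqref{Zeta}. On each block the inverse is the reciprocal scalar: $z=k^{-1}$ on $W_1^\alpha$, $1$ on $W_2^\alpha$, and $\big[k(\tfrac12+\mu_n)+(\tfrac12-\mu_n)\big]^{-1}=z\big[(\tfrac12+\mu_n)+z(\tfrac12-\mu_n)\big]^{-1}$ on $E_n$, which assembles into \eqref{Takinv}.

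Finally I would verify the factorization $\nabla\times(a(x)\nabla\times\mathbf{u})=-\Delta_\alpha T_k^\alpha\mathbf{u}$ in the weak sense: for $\mathbf{w}\in J_\#(\alpha,Y,\mathbb{C}^3)$, the weak form \eqref{weak}--\eqref{ses} says the distributional pairing of $\nabla\times(a\nabla\times\mathbf{u})$ with $\mathbf{w}$ equals $B_k(\mathbf{u},\mathbf{w})=\langle T_k^\alpha\mathbf{u},\mathbf{w}\rangle=\int_Y\nabla\times(T_k^\alpha\mathbf{u})\cdot\nabla\times\overline{\mathbf{w}}\,dx=\int_Y\nabla(T_k^\alpha\mathbf{u}):\nabla\overline{\mathbf{w}}\,dx$ by the Hilbert-space identity of the first two theorems, and the right-hand side is the pairing of $-\Delta_\alpha T_k^\alpha\mathbf{u}$ with $\mathbf{w}$; since $\mathbf{w}$ is arbitrary in $J_\#$ and both sides are divergence-free quasiperiodic fields, the identity follows. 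I expect the main obstacle to be purely bookkeeping rather than conceptual: namely keeping straight how the endpoint eigenvalues $\mu_n=\pm\tfrac12$ of $T^\alpha$ correspond precisely to $W_2^\alpha$ and $W_1^\alpha$ (this is the content of the remark after Lemma~\ref{ortho} that $W_1^\alpha$, $W_2^\alpha$ are the eigenspaces for $0$ and $1$ of problem \eqref{ypproblem}, equivalently $\mp\tfrac12$ for $T^\alpha$ via \eqref{T}), so that the decomposition of the identity operator, the spectral sum for $T_k^\alpha$, and the restricted sum $-\tfrac12<\mu_n<\tfrac12$ in the inverse formula are all mutually consistent; and making sure the term-by-term inversion is justified, which it is because the spectrum of the compact operator $T^\alpha$ can only accumulate at $0$, so the block scalars $k(\tfrac12+\mu_n)+(\tfrac12-\mu_n)$ stay bounded away from $0$ for $k\notin Z$ and the series for $(T_k^\alpha)^{-1}$ converges in operator norm.
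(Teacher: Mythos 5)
Your proposal is correct and follows essentially the same route as the paper: the orthogonal decomposition $J_{\#}(\alpha,Y,\mathbb{C}^3)=W_1^\alpha\oplus W_2^\alpha\oplus W_3^\alpha$, the identity \eqref{T} to express $B_k$ through $T^\alpha$ on $W_3^\alpha$ (your affine form $T_k^\alpha=\tfrac{k+1}{2}I+(k-1)T^\alpha$ is just a repackaging of the paper's chain \eqref{hppass}--\eqref{one}), and blockwise inversion yielding \eqref{Takinv} with $Z$ as in \eqref{Zeta}. One small slip in your parenthetical remarks: the endpoint correspondence is $W_1^\alpha\leftrightarrow\mu=+\tfrac12$ (from $\lambda=0$ in \eqref{ypproblem}) and $W_2^\alpha\leftrightarrow\mu=-\tfrac12$ (from $\lambda=1$), the opposite of what you wrote, but this does not affect your argument since you computed the $W_1^\alpha$ and $W_2^\alpha$ blocks directly from their definitions, obtaining the correct coefficients $k$ and $1$.
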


\begin{proof} Let $\mathbf{u}\in J_{\#}(\alpha,Y,\mathbb{C}^3)$.  Note that: 
\begin{equation}
\langle\mathbf{u},\mathbf{v}\rangle=\langle\sum_{i=1}^{\infty}P_{\mu_{i}}^\alpha\mathbf{u},\mathbf{v}\rangle=\langle P_{1}^\alpha\mathbf{u}+P_{2}^\alpha\mathbf{u}+\sum_{-\frac{1}{2}<\mu_{n}(\alpha)<\frac{1}{2}}P_{\mu_{n}}^\alpha\mathbf{u},\mathbf{v}\rangle,
\label{expansionh1}
    \end{equation}
for all $\mathbf{v}\in J_{\#}(\alpha,Y,\mathbb{C}^3)$, from where:
$$\langle T^\alpha\mathbf{u},\mathbf{v}\rangle=\langle\sum_{i=1}^{\infty}\mu_{i}(\alpha)P_{\mu_{i}}^\alpha\mathbf{u},\mathbf{v}\rangle,\qquad \qquad \forall\mathbf{v}\in J_{\#}(\alpha,Y,\mathbb{C}^3).$$
Also, by \eqref{w1}, \eqref{w2} and \eqref{T}, for all $\mathbf{v}\in J_{\#}(\alpha,Y,\mathbb{C}^3)$, we have:
\begin{equation*}
\begin{array}{lll}
\langle T^\alpha\mathbf{u_1},\mathbf{v}\rangle&=&\dfrac{1}{2}\langle\mathbf{u_1},\mathbf{v}\rangle, \qquad \qquad\forall\mathbf{u_{1}}\in W_{1}^\alpha,\\
\langle T^\alpha\mathbf{u_2},\mathbf{v}\rangle&=&-\dfrac{1}{2}\langle \mathbf{u_2},\mathbf{v}\rangle, \qquad \qquad\forall\mathbf{u_{2}}\in W_2^\alpha.
\end{array}
\end{equation*}


By \eqref{expansionh1}, for $\mathbf{u},\mathbf{v}\in J_{\#}(\alpha,Y,\mathbb{C}^3)$, we have:
\begin{equation}\label{bz}
B_{k}(P_{\mu_{n}}^\alpha\mathbf{u},\mathbf{v})=k\int_{H}(\nabla\times P_{\mu_{n}}^\alpha\mathbf{u} )\cdot(\nabla\times \overline{\mathbf{v}})\ dx +\int_{D}(\nabla\times P_{\mu_{n}}^\alpha\mathbf{u})\cdot(\nabla\times \overline{\mathbf{v}})\ dx.
\end{equation}
On the other hand, by \eqref{T}, we know that:
\begin{equation*}
\begin{split}
\langle T^\alpha\,P_{\mu_{n}}^\alpha\mathbf{u},\mathbf{v}\rangle&=\dfrac{1}{2}\int_{H}(\nabla\times P_{\mu_{n}}^\alpha\mathbf{u})\cdot (\nabla \times  \overline{\mathbf{v}})\ dx- \dfrac{1}{2}\int_{D}(\nabla\times P_{\mu_{n}}^\alpha\mathbf{u})\cdot (\nabla\times \overline{\mathbf{v}})\ dx\\
&=\mu_n(\alpha)\int_{H}(\nabla\times P_{\mu_{n}}^\alpha\mathbf{u})\cdot (\nabla \times  \overline{\mathbf{v}})\ dx+\mu_n(\alpha)\int_{D}(\nabla\times P_{\mu_{n}}^\alpha\mathbf{u})\cdot (\nabla\times \overline{\mathbf{v}})\ dx,
\end{split}
\end{equation*}
which implies that:
\begin{equation}\label{hppass}
\int_{H}(\nabla\times P_{\mu_{n}}^\alpha\mathbf{u})\cdot (\nabla \times  \overline{\mathbf{v}})\ dx=\dfrac{\frac{1}{2}+\mu_{n}(
\alpha)}{\frac{1}{2}-\mu_{n}(\alpha)}\int_{D}(\nabla\times P_{\mu_{n}}^\alpha\mathbf{u})\cdot \nabla\times \overline{\mathbf{v}})\, dx.
\end{equation}
We also have:
\begin{equation}\label{ypint}
\int_{D}(\nabla\times P_{\mu_{n}}^\alpha\mathbf{u})\cdot(\nabla\times \overline{\mathbf{v}})\ dx=\left(\dfrac{1}{2}-\mu_{n}(\alpha)\right)\int_{Y}(\nabla\times P_{\mu_{n}}^\alpha\mathbf{u})\cdot(\nabla\times \overline{\mathbf{v}})\ dx,
\end{equation}
from where \eqref{hppass} becomes:
\begin{equation}\label{hpint}
\int_{H}(\nabla\times P_{\mu_{n}}^\alpha\mathbf{u})\cdot (\nabla \times  \overline{\mathbf{v}})\ dx=\left(\dfrac{1}{2}+\mu_{n}(\alpha)\right)\int_{Y}(\nabla\times P_{\mu_{n}}^\alpha\mathbf{u})\cdot(\nabla\times \overline{\mathbf{v}})\ dx.
\end{equation}
Substituting \eqref{ypint} and \eqref{hpint} into \eqref{bz}, we get:
\begin{equation}\label{one}
B_{k}(P_{\mu_{n}}^\alpha\mathbf{u},\mathbf{v})=\left[k\left(\frac{1}{2}+\mu_{n}(\alpha)\right)+\left(\dfrac{1}{2}-\mu_{n}(\alpha)\right)\right]\int_{Y}(\nabla\times P_{\mu_{n}}^\alpha\mathbf{u})\cdot(\nabla\times \overline{\mathbf{v}})\ dx.
\end{equation}
Noting that:
\begin{align}
B_{k}(P_1^\alpha\mathbf{u},\mathbf{v})&=k\int_{H}(\nabla\times P_1^\alpha\mathbf{u})\cdot (\nabla\times \overline{\mathbf{v}})\ dx,\label{two}\\
B_{k}(P_2^\alpha\mathbf{u},\mathbf{v})&=\int_{D}(\nabla\times P_2^\alpha\mathbf{u})\cdot (\nabla\times \overline{\mathbf{v}})\ dx\,,\label{three}
\end{align}
one concludes that:
\begin{equation*}
B_{k}(\mathbf{u},\mathbf{v})=\langle T_{k}^\alpha\mathbf{u},\mathbf{v}\rangle=\langle kP_{1}^\alpha\mathbf{u}+P_{2}^\alpha\mathbf{u}+\sum_{-1/2<\mu_{n}(\alpha)<1/2}\left[k\left(\frac{1}{2}+\mu_{n}(\alpha)\right)+\left(\dfrac{1}{2}-\mu_{n}(\alpha)\right)\right]P_{\mu_{n}}^\alpha\mathbf{u}, \ \mathbf{v}\rangle,
\end{equation*}
and Theorem~\ref{representation} easily follows since $-\Delta_\alpha$ is the operator related to the bilinear form $\langle\mathbf{u},\mathbf{v}\rangle$.
\end{proof}

\section{Band Structure for Complex Coupling Constant }
\label{bandstructure}
We recall that $a(x)=(\epsilon(x))^{-1}$ and the operator representation is applied to write the Bloch eigenvalue problem as:
\begin{equation}
 \nabla\times (( \epsilon(x))^{-1}\nabla \times\mathbf{h}) =-\Delta_\alpha \,T_{k}^\alpha\mathbf{h}=\xi\mathbf{h}.
\label{representationform} 
\end{equation}
We characterize the Bloch spectra by analyzing the operator: 
\begin{equation}
B^\alpha(k):=(T_k^\alpha)^{-1}(-\Delta_{\alpha})^{-1},
\label{inverseoperatorquadform}
\end{equation}
where the operator $(-\Delta_{\alpha})^{-1}:L_{\#}^2(\alpha, Y, \mathbb{C}^3)\rightarrow J_{\#}(\alpha, Y, \mathbb{C}^3)$, defined for all $\alpha\in Y^\ast$, is given by:
\begin{equation}
(-\Delta_{\alpha})^{-1}\mathbf{ u}(x)=-\int_Y G^\alpha(x,y) \mathbf{u}(y)\,dy.
\label{inverselaplacian}
\end{equation}

Let us suppose $\alpha\neq\mathbf{0}$.
The operator $B^\alpha(k): L^2_{\#}(\alpha,Y,\mathbb{C}^3) \longrightarrow J_{\#}(\alpha,Y,\mathbb{C}^3)$ is easily seen to be bounded for $k\notin Z$ ($\ref{Zeta}$), see Theorem~\ref{bounded}. Since $H^1_{\#}(\alpha,Y,\mathbb{C}^3)$ (and hence $J_{\#}(\alpha,Y,\mathbb{C}^3)$) embeds compactly into $L^2_{\#}(\alpha,Y, \mathbb{C}^3)$, we find that $B^\alpha(k)$ is a bounded compact linear operator on $L^2_\#(\alpha,Y,\mathbb{C}^3)$ (see Theorem~\ref{compact2}) and, therefore, it has a discrete spectrum $\{ \gamma_i(k,\alpha) \}_{i \in \mathbb{N}}$, with a possible accumulation point at $0$. The corresponding  eigenspaces are finite-dimensional and the eigenfunctions $\mathbf{p}_i\in L^2_{\#}(\alpha,Y,\mathbb{C}^3)$ satisfy:
\begin{eqnarray}
B^\alpha(k)\mathbf{p}_i(x)=\gamma_i(k,\alpha)\,\mathbf{p}_i(x),\,\hbox{ for $x\in Y$,}
\label{compactproblem}
\end{eqnarray}
and also belong to $J_\#(\alpha,Y,\mathbb{C}^3)$. Observe that, for $\gamma_i \neq 0$, \eqref{compactproblem} holds if and only if \eqref{representationform} holds with $\xi= \lambda_i(k,\alpha) =\gamma_i^{-1}(k,\alpha)$, and $-\Delta_\alpha T_k^\alpha \mathbf{p}_i=\lambda_i(k,\alpha) \mathbf{p}_i$.  Collecting results, we have the following theorem.

\begin{theorem}
\label{extension}
The Bloch eigenvalue problem \eqref{Helm} for the operator $-\nabla \times (k\chi_H+\chi_D)\nabla \times$, associated with the sesquilinear form \eqref{ses}, can be extended for values of the coupling constant $k$ off the positive real axis into $\mathbb{C}\setminus Z$ ($Z$ given by (\ref{Zeta})), i.e., for each $\alpha\in Y^\star$, the Bloch eigenvalues are of finite multiplicity and denoted by $\lambda_j(k,\alpha)=\gamma_j^{-1}(k,\alpha)$, $j\in \mathbb{N}$, and the band structure (\ref{DispersionRelations}):
\begin{equation*}
\lambda_j(k,\alpha)=\xi,\hbox{ $j\in\mathbb{N}$}
\end{equation*}
extends to complex coupling constants $k \in \mathbb{C}\setminus Z$.
\end{theorem}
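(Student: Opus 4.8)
The plan is to read Theorem~\ref{extension} as a bookkeeping statement resting entirely on the spectral factorization of Theorem~\ref{representation}: all the content is that, for $k\in\mathbb{C}\setminus Z$, the operator $B^\alpha(k)=(T_k^\alpha)^{-1}(-\Delta_\alpha)^{-1}$ of \eqref{inverseoperatorquadform} is a well-defined bounded compact operator on $L^2_\#(\alpha,Y,\mathbb{C}^3)$ whose nonzero spectrum is precisely the reciprocal of the Bloch spectrum of \eqref{Helm}. I would carry out the argument for $\alpha\neq\mathbf{0}$ and then repeat it verbatim for $\alpha=\mathbf{0}$ on the mean-zero space $J_\#(0,Y,\mathbb{C}^3)$ of \eqref{H1-0}.

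The first step is to invoke Theorem~\ref{representation}: for $k\notin Z$ the series \eqref{Takinv} defines a bounded inverse $(T_k^\alpha)^{-1}$ on $J_\#(\alpha,Y,\mathbb{C}^3)$, the key point being that the scalar multipliers $z\bigl[(1/2+\mu_n(\alpha))+z(1/2-\mu_n(\alpha))\bigr]^{-1}$ with $z=1/k$, together with the constants $z$ and $1$ on $W_1^\alpha$ and $W_2^\alpha$, are uniformly bounded in $n$: since the $\mu_n(\alpha)$ are eigenvalues of the compact operator $T^\alpha$ they accumulate only at $0$, the multipliers converge to $2z/(1+z)$, and the finitely many denominators that could vanish do so precisely when $k\in Z$. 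Composing with the bounded map $(-\Delta_\alpha)^{-1}\colon L^2_\#(\alpha,Y,\mathbb{C}^3)\to J_\#(\alpha,Y,\mathbb{C}^3)$ of \eqref{inverselaplacian} gives that $B^\alpha(k)\colon L^2_\#(\alpha,Y,\mathbb{C}^3)\to J_\#(\alpha,Y,\mathbb{C}^3)$ is bounded; this is Theorem~\ref{bounded} (cf.\ also Lemma~\ref{inverseoperator}). Next, the Hilbert-space theorems of Section~\ref{layers} identify the $J_\#$-norm with the $H^1$ seminorm, so $J_\#(\alpha,Y,\mathbb{C}^3)$ embeds compactly into $L^2_\#(\alpha,Y,\mathbb{C}^3)$ by Rellich--Kondrachov, whence $B^\alpha(k)$, regarded as an operator on $L^2_\#(\alpha,Y,\mathbb{C}^3)$, is compact (Theorem~\ref{compact2}). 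By the Riesz--Schauder theory of compact operators, its spectrum is $\{0\}$ together with at most countably many nonzero eigenvalues $\gamma_j(k,\alpha)$ of finite algebraic multiplicity accumulating only at $0$, with finite-dimensional eigenspaces; and since $B^\alpha(k)$ maps into $J_\#$, the eigenfunctions $\mathbf{p}_j$ lie in $J_\#(\alpha,Y,\mathbb{C}^3)$. Finally, for $\gamma_j(k,\alpha)\neq0$, applying $T_k^\alpha$ and then $-\Delta_\alpha$ to \eqref{compactproblem} and using the factorization $\nabla\times(a(x)\nabla\times\,\cdot\,)=-\Delta_\alpha T_k^\alpha$ of Theorem~\ref{representation} shows that $\xi=\lambda_j(k,\alpha):=\gamma_j^{-1}(k,\alpha)$ solves \eqref{representationform}, i.e.\ \eqref{Helm}, with $-\Delta_\alpha T_k^\alpha\mathbf{p}_j=\lambda_j(k,\alpha)\mathbf{p}_j$; conversely every Bloch eigenpair of \eqref{Helm} arises in this way. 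For $k$ on the positive real axis this reproduces the dispersion relation \eqref{DispersionRelations}, so the formula $\lambda_j(k,\alpha)=\gamma_j^{-1}(k,\alpha)$ provides the desired extension to $k\in\mathbb{C}\setminus Z$.

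The main obstacle is the uniform control of the multipliers in \eqref{Takinv} and the attendant bookkeeping of the exceptional set $Z$ of \eqref{Zeta}: one must check that $Z$ is contained in the non-positive real axis with its only accumulation point at $k=-1$, so that $\mathbb{C}\setminus Z$ is open and connected and contains both the physical ray $k>0$ and every non-real $k$, and that off $Z$ the series converges in operator norm with a bound locally uniform in $k$---this is exactly what makes $k\mapsto B^\alpha(k)$ an analytic compact-operator-valued function on $\mathbb{C}\setminus Z$ and underpins the perturbation analysis of Section~\ref{asymptotic}. A secondary, more cosmetic point is the enumeration ``$j\in\mathbb{N}$'': at complex $k$ the operator $B^\alpha(k)$ is no longer self-adjoint, so the labeling of branches $\lambda_j(k,\alpha)$ that is consistent with the self-adjoint ordering at $k>0$ is only fully pinned down by the analytic continuation carried out later; for the present theorem it is enough to record that the Bloch eigenvalues exist, are of finite multiplicity, and coincide with the classical band structure for real $k$, which is what the steps above deliver.
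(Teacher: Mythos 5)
Your proposal is correct and follows essentially the same route as the paper: the paper also proves Theorem~\ref{extension} by using the spectral representation of Theorem~\ref{representation} to form $B^\alpha(k)=(T_k^\alpha)^{-1}(-\Delta_\alpha)^{-1}$ for $k\notin Z$, invoking its boundedness (Theorem~\ref{bounded}) and compactness on $L^2_\#(\alpha,Y,\mathbb{C}^3)$ via the compact embedding of $J_\#$ (Theorem~\ref{compact2}), and identifying the nonzero eigenvalues $\gamma_j(k,\alpha)$ with the reciprocals $\lambda_j(k,\alpha)$ of the Bloch eigenvalues through the factorization $-\Delta_\alpha T_k^\alpha$. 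Your added checks on the uniform boundedness of the multipliers in \eqref{Takinv} and on the structure of $Z$ are consistent with Lemma~\ref{inverseoperator} and do not change the argument.
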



\section{Power Series Representation of Bloch Eigenvalues for High Contrast Periodic Media }
\label{asymptotic}
In what follows, we set $\gamma=\lambda^{-1}(k,\alpha)$ and analyze the spectral problem:
\begin{equation}
B^\alpha(k) \mathbf{u} = \gamma (k,\alpha) \mathbf{u}.
\label{forpointtwo}
\end{equation}
Henceforth, we will analyze the high contrast limit by developing a power series in $z=1/k$, about $z=0$, for the spectrum of the family of operators \eqref{inverseoperatorquadform} associated with \eqref{forpointtwo}:  
$$\begin{array}{lcl}
B^{\alpha}(k) & = & (T_k^\alpha)^{-1}(-\Delta_{\alpha})^{-1}\\
& = &  (zP_1^\alpha + P_2^\alpha + z\sum \limits_{-1/2<\mu_i(\alpha)<1/2}[(1/2 + \mu_i(\alpha)) + z(1/2-\mu_i(\alpha))]^{-1} P_{\mu_i}^\alpha)(-\Delta_{\alpha})^{-1}\\
&=:& A^\alpha(z).
\end{array}$$
Here, we define the operator $A^\alpha(z)$ such that $A^\alpha(1/k)=B^\alpha(k)$, and the associated eigenvalues $\beta(1/k,\alpha)=\gamma(k,\alpha)$.  Then, the spectral problem becomes $A^\alpha(z)\mathbf{u}=\beta(z,\alpha)\mathbf{u}$, for $\mathbf{u}\in L^2_{\#}(\alpha,Y,\mathbb{C}^3)$.

It is easily seen, from the above representation, that  $A^{\alpha}(z)$ is self-adjoint for $k\in\mathbb{R}$ and is a family of bounded operators taking $L^2_{\#}(\alpha,Y,\mathbb{C}^3)$ into itself.  Also, we have the following lemma.

\begin{lemma}
\label{inverseoperator}
$A^{\alpha}(z)$ is holomorphic on $\Omega_0 := \mathbb{C} \setminus \mathcal{S}$, where $\mathcal{S}=\displaystyle\cup_{i\in\mathbb{N}} z_i(\alpha)$ is the collection of points $z_i(\alpha)=(\mu_i(\alpha)+1/2)/(\mu_i(\alpha)-1/2)$ on the negative real axis associated with
the eigenvalues $\{\mu_i(\alpha)\}_{i\in\mathbb{N}}$. The set $\mathcal{S}$ consists of poles  of $A^\alpha(z)$ with only one accumulation point at $z=-1$.
\end{lemma}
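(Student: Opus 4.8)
The plan is to exhibit $A^\alpha(z)$ as a locally uniform operator-norm limit of holomorphic operator-valued functions, and then to read off the poles directly from the explicit formula for $(T_k^\alpha)^{-1}$ in Theorem~\ref{representation}. Fix an enumeration $\mu_1(\alpha),\mu_2(\alpha),\dots$ of the distinct eigenvalues of $T^\alpha$ lying in $(-\tfrac12,\tfrac12)$, let $P_{\mu_n}^\alpha$ denote the spectral projection onto the corresponding eigenspace $E_n$, and put $f_n(z):=z\big[(\tfrac12+\mu_n(\alpha))+z(\tfrac12-\mu_n(\alpha))\big]^{-1}$. By Theorem~\ref{representation},
\[
A^\alpha(z)=\Big(zP_1^\alpha+P_2^\alpha+\sum_n f_n(z)\,P_{\mu_n}^\alpha\Big)(-\Delta_\alpha)^{-1}.
\]
Each $f_n$ is a scalar rational function whose only singularity is the simple pole $z_n(\alpha)=(\mu_n(\alpha)+\tfrac12)/(\mu_n(\alpha)-\tfrac12)$, and $P_{\mu_n}^\alpha(-\Delta_\alpha)^{-1}$ is a fixed bounded operator; hence each truncation $S_N(z):=\big(zP_1^\alpha+P_2^\alpha+\sum_{n\le N}f_n(z)P_{\mu_n}^\alpha\big)(-\Delta_\alpha)^{-1}$ is a holomorphic $\mathcal L\big(L^2_\#(\alpha,Y,\mathbb C^3)\big)$-valued function on $\mathbb C\setminus\{z_1(\alpha),\dots,z_N(\alpha)\}$, in particular on $\Omega_0$.

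The main step is the norm convergence $S_N\to A^\alpha$, uniformly on compact subsets $\mathcal K$ of $\mathbb C\setminus\overline{\mathcal S}$. On such a $\mathcal K$ one has $d:=\operatorname{dist}(\mathcal K,\mathcal S)>0$; since $T^\alpha$ is compact (Theorem~\ref{tcompact}) its eigenvalues accumulate only at $0$, so $\mu_n(\alpha)\to0$ and hence $\inf_n(\tfrac12-\mu_n(\alpha))>0$, and because the denominator of $f_n$ equals $(\tfrac12-\mu_n(\alpha))(z-z_n(\alpha))$ it follows that $c(\mathcal K):=\sup_n\sup_{z\in\mathcal K}|f_n(z)|<\infty$. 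Writing $Q_N:=\sum_{n>N}P_{\mu_n}^\alpha$ (an orthogonal projection, so $\|Q_N\|\le1$), one has for $M>N$
\[
S_M(z)-S_N(z)=\Big(\sum_{N<n\le M}f_n(z)P_{\mu_n}^\alpha\Big)Q_N(-\Delta_\alpha)^{-1},
\]
because $Q_N$ acts as the identity on each $E_m$, $m>N$; by mutual orthogonality of the $P_{\mu_n}^\alpha$ one has $\big\|\sum_{N<n\le M}f_n(z)P_{\mu_n}^\alpha\big\|=\max_{N<n\le M}|f_n(z)|\le c(\mathcal K)$, so $\|S_M(z)-S_N(z)\|\le c(\mathcal K)\,\|Q_N(-\Delta_\alpha)^{-1}\|$ on $\mathcal K$. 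Finally $Q_N\to0$ in the strong operator topology while $(-\Delta_\alpha)^{-1}$ is compact (elliptic regularity together with the compact embedding $H^1_\#\hookrightarrow L^2_\#$, cf.\ Theorem~\ref{compact2}), whence $\|Q_N(-\Delta_\alpha)^{-1}\|\to0$. Thus $\{S_N\}$ is uniformly Cauchy on $\mathcal K$, and since holomorphy is local, $A^\alpha$ is holomorphic on $\mathbb C\setminus\overline{\mathcal S}$; as $-1$ is the accumulation point of $\mathcal S$ it is excluded from the region of holomorphy, and when $0\in\sigma(T^\alpha)$ one even has $-1\in\mathcal S$ so that $\Omega_0=\mathbb C\setminus\overline{\mathcal S}$.

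It remains to describe $\mathcal S$. Near a point $z_n(\alpha)$ every summand except $f_n(z)P_{\mu_n}^\alpha(-\Delta_\alpha)^{-1}$ is holomorphic, so $A^\alpha$ has there a pole with residue $\big(\operatorname{Res}_{z_n(\alpha)}f_n\big)P_{\mu_n}^\alpha(-\Delta_\alpha)^{-1}$, a nonzero finite-rank operator since $\mu_n(\alpha)$ has finite multiplicity and $(-\Delta_\alpha)^{-1}$ is injective. Because $\mu_n(\alpha)\in(-\tfrac12,\tfrac12)$ forces $\mu_n(\alpha)+\tfrac12>0$ and $\mu_n(\alpha)-\tfrac12<0$, each $z_n(\alpha)<0$, so the poles lie on the negative real axis. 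Finally $\mu\mapsto(\mu+\tfrac12)/(\mu-\tfrac12)$ is continuous on $(-\tfrac12,\tfrac12)$ and carries the unique accumulation point $0$ of $\{\mu_n(\alpha)\}$ to $-1$, so $\mathcal S=\{z_n(\alpha)\}$ has $-1$ as its unique accumulation point. The periodic case $\alpha=\mathbf 0$ is handled identically, using the periodic Green's function $G^0$, the space $J_\#(0,Y,\mathbb C^3)$ of \eqref{H1-0}, and the compact embedding $J_\#(0,Y,\mathbb C^3)\hookrightarrow L^2_\#(0,Y,\mathbb C^3)$.

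I expect the one genuine obstacle to be the operator-norm convergence of the infinite sum. The series $\sum_n f_n(z)P_{\mu_n}^\alpha$ for $(T_k^\alpha)^{-1}$ alone does \emph{not} converge in operator norm: $f_n(z)\to 2z/(1+z)\neq0$, so the tails $\sum_{N<n\le M}f_n(z)P_{\mu_n}^\alpha$ have norm bounded below by roughly $|2z/(1+z)|$. It is only the post-composition with the compact operator $(-\Delta_\alpha)^{-1}$ that renders the tails norm-negligible, via $Q_N\to0$ strongly together with compactness; setting up this reduction and the locally uniform bound $c(\mathcal K)$ is the heart of the argument.
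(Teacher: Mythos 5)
Your argument is correct in substance, and it actually supplies more than the paper does: in the text the lemma is simply read off from the explicit representation of $(T_k^\alpha)^{-1}$ ("it is easily seen\ldots"), with no discussion of in what sense the infinite sum defines a holomorphic operator-valued function. Your reduction — truncate the spectral sum, bound the scalar factors $f_n(z)$ locally uniformly away from $\overline{\mathcal S}$ using $\inf_n(\tfrac12-\mu_n(\alpha))>0$, and make the tails norm-small only after composing with the compact operator $(-\Delta_\alpha)^{-1}$ via $Q_N\to0$ strongly — is exactly the missing analytic content, and your observation that $\sum_n f_n(z)P^\alpha_{\mu_n}$ alone is \emph{not} norm-convergent (since $f_n(z)\to 2z/(1+z)$) is the right diagnosis of where the work lies. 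Two small repairs are needed, neither fatal. First, the norm in which you estimate the tails is the energy norm of $J_\#(\alpha,Y,\mathbb C^3)$ (the projections $P^\alpha_{\mu_n}$, $Q_N$ are orthogonal only with respect to $\langle\cdot,\cdot\rangle$), so what you must use is compactness of $(-\Delta_\alpha)^{-1}:L^2_\#\to J_\#$ \emph{in the energy norm}; this follows from $H^2$-regularity (immediate from the Fourier representation of $G^\alpha$) together with the compactness of $H^2\hookrightarrow H^1$, not from the embedding $H^1_\#\hookrightarrow L^2_\#$ you cite, which only gives compactness into $L^2$. After that, the Poincar\'e inequality \eqref{alpha-poincare} (resp.\ \eqref{poincarealphazero} for $\alpha=\mathbf 0$) converts the energy-norm bound on $S_M(z)-S_N(z)$ into an $\mathcal L[L^2_\#;L^2_\#]$ bound, which is the topology in which holomorphy is used later. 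Second, to see that the residue $(\operatorname{Res}_{z_n}f_n)\,P^\alpha_{\mu_n}(-\Delta_\alpha)^{-1}$ is nonzero, injectivity of $(-\Delta_\alpha)^{-1}$ is not quite the right reason; you need that its range is dense in $J_\#$ (again clear from the Green's function), so that $P^\alpha_{\mu_n}(-\Delta_\alpha)^{-1}=0$ would force $P^\alpha_{\mu_n}=0$. Your handling of the accumulation point, i.e.\ that the honest region of holomorphy is $\mathbb C\setminus\overline{\mathcal S}$ and that $-1\in\mathcal S$ only when $0$ is an eigenvalue of $T^\alpha$, is a fair reading of the (slightly loosely stated) lemma.
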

The upper bound $z^*(\alpha)$ on $\mathcal{S}$ for fixed $\alpha\in Y^\ast$ is written:
\begin{eqnarray}
 \max_i\{z_i(\alpha)\}=z^\ast(\alpha)<0.
\label{bdsonS}
\end{eqnarray}
In Section \ref{radiusgeneralshape}, we develop explicit lower bounds on the structural spectrum, i.e.: 
$$-1/2<\mu^-\leq \mu_i(\alpha)\in \cup_{\alpha\in Y^*}\{\mu_i(\alpha)\}_{i\in \mathbb{N}}$$ that holds for a generic class of inclusion domains $D$. The corresponding upper bound $z^+$ is written:
\begin{eqnarray}
 \max\{z^*(\alpha); \alpha\in Y^{\ast}\}=\frac{\mu^-+1/2}{\mu^--1/2}=z^+<0,
\label{bdsonSall}
\end{eqnarray}
and $z^*(\alpha)\leq z^+$.

Let $\beta^\alpha_0 \in \sigma(A^{\alpha}(0))$ with spectral projection $P(0)$, and let $\Gamma$ be a closed contour in $\mathbb{C}$ enclosing $\beta^\alpha_0$, but no other $\beta \in \sigma(A^{\alpha}(0))$.
The spectral projection associated with $\beta^\alpha (z) \in \sigma(A^{\alpha}(z))$, for $\beta^\alpha(z) \in \text{int}(\Gamma)$, is denoted by $P(z)$. We write $\mathcal{M}(z) = P(z)L^2_{\#}(\alpha,Y,\mathbb{C}^3)$ and suppose, for the moment, that $\Gamma$ lies in the resolvent of $A^\alpha(z)$ and $\text{dim}(\mathcal{M}(0)) =\text{dim}(\mathcal{M}(z))= m$, realizing that Theorems \ref{separationandraduus-alphanotzero} and \ref{separationandraduus-alphazero}  provide explicit conditions for when this holds true.  Now define $\hat{\beta}^\alpha(z) := \frac{1}{m} \text{tr}(A^{\alpha}(z)P(z))$, the weighted mean of the eigenvalue group $\{ \beta^\alpha_1(z), \ldots ,\beta^\alpha_m(z) \}$ corresponding to $\beta^\alpha_1(0) = \ldots = \beta^\alpha_m(0) = \beta^\alpha_0$.   We  write the weighted mean as:
\begin{equation*}
\hat{\beta}^\alpha(z) = \beta^\alpha_0 +  \frac{1}{m} \text{tr}[(A^{\alpha}(z)-\beta^\alpha_0)P(z)].
\end{equation*}
Since $A^{\alpha}(z)$ is analytic in a neighborhood of the origin, we write:
\begin{equation*}
A^{\alpha}(z) = A^{\alpha}(0) + \sum_{n=1}^{\infty} z^n A^{\alpha}_n.
\end{equation*}
The explicit form of the sequence $\{A^{\alpha}_n\}_{n\in \mathbb{N}}$ is given in Section \ref{radius}. Define the resolvent of $A^{\alpha}(z)$ by:
$$R(\zeta, z) = (A^{\alpha}(z) - \zeta )^{-1}\text{;}$$
and expanding successively in Neumann series and power series, we have the identity:
\begin{align}
\label{foursix}
R(\zeta,z) & =  R(\zeta,0)[I + (A^{\alpha}(z) - A^{\alpha}(0))R(\zeta,0)]^{-1} \notag\\
& =  R(\zeta,0)+\sum_{p=1}^\infty [-(A^{\alpha}(z) - A^{\alpha}(0))R(\zeta,0)]^p\\
& =  R(\zeta,0) + \sum_{n=1}^{\infty} z^n R_n(\zeta),\notag
\end{align}
where:
$$R_n(\zeta) = \sum_{k_1 + \ldots k_p = n, k_j \geq 1} (-1)^pR(\zeta,0)A^{\alpha}_{k_1}R(\zeta,0)A^{\alpha}_{k_2}\ldots R(\zeta,0)A^{\alpha}_{k_p},\hspace{2mm} \text{for $n\geq 1$.}$$

Application of  the contour integral formula for spectral projections \cite{SzNagy}, \cite{TKato1}, \cite{TKato2}, delivers the expansion for the spectral projection:
\begin{equation}
\label{Project1}
P(z)  = \displaystyle -\frac{1}{2\pi i}\displaystyle \oint_{\Gamma} R(\zeta, z)d\zeta = P(0) + \sum_{n=1}^\infty z^n P_n,
\end{equation}
where $P_n =  -\frac{1}{2\pi i} \oint_{\Gamma} R_n(\zeta)d\zeta$.  Now, we develop the series for the  weighted mean of the eigenvalue group. Start with:
\begin{eqnarray*}
(A^{\alpha}(z) - \beta^\alpha_0)R(\zeta,z) = I + (\zeta - \beta^\alpha_0)R(\zeta,z),
\end{eqnarray*}
and we have:
\begin{equation*}
(A^{\alpha}(z) - \beta^\alpha_0)P(z) = - \frac{1}{2 \pi i}\oint_{\Gamma} (\zeta - \beta^\alpha_0)R(\zeta,z)d\zeta \text{,}
\end{equation*}
so:
\begin{equation}
\hat{\beta}(z) - \beta^\alpha_0 = - \frac{1}{2m \pi i} \text{tr}\oint_{\Gamma} (\zeta - \beta^\alpha_0)R(\zeta,z)d\zeta.
\label{fourten}
\end{equation}
Equation \eqref{fourten} delivers an analytic representation formula for a Bloch eigenvalue or, more generally, the eigenvalue group when $\beta^\alpha_0 $ is not a simple eigenvalue.
Substituting the third line of \eqref{foursix} into \eqref{fourten} yields:
\begin{equation}
\hat{\beta}^\alpha(z) = \beta^\alpha_0 + \sum_{n=1}^\infty z^n\beta^\alpha_n,
\label{foureleven}
\end{equation}
where:
\begin{equation}
{\beta}^\alpha_n = - \frac{1}{2m \pi i}\, \text{tr}\sum_{k_1+\cdots+k_p=n} \frac{(-1)^p}{p}\oint_{\Gamma}A^{\alpha}_{k_1}R(\zeta,0)A^{\alpha}_{k_2}\ldots R(\zeta,0)A^{\alpha}_{k_p}R(\zeta,0)d\zeta;\,\hbox{ $n\geq 1$}.
\label{fourtwelve}
\end{equation}

\section{Spectrum in the High Contrast Limit, $\alpha\not=0$}
\label{limitspectra}
We investigate the spectrum of the limiting operator $A^{\alpha}(0)$, for $\alpha\not=0$.  Using the representation:
\begin{equation*}
A^{\alpha}(z) = (zP_1^\alpha + P_2^\alpha + z\sum \limits_{-\frac{1}{2} < \mu_i(\alpha) < \frac{1}{2}} [(1/2 + \mu_i(\alpha)) + z(1/2-\mu_i(\alpha))]^{-1}P_{\mu_i}^\alpha)(-\Delta_{\alpha})^{-1} \text{,}
\end{equation*}
we see that $A^{\alpha}(0) = P_2^\alpha(-\Delta_{\alpha})^{-1}$;
and, from Theorem~\ref{compact2}, we get that $P_2^\alpha(-\Delta_{\alpha})^{-1}$ is a bounded compact operator and has a discrete spectrum.
Denote the spectrum of $A^{\alpha}(0)$ by $\sigma(A^{\alpha}(0))$.  Since $A^\alpha(0)$ is clearly self-adjoint and compact, it follows that $\sigma(A^{\alpha}(0)) \subset \mathbb{R}$ is discrete, with only one possible cluster point at zero. Next, we show that it is strictly positive as well.

We now consider the eigenvalue problem:
\begin{equation}
    \label{A0eigprob}
P_2^\alpha(-\Delta_{\alpha})^{-1}\mathbf{u}=\beta \mathbf{u},
\end{equation}
with $\beta \in \sigma(A^{\alpha}(0))$ and eigenfunction $\mathbf{u}\in L^2_{\#}(\alpha,Y,\mathbb{C}^3)$.
This eigenvalue problem is  equivalent to finding $\beta$ and $\mathbf{u}\in W_2^\alpha$ for which:
\begin{equation}
(\mathbf{u}, \mathbf{v})_{L^2(Y, \mathbb{C}^3)} = \beta \langle \mathbf{u}, \mathbf{v} \rangle, \hbox{ for all $\mathbf{v}\in J_{\#}(\alpha,Y,\mathbb{C}^3)$}.
\label{eqiveigennot0}
\end{equation}
Indeed, to see the equivalence, note that we have $P_2^\alpha(-\Delta_{\alpha})^{-1}:L^2_{\#}(\alpha,Y,\mathbb{C}^3)\rightarrow W_2^\alpha$ and, for $\mathbf{v} \in J_{\#}(\alpha,Y,\mathbb{C}^3)$, it holds:
\begin{equation*}
\begin{array}{lcl}
\langle P_2^\alpha(-\Delta_{\alpha})^{-1}\mathbf{u}, \mathbf{v} \rangle =  \beta \langle \mathbf{u}, \mathbf{v} \rangle=\beta \langle P_2^\alpha\mathbf{u}, \mathbf{v} \rangle; \end{array}
\end{equation*}
hence:
\begin{equation}
\label{fivefour2}
\begin{array}{lcl}
\langle (-\Delta_{\alpha})^{-1}\mathbf{u}, P_2^\alpha\mathbf{v}\rangle= \beta \langle \mathbf{u}, P_2^\alpha\mathbf{v} \rangle.
\end{array}
\end{equation}
Since $\langle (-\Delta_{\alpha})^{-1}\mathbf{u}, \mathbf{v}\rangle = \int_Y \mathbf{u} \cdot \overline{\mathbf{v}}\,dx=(\mathbf{u}, \mathbf{v})_{L^2(Y,\mathbb{C}^3)}$, for any $\mathbf{u}\in L^2_\#(\alpha,Y,\mathbb{C}^3)$ and $\mathbf{v} \in J_{\#}(\alpha,Y,\mathbb{C}^3)$, equation \eqref{fivefour2} becomes:
\begin{equation*}
(\mathbf{u}, P_2^\alpha\mathbf{v})_{L^2(Y,\mathbb{C}^3)}= \beta \langle \mathbf{u}, P_2^\alpha\mathbf{v} \rangle,
\end{equation*}
and the equivalence follows by noticing that $P_2^\alpha$ is the  projection of $J_{\#}(\alpha,Y,\mathbb{C}^3)$ onto $W_2^\alpha$. 

Rewriting \eqref{eqiveigennot0} as:
\begin{equation*}
\int_D \nabla \times \mathbf{u} \cdot \nabla \times \overline{\mathbf{v}}\,dx = \beta^{-1} \int_Y \mathbf{u} \cdot \overline{\mathbf{v}}\,dx,
\end{equation*}
we define the sesquilinear form $b_0(\cdot , \cdot): W_2^\alpha \times W_2^\alpha \rightarrow \mathbb{C}$ by:
\begin{equation*}
    b_0(\mathbf{u},\mathbf{v}) := \int_D \nabla \times \mathbf{u} \cdot \nabla \times \overline{\mathbf{v}}\,dx.
\end{equation*}
Clearly $b_0$ is bounded and we wish to show that the spectrum is positive. To this end we introduce the following lemma.

\begin{lemma}
\label{b0coerciveW2}
For all $\mathbf{u} \in W_2^\alpha$, there exists $C>0$ such that: 
  \begin{equation}
  \label{b0lowerbound}
      b_0(\mathbf{u},\mathbf{u}) \geq C \int_Y |\mathbf{u}|^2\,dx.
  \end{equation}
\end{lemma}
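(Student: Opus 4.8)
The plan is to establish the lower bound in two stages: first a Poincaré-type inequality controlling $\int_Y|\mathbf{u}|^2$ by $\int_Y|\nabla\times\mathbf{u}|^2=\|\mathbf{u}\|^2$, and then an argument showing that, for $\mathbf{u}\in W_2^\alpha$, the full curl energy over $Y$ is comparable to the curl energy over $D$ alone, i.e.\ $b_0(\mathbf{u},\mathbf{u})=\int_D|\nabla\times\mathbf{u}|^2\,dx\geq c\|\mathbf{u}\|^2$ for some $c>0$. Combining the two gives \eqref{b0lowerbound}.

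For the first stage, recall from the theorems following \eqref{H1} and \eqref{H1-0} that on $J_\#(\alpha,Y,\mathbb{C}^3)$ one has $\int_Y\nabla\times\mathbf{u}\cdot\nabla\times\overline{\mathbf{v}}\,dx=\int_Y\nabla\mathbf{u}:\nabla\overline{\mathbf{v}}\,dx$, so $\|\mathbf{u}\|^2=\int_Y|\nabla\mathbf{u}|^2\,dx$. For $\alpha\neq\mathbf{0}$ the quasiperiodic Poincaré inequality gives $\int_Y|\mathbf{u}|^2\,dx\leq C_\alpha\int_Y|\nabla\mathbf{u}|^2\,dx$ directly (no nontrivial constants are $\alpha$-quasiperiodic), so $\int_Y|\mathbf{u}|^2\,dx\leq C_\alpha\|\mathbf{u}\|^2$. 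This reduces the lemma to proving $b_0(\mathbf{u},\mathbf{u})\geq c\|\mathbf{u}\|^2$ on $W_2^\alpha$.

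For the second stage — which I expect to be the main obstacle — the key point is the defining property of $W_2^\alpha$: every $\mathbf{u}\in W_2^\alpha$ satisfies $\nabla\times\mathbf{u}=0$ in $H$, hence $\nabla\times\mathbf{u}$ is supported in $\overline{D}$ and $\|\mathbf{u}\|^2=\int_Y|\nabla\times\mathbf{u}|^2\,dx=\int_D|\nabla\times\mathbf{u}|^2\,dx=b_0(\mathbf{u},\mathbf{u})$. In fact this shows $b_0(\mathbf{u},\mathbf{u})=\|\mathbf{u}\|^2$ exactly on $W_2^\alpha$, so no comparison argument is actually needed: the ``obstacle'' dissolves once one uses the characterization \eqref{w2}. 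The only care required is the measure-zero boundary $\partial D$, which is harmless since $\nabla\times\mathbf{u}\in L^2(Y,\mathbb{C}^3)$.

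Putting the pieces together: for $\mathbf{u}\in W_2^\alpha$ we have
\begin{equation*}
b_0(\mathbf{u},\mathbf{u})=\int_D|\nabla\times\mathbf{u}|^2\,dx=\int_Y|\nabla\times\mathbf{u}|^2\,dx=\|\mathbf{u}\|^2=\int_Y|\nabla\mathbf{u}|^2\,dx\geq C_\alpha^{-1}\int_Y|\mathbf{u}|^2\,dx,
\end{equation*}
which is \eqref{b0lowerbound} with $C=C_\alpha^{-1}>0$. If one wants the constant to be independent of $\alpha$ (as is implicit in the statement), one invokes the uniform lower bound on the structural spectrum developed in Section~\ref{radiusgeneralshape}, or equivalently a uniform quasiperiodic Poincaré constant over $\alpha\in Y^\ast\setminus\{\mathbf{0}\}$; the delicate case is $\alpha\to\mathbf{0}$, where one uses instead that functions in $W_2^\alpha$ with $\alpha$ near $\mathbf{0}$ still cannot degenerate to constants because their curl lives in $D$ and $S^\alpha$ is an isomorphism onto $W_3^\alpha$ with $W_2^\alpha$ split off. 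I would present the clean identity $b_0(\mathbf{u},\mathbf{u})=\|\mathbf{u}\|^2$ first, since it trivializes the coercivity, and then cite the Poincaré step.
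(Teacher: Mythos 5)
Your argument is correct, and it reduces the lemma to two clean facts: (i) for $\mathbf{u}\in W_2^\alpha$ the defining property $\nabla\times\mathbf{u}=0$ in $H$ gives the identity $b_0(\mathbf{u},\mathbf{u})=\int_Y|\nabla\times\mathbf{u}|^2\,dx=\Vert\mathbf{u}\Vert^2=\int_Y|\nabla\mathbf{u}|^2\,dx$, and (ii) the quasiperiodic Poincar\'e inequality for $\alpha\neq\mathbf{0}$, which the paper itself records later as Lemma~\ref{poincarealpha} with the explicit constant $|\alpha|^{-2}$. The paper proves the lemma differently: it argues by contradiction, extracting a normalized sequence $\mathbf{v}_n\in W_2^\alpha$ with $\Vert\mathbf{v}_n\Vert_{L^2}=1$ and $\nabla\mathbf{v}_n\to 0$, then uses compact embedding to pass to a constant limit of unit $L^2$-norm, which is impossible in $J_\#(\alpha,Y,\mathbb{C}^3)$ for $\alpha\neq 0$. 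Both routes rest on the same key observation that the $D$-energy equals the full-cell energy on $W_2^\alpha$; your version replaces the soft compactness step by the quantitative Poincar\'e bound and therefore yields an explicit constant $C=|\alpha|^2$ (consistent with the $|\alpha|^2$ factors appearing in Section~\ref{derivation}), whereas the paper's contradiction argument is shorter on prerequisites but gives no constant. One caveat: your closing discussion about making $C$ independent of $\alpha$ is not needed and is also not what the statement asks --- the lemma is for a fixed $\alpha\neq\mathbf{0}$ (the periodic case is treated separately in Section~\ref{limitspeczero} with the mean-zero condition), and the constant necessarily degenerates as $\alpha\to\mathbf{0}$ through generic fields; the appeal there to the structural spectrum and to $S^\alpha$ being an isomorphism onto $W_3^\alpha$ is vague and would not, as written, produce a uniform bound. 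Dropping that aside, your proof stands.
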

\begin{proof}
  Suppose \eqref{b0lowerbound} does not hold.  Note that, for each $n=1,2,\ldots$, there exists $\mathbf{u}_n \in W_2^\alpha$, for which:
  \begin{equation*}
     n\,\,\int_D \nabla \mathbf{u}_n : \nabla  \overline{\mathbf{u}_n}\,dx \,=\, n\,\,\int_D \nabla \times \mathbf{u}_n \cdot \nabla \times \overline{\mathbf{u}_n}\,dx\,\, <\,\, \int_Y |\mathbf{u}_n|^2\,dx.
  \end{equation*}
  Then, on normalizing $\mathbf{u}_n$ with respect to the $L^2$-norm, there exists a sequence $\{\mathbf{v}_n \} \subset W_2^\alpha$, with $\| \mathbf{v}_n \|_{L^2(Y,\mathbb{C}^3)} = 1$ and $\nabla \mathbf{v}_{n} \rightarrow 0$ strongly in $L^2_{\#}(\alpha, Y, \mathbb{C}^3)$.  After possibly passing to a subsequence, we apply standard arguments to conclude that $\mathbf{v}_n \rightarrow \mathbf{v}$ strongly in $J_{\#}(\alpha, Y, \mathbb{C}^3)$, such that $\mathbf{v}$ is constant in $Y$ and $\| \mathbf{v} \|_{L^2(Y,\mathbb{C}^3)} = 1$.
  But the only constant function in $J_{\#}(\alpha, Y, \mathbb{C}^3)$, for $\alpha\not=0$, is the zero function; which leads to a contradiction.
\end{proof}
In light of Lemma~\ref{b0coerciveW2}, we conclude that the problem \eqref{A0eigprob} has a positive, decreasing sequence of eigenvalues, with a possible cluster point only at zero.



\section{Spectrum in the High Contrast Limit: Periodic Case, $\alpha=\mathbf{0}$}
\label{limitspeczero}

We describe the spectrum of the limiting operator $A^{0}(0)$, which is written as $A^0(0) = P_2^0(-\Delta_0)^{-1}$,
where $P_2^0$ is the projection onto  $W _2^0$.
Here, the operator $(-\Delta_0)^{-1}$ is compact and self-adjoint on $L^2_{\#}(0,Y,\mathbb{C}^3)$, and given by \eqref{inverselaplacian} for $\alpha=\mathbf{0}$.  Denote the spectrum of $A^{0}(0)$ by $\sigma(A^{0}(0))$.  In this case we see, as in the case $\alpha\neq\mathbf{0}$ of the previous section, that $\sigma(A^{0}(0))\subset \mathbb{R}_+$ is discrete, with only one possible cluster point at zero. 

As in \cite{Bouchitte2017}, one can define: 
\begin{definition} The geometric average is a path integral with components defined by:
$$(\oint \mathbf{u})\cdot\mathbf{e}^i:=\int_{\Gamma_i}\,\mathbf{u}\cdot\mathbf{e}^i\,d\ell,$$
where $\Gamma_i$ is any curve in $H$ connecting two opposite points on the faces of $\partial Y$ orthogonal to $\mathbf{e}^i$ and $d\ell$ is an element of arc-length.
\end{definition}
The goal is to precisely identify $\sigma(A^{0}(0))\subset \mathbb{R}_+$.  With that in mind, we introduce the spaces:
\begin{align*}
F(Y)&=\left\{\mathbf{u}\in H^1_{loc}(\mathbb{R}^3,\mathbb{C}^3)\,:\,\mathbf{u} \text{ periodic on } Y\,,\,\nabla\cdot\mathbf{u}=0\,\text{in}\,Y,\,\,\nabla\times\mathbf{u}=0\,\text{in}\,\,H\right\}\\
\chi_0^{div}&=\left\{\mathbf{u}\in F(Y):\,\oint\mathbf{u}=0\right\}.
\end{align*}
A characterization of the space $W_2^0$ is given by the following lemma.
\begin{lemma}\label{lem:W2} Let $\chi_Y$ be the characteristic function of $Y$.  We have: 
\begin{equation}
\label{w2rep}
W_2^0=\tilde{W}_2=\left\{\mathbf{u}=\tilde{\mathbf{u}}-\left(\int_Y\,\tilde{\mathbf{u}}\,dx\right)\chi_{Y}\,:\,\tilde{\mathbf{u}}\in\chi^{div}_0\right\}.
\end{equation}
\end{lemma}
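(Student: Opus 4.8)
The plan is to prove the two set inclusions $W_2^0\subset\tilde W_2$ and $\tilde W_2\subset W_2^0$. Recall that, by the Helmholtz decomposition \eqref{Helmoltz2} and the discussion following it, every element of $J_\#(0,Y,\mathbb{C}^3)$ has the form $\nabla\times\mathbf{h}_{\rm curl}$ with mean zero over $Y$, and $W_2^0$ consists of those $\mathbf{u}\in J_\#(0,Y,\mathbb{C}^3)$ with $\nabla\times\mathbf{u}=0$ in $H$. So, given $\mathbf{u}\in W_2^0$, I would set $\tilde{\mathbf{u}}:=\mathbf{u}+\left(\int_Y\mathbf{u}\,dx\right)\chi_Y=\mathbf{u}$ (since $\int_Y\mathbf{u}\,dx=0$ by \eqref{H1-0}); more precisely, since elements of $W_2^0$ already have mean zero, the content is really to show $W_2^0\subset\chi_0^{div}$ after adding back a constant, and conversely. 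The first step is therefore: show that $\mathbf{u}\in W_2^0$ implies $\mathbf{u}\in F(Y)$ (immediate: periodicity, $\nabla\cdot\mathbf{u}=0$ in $Y$, and $\nabla\times\mathbf{u}=0$ in $H$ are exactly the defining conditions of $F(Y)$ together with the conditions defining $W_2^0$), and then show that the geometric average $\oint\mathbf{u}$ vanishes. For the latter, I would use that $\nabla\times\mathbf{u}=0$ in the connected set $H$, so $\mathbf{u}=\nabla\phi$ in $H$ for a (possibly multivalued) scalar potential $\phi$; the periodicity of $\mathbf{u}$ forces the increments of $\phi$ across opposite faces to be constants $c_i$, and $\oint\mathbf{u}\cdot\mathbf{e}^i=c_i$. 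The claim $\oint\mathbf{u}=0$ then amounts to showing these period-increments vanish for $\mathbf{u}\in W_2^0$, which I expect to follow from $\mathbf{u}$ being orthogonal (in the $\langle\cdot,\cdot\rangle$ inner product) to $W_1^0\oplus W_3^0$, or more directly from the fact that $\mathbf{u}=\nabla\times\mathbf{h}_{\rm curl}$ is itself a curl and hence its circulation around any closed loop in $\mathbb{R}^3$—in particular a loop obtained by closing up $\Gamma_i$ through periodicity—is zero by Stokes.

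For the reverse inclusion, given $\tilde{\mathbf{u}}\in\chi_0^{div}$, set $\mathbf{u}=\tilde{\mathbf{u}}-\left(\int_Y\tilde{\mathbf{u}}\,dx\right)\chi_Y$. Then $\mathbf{u}$ is periodic, $\nabla\cdot\mathbf{u}=0$ in $Y$, $\nabla\times\mathbf{u}=0$ in $H$, and $\int_Y\mathbf{u}\,dx=0$, so $\mathbf{u}$ satisfies all the conditions in \eqref{H1-0} and \eqref{w2}; the only thing to verify is that $\mathbf{u}\in J_\#(0,Y,\mathbb{C}^3)$, i.e.\ that it actually lies in $H^1_{loc}$—this is inherited from $\tilde{\mathbf{u}}\in F(Y)\subset H^1_{loc}$ and subtracting a constant vector. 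Hence $\mathbf{u}\in W_2^0$.

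The main obstacle I anticipate is the claim that $\oint\mathbf{u}=0$ for $\mathbf{u}\in W_2^0$: one must carefully justify that a divergence-free, curl-free-in-$H$ periodic field which is globally a curl (equivalently, lies in the mean-zero curl subspace $J_\#(0,Y,\mathbb{C}^3)$ rather than in the full Helmholtz decomposition \eqref{Helmoltz2} which also contains the constant-vector summand $\mathbf{c}$) has vanishing circulation along the connecting curves $\Gamma_i$. The subtle point is distinguishing the role of the constant vector $\mathbf{c}$ in \eqref{Helmoltz2} from the "harmonic" contribution measured by $\oint$: the subtracted term $\left(\int_Y\tilde{\mathbf{u}}\,dx\right)\chi_Y$ in \eqref{w2rep} removes exactly the $L^2$-mean (the $\mathbf{c}$ part), and I would argue that since the remaining field is an honest curl $\nabla\times\mathbf{h}_{\rm curl}$ with $\mathbf{h}_{\rm curl}$ periodic, Stokes' theorem on a surface spanning the closed-up curve $\Gamma_i$ gives $\oint\mathbf{u}\cdot\mathbf{e}^i=\int_{\partial S}\mathbf{h}_{\rm curl}\cdot d\ell=0$ by periodicity of $\mathbf{h}_{\rm curl}$. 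Establishing that every $\mathbf{u}\in W_2^0$ is indeed such a curl (rather than curl plus a nonzero constant) uses the mean-zero condition built into the definition \eqref{H1-0}, closing the loop.
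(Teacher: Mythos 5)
Your second inclusion ($\tilde{W}_2\subset W_2^0$) is fine, but the first one rests on a claim that is false: you propose to show that every $\mathbf{u}\in W_2^0$ has vanishing geometric average, so that $\tilde{\mathbf{u}}=\mathbf{u}$ itself lies in $\chi_0^{div}$. This cannot be correct if the lemma is to hold: if $\tilde{\mathbf{u}}\in\chi_0^{div}$ has $\int_Y\tilde{\mathbf{u}}\,dx\neq0$, then $\mathbf{u}=\tilde{\mathbf{u}}-\int_Y\tilde{\mathbf{u}}\,dx$ belongs to $\tilde{W}_2$ (hence, by your own second inclusion, to $W_2^0$), yet $\oint\mathbf{u}=\oint\tilde{\mathbf{u}}-\int_Y\tilde{\mathbf{u}}\,dx=-\int_Y\tilde{\mathbf{u}}\,dx\neq0$, because the geometric average of a constant field is that constant. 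Elements of $\chi_0^{div}$ with nonzero $Y$-mean do occur in general; indeed the whole construction of the effective permeability tensor $\boldsymbol{\mu}(\nu)$ in Section 6 is driven by eigenfunctions $\mathbf{u}_n$ with $\int_Y\mathbf{u}_n\,dx\neq0$, and your claim would collapse $\tilde{W}_2$ to the zero-mean subspace of $\chi_0^{div}$ and make the branch $\{\tilde{\beta}_n\}$ of $\sigma(A^0(0))$ disappear. The error comes from conflating two different linear functionals on $F(Y)$: the $L^2(Y)$-mean (the constant $\mathbf{c}$ in the Helmholtz decomposition \eqref{Helmoltz2}, which is what the condition in \eqref{H1-0} kills) and the circulation constant in $H$ measured by $\oint$, i.e.\ the constant in the local representation $\mathbf{u}=\nabla\varphi+\mathbf{c}$ in $H$ with $\varphi$ periodic. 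Being a global curl with zero mean over $Y$ does not force the circulation in $H$ to vanish, because the $Y$-mean receives a contribution from $D$, where $\mathbf{u}$ is not curl-free.

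The Stokes argument cannot repair this. The curve $\Gamma_i$, closed up through periodicity, is a noncontractible cycle of the torus and bounds no surface there; viewed in $\mathbb{R}^3$ it is not closed, so Stokes' theorem does not apply, and even for a genuinely closed loop bounding a surface the circulation of $\mathbf{u}$ equals the flux of $\nabla\times\mathbf{u}$, which need not vanish when the spanning surface meets $D$. The general principle you invoke (``a curl of a periodic potential has zero line integral along period-spanning curves'') is simply false: $\mathbf{u}=(\cos 2\pi x_2,0,0)=\nabla\times\bigl(0,0,\tfrac{1}{2\pi}\sin 2\pi x_2\bigr)$ is divergence-free, periodic, mean zero, yet its line integral along the segment from $(0,a,b)$ to $(1,a,b)$ equals $\cos 2\pi a\neq 0$. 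The correct route, which is the one the paper takes, is to first establish the direct-sum decomposition $F(Y)=\chi_0^{div}\oplus\mathbb{C}^3$: since $\nabla\times\mathbf{u}=0$ in the connected set $H$ and $\mathbf{u}$ is periodic, one has $\mathbf{u}=\nabla\varphi+\mathbf{c}$ in $H$ with $\varphi$ periodic and $\oint\mathbf{u}=\mathbf{c}$. Then $W_2^0=\{\mathbf{u}\in F(Y):\int_Y\mathbf{u}\,dx=0\}$, and writing $\mathbf{u}=\tilde{\mathbf{u}}+\mathbf{a}$ with $\tilde{\mathbf{u}}\in\chi_0^{div}$, $\mathbf{a}\in\mathbb{C}^3$ (the paper does this concretely via the eigenbasis of \eqref{eqiveigen}), the zero-mean condition forces $\mathbf{a}=-\int_Y\tilde{\mathbf{u}}\,dx$, which is exactly \eqref{w2rep}; the constant $\mathbf{a}$ is in general nonzero, and correspondingly $\oint\mathbf{u}\neq 0$.
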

\begin{proof}
Consider the space $F(Y)$.  The curl-free condition in $H$, together with the $Y$-periodicity condition, implies that $\mathbf{u}=\nabla\varphi+\mathbf{c}$ in $H$, where $\varphi\in W^{1,2}_{\#}(H)$ and $\oint\mathbf{u}=\mathbf{c}\in\mathbb{C}^3$. From this, we can conclude that $\chi^{div}_0\oplus\mathbb{C}^3= F(Y)$ and that:
\begin{equation*}
 W_2^0=\left\{F(Y)\,:\,\,\int_Y\,\mathbf{u}\,dx=0\right\} = \left\{\mathbf{u}\in\chi^{div}_0\oplus\mathbb{C}^3\,:\,\,\int_Y\,\mathbf{u}\,dx=0\right\}.
 \end{equation*}
To see that $W_2^0=\tilde{W}_2$, we introduce the orthonormal system $\{\mathbf{u}_j\}_{j\in\mathbb{N}}$ in $L_{\#}^2(0,Y,\mathbb{C}^3)$ that is dense in $\chi_0^{div}$ with respect to the $W^{1,2}(Y,\mathbb{C}^3)$-norm, and is given by the eigenvectors of \eqref{eqiveigen}, see Theorem~\ref{eigenperiod} below.  Then:
\begin{equation*}
F(Y)=\left\{\mathbf{u}\in \rm{span}\left\{\mathbf{u}_j\right\}_{j\in \mathbb{N}}\oplus\,\rm{span}\left\{\mathbf{e}^1,\mathbf{e}^2,\mathbf{e}^3\,\right\}\right\},
\end{equation*}
and an element $\mathbf{u}$ of $F(Y)$ is written:
\begin{equation*}
\mathbf{u}=\sum_{j=1}^\infty{c}_j\mathbf{u}_j+a_1\mathbf{e}^1+a_2\mathbf{e}^2+a_3\mathbf{e}^3.
\end{equation*}
From this, we see that the condition $\int_Y\,\mathbf{u}\,dx=0$ is equivalent to:
\begin{equation*}
a_k=-\mathbf{e}^k\cdot\sum_{j=1}^\infty\,\int_Y\,{c}_j\mathbf{u}_j\,dx, \,\,\text{ for $k=1,2,3$}.
\end{equation*}
We define:
\begin{equation*}
\tilde{\mathbf{u}}=\sum_{j=1}^\infty{c}_j\mathbf{u}_j\in\,\chi_0^{div},
\end{equation*}
to discover $\mathbf{u}=\tilde{\mathbf{u}}-\,\int_Y\,\tilde{\mathbf{u}}\,dx$,
so $W_2^0=\tilde{W}_2$ and the lemma follows.
\end{proof}

Next, we identify all the eigenfunctions and eigenvalues of the following auxiliary eigenvalue problem.
Find all eigen-pairs $(\mathbf{u},~\beta)$ in $\chi_0^{div}\times \mathbb{R}_+$ for which:
\begin{equation}
(\mathbf{u}, \mathbf{v})_{L^2(Y, \mathbb{C}^3)} = \beta \langle \mathbf{u}, \mathbf{v} \rangle, \hbox{ for all $\mathbf{v}\in \chi_0^{div}$}.
\label{eqiveigen}
\end{equation}
This eigenvalue problem is analyzed in \cite{Bouchitte2017}.  Following the results in \cite{Bouchitte2017}, we get the following theorem.
\begin{theorem}
\label{eigenperiod}
    The eigenvalues $\beta$ of \eqref{eqiveigen} are positive and form a sequence $\{\beta_n\}_{n=1}^\infty$ converging to 0. The eigenvectors of \eqref{eqiveigen} deliver a orthonormal system in $L_{\#}^2(0,Y,\mathbb{C}^3)$ that is dense in $\chi_0^{div}$ with respect to the $W^{1,2}(Y,\mathbb{C}^3)$-norm.
\end{theorem}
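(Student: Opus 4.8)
The plan is to recast \eqref{eqiveigen} as the eigenvalue problem for a compact, self-adjoint, strictly positive operator on the Hilbert space $(\chi_0^{div},\langle\cdot,\cdot\rangle)$ and then invoke the Hilbert--Schmidt spectral theorem, following the analysis of \cite{Bouchitte2017}. First I would check that $\langle\cdot,\cdot\rangle$ is genuinely an inner product on $\chi_0^{div}$: if $\mathbf{u}\in\chi_0^{div}$ satisfies $\nabla\times\mathbf{u}=0$ in $Y$, then $\mathbf{u}$ is divergence-free, curl-free and $Y$-periodic, hence a constant vector, and the constraint $\oint\mathbf{u}=0$ forces that constant to vanish. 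The technical heart is a Poincar\'e-type inequality $\|\mathbf{u}\|_{L^2(Y,\mathbb{C}^3)}\leq C\|\mathbf{u}\|$ on $\chi_0^{div}$, together with the equivalence of $\|\cdot\|$ with the $W^{1,2}(Y,\mathbb{C}^3)$-norm on $\chi_0^{div}$. The first follows by the compactness--contradiction argument used in Lemma~\ref{b0coerciveW2}: a sequence $\mathbf{v}_n\in\chi_0^{div}$ with $\|\mathbf{v}_n\|_{L^2}=1$ and $\nabla\times\mathbf{v}_n\to0$ in $L^2$ would, by the div--curl regularity estimate $\|\mathbf{v}\|_{W^{1,2}}\leq C(\|\mathbf{v}\|_{L^2}+\|\nabla\cdot\mathbf{v}\|_{L^2}+\|\nabla\times\mathbf{v}\|_{L^2})$, be bounded in $W^{1,2}$, hence (Rellich) converge in $L^2$ along a subsequence to a field that is divergence-free, curl-free, periodic, and lies in $\chi_0^{div}$ -- hence constant and therefore zero, contradicting $\|\mathbf{v}\|_{L^2}=1$. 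The same regularity estimate, combined with the Poincar\'e bound, gives $\|\mathbf{u}\|_{W^{1,2}}\leq C\|\nabla\times\mathbf{u}\|_{L^2}=C\|\mathbf{u}\|$, so $\|\cdot\|\sim\|\cdot\|_{W^{1,2}}$ on $\chi_0^{div}$; in particular $(\chi_0^{div},\langle\cdot,\cdot\rangle)$ is complete and embeds compactly into $L^2_\#(0,Y,\mathbb{C}^3)$.

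Next I would introduce, via the Riesz representation theorem, the operator $K:\chi_0^{div}\to\chi_0^{div}$ determined by $\langle K\mathbf{u},\mathbf{v}\rangle=(\mathbf{u},\mathbf{v})_{L^2(Y,\mathbb{C}^3)}$ for all $\mathbf{v}\in\chi_0^{div}$; the Poincar\'e bound makes the right-hand side a bounded sesquilinear form on $\chi_0^{div}$, so $K$ is well defined and bounded. It is self-adjoint since $\langle K\mathbf{u},\mathbf{v}\rangle=(\mathbf{u},\mathbf{v})_{L^2}=\overline{\langle K\mathbf{v},\mathbf{u}\rangle}=\langle\mathbf{u},K\mathbf{v}\rangle$, and strictly positive since $\langle K\mathbf{u},\mathbf{u}\rangle=\|\mathbf{u}\|_{L^2}^2>0$ for $\mathbf{u}\neq0$, so $K$ is injective. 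For compactness, given $\|\mathbf{u}_n\|$ bounded, the norm-equivalence makes $\{\mathbf{u}_n\}$ bounded in $W^{1,2}$, so a subsequence converges in $L^2$; then $\|K\mathbf{z}\|^2=\langle K\mathbf{z},K\mathbf{z}\rangle=(\mathbf{z},K\mathbf{z})_{L^2}\leq\|\mathbf{z}\|_{L^2}\|K\mathbf{z}\|_{L^2}\leq C\|\mathbf{z}\|_{L^2}\|K\mathbf{z}\|$ gives $\|K\mathbf{z}\|\leq C\|\mathbf{z}\|_{L^2}$, so $\{K\mathbf{u}_n\}$ is Cauchy in $\chi_0^{div}$. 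Since $\chi_0^{div}$ is infinite-dimensional (it contains all smooth divergence-free fields compactly supported in $D$), the Hilbert--Schmidt theorem yields a $\langle\cdot,\cdot\rangle$-orthonormal basis $\{\mathbf{w}_n\}$ of $\chi_0^{div}$ consisting of eigenfunctions of $K$ with eigenvalues $\beta_n>0$ satisfying $\beta_n\to0$.

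Finally I would translate back. The identity $K\mathbf{w}_n=\beta_n\mathbf{w}_n$ is exactly $(\mathbf{w}_n,\mathbf{v})_{L^2}=\beta_n\langle\mathbf{w}_n,\mathbf{v}\rangle$ for all $\mathbf{v}\in\chi_0^{div}$, so the $\beta_n$ are precisely the eigenvalues of \eqref{eqiveigen}, all positive and converging to $0$. Choosing $\mathbf{v}=\mathbf{w}_m$ gives $(\mathbf{w}_n,\mathbf{w}_m)_{L^2}=\beta_n\delta_{nm}$, so the rescaled functions $\mathbf{u}_n:=\beta_n^{-1/2}\mathbf{w}_n$ are eigenfunctions of \eqref{eqiveigen} forming an orthonormal system in $L^2_\#(0,Y,\mathbb{C}^3)$. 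Their span coincides with that of $\{\mathbf{w}_n\}$, which is dense in $\chi_0^{div}$ in the $\|\cdot\|$-norm; since $\|\cdot\|\sim\|\cdot\|_{W^{1,2}}$ on $\chi_0^{div}$, the system $\{\mathbf{u}_n\}$ is dense in $\chi_0^{div}$ with respect to the $W^{1,2}(Y,\mathbb{C}^3)$-norm, which is the assertion. The main obstacle is the first paragraph: establishing the Poincar\'e inequality and the $W^{1,2}$-norm equivalence on $\chi_0^{div}$, and in particular verifying that membership in $\chi_0^{div}$ -- especially the geometric-average constraint $\oint\mathbf{u}=0$ -- is preserved under the $L^2$-limits appearing there; the decomposition $F(Y)=\chi_0^{div}\oplus\mathbb{C}^3$ from the proof of Lemma~\ref{lem:W2} is the key structural input for that point.
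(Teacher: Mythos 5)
Note first that the paper itself does not prove Theorem~\ref{eigenperiod}: it is imported wholesale with the remark that the eigenvalue problem \eqref{eqiveigen} ``is analyzed in \cite{Bouchitte2017}.'' So there is no internal proof to compare with; your proposal supplies the standard self-contained argument, and its skeleton is correct. Recasting \eqref{eqiveigen} as $K\mathbf{w}=\beta\mathbf{w}$ for the Riesz operator $K$ on $\bigl(\chi_0^{div},\langle\cdot,\cdot\rangle\bigr)$, checking that $K$ is bounded, self-adjoint, injective, positive and compact (via the periodic identity $\|\nabla\mathbf{u}\|_{L^2}^2=\|\nabla\times\mathbf{u}\|_{L^2}^2$ for divergence-free periodic fields and Rellich), invoking Hilbert--Schmidt, and then rescaling $\mathbf{u}_n=\beta_n^{-1/2}\mathbf{w}_n$ to get $L^2$-orthonormality and transferring density from the energy norm to the $W^{1,2}$-norm by norm equivalence: this is exactly the kind of argument the cited reference runs, and each of these steps is carried out correctly in your write-up, including the observation that $\chi_0^{div}$ is infinite dimensional.

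The one genuine issue is the point you flag at the end but do not close: both the completeness of $\bigl(\chi_0^{div},\langle\cdot,\cdot\rangle\bigr)$ (needed before you may apply Riesz representation and the spectral theorem) and the contradiction step in your Poincar\'e argument require that membership in $\chi_0^{div}$ --- in particular the constraint $\oint\mathbf{u}=0$ --- survive the $L^2$/weak-$W^{1,2}$ limits you take, and $\oint$ is a line integral, not obviously continuous in those topologies. This can be repaired concretely. For a sequence $\mathbf{v}_n\in\chi_0^{div}$ one has, by the decomposition in the proof of Lemma~\ref{lem:W2} with vanishing constant part, $\mathbf{v}_n=\nabla\varphi_n$ in $H$ with $\varphi_n$ periodic; choose a fixed smooth periodic divergence-free field $\boldsymbol{\psi}$, supported in a straight tube of the host region that avoids $D$ (possible since $D$ is compactly contained in $Y$) and satisfying $\int_Y\boldsymbol{\psi}\,dx\neq 0$. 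Integration by parts gives $\int_Y\mathbf{v}_n\cdot\overline{\boldsymbol{\psi}}\,dx=0$ for every $n$, and passing to the $L^2$ limit forces any constant limit field to vanish, which is precisely what your contradiction argument needs; the same test-field identity shows the constraint is stable under the limits used for completeness. (Alternatively, elements of $F(Y)$ are harmonic in the host, so interior estimates make $\mathbf{u}\mapsto\oint\mathbf{u}$ continuous under $L^2$ convergence once $\Gamma_i$ is chosen at positive distance from the inclusions.) With that supplement your proof is complete and is a useful addition, since the paper only cites \cite{Bouchitte2017} for this result.
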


We now provide a precise characterization of the spectrum $\sigma(A^{0}(0))$ of the limit operator $A^0(0)$.  In preparation, we consider the countably dense in $L_{\#}^2(0,Y,\mathbb{C}^3)$, subset of $\chi^{div}_0$, orthonormal family of eigenfunctions $\{\mathbf{u}_n\}_{n=1}^\infty$ associated with the eigenvalues $\beta_n\searrow0$  of \eqref{eqiveigen}. Here, orthonormality is considered with respect to the $L^2(Y,\mathbb{C}^3)$-inner product. 

We have that $\sigma(A^{0}(0))$ consists of all $\nu^{-1}$ such that there exists a pair $\mathbf{u}$ and $\nu$, with $\mathbf{u}\in W_2^0$ and $\nu>0$, such that:
\begin{equation}
\langle \mathbf{u}, \mathbf{v} \rangle_D = \nu\,(\mathbf{u}, \mathbf{v})_{L^2(Y, \mathbb{C}^3)},\,\, \hbox{ for all $\mathbf{v}\in W_2^0$},
\label{eqiveigenchi-sig0}
\end{equation}
where $\langle \mathbf{u}, \mathbf{v} \rangle_D=\int_{D}\nabla\times\mathbf{u}
\cdot\nabla\times\overline{\mathbf{v}}\,dx$.  By (\ref{w2rep}), $\mathbf{u}=\tilde{\mathbf{u}}-\int_Y\,\tilde{\mathbf{u}}\,dx$, with $\tilde{\mathbf{u}}\in\chi^{div}_0.$  Hence, there exists a sequence $\left\{c_n\right\}_{n=1}^{\infty}\subset\mathbb{C}$ such that: 
\begin{equation}
    \tilde{\mathbf{u}}=\sum_{n=1}^\infty c_n\mathbf{u}_n,\hspace{0.3cm}\text{ and  }  \hspace{0.5cm} \mathbf{u}=\sum_{n=1}^\infty c_n\mathbf{u}_n+\mathbf{c}.
    \label{utilde-u-1}
\end{equation} 
where $\mathbf{c}=-\int_Y\,\tilde{\mathbf{u}}\,dx$.

First, suppose $\mathbf{u}\in\chi^{div}_0$ and $\mathbf{c}=-\int_Y\,\mathbf{u}\,dx=0$.  By (\ref{eqiveigenchi-sig0}), for $\mathbf{v}=\tilde{\mathbf{v}}-\int_Y\,\tilde{\mathbf{v}}\,dx$, with $\tilde{\mathbf{v}}\in\chi^{div}_0$, we obtain:$$\langle \mathbf{u}, \tilde{\mathbf{v}} \rangle_D = \nu\,(\mathbf{u}, \tilde{\mathbf{v}}-\int_Y\,\tilde{\mathbf{v}}\,dx)_{L^2(Y, \mathbb{C}^3)} = \nu\,(\mathbf{u}, \tilde{\mathbf{v}})_{L^2(Y, \mathbb{C}^3)},$$ since: $$(\mathbf{u}, \int_Y\,\tilde{\mathbf{v}}\,dx)_{L^2(Y, \mathbb{C}^3)} =\int_{Y}\mathbf{u}\cdot\overline{\int_Y\,\tilde{\mathbf{v}}\,dx}\,dy=0.$$
So $\mathbf{u}$ solves $\langle \mathbf{u}, \tilde{\mathbf{v}} \rangle_D = \nu\,(\mathbf{u}, \tilde{\mathbf{v}})_{L^2(Y, \mathbb{C}^3)}$, for all $\tilde{\mathbf{v}}\in\chi^{div}_0$,
and is, therefore, an eigenfunction of \eqref{eqiveigenchi-sig0} belonging $\chi^{div}_0$ with $\int_Y \mathbf{u}\,dx=0$.  So all eigenvalues $\nu$ are eigenvalues $\left\{{\beta_n^{-1}}'\right\}_{n=1}^\infty\subset\left\{\beta_n^{-1}\right\}_{n=1}^\infty$ corresponding to mean zero eigenfunctions.
To summarize, a component of the spectrum $\sigma(A^{0}(0))$ of the limit operator $A^0(0)$ is given by  $\left\{{\beta_n^{-1}}'\right\}_{n=1}^\infty$. 

Next we identify the remaining component of $\sigma(A^{0}(0))$.
Now, suppose that $\mathbf{c}=-\displaystyle\int_Y\,\tilde{\mathbf{u}}\,dx\neq0$, and that $\mathbf{u}$ is an eigenfunction of \eqref{eqiveigenchi-sig0} with eigenvalue $\nu$. We normalize so that $|\mathbf{c}|=1$.  We have $\mathbf{u}=\tilde{\mathbf{u}}-\int_Y\,\tilde{\mathbf{u}}\,dx$ and for all $\mathbf{v}=\tilde{\mathbf{v}}-\int_Y\,\tilde{\mathbf{v}}\,dx$, we get: 
\begin{equation}
\langle \tilde{\mathbf{u}}, \tilde{\mathbf{v}} \rangle_D = \nu\,(\mathbf{u}, \tilde{\mathbf{v}})_{L^2(Y, \mathbb{C}^3)}, \hbox{ for all $\tilde{\mathbf{v}}\in\chi^{div}_0$}.
\label{eqiveigenchi-sig03}
\end{equation}

Using \eqref{utilde-u-1} in \eqref{eqiveigenchi-sig03}, we have:
\begin{equation}
\langle \sum_{n=1}^\infty c_n\mathbf{u}_n, \tilde{\mathbf{v}} \rangle_D = \nu\,(\sum_{n=1}^\infty c_n\mathbf{u}_n+\mathbf{c}, \tilde{\mathbf{v}})_{L^2(Y, \mathbb{C}^3)}, \hbox{ for all $\tilde{\mathbf{v}}\in\chi^{div}_0$}.
\label{eqiveigenchi-sig04}
\end{equation}
Now, pick $\tilde{\mathbf{v}}=\mathbf{u}_m$, $m\in\mathbb{N}^+$, in \eqref{eqiveigenchi-sig04}, to get:
\begin{align*}
    &c_m\beta_m^{-1}=\nu c_m+\nu\,(\mathbf{c}, \mathbf{u}_m)_{L^2(Y, \mathbb{C}^3)}\\
    &\implies c_m\beta_m^{-1}=\nu c_m+\nu\,\mathbf{c}\cdot \int_Y\overline{\mathbf{u}_m}\,dx\\
    &\implies c_m=\frac{\nu\,\mathbf{c}\cdot \int_Y\overline{\mathbf{u}_m}\,dx}{(\beta_m^{-1}-\nu)}.
\end{align*}

Then \eqref{utilde-u-1} becomes:
\begin{equation*}
    \tilde{\mathbf{u}}=\sum_{n=1}^\infty \frac{\nu\,\mathbf{c}\cdot \int_Y\overline{\mathbf{u}_n}\,dx}{(\beta_n^{-1}-\nu)}\mathbf{u}_n(\mathbf{x}),\hspace{0.3cm}\text{ and  }  \hspace{0.5cm} \mathbf{u}=\sum_{n=1}^\infty \frac{\nu\,\mathbf{c}\cdot \int_Y\overline{\mathbf{u}_n}\,dx}{(\beta_n^{-1}-\nu)}\mathbf{u}_n(\mathbf{x})+\mathbf{c}.
\end{equation*} 
Since we require $\int_Y\mathbf{u}\,dx=0$, we obtain:
\begin{equation}
\label{zeroaverage}
    \mathbf{c}=-\nu\,\sum_{n=1}^\infty \frac{\int_Y\mathbf{u}_n\,dx\otimes \int_Y\overline{\mathbf{u}_n}\,dx}{(\beta_n^{-1}-\nu)}\mathbf{c}.
\end{equation}
We introduce the \textit{effective magnetic permeability tensor}:
\begin{equation*}
    \boldsymbol{\mu}(\nu)=\left(I_{3\times3}+\nu\,\sum_{n=1}^\infty \frac{\int_Y\mathbf{u}_n\,dx\otimes \int_Y\overline{\mathbf{u}_n}\,dx}{(\beta_n^{-1}-\nu)}\right),
\end{equation*}
and \eqref{zeroaverage} gives the homogeneous system for the vector $\mathbf{c}$ in $\mathbb{C}^3$ given by:
\begin{equation}
\label{matrixBF}
    \boldsymbol{\mu}(\nu)\mathbf{c}=0.
\end{equation}
The effective permeability tensor agrees with the one given by the high contrast homogenization of Maxwell's equations in \cite{Bouchitte2017}.
We form the spectral function given by:
\begin{align}
    \label{specfunction}
    S(\nu)={\rm det}[\boldsymbol{\mu}(\nu)],
\end{align}
and, clearly, we have a nontrivial solution of \eqref{matrixBF} when $S(\nu)=0$.
The roots of the spectral function form a countable non-decreasing sequence of positive numbers $\{\nu_n\}_{n=1}^\infty$ tending to infinity. We set $\tilde{\beta}_n=\nu^{-1}_n$ and the complete characterization of $\sigma(A^{0}(0))$ given by:
\begin{theorem}
 $$\sigma(A^{0}(0))=\{\beta_n^{'}\}_{n=1}^\infty\cup\{\tilde{\beta}_n\}_{n=1}^\infty.$$
\end{theorem}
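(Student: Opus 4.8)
The plan is to obtain the identity by assembling the two cases already separated in the discussion above, supplying both set-inclusions together with the convergence facts used there implicitly. Recall, from the reduction leading to \eqref{eqiveigenchi-sig0}, that a positive number $\nu^{-1}$ belongs to $\sigma(A^{0}(0))$ exactly when some nonzero $\mathbf{u}\in W_2^0$ satisfies $\langle\mathbf{u},\mathbf{v}\rangle_D=\nu\,(\mathbf{u},\mathbf{v})_{L^2(Y,\mathbb{C}^3)}$ for all $\mathbf{v}\in W_2^0$, and that by Lemma~\ref{lem:W2} every such $\mathbf{u}$ can be written $\mathbf{u}=\tilde{\mathbf{u}}-\int_Y\tilde{\mathbf{u}}\,dx$ with $\tilde{\mathbf{u}}=\sum_{n}c_n\mathbf{u}_n\in\chi_0^{div}$; set $\mathbf{c}:=-\int_Y\tilde{\mathbf{u}}\,dx$. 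The alternative $\mathbf{c}=0$ versus $\mathbf{c}\neq0$ is exhaustive, and the two computations carried out above show that in the first case $\mathbf{u}$ is forced to coincide with an eigenfunction $\mathbf{u}_n$ of \eqref{eqiveigen} having $\int_Y\mathbf{u}_n\,dx=0$, so $\nu^{-1}\in\{\beta_n'\}$, while in the second case the coefficient relations force $\boldsymbol{\mu}(\nu)\mathbf{c}=0$, hence $S(\nu)=0$ (see \eqref{specfunction}) and $\nu^{-1}\in\{\tilde\beta_n\}$. This already gives $\sigma(A^{0}(0))\subseteq\{\beta_n'\}\cup\{\tilde\beta_n\}$.

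For the reverse inclusion, the Case~1 half is quick: if $\mathbf{u}_n\in\chi_0^{div}$ solves \eqref{eqiveigen} with eigenvalue $\beta_n$ and $\int_Y\mathbf{u}_n\,dx=0$, then $\mathbf{u}_n\in W_2^0$, and for $\mathbf{v}=\tilde{\mathbf{v}}-\int_Y\tilde{\mathbf{v}}\,dx\in W_2^0$ we get $\langle\mathbf{u}_n,\mathbf{v}\rangle_D=\langle\mathbf{u}_n,\tilde{\mathbf{v}}\rangle_D=\beta_n^{-1}(\mathbf{u}_n,\tilde{\mathbf{v}})_{L^2}=\beta_n^{-1}(\mathbf{u}_n,\mathbf{v})_{L^2}$, since $\nabla\times$ annihilates the constant $\int_Y\tilde{\mathbf{v}}\,dx$ and $\int_Y\mathbf{u}_n\,dx=0$ removes the remaining inner product with a constant; hence $\beta_n\in\sigma(A^{0}(0))$ and $\{\beta_n'\}\subseteq\sigma(A^{0}(0))$. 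The Case~2 half is the substantive step. Given a root $\nu_m$ of $S$, choose $\mathbf{c}\in\ker\boldsymbol{\mu}(\nu_m)\setminus\{0\}$ and define $\mathbf{u}:=\sum_{n}\frac{\nu_m\,\mathbf{c}\cdot\int_Y\overline{\mathbf{u}_n}\,dx}{\beta_n^{-1}-\nu_m}\mathbf{u}_n+\mathbf{c}$. I would then check, in order: (i) this series converges in the energy norm, because the $L^2$-orthonormal eigenbasis $\{\mathbf{u}_n\}$ of Theorem~\ref{eigenperiod} is automatically $\langle\cdot,\cdot\rangle$-orthogonal with $\langle\mathbf{u}_n,\mathbf{u}_n\rangle=\beta_n^{-1}$, the scalars $\mathbf{c}\cdot\int_Y\overline{\mathbf{u}_n}\,dx$ are square-summable by Bessel's inequality applied to the constant field $\mathbf{c}$, and $\beta_n^{-1}(\beta_n^{-1}-\nu_m)^{-2}$ is bounded (in fact tends to $0$) since $\beta_n^{-1}\to\infty$; (ii) $\mathbf{u}\in W_2^0$, which is exactly the zero-mean condition \eqref{zeroaverage}, i.e.\ $\boldsymbol{\mu}(\nu_m)\mathbf{c}=0$; (iii) $\mathbf{u}$ solves \eqref{eqiveigenchi-sig0}, because the choice of coefficients makes \eqref{eqiveigenchi-sig03} hold against each $\mathbf{u}_m$ and hence against all of $\chi_0^{div}$ by density, which in turn yields \eqref{eqiveigenchi-sig0} through the correspondence $\mathbf{v}\leftrightarrow\tilde{\mathbf{v}}$; and (iv) $\mathbf{u}\neq0$ since $\mathbf{c}\neq0$. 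Then $\tilde\beta_m=\nu_m^{-1}\in\sigma(A^{0}(0))$, so $\{\tilde\beta_n\}\subseteq\sigma(A^{0}(0))$, and together with Case~1 this finishes the proof.

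The main obstacle is the Case~2 construction, and within it the delicate point is a root $\nu_m$ of $S$ that coincides with a pole $\beta_\ell^{-1}$ of $\boldsymbol{\mu}$: there the relation $c_\ell(\beta_\ell^{-1}-\nu_m)=\nu_m\,\mathbf{c}\cdot\int_Y\overline{\mathbf{u}_\ell}\,dx$ forces $\mathbf{c}\cdot\int_Y\overline{\mathbf{u}_\ell}\,dx=0$, so the $\ell$-th term disappears from the series, the rank-one residue of $\boldsymbol{\mu}$ at $\nu_m$ annihilates $\mathbf{c}$, and the apparent singularity is removable along $\ker\boldsymbol{\mu}(\nu_m)$; the construction then proceeds verbatim and may produce a value $\nu_m^{-1}$ that also occurs in $\{\beta_n'\}$, which is harmless since the statement is an equality of sets. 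Stating this coincidence, the convergence in (i), and the density step in (iii) carefully is where the work lies; the remainder is bookkeeping that has, in effect, already been carried out in the preceding pages.
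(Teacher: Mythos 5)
Your proposal is correct and takes essentially the same route as the paper: the same characterization of $W_2^0$ from Lemma~\ref{lem:W2}, the same split into the cases $\mathbf{c}=0$ and $\mathbf{c}\neq 0$, and the same reduction to the auxiliary problem \eqref{eqiveigen} and to $\boldsymbol{\mu}(\nu)\mathbf{c}=0$, i.e.\ $S(\nu)=0$. The only difference is that you make the reverse inclusion explicit (energy-norm convergence of the series, the zero-mean condition \eqref{zeroaverage}, the density step, and the pole/root coincidence), details the paper leaves implicit in the discussion preceding the theorem.
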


When the inclusion shape is invariant under the cubic group of rotations, the effective permeability tensor is a multiple of the identity, i.e., $\boldsymbol{\mu}(\nu)=I_{3\times 3}\lambda(\nu)$, where $\lambda(\nu)$ is a scalar function of $\nu$. Here, $\det\left\{\boldsymbol{\mu}(\nu)\right\}=\lambda^3(\nu)$, so $\nu_j$ are the roots of the equation $\lambda(\nu)=0$. For any constant vector $\mathbf{v}$ in $\mathbb{R}^3$ we have:
\begin{equation}
    \label{merimorphic}
    \lambda({\nu})= \frac{\boldsymbol{\mu}(\nu)\mathbf{v}\cdot\mathbf{v}}{|\mathbf{v}|^2}=1-\nu\sum_{n\in\mathbb{N}}\frac{a^2_n}{\nu-\beta^*_n},
\end{equation}
where $a^2_n={|\int_D\mathbf{u}_n\;dx\cdot\mathbf{v}|^2}/{|\mathbf{v}|^2}>0$ and $\beta_n^\ast$ are only associated with nonzero mean eigenfunctions. For $\beta^\ast_{n-1}<\nu<\beta^\ast_{n}$, calculation shows $-\infty<\lambda(\nu)<\infty$, with $\lambda'(\nu)>0$. From this, we conclude $\beta^\ast_{n}<\nu_j<\beta^\ast_{n+1}$ and we have the interlacing $\nu_{n-1}<\beta^\ast_n<\nu_{n}$.

\section{Radius of Convergence and Separation of Spectra}
\label{radius}

Fix an inclusion geometry specified by the domain $D$. Suppose first $\alpha\in Y^\star$ and $\alpha\not =0$. 
Take $\Gamma_j$ to be a closed contour in $\mathbb{C}$ containing an eigenvalue  $\beta^\alpha_j(0)\in\sigma(A^\alpha(0))$, but no other element of $\sigma(A^\alpha(0))$, i.e, for $\alpha\neq\mathbf{0}\in Y^*$ fixed, $\beta^\alpha_j(0)$ is separated from other components of the spectrum, see Figure~\ref{spectrum}. Define $d$ to be the distance between $\Gamma_j$ and $ \sigma(A^\alpha(0))$, i.e.:
\begin{equation}
d={\rm{dist}}(\Gamma_j,\sigma(A^\alpha(0))=\inf_{\zeta\in\Gamma_j}\{{\rm{dist}}(\zeta,\sigma(A^\alpha(0))\}.
\label{dist}
\end{equation}
The component of the spectrum of $A^\alpha(0)$ inside $\Gamma_j$  is precisely $\beta^\alpha_j(0)$, and we denote this by $\Sigma'(0)$. The part of the spectrum of $A^\alpha(0)$ in the  domain exterior to $\Gamma_j$ is denoted by $\Sigma''(0)$, and $\Sigma''(0)=\sigma(A^\alpha(0))\setminus \beta^\alpha_j(0)$. The invariant subspace of $A^\alpha(0)$ associated with $\Sigma'(0)$ is denoted by $\mathcal{M}'(0)$ with $\mathcal{M}'(0)=P(0)L^2_{\#}(\alpha,Y,\mathbb{C}^3)$.

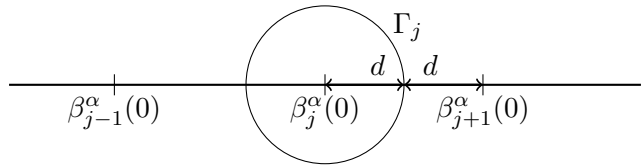
\begin{figure}[ht] 
\centering
\begin{tikzpicture}[xscale=0.70,yscale=0.70]
\draw [-,thick] (-6,0) -- (6,0);
\draw [<->,thick] (0,0) -- (1.5,0);
\node [above] at (1,0) {$d$};
\draw [<->,thick] (1.5,0) -- (3,0);
\node [above] at (2,0) {$d$};
\draw (-4,0.2) -- (-4.0, -0.2);
\node [below] at (-4,0) {$\beta^\alpha_{j-1}(0)$};
\draw (0,0.2) -- (0, -0.2);
\node [below] at (0,0) {$\beta^\alpha_j(0)$};
\draw (3,0.2) -- (3, -0.2);
\node [below] at (3,0) {$\beta^\alpha_{j+1}(0)$};
\draw (0,0) circle [radius=1.5];
\node [right] at (1.1,1.1) {$\Gamma_j$};
\end{tikzpicture} 
\caption{Schematic of $\Gamma_j$, $d$, $\Sigma'(0)$, and $\Sigma''(0)$. }
 \label{spectrum}
\end{figure}

Suppose the lowest $\alpha$-quasiperiodic resonance eigenvalue for the domain $D$ lies inside $-1/2<\mu^-(\alpha)<0$. It is noted that, in the sequel, a large and generic class of domains are identified for which $-1/2<\mu^-(\alpha)$.  The corresponding upper bound on the set $z\in \mathcal{S}$, for which $A^\alpha(z)$ is not invertible, is given by: 
\begin{eqnarray}
z^\ast(\alpha)=\frac{\mu^-(\alpha)+1/2}{\mu^-(\alpha)-1/2}<0,
\label{upperonS}
\end{eqnarray}
see \eqref{bdsonS}.
Now set:
\begin{equation}
r^*=\frac{|\alpha|^2d|z^\ast(\alpha)|}{\frac{1}{1/2-\mu^-(\alpha)}+|\alpha|^2d}.
\label{radiusalphanotzero}
\end{equation}

\begin{theorem}{\rm Separation of spectra and radius of convergence for $\alpha\in Y^\star$, $\alpha\not=0$.}\\
\label{separationandraduus-alphanotzero}
The following properties  hold for inclusions with domains $D$ that satisfy \eqref{upperonS}:
\begin{enumerate}
\item If $|z|<r^*$, then $\Gamma_j$ lies in the resolvent of both $A^\alpha(0)$ and $A^\alpha(z)$ and, thus, separates the spectrum of $A^\alpha(z)$ into two parts given by the component of spectrum of $A^\alpha(z)$ inside $\Gamma_j$, denoted by $\Sigma'(z)$, and the component exterior to $\Gamma_j$,  denoted by $\Sigma''(z)$. The invariant subspace of $A^\alpha(z)$ associated with $\Sigma'(z)$ is denoted by $\mathcal{M}'(z)$, with $\mathcal{M}'(z)=P(z)L^2_{\#}(\alpha,Y,\mathbb{C}^3)$.

\item The projection $P(z)$ is holomorphic for $|z|<r^*$ and $P(z)$ is given by:
\begin{equation*}
P(z)=\frac{-1}{2\pi i}\oint_{\Gamma_j} R(\zeta,z)\,d\zeta.
\end{equation*}
\item The spaces $\mathcal{M}'(z)$ and $\mathcal{M}'(0)$ are isomorphic for $|z|<r^*$.
\item The power series \eqref{foureleven} converges uniformly for $z\in\mathbb{C}$ inside any disk centered at the origin contained within $|z|<r^*$.
\end{enumerate}
\end{theorem}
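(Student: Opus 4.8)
The plan is to reduce all four assertions to a single quantitative estimate: that for $|z|<r^*$ one has the operator-norm bound $\|A^\alpha(z)-A^\alpha(0)\| < d$ on $L^2_\#(\alpha,Y,\mathbb{C}^3)$, where $d$ is the distance in \eqref{dist}. Once this is in hand, standard Kato-type perturbation theory (as set up in Section~\ref{asymptotic}) does the rest: the resolvent identity \eqref{foursix} shows $R(\zeta,z)=R(\zeta,0)[I+(A^\alpha(z)-A^\alpha(0))R(\zeta,0)]^{-1}$ exists for every $\zeta\in\Gamma_j$, because $\|(A^\alpha(z)-A^\alpha(0))R(\zeta,0)\| \le \|A^\alpha(z)-A^\alpha(0)\|\cdot\|R(\zeta,0)\| < d\cdot(1/d)=1$, using that $A^\alpha(0)$ is self-adjoint so $\|R(\zeta,0)\|=\operatorname{dist}(\zeta,\sigma(A^\alpha(0)))^{-1}\le 1/d$ on $\Gamma_j$. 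This gives (1): $\Gamma_j$ lies in the resolvent set of $A^\alpha(z)$, hence separates $\sigma(A^\alpha(z))$ into $\Sigma'(z)$ and $\Sigma''(z)$. Assertion (2) is then immediate from the Riesz projection formula, the Neumann series for the bracketed inverse converging uniformly in $\zeta\in\Gamma_j$ and in $z$ on compact subsets of $|z|<r^*$, so $P(z)$ is holomorphic there; this is exactly \eqref{Project1}. For (3), one invokes the classical fact that $\|P(z)-P(0)\|<1$ (which follows from the same Neumann estimate, shrinking if necessary, or directly from continuity of $z\mapsto P(z)$ and $P(0)^2=P(0)$) forces $\dim\mathcal{M}'(z)=\dim\mathcal{M}'(0)$ and yields an explicit isomorphism (the transformation function $U(z)$ of Kato). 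Finally (4): $\hat\beta^\alpha(z)=\frac{1}{m}\operatorname{tr}[A^\alpha(z)P(z)]$ is a composition/product of functions holomorphic on $|z|<r^*$, so its power series \eqref{foureleven} converges there, uniformly on any closed disk inside.

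The technical heart — and the step I expect to be the main obstacle — is therefore establishing the norm bound $\|A^\alpha(z)-A^\alpha(0)\|<d$ with the \emph{specific} radius $r^*$ given in \eqref{radiusalphanotzero}. Writing $A^\alpha(z)=T_k^\alpha)^{-1}(-\Delta_\alpha)^{-1}$ via the explicit spectral formula \eqref{Takinv}, subtract the $z=0$ term $A^\alpha(0)=P_2^\alpha(-\Delta_\alpha)^{-1}$ to get
\begin{equation*}
A^\alpha(z)-A^\alpha(0)=\Bigl(zP_1^\alpha+\sum_{-1/2<\mu_n(\alpha)<1/2}\frac{z}{(1/2+\mu_n(\alpha))+z(1/2-\mu_n(\alpha))}P_{\mu_n}^\alpha\Bigr)(-\Delta_\alpha)^{-1}.
\end{equation*}
Since the $P_1^\alpha,P_2^\alpha,P_{\mu_n}^\alpha$ are mutually orthogonal projections, the operator norm of the first factor is $\max$ over the scalar multipliers, i.e. $\max\{|z|,\ \sup_n |z|/|(1/2+\mu_n(\alpha))+z(1/2-\mu_n(\alpha))|\}$. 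The denominator is controlled from below using $z^\ast(\alpha)$: for $|z|<|z^\ast(\alpha)|$ one has $|(1/2+\mu_n)+z(1/2-\mu_n)|\ge (1/2-\mu_n)(|z^\ast(\alpha)|-|z|)\ge (1/2-\mu^-(\alpha))(|z^\ast(\alpha)|-|z|)$ after recalling $z^\ast(\alpha)=-(1/2+\mu^-)/(1/2-\mu^-)$ and that $\mu^-(\alpha)$ is the smallest structural eigenvalue; so the first factor has norm $\le |z|/[(1/2-\mu^-(\alpha))(|z^\ast(\alpha)|-|z|)]$ (this dominates the bare $|z|$ term). Combining with $\|(-\Delta_\alpha)^{-1}\|_{L^2\to L^2}=|\alpha|^{-2}$ (the smallest eigenvalue of $-\Delta_\alpha$ on $\alpha$-quasiperiodic fields being $|\alpha|^2$ for $\alpha\neq 0$, via the Fourier/Green's-function representation \eqref{inverselaplacian}–\eqref{Green-quasi}), the requirement $\|A^\alpha(z)-A^\alpha(0)\|<d$ becomes
\begin{equation*}
\frac{|z|}{|\alpha|^2(1/2-\mu^-(\alpha))(|z^\ast(\alpha)|-|z|)}<d,
\end{equation*}
and solving this inequality for $|z|$ yields exactly $|z|<r^*$ with $r^*$ as in \eqref{radiusalphanotzero}. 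I would carry the steps in this order: (i) the subtraction identity and orthogonal-projection norm computation; (ii) the lower bound on the denominator in terms of $z^\ast(\alpha)$; (iii) the bound $\|(-\Delta_\alpha)^{-1}\|=|\alpha|^{-2}$; (iv) algebra to extract $r^*$; (v) feed the resulting $\|A^\alpha(z)-A^\alpha(0)\|<d$ into the Neumann/resolvent machinery of Section~\ref{asymptotic} to conclude (1)–(4).

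One caveat to flag in the write-up: $A^\alpha(z)$ maps $L^2_\#$ into $J_\#\subset L^2_\#$, and the relevant resolvent/projection analysis is on $L^2_\#(\alpha,Y,\mathbb{C}^3)$; since the eigenfunctions $\mathbf{p}_i$ in \eqref{compactproblem} also lie in $J_\#$, there is no loss in working in $L^2_\#$, but I would remark explicitly that $\|(-\Delta_\alpha)^{-1}\|$ is computed as a bounded operator on $L^2_\#$ and that the orthogonality of the structural projections is with respect to the $\langle\cdot,\cdot\rangle$ inner product of \eqref{innerproduct} while $(-\Delta_\alpha)^{-1}$ intertwines the two inner products via $\langle(-\Delta_\alpha)^{-1}\mathbf{u},\mathbf{v}\rangle=(\mathbf{u},\mathbf{v})_{L^2}$ — so a short lemma reconciling the two norms (they agree on the range after the obvious identification) is the one genuinely fiddly point, and I would isolate it rather than let it clutter the main argument.
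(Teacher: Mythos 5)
Your overall strategy is exactly the paper's: bound $\|(A^\alpha(z)-A^\alpha(0))R(\zeta,0)\|<1$ on $\Gamma_j$ via $\|R(\zeta,0)\|\le d^{-1}$ and an explicit spectral-multiplier estimate for $A^\alpha(z)-A^\alpha(0)$ built from \eqref{Takinv}, then feed this into the Neumann series \eqref{foursix}, the Riesz projection \eqref{Project1}, and Kato's isomorphism lemma for properties (1)--(3), with (4) coming from the convergent expansion; your algebra for extracting $r^*$ in \eqref{radiusalphanotzero} also matches. Two local points in your key estimate need repair, though. First, your chain $|(1/2+\mu_n)+z(1/2-\mu_n)|\ge(1/2-\mu_n)(|z^\ast(\alpha)|-|z|)\ge(1/2-\mu^-(\alpha))(|z^\ast(\alpha)|-|z|)$ is backwards in its second step: since $\mu^-(\alpha)\le\mu_n$ you have $1/2-\mu_n\le 1/2-\mu^-(\alpha)$, so that inequality goes the wrong way. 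The target bound is nonetheless true; keep the affine expression together, $|(1/2+\mu_n)+z(1/2-\mu_n)|\ge(1/2+\mu_n)-|z|(1/2-\mu_n)$, note this is increasing in $\mu_n$ (slope $1+|z|>0$), and evaluate at $\mu^-(\alpha)$ to get $(1/2-\mu^-(\alpha))(|z^\ast(\alpha)|-|z|)$; the paper reaches the same bound by passing to the real part $g(\mathrm{Re}(z),x)=\left(\tfrac12+x+\mathrm{Re}(z)(\tfrac12-x)\right)^{-2}$ and a monotonicity analysis in $x$ over $[\mu^-(\alpha),\mu^+(\alpha)]$ (Lemma \ref{identifyu}, Corollary \ref{boundAz}).

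Second, the "fiddly point" you flag is real, but your proposed resolution ("the norms agree on the range") is not what makes it work. The multiplier bound for $zP_1^\alpha+\sum z[(1/2+\mu_n)+z(1/2-\mu_n)]^{-1}P_{\mu_n}^\alpha$ is an estimate in the energy norm of $J_\#(\alpha,Y,\mathbb{C}^3)$ (where the projections are orthogonal), not an $L^2\to L^2$ norm, so you cannot simply multiply it by $\|(-\Delta_\alpha)^{-1}\|_{L^2\to L^2}=|\alpha|^{-2}$. The paper's mechanism is two separate Poincar\'e-type inequalities: $\|\mathbf{w}\|_{L^2(Y,\mathbb{C}^3)}\le|\alpha|^{-1}\|\mathbf{w}\|$ for $\mathbf{w}\in J_\#$ (Lemma \ref{poincarealpha}), applied to the output, and $\|(-\Delta_\alpha)^{-1}\mathbf{u}\|\le|\alpha|^{-1}\|\mathbf{u}\|_{L^2(Y,\mathbb{C}^3)}$ (estimate \eqref{energyl2}), applied to the input; each contributes one factor $|\alpha|^{-1}$, which is why your final constant $|\alpha|^{-2}$ comes out right even though the bookkeeping behind it differs. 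With these two corrections your argument coincides with the paper's proof in Section \ref{derivation}.
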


Suppose now $\alpha=\mathbf{0}$. 
For this case, take $\Gamma_j$ to be the closed contour in $\mathbb{C}$ containing an eigenvalue $\beta_j^0(0)\in\sigma(A^0(0))$, but no other element of
$\sigma(A^0(0))$, i.e., $\Gamma_j$ separates $\beta^\alpha_j(0)$ from other components of the spectrum, and define: 
\begin{equation*}
d=\inf_{\zeta\in\Gamma_j}\{\rm{dist}(\zeta,\sigma(A^0(0)))\}.
\end{equation*}
Suppose that the lowest $\alpha$-quasiperiodic resonance eigenvalue for the domain $D$ lies inside $-1/2<\mu^-(0)<0$ and the corresponding upper bound on $\mathcal{S}$ is given by: 
\begin{equation}
z^\ast(0)=\frac{\mu^-(0)+1/2}{\mu^-(0)-1/2}<0.
\label{upperonSzero}
\end{equation}
Set:
\begin{equation}
r^*=\frac{4\pi^2d|z^\ast(0)|}{\frac{1}{1/2-\mu^-(0)}+4\pi^2d}.
\label{radiusalphazero}
\end{equation}
\begin{theorem}{\rm Separation of spectra and radius of convergence for $\alpha=\mathbf{0}$.}
\label{separationandraduus-alphazero}
\\
The following properties  hold for inclusions with domains $D$ that satisfy \eqref{upperonSzero}: 
\begin{enumerate}
\item If $|z|<r^*$, then $\Gamma_j$ lies in the resolvent of both $A^0(0)$ and $A^0(z)$ and, thus, separates the spectrum of $A^0(z)$ into two parts given by the component of spectrum of $A^0(z)$ inside $\Gamma_j$, denoted by $\Sigma'(z)$, and the component exterior to $\Gamma_j$, denoted by $\Sigma''(z)$. The invariant subspace of $A^0(z)$ associated with $\Sigma'(z)$ is denoted by $\mathcal{M}'(z)$, with $\mathcal{M}'(z)=P(z)L^2_{\#}(\alpha,Y,\mathbb{C}^3)$.
\item The projection $P(z)$ is holomorphic for $|z|<r^*$ and $P(z)$ is given by:
\begin{equation*}
P(z)=\frac{-1}{2\pi i}\oint_{\Gamma_j} R(\zeta,z)\,d\zeta.
\end{equation*}
\item The spaces $\mathcal{M}'(z)$ and $\mathcal{M}'(0)$ are isomorphic for $|z|<r^*$.
\item The power series \eqref{foureleven} converges uniformly for $z\in\mathbb{C}$ inside any disk centered at the origin contained within $|z|<r^*$.
\end{enumerate}
\end{theorem}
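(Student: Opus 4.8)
The plan is to transcribe the proof of Theorem~\ref{separationandraduus-alphanotzero}, the single substantive change being that the quasiperiodic Poincar\'e constant $|\alpha|^2$ is replaced by $4\pi^2$, the smallest nonzero eigenvalue of $-\Delta_0$ on periodic mean-zero vector fields on $Y=(0,1]^3$. Recall from Section~\ref{limitspeczero} that $A^0(0)=P_2^0(-\Delta_0)^{-1}$ is compact, self-adjoint and has discrete, strictly positive spectrum clustering only at $0$, so $\beta_j^0(0)$ is an isolated eigenvalue, $d>0$, and $\Gamma_j$ may be drawn as in the statement. Everything then reduces to the estimate
\[
\bigl\|(A^0(z)-A^0(0))R(\zeta,0)\bigr\|_{L^2_\#(0,Y,\mathbb{C}^3)}<1\qquad\text{for all }\zeta\in\Gamma_j,\ |z|<r^*,
\]
after which parts (1)--(4) are formal consequences of the Neumann-series and contour-integral apparatus of Section~\ref{asymptotic}.

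First I would prove the perturbation bound. Subtracting $A^0(0)=P_2^0(-\Delta_0)^{-1}$ from the representation \eqref{Takinv} gives
\[
A^0(z)-A^0(0)=\Bigl(zP_1^0+\!\!\sum_{-1/2<\mu_i(0)<1/2}\!\!\frac{z}{(1/2+\mu_i(0))+z(1/2-\mu_i(0))}\,P_{\mu_i}^0\Bigr)(-\Delta_0)^{-1}.
\]
Since $P_1^0$ and the $P_{\mu_i}^0$ are mutually orthogonal in the $\langle\cdot,\cdot\rangle$ inner product of $J_\#(0,Y,\mathbb{C}^3)$, the bracketed operator has $J_\#$-norm equal to the supremum of $|z|$ and of $|z|/|(1/2+\mu_i(0))+z(1/2-\mu_i(0))|$. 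Writing $a^-=\tfrac12+\mu^-(0)$ and $b^-=\tfrac12-\mu^-(0)$ and using the hypothesis $-\tfrac12<\mu^-(0)\le\mu_i(0)<\tfrac12$, the denominator satisfies $|(1/2+\mu_i(0))+z(1/2-\mu_i(0))|\ge \tfrac12(1-|z|)+\mu_i(0)(1+|z|)\ge a^--b^-|z|>0$ whenever $|z|<a^-/b^-=|z^\ast(0)|$, so the bracket has $J_\#$-norm at most $|z|/(a^--b^-|z|)$. Composing with $(-\Delta_0)^{-1}\colon L^2\to J_\#$ (norm $\le 1/2\pi$, from $\langle(-\Delta_0)^{-1}\mathbf f,\cdot\rangle=(\mathbf f,\cdot)_{L^2}$ and the Poincar\'e inequality $\|\mathbf u\|_{L^2}\le(2\pi)^{-1}\|\nabla\mathbf u\|_{L^2}$ on mean-zero periodic fields) and with the embedding $J_\#\hookrightarrow L^2$ (norm $\le 1/2\pi$ by the same inequality) yields
\[
\|A^0(z)-A^0(0)\|_{L^2\to L^2}\le\frac{1}{4\pi^2}\,\frac{|z|}{a^--b^-|z|}.
\]
Together with $\|R(\zeta,0)\|\le 1/d$ for $\zeta\in\Gamma_j$ (from $\mathrm{dist}(\Gamma_j,\sigma(A^0(0)))=d$), the product is $<1$ exactly when $|z|(1+4\pi^2 d\,b^-)<4\pi^2 d\,a^-$, and substituting $|z^\ast(0)|=a^-/b^-$ and $1/(1/2-\mu^-(0))=1/b^-$ shows this threshold is precisely the $r^*$ of \eqref{radiusalphazero}.

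Granting the bound, the four conclusions follow as for Theorem~\ref{separationandraduus-alphanotzero}. For $|z|<r^*$ the Neumann series produces a bounded inverse of $I+(A^0(z)-A^0(0))R(\zeta,0)$ for every $\zeta\in\Gamma_j$, so $A^0(z)-\zeta=\bigl(I+(A^0(z)-A^0(0))R(\zeta,0)\bigr)(A^0(0)-\zeta)$ is boundedly invertible there; this gives (1) together with $R(\zeta,z)=R(\zeta,0)\bigl(I+(A^0(z)-A^0(0))R(\zeta,0)\bigr)^{-1}$. Because $r^*<|z^\ast(0)|=\mathrm{dist}(0,\mathcal{S})$, Lemma~\ref{inverseoperator} makes $A^0(z)$, hence $R(\zeta,z)$, holomorphic on $|z|<r^*$ uniformly in $\zeta\in\Gamma_j$; integrating over $\Gamma_j$ gives the holomorphy of $P(z)$ and its contour representation, i.e.\ (2). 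A norm-continuous family of projections has constant rank on the connected disc $|z|<r^*$, and Kato's transformation function $U(z)$ (the solution of $U'(z)=[P'(z),P(z)]U(z)$, $U(0)=I$) intertwines $\mathcal{M}'(0)$ and $\mathcal{M}'(z)$, giving (3). Finally, expanding $\bigl(I+(A^0(z)-A^0(0))R(\zeta,0)\bigr)^{-1}$ as a power series in $z$ converging uniformly for $\zeta\in\Gamma_j$ and $|z|\le\rho<r^*$, inserting it into the weighted-mean formula $\hat\beta^0(z)-\beta_j^0(0)=-\tfrac{1}{2m\pi i}\,\text{tr}\oint_{\Gamma_j}(\zeta-\beta_j^0(0))R(\zeta,z)\,d\zeta$, and integrating term by term recovers \eqref{foureleven} with radius of convergence at least $r^*$; this is (4).

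I expect essentially no conceptual obstacle, since the argument is a transcription of the $\alpha\neq0$ case, but the step that genuinely requires care is the uniform control of $\sup_i |z|/|(1/2+\mu_i(0))+z(1/2-\mu_i(0))|$ over the whole structural spectrum. The hypothesis $\mu^-(0)>-\tfrac12$ is used precisely to keep the poles $z_i(0)$ away from the origin and prevent the denominators from degenerating as $\mu_i(0)\to-\tfrac12$, while the opposite regime $\mu_i(0)\to\tfrac12$ (where individual coefficients blow up but the corresponding pole runs off to $-\infty$) is tamed by the algebraic identity $(1/2-\mu_i(0))\,|z_i(0)|=1/2+\mu_i(0)$ built into the estimate; it is also this bookkeeping that pins down the exact constant in $r^*$ and, in particular, the replacement of $|\alpha|^2$ by $4\pi^2$.
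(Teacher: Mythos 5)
Your proposal is correct and follows essentially the same route as the paper, whose proof of this theorem consists precisely of establishing the periodic Poincar\'e inequality $\Vert\mathbf{v}\Vert_{L^2}\le(2\pi)^{-1}\Vert\mathbf{v}\Vert$ on $J_{\#}(0,Y,\mathbb{C}^3)$ and then repeating the $\alpha\neq 0$ argument with $|\alpha|^2$ replaced by $4\pi^2$, arriving at the same bound $B(0,z)<1$ for $|z|<r^*$ as in \eqref{radiusalphazero}. Your minor shortcuts (a direct reverse-triangle-inequality lower bound on the denominators in place of the paper's monotonicity analysis of $g(u,x)$ in Lemma~\ref{identifyu}, and Kato's transformation function in place of the cited Lemma~4.10 for property~3) yield the identical constants and do not alter the structure of the argument.
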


Next, we provide an explicit representation of the integral operators appearing in the series expansion for the eigenvalue group.
\begin{theorem}{\rm Representation of integral operators in the series expansion for eigenvalues}\\
\label{reptheorem1}
Let $P_3^\alpha$ be the projection onto the orthogonal complement of $W_1^\alpha\oplus W_2^\alpha$, and let $\tilde{I}$ denote the identity on $L^2(\partial D)^3$,  then the explicit representation for
for the operators $A_n^\alpha$ in the expansion  \eqref{foureleven}, \eqref{fourtwelve} is given by:
\begin{align*}
A_1^\alpha&=[S^\alpha(M^\alpha+\frac{1}{2}\tilde{I})^{-1}(S^\alpha)^{-1}P_3^\alpha+P_1^\alpha](-\Delta_\alpha)^{-1}\hbox{ \rm and}\notag\\
A_n^\alpha&=S^\alpha(M^\alpha+\frac{1}{2}\tilde{I})^{-1}(S^\alpha)^{-1}[S^\alpha(M^\alpha-\frac{1}{2}\tilde{I})(M^\alpha+\frac{1}{2}\tilde{I})^{-1}(S^\alpha)^{-1}]^{n-1}P_3^\alpha(-\Delta_\alpha)^{-1}.
\end{align*}
\end{theorem}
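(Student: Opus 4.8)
The operators $A_n^\alpha$ are, by definition, the Taylor coefficients at $z=1/k=0$ of the holomorphic operator family $A^\alpha(z)=(T_k^\alpha)^{-1}(-\Delta_\alpha)^{-1}$ of Lemma~\ref{inverseoperator}, so the plan is to expand the spectral representation \eqref{Takinv} of $(T_k^\alpha)^{-1}$ in powers of $z$, read off the coefficient of $z^n$, and then rewrite the resulting spectral sums in layer-potential form by means of the identity $T^\alpha=S^\alpha M^\alpha(S^\alpha)^{-1}$.

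First I would expand each scalar factor in \eqref{Takinv}. For $-1/2<\mu_i(\alpha)<1/2$ set $a_i=\tfrac12+\mu_i(\alpha)>0$, $b_i=\tfrac12-\mu_i(\alpha)>0$; then
\[
z\bigl(a_i+zb_i\bigr)^{-1}=\sum_{n\ge 1}(-1)^{n-1}\,\frac{b_i^{\,n-1}}{a_i^{\,n}}\,z^{n},
\]
a geometric series of radius $a_i/b_i=|z_i(\alpha)|$, hence all of them converge uniformly on $|z|<\min_i|z_i(\alpha)|=|z^\ast(\alpha)|={\rm dist}(0,\mathcal S)$ by Lemma~\ref{inverseoperator}. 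Inserting these together with the terms $zP_1^\alpha$, $P_2^\alpha$ into $A^\alpha(z)=(T_k^\alpha)^{-1}(-\Delta_\alpha)^{-1}$ and collecting the coefficient of $z^n$ gives $A^\alpha(0)=P_2^\alpha(-\Delta_\alpha)^{-1}$ together with
\[
A_1^\alpha=\Bigl[P_1^\alpha+\sum_i a_i^{-1}P_{\mu_i}^\alpha\Bigr](-\Delta_\alpha)^{-1},\qquad
A_n^\alpha=\Bigl[\sum_i(-1)^{n-1}\frac{b_i^{\,n-1}}{a_i^{\,n}}\,P_{\mu_i}^\alpha\Bigr](-\Delta_\alpha)^{-1}\ \ (n\ge 2).
\]

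Next I would recognise these spectral sums through the functional calculus for the Hermitian compact operator $T^\alpha$ on $W_3^\alpha$ (Theorem~\ref{tcompact}). Since $\sum_i P_{\mu_i}^\alpha=P_3^\alpha$ and $\sigma(T^\alpha;W_3^\alpha)$ is a compact subset of $(-1/2,1/2)$ — it lies in $[-1/2,1/2]$, accumulates only at $0$ (compactness), and excludes $\pm1/2$, since by \eqref{T} a unit eigenfunction there would be curl-free in $D$ resp.\ in $H$ and hence lie in $W_1^\alpha$ resp.\ $W_2^\alpha$, forcing it to be orthogonal to itself — the operators $\tfrac12\pm T^\alpha$ are boundedly invertible on $W_3^\alpha$, and for the continuous functions $f_n(\mu)=(\tfrac12+\mu)^{-1}\bigl[(\mu-\tfrac12)(\mu+\tfrac12)^{-1}\bigr]^{n-1}$ one has
\[
\sum_i f_n(\mu_i)P_{\mu_i}^\alpha=(\tfrac12+T^\alpha)^{-1}\bigl[(T^\alpha-\tfrac12)(T^\alpha+\tfrac12)^{-1}\bigr]^{n-1}P_3^\alpha .
\]
Finally, $S^\alpha\colon V_t^{-1/2}(\partial D)^3\to W_3^\alpha$ is a bijection with bounded inverse (Theorem~\ref{thm:iso}) and $T^\alpha=S^\alpha M^\alpha(S^\alpha)^{-1}$, so conjugation by $S^\alpha$ intertwines a rational function of $M^\alpha$ (with poles off $\sigma(M^\alpha;V_t^{-1/2}(\partial D)^3)=\sigma(T^\alpha;W_3^\alpha)$) with the same function of $T^\alpha$; in particular $(\tfrac12+T^\alpha)^{-1}=S^\alpha(M^\alpha+\tfrac12\tilde{I})^{-1}(S^\alpha)^{-1}$ and $(T^\alpha-\tfrac12)(T^\alpha+\tfrac12)^{-1}=S^\alpha(M^\alpha-\tfrac12\tilde{I})(M^\alpha+\tfrac12\tilde{I})^{-1}(S^\alpha)^{-1}$ on $W_3^\alpha$. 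Substituting these into the displays for $A_1^\alpha$ and $A_n^\alpha$, and using $(S^\alpha)^{-1}S^\alpha={\rm id}$ to telescope the $(n-1)$-fold product, produces exactly the asserted formulas; the periodic case $\alpha=\mathbf{0}$ is verbatim the same with $(-\Delta_0)^{-1}$ and $W_3^0\subset J_\#(0,Y,\mathbb{C}^3)$ in place of their $\alpha\neq 0$ counterparts.

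I expect the only genuine obstacle to be the subspace bookkeeping: one must confirm that $\pm1/2\notin\sigma(M^\alpha;V_t^{-1/2}(\partial D)^3)$, equivalently that $\sigma(T^\alpha;W_3^\alpha)$ stays in a compact subset of $(-1/2,1/2)$ — this is what makes $M^\alpha\pm\tfrac12\tilde{I}$ boundedly invertible and the spectral radius of $(M^\alpha-\tfrac12\tilde{I})(M^\alpha+\tfrac12\tilde{I})^{-1}$ equal to $|z^\ast(\alpha)|^{-1}$ — and that the inverses and the operator powers in the final formula are applied only after composing with $P_3^\alpha$, so that each factor is a bona fide bounded operator. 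The remaining steps (the geometric expansion and matching coefficients of $z^n$) are routine, and uniform convergence of every series on $|z|<|z^\ast(\alpha)|$ is already supplied by Lemma~\ref{inverseoperator} and the radius bounds in Theorems~\ref{separationandraduus-alphanotzero}--\ref{separationandraduus-alphazero}.
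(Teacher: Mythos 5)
Your proposal is correct and takes essentially the same route as the paper's own proof: both expand the spectral representation \eqref{Takinv} as a geometric series in $z$, read off the coefficient of $z^n$ as a sum over the projections $P_{\mu_i}^\alpha$, and then convert that sum to layer-potential form through $T^\alpha=S^\alpha M^\alpha(S^\alpha)^{-1}$ and the bijectivity of $S^\alpha$. The only difference is cosmetic — you phrase the conversion via the functional calculus for the compact Hermitian operator $T^\alpha$ while the paper manipulates the mutually orthogonal projections explicitly (its identities \eqref{identproj}--\eqref{resonancetolayersAn}) — and your explicit check that $\pm\tfrac{1}{2}\notin\sigma(T^\alpha;W_3^\alpha)$, so that $M^\alpha\pm\tfrac{1}{2}\tilde{I}$ is boundedly invertible, is a detail the paper leaves implicit.
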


We have a corollary to Theorems \ref{separationandraduus-alphanotzero} and \ref{separationandraduus-alphazero} regarding the error incurred when only finitely many terms of the series (\ref{foureleven}) are calculated.
\begin{theorem}{\rm Error estimates for the eigenvalue expansion}.
\begin{enumerate}
\label{errorestm}
\item Let $\alpha \neq 0$, and suppose $D$, $z^*(\alpha)$, and $r^*$ are as in Theorem \ref{separationandraduus-alphanotzero}. Then, the following error estimate for the series \eqref{foureleven} holds for $|z|<r^*$:
\begin{equation*}
\left |\hat{\beta}^{\alpha}(z) - \sum \limits_{n = 0}^{p} z^n \beta^{\alpha}_n \right | \leq \frac{d|z|^{p+1}}{(r^*)^p(r^* - |z|)}.
\end{equation*}
\item Let $\alpha = 0$, and suppose $D$, $z^*(0)$, and $r^*$ are as in Theorem \ref{separationandraduus-alphazero}. Then, the following error estimate for the series \eqref{foureleven} holds for $|z|<r^*$:
\begin{equation*}
\left |\hat{\beta}^{0}(z) - \sum \limits_{n = 0}^{p} z^n \beta^{0}_n \right | \leq \frac{d|z|^{p+1}}{(r^*)^p(r^* - |z|)}.
\end{equation*}
\end{enumerate}
\end{theorem}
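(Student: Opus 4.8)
The two statements have an identical structure and differ only in the value of $r^*$ (given by \eqref{radiusalphanotzero} when $\alpha\neq\mathbf 0$ and by \eqref{radiusalphazero} when $\alpha=\mathbf 0$), so the plan is to prove them simultaneously. The quantity to be estimated is exactly the tail of the convergent power series \eqref{foureleven},
\[
\hat{\beta}^\alpha(z)-\sum_{n=0}^{p}z^n\beta^\alpha_n=\sum_{n=p+1}^{\infty}z^n\beta^\alpha_n ,
\]
which converges for $|z|<r^*$ by part (4) of Theorem~\ref{separationandraduus-alphanotzero} (respectively Theorem~\ref{separationandraduus-alphazero}). Hence the whole argument reduces to the single coefficient bound $|\beta^\alpha_n|\le d\,(r^*)^{-n}$ for $n\ge1$, since then
\[
\Bigl|\sum_{n=p+1}^{\infty}z^n\beta^\alpha_n\Bigr|\le \sum_{n=p+1}^{\infty} d\,\Bigl(\frac{|z|}{r^*}\Bigr)^{n}=\frac{d\,|z|^{p+1}}{(r^*)^{p}\,(r^*-|z|)}
\]
for $|z|<r^*$, which is the claimed estimate in both cases.

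To obtain the coefficient bound I would apply Cauchy's coefficient inequality to the function $g(z):=\hat{\beta}^\alpha(z)-\beta^\alpha_0=\sum_{n\ge1}z^n\beta^\alpha_n$, which is holomorphic on the disk $|z|<r^*$; the key input is the a priori bound $|g(z)|\le d$ there. Take $\Gamma_j$ to be the circle of radius $d$ centered at $\beta^\alpha_j(0)=\beta^\alpha_0$, consistent with the definition \eqref{dist} of $d$. By part (1) of the relevant theorem, for $|z|<r^*$ this circle lies in the resolvent set of $A^\alpha(z)$ and splits its spectrum into an inner part $\Sigma'(z)$ and an outer part $\Sigma''(z)$; by parts (2)--(3) the spectral subspace $\mathcal M'(z)=P(z)L^2_\#(\alpha,Y,\mathbb C^3)$ depends holomorphically on $z$ and is isomorphic to $\mathcal M'(0)$, hence has the same dimension $m$, so $\Sigma'(z)$ is precisely the eigenvalue group $\{\beta^\alpha_1(z),\dots,\beta^\alpha_m(z)\}$ counted with multiplicity. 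Since $\hat{\beta}^\alpha(z)=\tfrac1m\,\mathrm{tr}\bigl(A^\alpha(z)P(z)\bigr)$ is the multiplicity-weighted average of these eigenvalues, all of which lie inside $\Gamma_j$, i.e.\ within distance $d$ of $\beta^\alpha_0$, it follows that $|g(z)|\le d$ for all $|z|<r^*$. That $A^\alpha(z)$ is itself holomorphic on $|z|<r^*$ is automatic, because $r^*<|z^\ast(\alpha)|=\mathrm{dist}(0,\mathcal S)$ by inspection of \eqref{radiusalphanotzero}--\eqref{radiusalphazero} together with Lemma~\ref{inverseoperator}.

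With the a priori bound in hand, Cauchy's inequality on the circle $|z|=\rho$ gives $|\beta^\alpha_n|=\bigl|\tfrac1{n!}g^{(n)}(0)\bigr|\le\rho^{-n}\sup_{|z|=\rho}|g(z)|\le d\,\rho^{-n}$ for every $0<\rho<r^*$; letting $\rho\uparrow r^*$ yields $|\beta^\alpha_n|\le d\,(r^*)^{-n}$, and the geometric tail estimate of the first paragraph finishes both parts. The only genuinely delicate point is this a priori sup bound, namely confirming that the full eigenvalue group (with multiplicities) stays inside the disk of radius $d$ for all $|z|<r^*$; this rests entirely on the separation and isomorphism conclusions of Theorems~\ref{separationandraduus-alphanotzero} and~\ref{separationandraduus-alphazero} and on the choice of $\Gamma_j$ as the circle of radius $d$. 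An alternative route that sidesteps the sup bound is to estimate $\beta^\alpha_n$ directly from the series \eqref{fourtwelve}, using $\|R(\zeta,0)\|\le 1/d$ for $\zeta\in\Gamma_j$ (valid since $A^\alpha(0)$ is self-adjoint) together with operator-norm bounds on the coefficients $A^\alpha_k$ coming from Theorem~\ref{reptheorem1}, expressed through $|z^\ast(\alpha)|$, $\mu^-(\alpha)$ and $|\alpha|$; this reproduces the same geometric rate $(r^*)^{-n}$, at the cost of a longer computation.
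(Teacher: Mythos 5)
Your proof is correct and follows essentially the same route as the paper: the paper obtains the coefficient bound $|\beta^\alpha_n|\le d(r^*)^{-n}$ by citing the Cauchy-inequality argument in Kato (Chapter~II, \S~3) and then sums the geometric tail exactly as you do. Your second paragraph simply spells out the cited Kato step (the a priori bound $|\hat{\beta}^\alpha(z)-\beta^\alpha_0|\le d$ from the separation theorems with $\Gamma_j$ the circle of radius $d$, followed by Cauchy's inequalities), so there is no substantive difference in approach.
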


We summarize results in the following theorem.
\begin{theorem}
\label{maintheorem}
The Bloch eigenvalue problem \eqref{Helm} is defined for the coupling constant $k$ extended into
the complex plane and the operator $-\nabla\times(k\chi_{H}+\chi_D)\nabla\times$  with domain $J_{\#}(\alpha,Y,\mathbb{C}^3)$ is holomorphic for $k\in\mathbb{C}\setminus Z$. The associated Bloch spectra is given by the eigenvalues $\lambda_j(k,\alpha)=(\beta_j^\alpha(1/k))^{-1}$, for $j\in\mathbb{N}$. For $\alpha\in Y^\star$ fixed, the eigenvalues are of finite multiplicity. Moreover for each $j$ and $\alpha\in Y^\star$, the eigenvalue group is analytic within any neighborhood of infinity
contained within the disk $|k|>(r^*)^{-1}$ where $r^*$ is given by \eqref{radiusalphanotzero} for $\alpha\not=0$ and by \eqref{radiusalphazero} for $\alpha=\mathbf{0}$.
\end{theorem}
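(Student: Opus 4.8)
The plan is to assemble this summary statement from the structural results established in Sections~\ref{layers}--\ref{radius}; no new argument is needed, so the ``proof'' amounts to recording which earlier result supplies each clause. First I would dispatch the holomorphy of $-\nabla\times(k\chi_H+\chi_D)\nabla\times$ on $\mathbb{C}\setminus Z$: by Theorem~\ref{representation} this operator factors as $-\Delta_\alpha T_k^\alpha$, with $T_k^\alpha$ affine in $k$ through the spectral representation \eqref{Takinv} and invertible exactly for $k\in\mathbb{C}\setminus Z$, $Z$ as in \eqref{Zeta}; combined with Theorem~\ref{extension} this yields that the Bloch problem \eqref{Helm} and the solution operator $B^\alpha(k)=(T_k^\alpha)^{-1}(-\Delta_\alpha)^{-1}$ extend holomorphically to $k\in\mathbb{C}\setminus Z$. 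Passing to $z=1/k$, Lemma~\ref{inverseoperator} identifies $A^\alpha(z)=B^\alpha(k)$ as holomorphic on $\Omega_0=\mathbb{C}\setminus\mathcal{S}$, where $\mathcal{S}=\{1/k:k\in Z\}$ accumulates only at $z=-1$, so $z=0$ (that is, $k=\infty$) is an interior point of the domain of holomorphy.

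Next I would record the finite multiplicity and the identification $\lambda_j(k,\alpha)=(\beta_j^\alpha(1/k))^{-1}$. Since $J_\#(\alpha,Y,\mathbb{C}^3)$ embeds compactly into $L^2_\#(\alpha,Y,\mathbb{C}^3)$, the operator $B^\alpha(k)$ is compact on $L^2_\#(\alpha,Y,\mathbb{C}^3)$ (Theorem~\ref{compact2}), so its spectrum $\{\gamma_j(k,\alpha)\}_{j\in\mathbb{N}}$ is discrete with finite-dimensional eigenspaces and a possible accumulation point only at $0$; for $\gamma_j\neq0$ the equivalence noted before Theorem~\ref{extension} shows that $\xi=\lambda_j(k,\alpha)=\gamma_j^{-1}(k,\alpha)$ solves \eqref{representationform}, and with $z=1/k$ and $\beta_j^\alpha(z)=\gamma_j(k,\alpha)$ this reads $\lambda_j(k,\alpha)=(\beta_j^\alpha(1/k))^{-1}$, with finite multiplicities for each fixed $\alpha\in Y^\star$.

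For the analyticity of the eigenvalue group near $k=\infty$ I would invoke Theorems~\ref{separationandraduus-alphanotzero} and \ref{separationandraduus-alphazero}. Fixing $j$ and $\alpha$, the analysis of Sections~\ref{limitspectra} and \ref{limitspeczero} shows $A^\alpha(0)=P_2^\alpha(-\Delta_\alpha)^{-1}$ has strictly positive, discrete spectrum, so $\beta_j^\alpha(0)>0$ can be enclosed by a contour $\Gamma_j$ separating it from the remainder of $\sigma(A^\alpha(0))$. For $|z|<r^*$, with $r^*$ given by \eqref{radiusalphanotzero} for $\alpha\not=0$ and by \eqref{radiusalphazero} for $\alpha=\mathbf{0}$, those theorems guarantee that $\Gamma_j$ stays in the resolvent of $A^\alpha(z)$, that $P(z)$ is holomorphic with $\dim\mathcal{M}'(z)\equiv\dim\mathcal{M}'(0)=:m$, and that the weighted mean $\hat\beta^\alpha(z)$ of \eqref{foureleven} converges uniformly on disks centered at the origin inside $|z|<r^*$. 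Under $z=1/k$ the disk $|z|<r^*$ becomes the neighborhood of infinity $|k|>(r^*)^{-1}$ with $z=0\leftrightarrow k=\infty$, so $\hat\beta^\alpha(1/k)$ and the symmetric functions of the group $\{\beta^\alpha_1(1/k),\dots,\beta^\alpha_m(1/k)\}$ are analytic there; since $\hat\beta^\alpha(z)\to\beta_j^\alpha(0)>0$ as $z\to0$, shrinking the neighborhood keeps the group bounded away from $0$, whence $\lambda_j(k,\alpha)=(\hat\beta^\alpha(1/k))^{-1}$ and the associated group are analytic on it.

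The main obstacle is not any individual estimate, since all of them are already in hand, but the bookkeeping of the substitution $z=1/k$ together with the inversion $\lambda=\beta^{-1}$: one must be careful that ``analyticity of the eigenvalue group'' in the sense of Kato refers to the weighted mean and the other symmetric functions of $\{\beta^\alpha_1(1/k),\dots,\beta^\alpha_m(1/k)\}$ rather than to individual branches when the group has size $m>1$ at $k=\infty$, and that the radius $r^*$ may have to be replaced by a slightly smaller one to guarantee $\beta^\alpha_j(1/k)$ stays away from zero so that taking reciprocals preserves analyticity.
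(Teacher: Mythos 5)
Your proposal is correct and matches the paper's treatment: Theorem~\ref{maintheorem} is stated there as a summary, with no separate argument beyond citing Theorems~\ref{representation}, \ref{extension}, \ref{compact2}, \ref{separationandraduus-alphanotzero} and \ref{separationandraduus-alphazero} together with the substitution $z=1/k$, which is exactly the assembly you carry out. Your closing caveat about interpreting analyticity of the eigenvalue group via the weighted mean (and keeping the group away from zero before inverting) is a reasonable clarification consistent with the paper's framework rather than a deviation from it.
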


The proofs of Theorems~\ref{separationandraduus-alphanotzero}, \ref{separationandraduus-alphazero}  and \ref{errorestm} are given in Section~\ref{derivation}. The proof of Theorem~\ref{reptheorem1} is given in Section~\ref{leading-order}.

\section{Radius of Convergence and Separation of Spectra for Periodic Scatterers of General Shape}
\label{radiusgeneralshape}

In this section, we identify an explicit condition on the inclusion geometry that guarantees a lower bound $\mu^-$ on the structural spectrum.  

 Let $D$ be a simply connected set, compactly contained in $Y$,  with $C^{1,\gamma}$ boundary, $\gamma>0$. Recall that, by Theorem~\ref{mcompact}, we have that the eigenvalues of the magnetic dipole operator are precisely those of the Neumann-Poincar\'e operator, that is: $$\sigma(M^\alpha;\ V^{^{-\frac{1}{2}}}_{t}(\partial D)^{3})=\sigma((K^{-\alpha})^{*};\ H^{-\frac{1}{2}}_{0}(\partial D)).$$  Moreover, a criteria for an $\alpha$-independent lower bound for $\sigma\left((K^{-\alpha})^{*};\ H^{-\frac{1}{2}}_{0}(\partial D)\right)$ was already established in \cite{RobertRobert1}, in a theorem which we restate below.


\begin{theorem}
\label{lowerboundrho}  Let $\mu^-$ be the infimum of the structural spectrum.  Suppose there is a constant $\theta >0$ such that, for all $u \in H^1_{\alpha}(Y)$ that are harmonic in $D$ and $Y\setminus D$, we have:
\begin{equation}
\label{thetaineq}
\| \nabla u\|_{L^2(Y\setminus D)}^2 \geq \theta \| \nabla u \|_{L^2(D)}^2.
\end{equation}

Let $\rho = \min \{ \frac{1}{2}, \frac{\theta}{2} \}$.  Then $\mu^- + \frac{1}{2} > \rho$.
\end{theorem}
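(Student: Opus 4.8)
The plan is to relate the $\alpha$-quasiperiodic Neumann--Poincar\'e eigenvalue problem for $(K^{-\alpha})^\ast$ on $H^{-1/2}_0(\partial D)$ to a transmission-type variational problem for scalar harmonic functions, and then to exploit the hypothesis \eqref{thetaineq} on that variational problem to bound the eigenvalues away from $-1/2$. By Theorem~\ref{mcompact}, it suffices to work with $(K^{-\alpha})^\ast$ acting on $H^{-1/2}_0(\partial D)$. The key identity is the classical one for the Neumann--Poincar\'e operator: if $\phi \in H^{-1/2}_0(\partial D)$ is an eigenfunction with $(K^{-\alpha})^\ast \phi = \mu \phi$, and $u = S^\alpha(\phi)$ is its $\alpha$-quasiperiodic single-layer potential, then $u$ is $\alpha$-quasiperiodic, harmonic in $D$ and in $Y \setminus D$, continuous across $\partial D$, and the jump relations $\mathbf{n}\cdot\nabla u|^{\pm}_{\partial D} = (\mp \tfrac12 + \mu)\phi$ give
\begin{equation*}
\frac{\partial u}{\partial \mathbf{n}}\Big|^+ = \Big(\mu - \tfrac12\Big)\phi, \qquad \frac{\partial u}{\partial \mathbf{n}}\Big|^- = \Big(\mu + \tfrac12\Big)\phi.
\end{equation*}
Hence $\partial_{\mathbf{n}} u|^+ \big/ \partial_{\mathbf{n}} u|^- = (\mu - \tfrac12)/(\mu + \tfrac12)$, the same ratio that appears throughout the paper.

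Next I would convert this into an energy identity. Multiplying $\Delta u = 0$ by $\overline{u}$ and integrating by parts separately over $D$ and over $Y\setminus D$ (using $\alpha$-quasiperiodicity to kill the boundary terms on $\partial Y$, and continuity of $u$ across $\partial D$), one gets
\begin{equation*}
\int_{Y\setminus D} |\nabla u|^2\,dx = \int_{\partial D} \frac{\partial u}{\partial \mathbf{n}}\Big|^+ \overline{u}\,ds = \Big(\mu - \tfrac12\Big)\int_{\partial D}\phi\,\overline{u}\,ds,
\end{equation*}
\begin{equation*}
\int_{D} |\nabla u|^2\,dx = -\int_{\partial D} \frac{\partial u}{\partial \mathbf{n}}\Big|^- \overline{u}\,ds = -\Big(\mu + \tfrac12\Big)\int_{\partial D}\phi\,\overline{u}\,ds.
\end{equation*}
Setting $a = \int_D |\nabla u|^2\,dx$ and $b = \int_{Y\setminus D}|\nabla u|^2\,dx$, and noting $a,b>0$ since $u$ is nonconstant (as $\phi\neq 0$ and $S^\alpha$ is injective), we obtain $b/a = -(\mu - \tfrac12)/(\mu + \tfrac12) = (\tfrac12 - \mu)/(\tfrac12 + \mu)$, i.e.
\begin{equation*}
\mu + \tfrac12 = \frac{a + b}{a}\cdot\frac{1}{1 + b/a}\cdot\frac{b/a + 1}{1}\cdots
\end{equation*}
— more cleanly, solving for $\mu$ gives $\mu = \tfrac12\,\dfrac{a - b}{a + b}$, so $\mu + \tfrac12 = \dfrac{a}{a+b}$. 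Now the hypothesis \eqref{thetaineq} says precisely $b \geq \theta a$, whence $\mu + \tfrac12 = \dfrac{a}{a+b} \leq \dfrac{a}{a + \theta a} = \dfrac{1}{1+\theta}$, which bounds eigenvalues away from $+\tfrac12$; but we want the bound away from $-\tfrac12$, i.e. a lower bound on $\mu + \tfrac12$. For that I instead use the reverse role: since $a>0$ and $b < \infty$, we always have $\mu + \tfrac12 = a/(a+b) > 0$, and to get a \emph{quantitative} lower bound one applies \eqref{thetaineq} in the form $b \geq \theta a$ together with the trivial bound — wait, that gives an upper bound on $a/(a+b)$, not a lower one. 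The correct move, which I expect to be the crux, is that \eqref{thetaineq} must be applied to the harmonic function whose normal-derivative data is reversed, or equivalently one uses that $a/(a+b) = 1/(1 + b/a) \geq 1/(1 + \text{(upper bound on } b/a))$; so one needs an \emph{upper} bound on $b/a$. This is obtained by applying \eqref{thetaineq} with the roles of $D$ and $Y\setminus D$ effectively swapped via the substitution $u \mapsto$ the harmonic conjugate-type function, OR — more likely in the source — by observing that the same inequality \eqref{thetaineq}, applied to $u$, also controls $b/a$ from above when combined with $b\ge \theta a$ is actually used on a different extremal function. I would resolve this by invoking \eqref{thetaineq} directly: it gives $b \geq \theta a$, hence $b/a \geq \theta$; combined with $\mu + \tfrac12 = a/(a+b) = 1/(1+b/a)$ and the elementary fact that $\mu$ is also $\geq -\tfrac12$ always, we get for the \emph{infimum} $\mu^-$ that $\mu^- + \tfrac12 = \inf a/(a+b)$. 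To bound this below by $\rho = \min\{\tfrac12, \tfrac\theta2\}$ one argues: if $\theta \geq 1$ then $b/a \geq 1$ forces $\mu \leq 0$ so $\mu^- + \tfrac12 \leq \tfrac12$, and separately one shows $b/a$ cannot be arbitrarily large for the relevant class — this is where the genuine content of \cite{RobertRobert1} enters; the clean statement there is that \eqref{thetaineq} \emph{symmetrized} (which for $C^{1,\gamma}$ inclusions compactly contained in $Y$ holds automatically with the \emph{same} $\theta$ up to the stated $\min$) yields $\theta a \leq b \leq \theta^{-1}a$ is false in general, so instead $\rho = \min\{\tfrac12,\tfrac\theta2\}$ is exactly calibrated so that $\mu^- + \tfrac12 = a/(a+b) \geq \theta/(1 + \text{something}) \geq \rho$.

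Rather than reconstruct the delicate inequality chain, I would structure the proof as: (1) reduce to the scalar NP eigenvalue problem via Theorem~\ref{mcompact}; (2) produce the harmonic single-layer potential $u$ and derive $\mu + \tfrac12 = a/(a+b)$ with $a = \|\nabla u\|^2_{L^2(D)}$, $b = \|\nabla u\|^2_{L^2(Y\setminus D)}$, from the jump relations and integration by parts, being careful that $u \in H^1_\alpha(Y)$ and is harmonic in both phases so that \eqref{thetaineq} applies to it; (3) invoke \eqref{thetaineq}, $b \geq \theta a$, to get $\mu + \tfrac12 \leq 1/(1+\theta)$, and dually — applying \eqref{thetaineq} to the potential associated with the "complementary" density, or simply noting the problem is symmetric under $D \leftrightarrow Y\setminus D$ only in the limited sense captured by $\rho$ — to get the lower bound $\mu + \tfrac12 > \rho$ with $\rho = \min\{\tfrac12,\tfrac\theta2\}$; (4) take the infimum over all eigenvalues and all $\alpha \in Y^\star$ to conclude $\mu^- + \tfrac12 > \rho$. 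The main obstacle is step (3): getting the \emph{lower} bound on $\mu + \tfrac12$ requires controlling $b/a$ from above, which \eqref{thetaineq} alone does not directly give; the resolution is that for $u$ harmonic in $D$ with prescribed Dirichlet data on $\partial D$ one has an a priori bound $a = \|\nabla u\|^2_{L^2(D)} \leq C_D \|\nabla u\|^2_{L^2(Y\setminus D)} = C_D\, b$ coming from trace/extension estimates on the $C^{1,\gamma}$ domain $D$ compactly contained in $Y$ (indeed harmonic extension into $D$ minimizes energy, so $a \leq \|\nabla \tilde u\|^2_{L^2(D)}$ for any extension $\tilde u$ of the boundary data, and choosing $\tilde u$ supported near $\partial D$ gives $a \leq C\, b$), and the constant works out so that $\min\{\tfrac12, \tfrac\theta2\}$ is the right threshold. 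Since this is exactly the content of the cited Theorem from \cite{RobertRobert1}, I would present the scalar-reduction and energy-identity steps in full and then cite \cite{RobertRobert1} for the final inequality, noting that the $\alpha$-quasiperiodic case is identical to the periodic case treated there because \eqref{thetaineq} is assumed uniformly in $\alpha$.
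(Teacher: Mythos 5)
Your overall strategy is the right one, and it is worth saying that the paper itself offers no proof of this statement: it simply restates the result from \cite{RobertRobert1}, and that reference argues essentially along the lines you sketch (reduce to the scalar Neumann--Poincar\'e spectrum via Theorem~\ref{mcompact}, represent an eigenfunction by a scalar $\alpha$-quasiperiodic single layer potential, turn the jump relations into an energy identity, then apply \eqref{thetaineq}). The genuine gap is in your step (3): your energy identity comes out inverted, and you never recover from it. You arrive at $\mu+\tfrac12=a/(a+b)$ with $a=\|\nabla u\|_{L^2(D)}^2$, $b=\|\nabla u\|_{L^2(Y\setminus D)}^2$, correctly observe that $b\ge\theta a$ then only gives the upper bound $\mu+\tfrac12\le(1+\theta)^{-1}$, and the rest of the step is a list of speculative patches (swapping $D$ and $Y\setminus D$, harmonic conjugates, an extension estimate $a\le C_D\,b$) that are not carried out and could not in any case produce the specific constant $\min\{\tfrac12,\tfrac\theta2\}$ (an extension argument yields a domain-dependent constant, and an upper bound on $b/a$ is not what is needed at all); you end by proposing to cite \cite{RobertRobert1} for exactly the inequality to be proved. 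The correct identity is the opposite one: for $u=S^\alpha(\psi)$ with $(K^{-\alpha})^\ast\psi=\mu\psi$ one has $\mu+\tfrac12=b/(a+b)$, the fraction of Dirichlet energy carried by the host $Y\setminus D$. With that, \eqref{thetaineq} finishes immediately and uniformly in $\alpha$: $\mu+\tfrac12=(1+a/b)^{-1}\ge\theta/(1+\theta)\ge\min\{\tfrac12,\tfrac\theta2\}$, treating the cases $\theta\le1$ and $\theta\ge1$ separately; note also that $u$ does lie in the class to which \eqref{thetaineq} applies (it is $\alpha$-quasiperiodic, in $H^1$, continuous across $\partial D$, and harmonic in both phases), so no auxiliary function is needed.

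Where the inversion comes from: you took the paper's displayed scalar jump formula $\partial_{\mathbf{n}}S^\alpha(\rho)\big|^\pm=\mp\tfrac12\rho+(K^{-\alpha})^\ast\rho$ at face value, and in addition both of your Green's identities have reversed boundary signs (since the outward normal of $Y\setminus D$ along $\partial D$ is $-\mathbf{n}$, the correct identities are $\int_{Y\setminus D}|\nabla u|^2\,dx=-\int_{\partial D}\partial_{\mathbf{n}}u|^+\,\overline{u}\,ds$ and $\int_{D}|\nabla u|^2\,dx=+\int_{\partial D}\partial_{\mathbf{n}}u|^-\,\overline{u}\,ds$); the two flips cancel in the ratio, so your final relation inherits the orientation of the displayed jump formula. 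But that formula is not consistent with the paper's kernel convention $G^\alpha(x,y)\sim\Gamma(x,y)=-\tfrac{1}{4\pi|x-y|}$ (Appendix E): with this sign the distributional identity $\Delta S^\alpha\rho=\rho\,\delta_{\partial D}$ forces the jump $\partial_{\mathbf{n}}u|^+-\partial_{\mathbf{n}}u|^-=+\rho$, i.e.\ the consistent relation is $\partial_{\mathbf{n}}S^\alpha(\rho)\big|^\pm=\pm\tfrac12\rho+(K^{-\alpha})^\ast\rho$ ($+$ being the limit from $H$), and redoing your computation then gives $\mu+\tfrac12=b/(a+b)$. A decisive internal cross-check is the paper's own \eqref{hpint}: for a structural eigenfunction the energy fraction in $H$ equals $\tfrac12+\mu_n$, so a hypothesis that bounds the host energy from below (which is what \eqref{thetaineq} does) must translate into a lower bound on $\mu+\tfrac12$ --- exactly the direction the theorem asserts and the opposite of what your identity predicts. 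As written, the proposal does not close the argument.
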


Clearly, the parameter $\theta$ is a geometric descriptor for $D$. The class of inclusions for which Theorem~\eqref{lowerboundrho} holds, for a fixed positive value of $\theta$, is denoted by $P_\theta$, and we have the following corollary.
\begin{corollary}
\label{theta}
For every inclusion domain $D$ belonging to $P_\theta$, Theorems~\ref{separationandraduus-alphazero} through \ref{maintheorem} hold with $z^*(\alpha)$ replaced with $z^+$ given by:
\begin{equation*}
z^+=\frac{\mu^-+1/2}{\mu^--1/2}<0,
\end{equation*}
where $\mu^-=\min\{\frac{1}{2},\frac{\theta}{2}\}-\frac{1}{2}$.
\end{corollary}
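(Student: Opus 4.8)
The plan is to reduce the corollary to Theorem~\ref{lowerboundrho} together with the observation that the quantities $\mu^-(\alpha)$ and $z^\ast(\alpha)$ enter the proofs of Theorems~\ref{separationandraduus-alphanotzero}--\ref{maintheorem} only through a single perturbation estimate on $\|A^\alpha(z)-A^\alpha(0)\|$, and that this estimate uses the structural eigenvalues merely through a lower bound. First I would record the uniform lower bound on the structural spectrum: since $D\in P_\theta$, the inequality \eqref{thetaineq} holds for every $\alpha\in Y^\star$ and every $\alpha$-quasiperiodic $u$ harmonic in $D$ and in $Y\setminus D$, so Theorem~\ref{lowerboundrho} tells us that the infimum of the structural spectrum exceeds $\rho-\tfrac{1}{2}=\min\{\tfrac{1}{2},\tfrac{\theta}{2}\}-\tfrac{1}{2}$; calling this last number $\mu^-$ as in the statement, we get $\mu_i(\alpha)>\mu^-$ for every $i\in\mathbb{N}$ and every $\alpha\in Y^\star$. (In the range $\theta\ge1$ one has $\mu^-=0$, and Theorem~\ref{lowerboundrho} then forces the entire structural spectrum to be strictly positive, which only strengthens what follows.)

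Next I would push this through the M\"obius map $\mu\mapsto(\mu+\tfrac{1}{2})/(\mu-\tfrac{1}{2})$, which on $(-\tfrac{1}{2},\tfrac{1}{2})$ is strictly decreasing and negative: $\mu_i(\alpha)>\mu^-$ gives $z_i(\alpha)<z^+<0$, hence $z^\ast(\alpha)=\sup_i z_i(\alpha)\le z^+$ and $|z_i(\alpha)|>|z^+|$ for all $i,\alpha$. So $z^+$ is a legitimate $\alpha$-uniform substitute for $z^\ast(\alpha)$ in \eqref{bdsonS}--\eqref{bdsonSall}, and $A^\alpha(z)$ of Lemma~\ref{inverseoperator} is holomorphic on the disk $|z|<|z^+|$ for every $\alpha$.

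The heart of the argument is the perturbation estimate. From the spectral representation \eqref{Takinv},
\[
A^\alpha(z)-A^\alpha(0)=\Bigl(zP_1^\alpha+z\sum_{-1/2<\mu_i(\alpha)<1/2} c_i(z)\,P^\alpha_{\mu_i}\Bigr)(-\Delta_\alpha)^{-1},\qquad c_i(z)=\bigl[(\tfrac{1}{2}+\mu_i(\alpha))+z(\tfrac{1}{2}-\mu_i(\alpha))\bigr]^{-1}.
\]
Using that $P_1^\alpha$ and the $P^\alpha_{\mu_i}$ are mutually orthogonal in $\langle\cdot,\cdot\rangle$, that the inverse-Laplacian factor contributes the constant $|\alpha|^{-2}$ present in \eqref{radiusalphanotzero} (resp.\ $(4\pi^2)^{-1}$ in \eqref{radiusalphazero} when $\alpha=\mathbf{0}$), the reverse triangle inequality, and the monotonicity of $\mu\mapsto(\tfrac{1}{2}+\mu)-|z|(\tfrac{1}{2}-\mu)$, one obtains, for $|z|<|z^+|$, that $\bigl|c_i(z)\bigr|^{-1}\ge(\tfrac{1}{2}+\mu_i(\alpha))-|z|(\tfrac{1}{2}-\mu_i(\alpha))\ge(\tfrac{1}{2}-\mu^-)(|z^+|-|z|)$, since $|z^+|=(\tfrac{1}{2}+\mu^-)/(\tfrac{1}{2}-\mu^-)$. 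Hence
\[
\|A^\alpha(z)-A^\alpha(0)\|\ \le\ \frac{|z|}{|\alpha|^2\,(\tfrac{1}{2}-\mu^-)\,(|z^+|-|z|)}
\]
(with $4\pi^2$ in place of $|\alpha|^2$ when $\alpha=\mathbf{0}$), and this is $<d$ precisely when $|z|<r^\ast$ with $r^\ast$ as in \eqref{radiusalphanotzero}, resp.\ \eqref{radiusalphazero}, but with $z^+$ and $\mu^-$ replacing $z^\ast(\alpha)$ and $\mu^-(\alpha)$; moreover $r^\ast<|z^+|\le|z^\ast(\alpha)|$, so holomorphy of $A^\alpha(z)$ on $|z|<r^\ast$ comes for free.

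Everything else in the proofs of Theorems~\ref{separationandraduus-alphanotzero}, \ref{separationandraduus-alphazero} and \ref{errorestm} --- the Riesz-projection formula for $P(z)$, the isomorphism $\mathcal{M}'(z)\cong\mathcal{M}'(0)$ of invariant subspaces, the holomorphy of $P(z)$, the uniform convergence of \eqref{foureleven}, and the truncation error bounds --- depends only on the inequality $\|A^\alpha(z)-A^\alpha(0)\|<d$ on the disk in question, and so goes through verbatim with the new $r^\ast$. Theorem~\ref{reptheorem1} contains no $z^\ast(\alpha)$ and needs no change, and Theorem~\ref{maintheorem} follows immediately via $\lambda_j(k,\alpha)=(\beta^\alpha_j(1/k))^{-1}$, the operator $-\nabla\times(k\chi_H+\chi_D)\nabla\times$ being holomorphic on $\mathbb{C}\setminus Z$ regardless of the value of $z^\ast(\alpha)$. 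The step I expect to require the most care is verifying that the estimate on $\|A^\alpha(z)-A^\alpha(0)\|$ never uses the exact location $\mu^-(\alpha)$ of the lowest $\alpha$-quasiperiodic resonance but only a lower bound for it; once that is settled, substituting the weaker $\alpha$-uniform bound $\mu_i(\alpha)>\mu^-$ is harmless, the resulting $r^\ast$ is valid because it issues from genuine (merely more conservative) inequalities, and the distance $d=\mathrm{dist}(\Gamma_j,\sigma(A^\alpha(0)))$ is a purely geometric quantity untouched by the replacement.
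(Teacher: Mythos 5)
Your proposal is correct and follows essentially the route the paper intends: the paper treats the corollary as an immediate consequence of Theorem~\ref{lowerboundrho} (via the definition of $P_\theta$) together with the observation that $\mu^-(\alpha)$ and $z^*(\alpha)$ enter the proofs of Section~\ref{derivation} only through the bound on $\|A^\alpha(z)-A^\alpha(0)\|$, so the $\alpha$-uniform bound $\mu^-$, $z^+$ can be substituted, yielding the modified $r^*$. Your re-derivation of that bound via the reverse triangle inequality and monotonicity in $\mu$ is just a cosmetic variant of the paper's maximization of $g(u,x)$ in Lemma~\ref{identifyu} and Corollary~\ref{boundAz}, and all the constants you obtain agree with \eqref{radiusalphanotzero} and \eqref{radiusalphazero}.
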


In \cite{RobertRobert1}, the authors also introduce a wide class of inclusion shapes with $\theta>0$ that satisfy \eqref{thetaineq}.  
Consider a buffered inclusion geometry, which consists of an inclusion domain $D$ surrounded by a buffer layer $R$, see Figure~\ref{plane2}.  Denote the Dirichlet-to-Neumann map on the boundary of the inclusion by $DN: H^{1/2}(\partial D) \rightarrow H^{-1/2}(\partial D)$, denote its norm by $\| DN \|$, and denote the Poincar\'e constant for the buffer layer by $C_{R}$; we have the following theorem, also from \cite{RobertRobert1}.

\begin{theorem}
The buffered inclusion geometry satisfies \eqref{thetaineq} with: 
\begin{equation*}
\theta^{-1}\geq\sqrt{1+C_{R}^2}\,\Vert DN\Vert\,
\end{equation*}
provided this maximum is finite. 
\end{theorem}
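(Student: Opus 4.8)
The plan is to verify hypothesis \eqref{thetaineq} of Theorem~\ref{lowerboundrho} directly for the buffered configuration, testing only on the buffer layer $R$ — the one piece of the geometry over which we have quantitative control (via $C_R$ and $\|DN\|$) — as is done in \cite{RobertRobert1}.

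First I would reduce the estimate to $R$. Since $R\subset Y\setminus D$, we have $\|\nabla u\|_{L^2(Y\setminus D)}^2\ge\|\nabla u\|_{L^2(R)}^2$, so it suffices to exhibit a $\theta$ with $\|\nabla u\|_{L^2(R)}^2\ge\theta\,\|\nabla u\|_{L^2(D)}^2$ for every $u\in H^1_\alpha(Y)$ harmonic in both $D$ and $Y\setminus D$. Both sides depend on $u$ only through $\nabla u$, so I may subtract the average of $u$ over $R$ and assume $\int_R u\,dx=0$; this normalization is what makes the Poincaré inequality $\|u\|_{L^2(R)}\le C_R\|\nabla u\|_{L^2(R)}$ available on the collar $R$.

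Next, harmonicity of $u$ in $D$ and Green's first identity give
\begin{equation*}
\|\nabla u\|_{L^2(D)}^2=\int_{\partial D}\overline{u}\,\partial_{\mathbf n}u\,ds=\langle DN(u|_{\partial D}),\,u|_{\partial D}\rangle_{\partial D},
\end{equation*}
where $\langle\cdot,\cdot\rangle_{\partial D}$ is the $H^{-1/2}(\partial D)$--$H^{1/2}(\partial D)$ duality pairing and $DN$ is the interior Dirichlet-to-Neumann map of $D$. Applying Cauchy--Schwarz for this pairing,
\begin{equation*}
\|\nabla u\|_{L^2(D)}^2\le\|DN(u|_{\partial D})\|_{H^{-1/2}(\partial D)}\,\|u|_{\partial D}\|_{H^{1/2}(\partial D)}.
\end{equation*}
The buffer layer now enters through the trace: since $u|_R$ is an $H^1(R)$-extension of $u|_{\partial D}$, one has $\|u|_{\partial D}\|_{H^{1/2}(\partial D)}\le\|u\|_{H^1(R)}$, and by Poincaré together with the mean-zero normalization, $\|u\|_{H^1(R)}^2\le(1+C_R^2)\|\nabla u\|_{L^2(R)}^2$; on the dual side, using that $u|_R$ is a competitor in the Dirichlet-energy realization of the $H^{-1/2}(\partial D)$ norm anchored to $R$, one gets $\|DN(u|_{\partial D})\|_{H^{-1/2}(\partial D)}\le\|DN\|\,\|\nabla u\|_{L^2(R)}$. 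Combining, $\|\nabla u\|_{L^2(D)}^2\le\sqrt{1+C_R^2}\,\|DN\|\,\|\nabla u\|_{L^2(R)}^2\le\sqrt{1+C_R^2}\,\|DN\|\,\|\nabla u\|_{L^2(Y\setminus D)}^2$, which is \eqref{thetaineq} with $\theta^{-1}=\sqrt{1+C_R^2}\,\|DN\|$. The stated proviso is exactly finiteness of $\|DN\|$ (ensured by the $C^{1,\gamma}$ regularity of $\partial D$) and of $C_R$.

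The delicate step, and the one I would treat most carefully, is the Sobolev-norm bookkeeping on $\partial D$: one must use the specific buffer-layer realizations of the $H^{\pm1/2}(\partial D)$ norms relative to which $\|DN\|$ is defined in \cite{RobertRobert1}, so that the Cauchy--Schwarz split places the plain Dirichlet energy $\|\nabla u\|_{L^2(R)}$ on one factor and the full $H^1(R)$-energy $\sqrt{1+C_R^2}\,\|\nabla u\|_{L^2(R)}$ on the other — this is precisely what produces the factor $\sqrt{1+C_R^2}$ rather than $1+C_R^2$. Everything else is a routine application of Green's identity, the trace theorem, and the Poincaré inequality on the collar $R$.
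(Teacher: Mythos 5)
A preliminary remark: the paper does not prove this theorem at all — it is quoted from \cite{RobertRobert1} ("we have the following theorem, also from \cite{RobertRobert1}") — so there is no in-paper argument to compare against. Your outline (restrict to the buffer $R\subset Y\setminus D$, subtract the mean over $R$, convert $\|\nabla u\|_{L^2(D)}^2$ into the boundary pairing $\langle DN(u|_{\partial D}),u|_{\partial D}\rangle$ by Green's identity, then use duality, the trace into $R$, and the Poincar\'e inequality on $R$) is the natural route and certainly the one the cited source follows in spirit; the mean-zero normalization is legitimate since $DN$ kills constants and $\langle DN(u|_{\partial D}),1\rangle=\int_D\Delta u\,dx=0$.

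The step you yourself flag as delicate is, however, a genuine gap, and it is exactly the step that produces the advertised constant. In your Cauchy--Schwarz split you price one factor as $\|u|_{\partial D}\|_{H^{1/2}(\partial D)}\le\|u\|_{H^1(R)}\le\sqrt{1+C_R^2}\,\|\nabla u\|_{L^2(R)}$ (an inhomogeneous realization of the $H^{1/2}$ norm — and even this carries a trace constant unless $\|\cdot\|_{H^{1/2}(\partial D)}$ is taken to be the minimal $H^1(R)$-extension norm), while for the other factor you assert $\|DN(u|_{\partial D})\|_{H^{-1/2}(\partial D)}\le\|DN\|\,\|\nabla u\|_{L^2(R)}$, which implicitly prices the same trace in a homogeneous, Dirichlet-energy realization. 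But $\|DN\|$ is a single number defined relative to one fixed pair of norms; you cannot use two different realizations of $H^{1/2}(\partial D)$ in the two factors and keep the same $\|DN\|$ without verifying that this mixed convention is the one under which $\|DN\|$ and the duality are defined — which is precisely what you defer to \cite{RobertRobert1} rather than supply. With any single consistent convention your chain yields either $(1+C_R^2)\|DN\|$ (inhomogeneous realization in both factors) or a bound with no $C_R$-dependence at all but a differently defined operator norm (homogeneous realization in both factors); the specific value $\sqrt{1+C_R^2}\,\|DN\|$ is exactly what the missing bookkeeping must produce. Your argument does establish \eqref{thetaineq} with some finite $\theta^{-1}$, which suffices for the qualitative use in Corollary~\ref{theta}, but the theorem's content is the explicit constant, and that constant is not proved by the proposal as written.
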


We now take $D_i=B_a(x_i)$, a sphere with center $x_i$ and radius $a$, and observe that $D_i'=B_b(x_i)\supset D_i$ if $a<b$.  Following Appendix~A.3 of \cite{Bruno}, we see that $\theta^{-1}$ will satisfy: $$\theta^{-1}=\max_{l\geq1}C_l(a,b),$$ where: $$C_l(a,b)=\frac{lb^{2l+1}+(l+1)a^{2l+1}}{(l+1)(b^{2l+1}-a^{2l+1})}.$$

Adding and subtracting $b^{2l+1}$ in the numerator yields:
\begin{align*}
    C_l(a,b)&=\frac{b^{2l+1}+a^{2l+1}}{b^{2l+1}-a^{2l+1}}-\frac{b^{2l+1}}{(l+1)(b^{2l+1}-a^{2l+1})}\\
    &\leq\frac{b^{2l+1}+a^{2l+1}}{b^{2l+1}-a^{2l+1}}=:C_l^*(a,b).
\end{align*}
Note that $C_l^*(a,b)$ is decreasing in $l$: $$\frac{d}{dl}C_l^*(a,b)=\frac{2(ab)^{2l+1}(\ln(a)-\ln(b))}{(b^{2l+1}-a^{2l+1})^2}<0,$$ for all $l\geq1$.  So: $$\theta^{-1}\leq\max_{l\geq1}C_l^*(a,b)=\frac{b^3+a^3}{b^3-a^3}.$$
Thus:$$\Vert\nabla u \Vert_{L^2(Y\setminus D)}\geq\frac{b^3-a^3}{b^3+a^3}\Vert\nabla u\Vert_{L^2(D)}.$$
Observe that this bound is not sharp.

\begin{figure}[h] 
\centering
\begin{tikzpicture}[xscale=0.6,yscale=0.6]
\draw [thick] (-2,-2) rectangle (3,3);
\draw [fill=orange,thick] (0.5,0.5) circle [radius=1.2];
\draw (0.5,0.5) circle [radius=1.6];
\end{tikzpicture} 
\caption{\bf Buffered inclusion.}
 \label{plane2}
\end{figure}
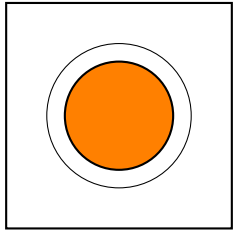
\section{Layer Potential Representation of Operators in Power Series}
\label{leading-order}

In this section, we obtain explicit formulas for the operators $A^\alpha_n$ appearing in the power series  \eqref{fourtwelve}. It is shown that $A^{\alpha}_n$, $n \neq 0$, can be expressed in terms of  integral operators associated with layer potentials, and we establish Theorem \ref{reptheorem1}.  

Recall that $A^{\alpha}(z) - A^{\alpha}(0)$ is given by:
 \begin{equation*}
 \big(z\,P_1^\alpha + \sum_{-\frac{1}{2} < \mu_i(\alpha) < \frac{1}{2}}z\left[(1/2 + \mu_i(\alpha)) + z(1/2-\mu_i(\alpha))\right] ^{-1}P_{\mu_i}^\alpha\big)(-\Delta_{\alpha}^{-1}).
 \end{equation*}
  Factoring $(1/2+\mu_i(\alpha))^{-1}$ and expanding in power series the term: 
 \begin{equation*}
 [(1/2 + \mu_i(\alpha)) + z(1/2-\mu_i(\alpha))]^{-1} = (1/2 + \mu_i(\alpha))^{-1}\sum \limits_{n=0}^{\infty}z^n\left (\frac{\mu_i(\alpha)-1/2}{\mu_i(\alpha) + 1/2} \right )^n,
 \end{equation*}
 we obtain:
 \begin{equation*}
A^{\alpha}(z) - A^{\alpha}(0) = (z P_1^\alpha + \sum \limits_{n=1}^{\infty}z^{n}\sum_{-\frac{1}{2} < \mu_i(\alpha) < \frac{1}{2}}(\mu_i(\alpha) +1/2)^{-1} \left (\frac{\mu_i(\alpha)-1/2}{\mu_i(\alpha) + 1/2} \right)^{n-1}  P_{\mu_i}^\alpha P_3^\alpha)(-\Delta_{\alpha}^{-1}).
\end{equation*}
 
 It follows that:
 \begin{align}
 A_1^{\alpha}  &=  (P_1^\alpha + \sum \limits_{-\frac{1}{2} < \mu_i(\alpha) < \frac{1}{2}} (1/2 + \mu_i(\alpha))^{-1} P_{\mu_i}^\alpha P_3^\alpha)(-\Delta_{\alpha}^{-1}) \label{AnProjections}\\
 A_n^{\alpha} & =  \Big(\sum \limits_{-\frac{1}{2} < \mu_i(\alpha) < \frac{1}{2}}(\mu_i(\alpha) +1/2)^{-1} \left (\frac{\mu_i(\alpha)-1/2}{\mu_i(\alpha) + 1/2} \right)^{n-1}  P_{\mu_i}^\alpha P_3^\alpha\Big)(-\Delta_{\alpha}^{-1}).
 \label{AnProjections2}
 \end{align}
 
 We also that we have the resolution of the identity given by:
 \begin{equation*}
 I=I_{J_\#(\alpha,Y,\mathbb{C}^3)} = P_1^\alpha+P_2^\alpha+P_3^\alpha,
 \end{equation*}
 with $P_3^\alpha=\sum \limits_{-\frac{1}{2} < \mu_i(\alpha) < \frac{1}{2}} P_{\mu_i}^\alpha$, and the spectral representation:
 \begin{align*}
  \langle T^\alpha\mathbf{u},\mathbf{v} \rangle & =  \langle (S^\alpha M^\alpha(S^\alpha)^{-1}) P_3^\alpha\mathbf{u} + \frac{1}{2} P_1^\alpha\mathbf{u} - \frac{1}{2} P_2^\alpha\mathbf{u}, \mathbf{v} \rangle \\
 & =  \langle \sum \limits_{-\frac{1}{2} < \mu_i(\alpha) < \frac{1}{2}} \mu_i(\alpha) P_{\mu_i}^\alpha\mathbf{u} + \frac{1}{2} P_1^\alpha\mathbf{u} - \frac{1}{2} P_2^\alpha\mathbf{u}, \mathbf{v} \rangle.
 \end{align*}
 
 Adding $\frac{1}{2} I$ to both sides of the above equation, we obtain:
  \begin{align}
   \label{identproj}
 \langle (T^\alpha+ \frac{1}{2}I)\mathbf{u},\textbf{v} \rangle & =  \langle (\sum \limits_{-\frac{1}{2} < \mu_i(\alpha) < \frac{1}{2}} (\mu_i(\alpha) + \frac{1}{2})P_{\mu_i}^\alpha +P_1^\alpha)\mathbf{u},\mathbf{v}\rangle \notag\\
  & =   \langle ((S^\alpha M^\alpha (S^\alpha)^{-1} +\frac{1}{2}P_3^\alpha)P_3^\alpha +P_1^\alpha)\mathbf{u},\mathbf{v}\rangle \\
  & =  \langle ((S^\alpha(M^\alpha+ \frac{1}{2} \tilde{I})(S^\alpha)^{-1})P_3^\alpha +P_1^\alpha)\mathbf{u},\mathbf{v}\rangle,\notag
  \end{align}
  where $\tilde{I}$ is the identity on  $H^{-1/2}(\partial D)^3$.  Now, from \eqref{identproj}, we see that:
 \begin{equation}
 \label{A1spectrumtolayers}
 \sum \limits_{-\frac{1}{2} < \mu_i(\alpha) < \frac{1}{2}} (\frac{1}{2} + \mu_i(\alpha))^{-1} P_{\mu_i}^\alpha P_3^\alpha = (S^\alpha(M^\alpha + \frac{1}{2}\tilde{I})^{-1}(S^\alpha)^{-1})P_3^\alpha.
 \end{equation}
 
 Combining \eqref{AnProjections} and \eqref{A1spectrumtolayers}, we obtain:
  \begin{equation*}
 A_1^\alpha=[S^\alpha(M^\alpha+\frac{1}{2}\tilde{I})^{-1}(S^\alpha)^{-1}P_3^\alpha+P_1^\alpha](-\Delta_\alpha)^{-1}.
 \end{equation*}
 
 We now turn to the higher-order terms.  By the mutual orthogonality of the projections $P_{\mu_i}^\alpha$, for $n>1$, we have that:
 \begin{align}
\label{separateresonances}
 &\sum \limits_{-\frac{1}{2} < \mu_i(\alpha) < \frac{1}{2}} (\mu_i(\alpha) +1/2)^{-1} \left (\frac{\mu_i(\alpha)-1/2}{\mu_i(\alpha) + 1/2} \right)^{n-1}  P_{\mu_i}^\alpha \\
 &=  \Big( \sum \limits_{-\frac{1}{2} < \mu_i(\alpha) < \frac{1}{2}} (1/2 + \mu_i(\alpha))^{-1}P_{\mu_i}^\alpha \Big) \Big( \sum \limits_{-\frac{1}{2} < \mu_i(\alpha) < \frac{1}{2}} (\mu_i(\alpha)-1/2) P_{\mu_i}^\alpha \Big)^{n-1} \Big( \sum \limits_{-\frac{1}{2} < \mu_i(\alpha) < \frac{1}{2}} (\mu_i(\alpha) +1/2) P_{\mu_i}^\alpha \Big)^{1-n}.\notag
 \end{align}
 
 As above, we have that:
  \begin{align}
 \label{resonancetolayersAn}
 \sum \limits_{-\frac{1}{2} < \mu_i(\alpha) < \frac{1}{2}} (1/2 + \mu_i(\alpha))^{-1}P_{\mu_i}^\alpha P_3^\alpha\,\, & = \,\, S^\alpha(M^\alpha+\frac{1}{2}\tilde{I})^{-1}(S^\alpha)^{-1}P_3^\alpha,\notag\\
  \sum \limits_{-\frac{1}{2} < \mu_i(\alpha) < \frac{1}{2}} (1/2 + \mu_i(\alpha))P_{\mu_i}^\alpha P_3^\alpha \,\, & = \,\,  S^\alpha(M^\alpha+\frac{1}{2}\tilde{I})(S^\alpha)^{-1}P_3^\alpha,\\
   \sum \limits_{-\frac{1}{2} < \mu_i(\alpha) < \frac{1}{2}} (\mu_i(\alpha) - 1/2)P_{\mu_i}^\alpha P_3^\alpha \,\, & = \,\,  S^\alpha(M^\alpha- \frac{1}{2}\tilde{I})(S^\alpha)^{-1}P_3^\alpha.\notag
   \end{align}
 
 Combining \eqref{resonancetolayersAn}, \eqref{separateresonances}, and \eqref{AnProjections2}, we obtain the layer-potential representation for $A_n^{\alpha}$, concluding the proof of Theorem~\ref{reptheorem1}:
  \begin{equation*}
 A_n^\alpha=S^\alpha(M^\alpha+\frac{1}{2}\tilde{I})^{-1}(S^\alpha)^{-1}[S^\alpha(M^\alpha-\frac{1}{2}\tilde{I})(M^\alpha+\frac{1}{2}\tilde{I})^{-1}(S^\alpha)^{-1}]^{n-1}P_3^\alpha(-\Delta_\alpha)^{-1}.
\end{equation*}
\\
\section{Derivation of the Convergence Radius and Separation of Spectra}
\label{derivation}

In this section, we present the proof of Theorem~\ref{separationandraduus-alphanotzero} and the proof of Theorem~\ref{separationandraduus-alphazero}.  To begin, we suppose $\alpha\not=0$ and recall that the Neumann series \eqref{foursix}, and consequently \eqref{Project1} and \eqref{foureleven}, converge provided that:
\begin{equation}
\label{tenone}
\| (A^{\alpha}(z) - A^{\alpha}(0))R(\zeta,0) \|_{\mathcal{L}[L^2_{\#} (\alpha,Y,\mathbb{C}^3);L^2_{\#} (\alpha,Y,\mathbb{C}^3)]} <1.
\end{equation}
With this in mind, we will compute an explicit upper bound $B(\alpha,z)$ and identify a neighborhood of the origin on the complex plane for which:
\begin{equation*}
\| (A^{\alpha}(z) - A^{\alpha}(0))R(\zeta,0) \|_{\mathcal{L}[L^2_{\#} (\alpha,Y,\mathbb{C}^3);L^2_{\#} (\alpha,Y,\mathbb{C}^3)]} <B(\alpha,z)<1,
\end{equation*}
holds for $\zeta\in\Gamma_j$.  The inequality $B(\alpha,z)<1$ will be used first to derive a lower bound on the radius of convergence of the power series expansion of the eigenvalue group about $z=0$. Then, it will be used to provide a lower bound on the neighborhood of $z=0$ where properties $1$ through $3$ of Theorem~\ref{separationandraduus-alphanotzero} hold.

We have the basic estimate given by:
\begin{align}
\label{tenonedouble}
&\| (A^{\alpha}(z) - A^{\alpha}(0))R(\zeta,0) \|_{\mathcal{L}[L^2_{\#} (\alpha,Y,\mathbb{C}^3);L^2_{\#} (\alpha,Y,\mathbb{C}^3)]} \\
&\quad\leq\| (A^{\alpha}(z) - A^{\alpha}(0))\|_{\mathcal{L}[L^2_{\#} (\alpha,Y,\mathbb{C}^3);L^2_{\#} (\alpha,Y,\mathbb{C}^3)]}\|R(\zeta,0) \|_{\mathcal{L}[L^2_{\#} (\alpha,Y,\mathbb{C}^3);L^2_{\#} (\alpha,Y,\mathbb{C}^3)]}.\nonumber
\end{align}
Here $\zeta\in\Gamma_j$, as defined in Theorem~\ref{separationandraduus-alphanotzero}, and elementary arguments deliver the estimate:
\begin{eqnarray}
\label{tenonedoubleRz}
\|R(\zeta,0) \|_{\mathcal{L}[L^2_{\#} (\alpha,Y,\mathbb{C}^3);L^2_{\#} (\alpha,Y,\mathbb{C}^3)]}\leq d^{-1},
\end{eqnarray}
where $d$ is given by \eqref{dist}.  Next, we estimate $\| (A^{\alpha}(z) - A^{\alpha}(0))\|_{\mathcal{L}[L^2_{\#} (\alpha,Y,\mathbb{C}^3);L^2_{\#} (\alpha,Y,\mathbb{C}^3)]} $.  

Denote the energy seminorm of $\mathbf{u}$ by:
\begin{equation*}
\|\mathbf{u} \|= \| \nabla\times\mathbf{ u} \|_{L^2(Y,\mathbb{C}^3)}.
\end{equation*}
To proceed, we introduce the following Poincar\'e estimate: 
\begin{lemma}	Poincar\'e estimate for functions $\mathbf{u}$ belonging to $J_{\#}(\alpha,Y,\mathbb{C}^3)$, for $\alpha\neq\mathbf{0}$:
\begin{equation}
\label{alpha-poincare}
\|\mathbf{u} \|_{L^2(Y,\mathbb{C}^3)} \leq |\alpha|^{-1}\|\mathbf{u}\|.
\end{equation}
\label{poincarealpha}
\end{lemma}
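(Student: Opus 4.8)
The plan is to reduce \eqref{alpha-poincare} to a Fourier-series computation that exploits the fact that, for $\alpha\neq\mathbf{0}$, the quasiperiodic Laplacian $-\Delta_\alpha$ has a spectral gap separating its spectrum from the origin. First I would expand $\mathbf{u}\in J_{\#}(\alpha,Y,\mathbb{C}^3)$ in the $\alpha$-quasiperiodic Fourier basis, writing $\mathbf{u}(x)=\sum_{\mathbf{n}\in\mathbb{Z}^3}\mathbf{c}_{\mathbf{n}}\,e^{i(2\pi\mathbf{n}+\alpha)\cdot x}$ with $\mathbf{c}_{\mathbf{n}}\in\mathbb{C}^3$, so that by Parseval $\|\mathbf{u}\|_{L^2(Y,\mathbb{C}^3)}^2=\sum_{\mathbf{n}}|\mathbf{c}_{\mathbf{n}}|^2$. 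The constraint ${\rm div}\,\mathbf{u}=0$ translates into the orthogonality relation $(2\pi\mathbf{n}+\alpha)\cdot\mathbf{c}_{\mathbf{n}}=0$ for every $\mathbf{n}$, and termwise differentiation gives $\nabla\times\mathbf{u}=\sum_{\mathbf{n}}i(2\pi\mathbf{n}+\alpha)\times\mathbf{c}_{\mathbf{n}}\,e^{i(2\pi\mathbf{n}+\alpha)\cdot x}$.

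Next I would compute $\|\mathbf{u}\|^2=\|\nabla\times\mathbf{u}\|_{L^2(Y,\mathbb{C}^3)}^2=\sum_{\mathbf{n}}|(2\pi\mathbf{n}+\alpha)\times\mathbf{c}_{\mathbf{n}}|^2$ by Parseval, then invoke the cross-product identity $|\xi\times\mathbf{c}|^2=|\xi|^2|\mathbf{c}|^2-|\xi\cdot\mathbf{c}|^2$, valid for $\xi\in\mathbb{R}^3$ and $\mathbf{c}\in\mathbb{C}^3$, together with the divergence-free relation $(2\pi\mathbf{n}+\alpha)\cdot\mathbf{c}_{\mathbf{n}}=0$, to obtain the clean identity $\|\mathbf{u}\|^2=\sum_{\mathbf{n}}|2\pi\mathbf{n}+\alpha|^2|\mathbf{c}_{\mathbf{n}}|^2$.

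The only point that genuinely uses $\alpha\neq\mathbf{0}$ and $\alpha\in Y^\ast=(-\pi,\pi]^3$ is the elementary lattice estimate $|2\pi\mathbf{n}+\alpha|\geq|\alpha|$ for all $\mathbf{n}\in\mathbb{Z}^3$. This follows from $|2\pi\mathbf{n}+\alpha|^2-|\alpha|^2=4\pi\big(\pi|\mathbf{n}|^2+\mathbf{n}\cdot\alpha\big)$, since $|\alpha_i|\leq\pi$ for each component gives $\mathbf{n}\cdot\alpha\geq-\pi\sum_i|n_i|\geq-\pi\sum_i n_i^2=-\pi|\mathbf{n}|^2$, using $|n_i|\leq n_i^2$ for integers. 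Substituting this bound into the Parseval identity yields $\|\mathbf{u}\|^2\geq|\alpha|^2\sum_{\mathbf{n}}|\mathbf{c}_{\mathbf{n}}|^2=|\alpha|^2\|\mathbf{u}\|_{L^2(Y,\mathbb{C}^3)}^2$, which is \eqref{alpha-poincare}.

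I expect the main (though modest) obstacle to be the bookkeeping in the middle step: justifying termwise differentiation of the Fourier series in the relevant Sobolev space and handling the complex-valued cross product correctly. One can sidestep the vector identity altogether by using the identity $\int_Y\nabla\times\mathbf{u}\cdot\nabla\times\overline{\mathbf{u}}\,dx=\int_Y\nabla\mathbf{u}:\nabla\overline{\mathbf{u}}\,dx$ established earlier for $\mathbf{u}\in J_{\#}(\alpha,Y,\mathbb{C}^3)$, so that $\|\mathbf{u}\|^2=\sum_{j=1}^3\|\nabla u_j\|_{L^2(Y)}^2$, and then applying the scalar $\alpha$-quasiperiodic Poincar\'e inequality $\|u_j\|_{L^2(Y)}\leq|\alpha|^{-1}\|\nabla u_j\|_{L^2(Y)}$ (itself the scalar case of the Fourier argument above) to each Cartesian component $u_j$ before summing over $j$.
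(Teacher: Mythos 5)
Your proof is correct, but it takes a more direct route than the paper. The paper never expands $\mathbf{u}$ itself in a Fourier series: it instead works with the inverse operator $(-\Delta_\alpha)^{-1}$, first showing via the Green's function \eqref{Green-quasi} that $(-\Delta_\alpha^{-1}\mathbf{u},\mathbf{u})_{L^2(Y,\mathbb{C}^3)}\leq|\alpha|^{-2}\|\mathbf{u}\|_{L^2(Y,\mathbb{C}^3)}^2$, and then running a duality argument: $\int_Y|\mathbf{u}|^2=\int_Y\nabla(-\Delta_\alpha^{-1}\mathbf{u}):\overline{\nabla\mathbf{u}}$ is bounded by Cauchy--Schwarz by $\|\nabla\times(-\Delta_\alpha^{-1}\mathbf{u})\|_{L^2}\,\|\nabla\times\mathbf{u}\|_{L^2}\leq|\alpha|^{-1}\|\mathbf{u}\|_{L^2}\,\|\mathbf{u}\|$, where the replacement of gradients by curls uses the divergence-free structure (the identity of Lemma~\ref{lem:app:c1}); dividing by $\|\mathbf{u}\|_{L^2}$ gives \eqref{alpha-poincare}. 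Your argument instead applies Parseval directly to $\mathbf{u}$ and $\nabla\times\mathbf{u}$, uses $(2\pi\mathbf{n}+\alpha)\cdot\mathbf{c}_{\mathbf{n}}=0$ together with $|\xi\times\mathbf{c}|^2=|\xi|^2|\mathbf{c}|^2-|\xi\cdot\mathbf{c}|^2$ (exactly the computation behind Lemma~\ref{lem:bbf}), and concludes from the lattice bound $|2\pi\mathbf{n}+\alpha|\geq|\alpha|$; your fallback via the curl--gradient identity plus the scalar quasiperiodic Poincar\'e inequality applied componentwise is an equivalent repackaging. Both proofs ultimately rest on the same lattice estimate, but yours avoids the detour through $(-\Delta_\alpha)^{-1}$ (at the modest cost of justifying termwise differentiation, which is routine for $H^1_{loc}$ quasiperiodic fields), while the paper's operator-theoretic route has the advantage that the intermediate bounds \eqref{spectralbound} and \eqref{energyl2} are reused later (e.g., in Theorem~\ref{bounded}). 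A small bonus of your version: your componentwise justification $\mathbf{n}\cdot\alpha\geq-\pi|\mathbf{n}|^2$ of the lattice bound is airtight, whereas the paper's intermediate inequality $|\alpha|^2\leq\left|\,|2\pi\mathbf{n}|-|\alpha|\,\right|^2$ is not literally valid for all $\mathbf{n}$ and $\alpha\in Y^\ast$ (though the final bound it is used for is).
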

\begin{proof}
First, we obtain that: 
\begin{align}
\label{poincest1}
(-\Delta_{\alpha}^{-1}\mathbf{u},\mathbf{u})_{L^2(Y,\mathbb{C}^3)}&=\int_Y\int_Y-G^\alpha(x,y)\mathbf{u}(y)\,dy\cdot\overline{\mathbf{u}(x)}\,dx\notag\\
	&=\sum_{n\in\mathbb{Z}^3}\frac{\left|\int_{Y}e^{-i(2\pi\,n+\alpha)\cdot y}\mathbf{u}(y)\,dy\right|^2}{|2\pi\,n+\alpha|^2}.
\end{align}
Observe that, for $\alpha\in Y^*$, the following holds: 
\begin{equation*}
	|\alpha|^2\leq\left|\left|2\pi n\right|-\left|\alpha\right|\right|^2\leq\left|2\pi n+\alpha\right|^2,
\end{equation*}
and using this in (\ref{poincest1}), we have:
\begin{equation}
\label{poincest2}
(-\Delta_{\alpha}^{-1}\mathbf{u},\mathbf{u})_{L^2(Y,\mathbb{C}^3)}\leq\sum_{n\in\mathbb{Z}^3}\frac{\left|\int_{Y}e^{-i(2\pi\,n+\alpha)\cdot y}\mathbf{u}(y)\,dy\right|^2}{|\alpha|^2}.
\end{equation}
Now, write $\mathbf{u}(y)=\tilde{\mathbf{u}}(y)e^{i\alpha\cdot y}$ and observe that: $$\int_{Y}e^{-i(2\pi\,n+\alpha)\cdot y}\mathbf{u}(y)\,dy=\int_{Y}e^{-i(2\pi\,n)\cdot y}\tilde{\mathbf{u}}(y)\,dy=\hat{\tilde{\mathbf{u}}}(n),$$ where $\hat{\tilde{\mathbf{u}}}$ is the Fourier transform of $\tilde{\mathbf{u}}$, so we can rewrite (\ref{poincest2}) as:
\begin{equation}
\label{spectralbound}
(-\Delta_{\alpha}^{-1}\mathbf{u},\mathbf{u})_{L^2(Y,\mathbb{C}^3)}\leq\frac{1}{|\alpha|^2}\int_{Y}\left|\tilde{\mathbf{u}}(y)\right|^2\,dy=|\alpha|^{-2}\|\mathbf{u}\|_{L^2(Y,\mathbb{C}^3)}^2.
\end{equation}
Also, we have the Cauchy inequality:
\begin{align}
	\label{identl2}
\int_{Y}|\mathbf{u}(y)|^2\,dy&=\int_{Y}\nabla(-\Delta^{-1}_\alpha\mathbf{u}(y)):\overline{\nabla\mathbf{u}(y)}\,dy\notag\\
	&\leq\Big(\int_{Y}\left|\nabla\times(-\Delta^{-1}_\alpha\mathbf{u}(y))\right|^2dy\Big)^{1/2}\Big(\int_{Y}\left|\nabla\times\mathbf{u}(y)\right|^2dy\Big)^{1/2}.
\end{align}
Applying (\ref{spectralbound}), we get:
\begin{align}
	\label{energyl2}
\Big(\int_{Y}\left|\nabla\times(-\Delta^{-1}_\alpha\mathbf{u}(y))\right|^2dy\Big)^{1/2}
&=\Big(\int_{Y}\nabla(-\Delta^{-1}_\alpha\mathbf{u}(y)):\overline{\nabla(-\Delta^{-1}_\alpha\mathbf{u}(y))}\,dy\Big)^{1/2}\notag\\
	&\leq|\alpha|^{-1}\|\mathbf{u}\|_{L^2(Y,\mathbb{C}^3)}
\end{align}
and the Poincar\'e inequality follows from (\ref{identl2}) and (\ref{energyl2}).
\end{proof}
For any $\mathbf{u} \in L^2_{\#} (\alpha,Y,\mathbb{C}^3)$, we  apply \eqref{alpha-poincare} to find:
\begin{align}
	\label{tenfive}
	\|\left(A^\alpha(z)-A^\alpha(0)\right)\mathbf{u}\|_{L^2(Y,\mathbb{C}^3)}&\leq|\alpha|^{-1}\|\nabla\times\left(A^\alpha(z)-A^\alpha(0)\right)\mathbf{u}\|_{L^2(Y,\mathbb{C}^3)}\notag\\
&\leq|\alpha|^{-1}\|\left((T_k^\alpha)^{-1}-P_2^\alpha\right)\|_{\mathcal{L}\left[J_{\#}(\alpha,Y,\mathbb{C}^3);J_{\#}(\alpha,Y,\mathbb{C}^3)\right]}\|-\Delta_\alpha^{-1}\mathbf{u}\|
\end{align}
Applying  \eqref{energyl2} and \eqref{tenfive} delivers the upper bound:
\begin{equation*}
	\|A^\alpha(z)-A^\alpha(0)\|_{\mathcal{L}\left[L^2_{\#} (\alpha,Y,\mathbb{C}^3);L^2_{\#} (\alpha,Y,\mathbb{C}^3)\right]}\leq|\alpha|^{-2}\|\left((T_k^\alpha)^{-1}-P_2^\alpha\right)\|_{\mathcal{L}\left[J_{\#}(\alpha,Y,\mathbb{C}^3);J_{\#}(\alpha,Y,\mathbb{C}^3)\right]}.
\end{equation*}

The next step is to obtain an upper bound on $\|\left((T_k^\alpha)^{-1}-P_2^\alpha\right)\|_{\mathcal{L}\left[J_{\#}(\alpha,Y,\mathbb{C}^3);J_{\#}(\alpha,Y,\mathbb{C}^3)\right]}$.  By (\ref{Takinv}), for all $\mathbf{u}\in J_{\#}(\alpha,Y,\mathbb{C}^3)$, we have:
\begin{align*}
	&\frac{\Big(\int_Y\left|\nabla\times\left((T_k^\alpha)^{-1}-P_2^\alpha\right)\mathbf{u}\right|^2\,dy\Big)^{1/2}}{\Vert\mathbf{u}\Vert}\\
	&=\left(\frac{\int_Y|\nabla\times(z\,P_1^\alpha\mathbf{u}+\sum_{-\frac{1}{2}< \mu_i(\alpha)<\frac{1}{2}}z\left[(1/2+\mu_i(\alpha))+z(1/2-\mu_i(\alpha))\right]^{-1}P_{\mu_i}^\alpha\mathbf{u})|^2\,dy}{\Vert\mathbf{u}\Vert^2}\right)^{1/2}\notag\\
	&=|z|\Big(w_o+\sum_{-\frac{1}{2}< \mu_i(\alpha)<\frac{1}{2}}\left|(1/2+\mu_i(\alpha))+z(1/2-\mu_i(\alpha))\right|^{-2}w_i\Big)^{1/2},
\end{align*}
where $w_o=\Vert P_1^\alpha\mathbf{u}\Vert^2/\Vert\mathbf{u}\Vert^2$, $w_i=\Vert P_{\mu_i}^\alpha\mathbf{u}\Vert^2/\Vert\mathbf{u}\Vert^2$, and $w_o+\sum w_i=c\leq 1$, $c>0$.  Hence, maximizing the right hand side is equivalent to calculating:
\begin{align*}
&\max \limits_{w_0+\sum w_i = c\leq1} \{w_0 + \sum_{-\frac{1}{2}< \mu_i(\alpha)<\frac{1}{2}}w_i |(1/2 + \mu_i(\alpha)) + z(1/2-\mu_i(\alpha))|^{-2}\}^{1/2}\\
 &\qquad=  \sup \{1, |(1/2 + \mu_i(\alpha)) + z(1/2-\mu_i(\alpha))|^{-2}\}^{1/2}.
\end{align*}
Thus, we maximize the function:
\begin{equation*}
f(x) = \left|\frac{1}{2} + x + z\left(\frac{1}{2} -x\right)\right|^{-2}
\end{equation*}
over $x \in [\mu^-(\alpha), {\mu}^+(\alpha)]$, for $z$ in a neighborhood about the origin.  Let $Re(z)=u$, $Im(z)=v$, and we write:
\begin{equation*}
    f(x)  = \left|\frac{1}{2} + x + (u+iv)\left(\frac{1}{2} -x\right)\right|^{-2}\leq  \left(\frac{1}{2}+x+u\left(\frac{1}{2}-x\right)\right)^{-2} =: g(Re(z),x)
\end{equation*}
to get the bound:
\begin{equation}
\label{tenfifteen}
\Vert ((T_k^\alpha)^{-1} - P_2^\alpha)\Vert_{\mathcal{L}\left[J_{\#}(\alpha,Y,\mathbb{C}^3);J_{\#}(\alpha,Y,\mathbb{C}^3)\right]} \leq |z| \sup\,\Big\{1, \sup \limits_{x\, \in\, [\mu^-(\alpha), \mu^+(\alpha)]} g(u,x)\Big\}^{1/2}.
\end{equation}

We now examine the poles of $g(u,x)$ and the sign of its partial derivative $\partial_x g(u,x)$ when $|u|<1$.  If $Re(z)=u$ is fixed, then $g(u,x) = ((\frac{1}{2} + x) + u(\frac{1}{2} - x))^{-2}$ has a pole when $(\frac{1}{2} + x) + u(\frac{1}{2} - x)=0$.
For $u$ fixed, this occurs when $x=\hat{x}$, given by:
\begin{equation*}
\hat{x}=\hat{x}(u)= \frac{1}{2} \left(\frac{1+u}{u-1}\right).
\end{equation*}
On the other hand, if $x$ is fixed, $g$ has a pole at:
\begin{equation*}
u=\frac{x+1/2}{x-1/2}.
\end{equation*}
The sign of $\partial_x g$ is determined by the formula:
\begin{equation}
\label{teneighteen}
\partial_x g(u,x)  = \displaystyle \frac{-2(1-u)}{\left[\frac{1}{2}+x+u\left(\frac{1}{2}-x\right)\right]^3}=\frac{-2(1-u)^2x-(1-u^2)}{\left[\frac{1}{2}+x+u\left(\frac{1}{2}-x\right)\right]^4}.
\end{equation}
Observe that the denominator on the right hand side of (\ref{teneighteen}) is positive.  A calculation shows that $\partial_x g<0$ for $x>\hat{x}$, i.e. $g$ is decreasing on $(\hat{x},\infty)$.  Similarly, we have $\partial_x g>0$ for $x<\hat{x}$ and $g$ is increasing on $(-\infty, \hat{x})$.

Now, we identify all $u=Re(z)$ for which $\hat{x}=\hat{x}(u)$ satisfies $\hat{x} < \mu^-(\alpha) < 0$. 
Indeed, for such $u$, the function $g(u,x)$ will be decreasing on $[\mu^-(\alpha), \mu^+(\alpha)]$, so that, for all $x \in [\mu^-(\alpha), \bar{\mu}]$, we have $g(u,\mu^-(\alpha)) \geq g(u,x)$, yielding an upper bound for \eqref{tenfifteen}.
\begin{lemma}
\label{identifyu}
The set $U$ of $u \in \mathbb{R}$ for which $-\frac{1}{2} < \hat{x}(u) < \mu^-(\alpha) < 0$ is given by $U := [z^*, 1]$, where:
$$-1\leq z^*:=\frac{\mu^-(\alpha)+\frac{1}{2}}{\mu^-(\alpha)-\frac{1}{2}}<0.$$
\end{lemma}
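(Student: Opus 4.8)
The plan is to reduce the claim to an elementary monotonicity analysis of the Möbius transformation that records the location of the pole of $g(u,\cdot)$, namely $u\mapsto \hat{x}(u)=\tfrac12\left(\tfrac{1+u}{u-1}\right)$. The key observation is that its inverse is precisely the rational map $x\mapsto \tfrac{x+1/2}{x-1/2}$, which is exactly the expression defining $z^*$ in the statement; so identifying the endpoint $z^*$ amounts to inverting $\hat{x}$ at $x=\mu^-(\alpha)$.

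First I would collect the elementary properties of $\hat{x}$. A direct differentiation gives $\hat{x}'(u)=-\,(u-1)^{-2}<0$, so $\hat{x}$ is strictly decreasing on each of the two intervals $(-\infty,1)$ and $(1,\infty)$; moreover $\hat{x}(u)\to\tfrac12$ as $u\to\pm\infty$, while $\hat{x}(u)\to\mp\infty$ as $u\to 1^{\mp}$. Hence $\hat{x}$ restricts to a strictly decreasing bijection of $(-\infty,1)$ onto $(-\infty,\tfrac12)$, whereas on $(1,\infty)$ its values lie entirely in $(\tfrac12,\infty)$.

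Next, since $-\tfrac12<\mu^-(\alpha)<0$ the value $\mu^-(\alpha)$ lies in the range $(-\infty,\tfrac12)$, so there is a unique $u_0\in(-\infty,1)$ with $\hat{x}(u_0)=\mu^-(\alpha)$; solving $\tfrac12+\mu^-(\alpha)+u_0\bigl(\tfrac12-\mu^-(\alpha)\bigr)=0$, i.e.\ inverting the Möbius map, yields
\[
u_0=\frac{\mu^-(\alpha)+\tfrac12}{\mu^-(\alpha)-\tfrac12}=z^*,
\]
and from $-\tfrac12<\mu^-(\alpha)<0$ one reads off $|z^*|=\dfrac{\mu^-(\alpha)+1/2}{1/2-\mu^-(\alpha)}\in(0,1)$, so $-1\leq z^*<0$. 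I would then translate membership in $U$: recalling from the discussion preceding the lemma that, for $u<1$, $g(u,\cdot)$ is non-increasing on any interval lying to the right of its pole $\hat{x}(u)$, and that $\mu^+(\alpha)<\tfrac12$ (the structural eigenvalues lie strictly inside $(-\tfrac12,\tfrac12)$), the property that makes $g(u,\cdot)$ decreasing on $[\mu^-(\alpha),\mu^+(\alpha)]$ is exactly ``$\hat{x}(u)$ lies at or to the left of $\mu^-(\alpha)$''. By strict monotonicity of $\hat{x}$ on $(-\infty,1)$ together with $\hat{x}(z^*)=\mu^-(\alpha)$, this holds precisely for $u\in[z^*,1)$; on the branch $(1,\infty)$ it always fails, since there $\hat{x}(u)>\tfrac12>0>\mu^-(\alpha)$, placing $[\mu^-(\alpha),\mu^+(\alpha)]$ to the left of the pole where $g(u,\cdot)$ is increasing.

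Finally I would handle the two endpoints directly. At $u=1$ the Möbius formula degenerates but $g(1,x)=\bigl|\tfrac12+x+\tfrac12-x\bigr|^{-2}\equiv1$, which is (weakly) non-increasing in $x$, so $1\in U$; and at $u=z^*$ the pole sits exactly at the left endpoint $\mu^-(\alpha)$, the limiting case retained in $U$. Combining the branch analysis with these two endpoints gives $U=[z^*,1]$, which would complete the proof. I do not expect a genuine obstacle: the whole argument is the monotonicity of a Möbius map together with the inclusion $[\mu^-(\alpha),\mu^+(\alpha)]\subset(\hat{x}(u),\tfrac12)$ valid for $u$ in the claimed range. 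The only point demanding care is the bookkeeping at $u=z^*$ and $u=1$, where the rational expressions for $\hat{x}$ and for $\partial_x g$ degenerate and must be examined by hand rather than through the generic formula.
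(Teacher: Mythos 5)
Your proposal is correct and follows essentially the same route as the paper: invert the M\"obius map $\hat{x}(u)=\tfrac12\tfrac{u+1}{u-1}$ at $x=\mu^-(\alpha)$ and use its strict monotonicity to identify $U=[z^*,1]$, with the bound $-1\leq z^*<0$ coming from $\mu^-(\alpha)\leq 0$ exactly as in the paper (which phrases the monotonicity in terms of the inverse map $h(\hat{x})=\tfrac{\hat{x}+1/2}{\hat{x}-1/2}$ rather than $\hat{x}(u)$ itself, a purely cosmetic difference). Your explicit treatment of the degenerate endpoints $u=1$ (where $g(1,x)\equiv 1$) and $u=z^*$ is slightly more careful than the paper's, but the substance is identical.
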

\begin{proof}
Note first that $\mu^-(\alpha)=\inf_{i\in\mathbb{N}}\{\mu_i\}\leq 0$ follows from the fact that zero is an accumulation point for the sequence $\{\mu_i\}_{i\in\mathbb{N}}$, so it follows that:
\begin{equation*}
	\mu^-(\alpha)\leq-\mu^-(\alpha)\implies 1/2+\mu^-(\alpha)\leq(-1)(\mu^-(\alpha)-1/2)\implies z^*\geq-1.
\end{equation*}
Observe that $\hat{x} = \hat{x}(u) = \displaystyle \frac{u+1}{2(u-1)}$, we invert and write $u =\displaystyle \frac{\hat{x}+1/2}{\hat{x}-1/2}.$  
We now show that $z^*\leq u\leq 1$, for $ \hat{x} \leq \mu^-(\alpha)$.  Set $h(\hat{x}) = \displaystyle\frac{\hat{x}+1/2}{\hat{x}-1/2}$, then $h'(\hat{x}) = \displaystyle\frac{-1}{(\hat{x}-\frac{1}{2})^2}<0$, 
and so, $h$ is decreasing on $(-\infty, \frac{1}{2})$.  Since $\mu^-(\alpha)<\frac{1}{2}$, $h$ attains a minimum over $(-\infty, \mu^-(\alpha)]$ at $x=\mu^-(\alpha)$.  Thus $\hat{x}(u) \leq \mu^-(\alpha)$ implies:
\begin{align*}
\label{boundz*}
	&\frac{1}{2}\left(\frac{u+1}{u-1}\right)\leq\mu^-(\alpha)\implies z^*=\frac{\mu^-(\alpha)+1/2}{\mu^-(\alpha)-1/2}\leq u\leq1
\end{align*}
as desired.
\end{proof}
Combining Lemma~\ref{identifyu} with the inequality \eqref{tenfifteen}, noting that $-|z|\leq Re(z) \leq |z|$, and on rearranging terms, we obtain the following corollary.
\begin{corollary}
\label{boundAz}
For $|z| < |z^*|$, the following holds:
\begin{equation*}
\Vert (A^{\alpha}(z) - A^{\alpha}(0)) \Vert_{\mathcal{L}[L^2_{\#} (\alpha,Y,\mathbb{C}^3);L^2_{\#} (\alpha,Y,\mathbb{C}^3)]}  \leq |\alpha |^{-2} |z| (-|z|-z^*)^{-1}\Big(\frac{1}{2}-\mu^-(\alpha)\Big)^{-1}.
\end{equation*}
\end{corollary}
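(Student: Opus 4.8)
The plan is to combine the two estimates already in hand. From the Poincar\'e inequality \eqref{alpha-poincare} we have the reduction $\|A^\alpha(z)-A^\alpha(0)\|_{\mathcal{L}}\le|\alpha|^{-2}\,\|((T_k^\alpha)^{-1}-P_2^\alpha)\|_{\mathcal{L}}$ (operator norms on $L^2_\#$ and on $J_\#$ respectively), and \eqref{tenfifteen} bounds the second factor by $|z|\,\sup\{1,\ \sup_{x\in[\mu^-(\alpha),\mu^+(\alpha)]}g(u,x)\}^{1/2}$ with $u=Re(z)$. So, for $|z|<|z^*|$, it suffices to show that $\sup\{1,\ \sup_{x\in[\mu^-(\alpha),\mu^+(\alpha)]}g(u,x)\}^{1/2}\le[(1/2-\mu^-(\alpha))(-|z|-z^*)]^{-1}$, after which feeding the bound back through \eqref{tenfifteen} and the $|\alpha|^{-2}$ reduction gives exactly $|\alpha|^{-2}|z|(-|z|-z^*)^{-1}(1/2-\mu^-(\alpha))^{-1}$.

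First I would locate $u=Re(z)$ inside the interval $U=[z^*,1]$ furnished by Lemma~\ref{identifyu}. Since $z^*<0$ we have $z^*=-|z^*|$, and the proof of Lemma~\ref{identifyu} records $z^*\ge-1$, so $|z^*|\le1$. Combining the hypothesis $|z|<|z^*|$ with $|Re(z)|\le|z|$ yields $z^*=-|z^*|<-|z|\le Re(z)\le|z|<|z^*|\le1$, hence $u\in(z^*,1)\subset U$, so $\hat x(u)<\mu^-(\alpha)$ by Lemma~\ref{identifyu}. Since $g(u,\cdot)$ is decreasing on $(\hat x(u),\infty)\supset[\mu^-(\alpha),\mu^+(\alpha)]$ (the monotonicity read off from \eqref{teneighteen}, using $u<1$), the supremum over $[\mu^-(\alpha),\mu^+(\alpha)]$ is attained at the left endpoint, namely $\sup_x g(u,x)=g(u,\mu^-(\alpha))=((1/2+\mu^-(\alpha))+u(1/2-\mu^-(\alpha)))^{-2}$.

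Next I would factor the base by the very definition of $z^*$, namely $1/2+\mu^-(\alpha)=-z^*\,(1/2-\mu^-(\alpha))$; thus the base equals $(1/2-\mu^-(\alpha))(u-z^*)$, and since $u\ge-|z|$ while $|z|<|z^*|=-z^*$, it is $\ge(1/2-\mu^-(\alpha))(-|z|-z^*)>0$. Hence $g(u,\mu^-(\alpha))\le[(1/2-\mu^-(\alpha))(-|z|-z^*)]^{-2}$. To dispose of the factor $1$ in the outer supremum, I would note that $1/2-\mu^-(\alpha)<1$ (because $\mu^-(\alpha)>-1/2$ under \eqref{upperonS}) and $-|z|-z^*=|z^*|-|z|<|z^*|\le1$, so the product $(1/2-\mu^-(\alpha))(-|z|-z^*)$ is $<1$ and its reciprocal exceeds $1$; therefore $\sup\{1,\ \sup_x g(u,x)\}^{1/2}\le[(1/2-\mu^-(\alpha))(-|z|-z^*)]^{-1}$, which is the required bound.

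I expect the only delicate point to be the sign bookkeeping in the second step: one must juggle $z^*=-|z^*|$, the bound $|z^*|\le1$, and the two-sided estimate $-|z|\le Re(z)\le|z|$ simultaneously to be certain that $Re(z)$ genuinely lands in the interval $U$ of Lemma~\ref{identifyu} (and that $u<1$, so that $g(u,\cdot)$ is monotone rather than constant or singular on $[\mu^-(\alpha),\mu^+(\alpha)]$). Once that placement is secured, everything else is the one-variable calculus already carried out in the preceding pages.
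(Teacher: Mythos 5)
Your proposal is correct and follows essentially the same route as the paper: the chain Poincar\'e reduction $\Rightarrow$ estimate \eqref{tenfifteen} $\Rightarrow$ Lemma~\ref{identifyu}, with the base factored via $1/2+\mu^-(\alpha)=-z^*\,(1/2-\mu^-(\alpha))$ and $u-z^*\geq -|z|-z^*>0$, is exactly the ``rearranging terms'' the paper's terse three-line proof leaves implicit. Your extra care in placing $u=\mathrm{Re}(z)$ strictly inside $(z^*,1)$ and in checking that $(1/2-\mu^-(\alpha))(-|z|-z^*)<1$ (so the factor $1$ in the supremum is harmless) just makes explicit the details the paper's own argument silently relies on.
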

\begin{proof}
Observe that:
\begin{align*}
	\Vert A^\alpha(z)-A^\alpha(0)\Vert_{\mathcal{L}\left[L^2_{\#} (\alpha,Y,\mathbb{C}^3);L^2_{\#} (\alpha,Y,\mathbb{C}^3)\right]}&\leq|\alpha|^{-2}\|\left((T_k^\alpha)^{-1}-P_2^\alpha\right)\|_{\mathcal{L}\left[J_{\#}(\alpha,Y,\mathbb{C}^3;J_{\#}(\alpha,Y,\mathbb{C}^3\right]}\notag\\
	&\leq|\alpha|^{-2}|z|\sup\Big\{1,\sup_{x\,\in\,\left[\mu^-(\alpha),\mu^+(\alpha)\right]}g({\rm Re}(z),x)\Big\}^{1/2}\notag\\
	&\leq|\alpha|^{-2}|z|\left(-|z|-z^*\right)^{-1}\Big(\frac{1}{2}-\mu^-(\alpha)\Big)^{-1}.
\end{align*}
\end{proof}
From Corollary~\ref{boundAz}, \eqref{tenonedouble} and  \eqref{tenonedoubleRz}, it follows that:
\begin{align*}
&\Vert (A^{\alpha}(z) - A^{\alpha}(0))R(\zeta,0) \Vert_{\mathcal{L}[L^2_{\#} (\alpha,Y,\mathbb{C}^3);L^2_{\#} (\alpha,Y,\mathbb{C}^3)]} \\
&\quad\leq\quad |\alpha |^{-2} |z| (-|z|-z^*)^{-1}\Big(\frac{1}{2}-\mu^-(\alpha)\Big)^{-1}d^{-1}=:B(\alpha,z).
\end{align*}
A straightforward calculation shows that $B(\alpha,z)<1$, for:
\begin{equation*}
|z| < r^*:= \frac{|\alpha|^2d|z^*(\alpha)|}{\frac{1}{\frac{1}{2} - \mu^-(\alpha)} + |\alpha|^2d}\,\,,
\end{equation*}
and property~$4$ of Theorem~\ref{separationandraduus-alphanotzero} is established, since $r^* < |z^*|$.

Now we establish properties $1$ through $3$ of Theorem~\ref{separationandraduus-alphanotzero}.
Inspection of \eqref{foursix} shows that, if \eqref{tenone} holds and if $\zeta\in\mathbb{C}$ belongs to the resolvent of $A^\alpha(0)$, then it also belongs to the resolvent of $A^\alpha(z)$.  Since \eqref{tenone} holds for $\zeta\in\Gamma_j$ and $|z|<r^*$, property~$1$ of Theorem~\ref{separationandraduus-alphanotzero} follows.
Formula \eqref{Project1} shows that $P(z)$ is analytic in a neighborhood of $z=0$, determined by the condition that \eqref{tenone}  holds for $\zeta\in\Gamma_j$. The set $|z|<r^*$ lies inside this neighborhood
and property~$2$ of Theorem~\ref{separationandraduus-alphanotzero} is proved. The isomorphism expressed in property~$3$ of Theorem~\ref{separationandraduus-alphanotzero} follows directly from Lemma~4.10 of \cite{KatoPerturb} (Chapter~I, \S~4), which is also valid in a Banach space.

To prove Theorem~\ref{separationandraduus-alphazero}, we need  the  following Poincar\'{e} inequality for $J_{\#}(0,Y,\mathbb{C}^3)$.
\begin{lemma}The following inequality holds:
\begin{equation}
\label{poincarealphazero}
\Vert\mathbf{v}\Vert_{L^2_{\#}(0,Y,\mathbb{C}^3)} \leq \frac{1}{2\pi}\Vert\mathbf{v}\Vert.
\end{equation}
\label{poincalphazero}
\end{lemma}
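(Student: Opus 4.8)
The plan is to follow the template of Lemma~\ref{poincarealpha}, replacing the $\alpha$-quasiperiodic Green's function by the periodic one \eqref{Green-periodic}; the role played there by the inequality $|\alpha|^2\le|2\pi n+\alpha|^2$ is played here by the exclusion of the zero Fourier mode.

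First I would record that every $\mathbf{v}\in J_{\#}(0,Y,\mathbb{C}^3)$ is periodic, divergence free and mean zero on $Y$, so its Fourier series $\mathbf{v}(x)=\sum_{n\in\mathbb{Z}^3\setminus\{0\}}\hat{\mathbf{v}}(n)\,e^{2\pi i n\cdot x}$ has vanishing zero mode. Inserting \eqref{Green-periodic} into \eqref{inverselaplacian}, a computation identical to \eqref{poincest1} gives
\begin{equation*}
(-\Delta_0^{-1}\mathbf{v},\mathbf{v})_{L^2(Y,\mathbb{C}^3)}=\sum_{n\in\mathbb{Z}^3\setminus\{0\}}\frac{|\hat{\mathbf{v}}(n)|^2}{|2\pi n|^2}.
\end{equation*}
Since $|n|\ge 1$ for every $n\in\mathbb{Z}^3\setminus\{0\}$, and since $\|\mathbf{v}\|_{L^2(Y,\mathbb{C}^3)}^2=\sum_{n\neq 0}|\hat{\mathbf{v}}(n)|^2$ because $\hat{\mathbf{v}}(0)=0$, this is bounded above by $\frac{1}{4\pi^2}\|\mathbf{v}\|_{L^2(Y,\mathbb{C}^3)}^2$, the periodic analogue of \eqref{spectralbound}. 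Equivalently, using the identity $\langle(-\Delta_0)^{-1}\mathbf{u},\mathbf{w}\rangle=(\mathbf{u},\mathbf{w})_{L^2(Y,\mathbb{C}^3)}$ together with $\int_Y\nabla\times\mathbf{u}\cdot\nabla\times\overline{\mathbf{w}}\,dx=\int_Y\nabla\mathbf{u}:\nabla\overline{\mathbf{w}}\,dx$ on $J_{\#}(0,Y,\mathbb{C}^3)$, this reads $\big(\int_Y|\nabla\times(-\Delta_0^{-1}\mathbf{v})|^2\,dx\big)^{1/2}\le\frac{1}{2\pi}\|\mathbf{v}\|_{L^2(Y,\mathbb{C}^3)}$.

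Next, exactly as in \eqref{identl2} I would write, using the same Frobenius identity together with the Cauchy--Schwarz inequality (and noting that $-\Delta_0^{-1}$ maps $L^2_{\#}(0,Y,\mathbb{C}^3)$ into $J_{\#}(0,Y,\mathbb{C}^3)$, so the identity applies),
\begin{equation*}
\int_Y|\mathbf{v}|^2\,dx=\int_Y\nabla(-\Delta_0^{-1}\mathbf{v}):\overline{\nabla\mathbf{v}}\,dx\le\Big(\int_Y|\nabla\times(-\Delta_0^{-1}\mathbf{v})|^2\,dx\Big)^{1/2}\Big(\int_Y|\nabla\times\mathbf{v}|^2\,dx\Big)^{1/2}.
\end{equation*}
Substituting the bound from the previous paragraph into the first factor and cancelling one power of $\|\mathbf{v}\|_{L^2(Y,\mathbb{C}^3)}$ gives $\|\mathbf{v}\|_{L^2(Y,\mathbb{C}^3)}\le\frac{1}{2\pi}\|\nabla\times\mathbf{v}\|_{L^2(Y,\mathbb{C}^3)}=\frac{1}{2\pi}\|\mathbf{v}\|$, which is \eqref{poincarealphazero}.

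I do not expect a serious obstacle: the argument is essentially the verbatim periodic transcription of Lemma~\ref{poincarealpha}. The one point that needs care is the place where the two cases genuinely differ, namely the spectral gap. For $\alpha\neq\mathbf 0$ it is furnished by $|\alpha|>0$, whereas for $\alpha=\mathbf 0$ it is furnished entirely by the mean-zero constraint $\int_Y\mathbf{v}\,dx=0$ built into the definition \eqref{H1-0} of $J_{\#}(0,Y,\mathbb{C}^3)$, which forces $\hat{\mathbf{v}}(0)=0$ and hence $|2\pi n|\ge 2\pi$ on the support of the Fourier coefficients; this is precisely what produces the constant $\frac{1}{2\pi}$. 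Everything else --- convergence of the Fourier expansions and the mapping property of $-\Delta_0^{-1}$ --- is routine.
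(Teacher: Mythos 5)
Your proof is correct and is precisely what the paper intends: its own ``proof'' is just the remark that one proceeds as in Lemma~\ref{poincarealpha} with the periodic Green's function \eqref{Green-periodic}, and your write-up carries out exactly that transcription, with the mean-zero constraint in $J_{\#}(0,Y,\mathbb{C}^3)$ supplying the spectral gap $|2\pi n|\geq 2\pi$ in place of $|\alpha|$. No gaps.
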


This inequality is established proceeding as in the proof of Lemma~\ref{poincarealpha}, with \eqref{Green-periodic}.  Using \eqref{poincarealphazero} in place of \eqref{alpha-poincare}, we argue, as in the proof of Theorem~\ref{separationandraduus-alphanotzero}, to show that: 
\begin{equation*}
\Vert (A^{0}(z) - A^{0}(0))R(\zeta,0) \Vert_{\mathcal{L}[(L^2_{\#}(0,Y,\mathbb{C}^3);L^2_\#(0,Y,\mathbb{C}^3)]} <1
\end{equation*}
holds provided $|z| < r^*$, where $r^*$ is given by \eqref{radiusalphazero}.   This establishes Theorem~\ref{separationandraduus-alphazero}. 

The error estimates presented in Theorem~\ref{errorestm} are easily recovered from the arguments in \cite{KatoPerturb} (Chapter~II, \S~3); for completeness, we restate them here.  We begin with the following application of Cauchy inequalities to the coefficients $\beta^{\alpha}_n$ of \eqref{foureleven}, from \cite{KatoPerturb} (Chapter~II, \S~3, pg~88):
\begin{equation*}
\left | \beta^{\alpha}_n \right | \leq d(r^*)^{-n}.
\end{equation*}
It follows immediately that, for $|z|<r^*$, we have:
\begin{equation*}
\left |\hat{\beta}^{\alpha}(z) - \sum \limits_{n = 0}^{p} z^n \beta^{\alpha}_n \right | \leq \sum \limits_{n=p+1}^{\infty} |z|^n |\beta^{\alpha}_n| \leq \frac{d|z|^{p+1}}{(r^*)^p(r^* - |z|)}\,,
\end{equation*}
completing the proof.

For completeness, we establish the boundedness and compactness of the operator $B^\alpha(k)$ in  (\ref{inverseoperatorquadform}).
\begin{theorem}
\label{bounded}
The operator $B^\alpha(k): L^2_{\#}(\alpha,Y,\mathbb{C}^3) \longrightarrow J_{\#}(\alpha,Y,\mathbb{C}^3)$ is bounded for $k\not\in Z$.
\end{theorem}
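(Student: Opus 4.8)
The plan is to exploit the factorization $B^\alpha(k)=(T_k^\alpha)^{-1}(-\Delta_\alpha)^{-1}$ recorded in \eqref{inverseoperatorquadform} and to bound the two factors separately in the appropriate spaces. First I would check that $(-\Delta_\alpha)^{-1}\colon L^2_\#(\alpha,Y,\mathbb C^3)\to J_\#(\alpha,Y,\mathbb C^3)$ is bounded. Testing $-\Delta_\alpha\mathbf u=\mathbf f$ against $\mathbf u$ gives $\|\mathbf u\|^2=\langle\mathbf u,\mathbf u\rangle=(\mathbf f,\mathbf u)_{L^2}\le\|\mathbf f\|_{L^2}\|\mathbf u\|_{L^2}$, and the Poincar\'e inequalities \eqref{alpha-poincare} (for $\alpha\ne\mathbf 0$, Lemma~\ref{poincarealpha}) and \eqref{poincarealphazero} (for $\alpha=\mathbf 0$, Lemma~\ref{poincalphazero}) then yield $\|(-\Delta_\alpha)^{-1}\mathbf f\|\le C_\alpha\|\mathbf f\|_{L^2}$ with $C_\alpha=|\alpha|^{-1}$ or $C_0=(2\pi)^{-1}$; this is also immediate from the Fourier/Green's-function estimates \eqref{spectralbound}--\eqref{energyl2}.

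The substantive step is to show that $(T_k^\alpha)^{-1}\colon J_\#(\alpha,Y,\mathbb C^3)\to J_\#(\alpha,Y,\mathbb C^3)$ is bounded for every $k\notin Z$. By Theorem~\ref{representation}, for $k\notin Z$ the inverse is given by \eqref{Takinv} with $z=1/k$. Since $P_1^\alpha$, $P_2^\alpha$ and the $P_{\mu_n}^\alpha$ are mutually orthogonal projections on $J_\#$ resolving the identity, applying the Pythagorean identity to \eqref{Takinv} gives
\begin{equation*}
\|(T_k^\alpha)^{-1}\mathbf u\|^2=|z|^2\|P_1^\alpha\mathbf u\|^2+\|P_2^\alpha\mathbf u\|^2+\sum_{-\frac12<\mu_n(\alpha)<\frac12}\frac{|z|^2}{|\phi_n(z)|^2}\,\|P_{\mu_n}^\alpha\mathbf u\|^2,
\end{equation*}
where $\phi_n(z):=\bigl(\tfrac12+\mu_n(\alpha)\bigr)+z\bigl(\tfrac12-\mu_n(\alpha)\bigr)$, so that $\|(T_k^\alpha)^{-1}\|_{\mathcal L(J_\#)}\le\max\{|z|,\,1,\,|z|(\inf_n|\phi_n(z)|)^{-1}\}$; it therefore suffices to prove $\inf_n|\phi_n(z)|>0$. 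Now $\phi_n(z)=0$ precisely when $z=z_n(\alpha)=(\mu_n(\alpha)+\tfrac12)/(\mu_n(\alpha)-\tfrac12)$, i.e.\ when $z\in\mathcal S$, and the hypothesis $k\notin Z$ forces $z=1/k\notin\mathcal S$, so $\phi_n(z)\ne0$ for every $n$. Moreover $T^\alpha$ is compact on $W_3^\alpha$ (Theorem~\ref{tcompact}), hence $\mu_n(\alpha)\to0$ and $\phi_n(z)\to\tfrac12(1+z)$; since $-1$ is the only accumulation point of $\mathcal S$ (Lemma~\ref{inverseoperator}) and $z\notin\mathcal S$, we also have $z\ne-1$, so this limit is nonzero. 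Consequently $|\phi_n(z)|\ge\tfrac14|1+z|$ for all large $n$, while the finitely many remaining $|\phi_n(z)|$ are strictly positive, and thus $\inf_n|\phi_n(z)|>0$. Composing the two bounds yields $\|B^\alpha(k)\mathbf u\|\le\|(T_k^\alpha)^{-1}\|_{\mathcal L(J_\#)}\,C_\alpha\,\|\mathbf u\|_{L^2}$, which is the assertion.

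I expect the main obstacle to be precisely the lower bound $\inf_n|\phi_n(z)|>0$: one must control the resonance poles $z_n(\alpha)$, which cluster at $z=-1$, and verify that the hypothesis $k\notin Z$ (equivalently $z=1/k\notin\mathcal S$) indeed excludes every pole \emph{and} the single accumulation point $z=-1$. Everything else --- the Lax--Milgram/Poincar\'e bound on $(-\Delta_\alpha)^{-1}$, the orthogonal-projection computation, and the final composition --- is routine.
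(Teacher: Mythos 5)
Your proof is correct and follows essentially the same route as the paper: factor $B^\alpha(k)=(T_k^\alpha)^{-1}(-\Delta_\alpha)^{-1}$, bound $(-\Delta_\alpha)^{-1}:L^2_{\#}(\alpha,Y,\mathbb{C}^3)\to J_{\#}(\alpha,Y,\mathbb{C}^3)$ by the Poincar\'e/Fourier estimate, and bound $(T_k^\alpha)^{-1}$ on $J_{\#}(\alpha,Y,\mathbb{C}^3)$ via the orthogonal spectral decomposition \eqref{Takinv}. The only difference is that you make explicit the uniform lower bound $\inf_n\left|\left(\tfrac12+\mu_n(\alpha)\right)+z\left(\tfrac12-\mu_n(\alpha)\right)\right|>0$ (using that the poles accumulate only at $z=-1$), a point the paper leaves implicit in its constant $\bar{M}$.
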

\begin{proof}
For $\alpha\not=0$ and for $\mathbf{v}\in L^2_{\#}(\alpha,Y,\mathbb{C}^3)$, we have:
\begin{align*}
\Vert B^\alpha(k) \mathbf{v}\Vert&=\Vert(T_k^\alpha)^{-1}(-\Delta_\alpha)^{-1} \mathbf{v}\Vert\\
&\leq \| (T_k^\alpha)^{-1}\|_{\mathcal{L}[J_{\#}(\alpha,Y,\mathbb{C}^3);J_{\#}(\alpha,Y,\mathbb{C}^3)]} \Vert-\Delta_\alpha^{-1} \mathbf{v}\Vert\\
&\leq |\alpha|^{-1}\Vert ((T_k^\alpha)^{-1}\Vert_{\mathcal{L}[J_{\#}(\alpha,Y,\mathbb{C}^3);J_{\#}(\alpha,Y,\mathbb{C}^3)]} \Vert \mathbf{v}\Vert_{L^2(Y,\mathbb{C}^3)},
\end{align*}
where the last inequality follows from \eqref{energyl2}. The upper estimate on $\Vert ((T_k^\alpha)^{-1}\Vert_{\mathcal{L}[J_{\#}(\alpha,Y,\mathbb{C}^3);J_{\#}(\alpha,Y,\mathbb{C}^3)]}$ is obtained from:
\begin{equation*}
\frac{\Vert (T_k^\alpha)^{-1}\mathbf{v}\Vert}{\Vert \mathbf{v} \Vert} \leq \Big\{|z|\hat{w}+\tilde{w}+|\sum \limits_{i=1}^{\infty}w_i |(1/2 + \mu_i) + z(1/2-\mu_i)|^{-2}\Big\}^{1/2},
\end{equation*}
where $\hat{w}=\Vert P_1^\alpha \mathbf{v}\Vert^2/\Vert\mathbf{v}\Vert^2$, $\tilde{w}=\Vert P_2^\alpha\mathbf{v}\Vert^2/\Vert \mathbf{v}\Vert^2$, and $w_i=\Vert P_{\mu_i}^\alpha \mathbf{v}\Vert^2/\Vert\mathbf{v}\Vert^2$.  Since $\hat{w}+\tilde{w}+\sum_{i=1}^\infty w_i=c\leq 1$, one recovers the upper bound:
\begin{equation*}
\frac{\Vert (T_k^\alpha)^{-1}\mathbf{v}\Vert}{\Vert \mathbf{v} \Vert} \leq \bar{M},
\end{equation*}
where:
\begin{equation*}
\bar{M}= \max\,\Big\{1, |z|,\, \sup_{i}\, \big\{ |(1/2 + \mu_i) + z(1/2-\mu_i)|^{-1}\big\}\Big\}.
\end{equation*}
A similar argument can be carried out for $\alpha=\mathbf{0}$.
\end{proof}
\begin{theorem}
For $k\not\in Z$, $B^\alpha(k): L^2_{\#}(\alpha,Y,\mathbb{C}^3) \longrightarrow L^2_{\#}(\alpha,Y,\mathbb{C}^3)$ is a bounded compact  operator mapping $L^2_{\#}(\alpha,Y,\mathbb{C}^3)$ into itself.
\label{compact2}
\end{theorem}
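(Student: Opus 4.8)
The plan is to deduce compactness of $B^\alpha(k)$ on $L^2_{\#}(\alpha,Y,\mathbb{C}^3)$ by factoring it through a compact Sobolev embedding. First I would invoke Theorem~\ref{bounded}, which already tells us that $B^\alpha(k)=(T_k^\alpha)^{-1}(-\Delta_\alpha)^{-1}$ is bounded as a map from $L^2_{\#}(\alpha,Y,\mathbb{C}^3)$ into $J_{\#}(\alpha,Y,\mathbb{C}^3)$ whenever $k\notin Z$. Writing $\iota$ for the natural inclusion $J_{\#}(\alpha,Y,\mathbb{C}^3)\hookrightarrow L^2_{\#}(\alpha,Y,\mathbb{C}^3)$, the operator in question is $\iota\circ B^\alpha(k)$, so --- since the composition of a bounded operator with a compact one is compact --- it will be enough to show that $\iota$ is bounded (which gives boundedness of $B^\alpha(k)$ into $L^2_{\#}$) and compact.

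Next I would establish that $\iota$ is compact. The key input is the norm equivalence on $J_{\#}(\alpha,Y,\mathbb{C}^3)$: by the two theorems at the start of Section~\ref{layers}, the energy norm $\|\mathbf u\|^2=\int_Y|\nabla\times\mathbf u|^2\,dx$ equals the full Dirichlet form $\int_Y\nabla\mathbf u:\nabla\overline{\mathbf u}\,dx$, and $J_{\#}(\alpha,Y,\mathbb{C}^3)\subset W_{\#}^1(\alpha,Y,\mathbb{C}^3)$; combined with the Poincar\'e inequalities \eqref{alpha-poincare} for $\alpha\neq\mathbf 0$ and \eqref{poincarealphazero} for $\alpha=\mathbf 0$, this makes the $J_{\#}$-norm equivalent to the $H^1_{\#}(\alpha,Y,\mathbb{C}^3)$-norm. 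Thus $\iota$ factors through the bounded inclusion $J_{\#}(\alpha,Y,\mathbb{C}^3)\hookrightarrow H^1_{\#}(\alpha,Y,\mathbb{C}^3)$ followed by $H^1_{\#}(\alpha,Y,\mathbb{C}^3)\hookrightarrow L^2_{\#}(\alpha,Y,\mathbb{C}^3)$, and the last inclusion is compact by the Rellich--Kondrachov theorem on the bounded cell $Y$ --- applied to the periodic representative $\tilde{\mathbf u}$ obtained from $\mathbf u=e^{i\alpha\cdot x}\tilde{\mathbf u}$, so that the $\alpha$-quasiperiodic $H^1$ bound becomes an honest periodic $H^1$ bound. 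Concretely, given a bounded sequence $\{\mathbf v_n\}\subset L^2_{\#}(\alpha,Y,\mathbb{C}^3)$, the sequence $\{B^\alpha(k)\mathbf v_n\}$ is bounded in $H^1_{\#}(\alpha,Y,\mathbb{C}^3)$ and hence has a subsequence convergent in $L^2_{\#}(\alpha,Y,\mathbb{C}^3)$, which is precisely compactness of $B^\alpha(k)$.

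The main thing to watch is the $\alpha=\mathbf 0$ case: there $J_{\#}(0,Y,\mathbb{C}^3)$ is defined in \eqref{H1-0} with the extra constraint $\int_Y\mathbf u\,dx=0$, and it is exactly this normalization that upgrades the trivial estimate into the genuine Poincar\'e inequality \eqref{poincarealphazero}, restoring the norm equivalence with $H^1$. Apart from keeping track of this normalization --- and of the unitary change $\mathbf u\mapsto e^{-i\alpha\cdot x}\mathbf u$ used to reduce the quasiperiodic Rellich statement to the standard periodic one --- no real difficulty arises; the argument is essentially a one-line composition once these structural facts from Section~\ref{layers} and the Poincar\'e lemmas of Section~\ref{derivation} are in hand.
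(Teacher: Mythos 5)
Your proposal is correct and follows essentially the same route as the paper: boundedness from Theorem~\ref{bounded} together with the Poincar\'e inequalities \eqref{alpha-poincare} and \eqref{poincarealphazero}, and compactness from the compact embedding of $J_{\#}(\alpha,Y,\mathbb{C}^3)$ into $L^2_{\#}(\alpha,Y,\mathbb{C}^3)$. The only difference is that you spell out the Rellich--Kondrachov/norm-equivalence justification of that embedding, which the paper invokes without detail.
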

\begin{proof}
The Poincar\'e inequalities \eqref{alpha-poincare} and \eqref{poincarealphazero}, together with Theorem~\ref{bounded}, show that
$B^\alpha(k): L^2_{\#}(\alpha,Y,\mathbb{C}^3) \longrightarrow L^2_{\#}(\alpha,Y,\mathbb{C}^3)$ is a bounded linear operator mapping $L^2_{\#}(\alpha,Y,\mathbb{C}^3)$ into itself. The compact embedding of $J_\#(\alpha,Y,\mathbb{C}^3)$ into $L^2_{\#}(\alpha,Y,\mathbb{C}^3)$ shows the operator is compact on $L^2_{\#}(\alpha,Y,\mathbb{C}^3)$. 
\end{proof}

\begin{section}{Conclusions}
    In this paper, analytic representation formulas and power series describing the band structure inside non-magnetic  periodic photonic crystals, made from high dielectric contrast inclusions, are developed.  The spectral representation for the operator $-\nabla\times (k \chi_{H} + \chi_{D})\nabla\times$ is derived, as well as a power series representation of Bloch eigenfunctions.  The radius of convergence for the power series, together with explicit formulas for each of its terms, in terms of layer potentials, is obtained.  The spectrum in the high contrast limit is completely characterized for the $\alpha$-quasiperiodic and periodic ($\alpha=\mathbf{0}$) cases. Explicit conditions on the contrast are found that provide lower bounds on the convergence radius. These conditions are sufficient for the separation of spectral branches of the dispersion relation for any fixed quasi-momentum.
\end{section}

\begin{appendix}

\section{Helmholtz decomposition for periodic and quasiperiodic vector fields.}
\label{app:helm}
Here, we show how to obtain the Helmholtz decomposition (\ref{Helmoltz1}).  First, consider  $\alpha\in Y^*$, $\alpha\neq\mathbf{0}$.  For $\mathbf{h}(x)\in L^2_\#(\alpha,Y,\mathbb{C}^3)$, we have $\mathbf{h}(x)=\mathbf{h}_{\rm per}(x,\alpha)e^{2\pi i\alpha\cdot x}$, where: $$\mathbf{h}_{\rm per}(x,\alpha)=\sum_{k\in\mathbb{Z}^3}\mathbf{\hat{h}}_{\rm per}(k,\alpha)e^{2\pi i\,k\cdot x}.$$  In other words: $$\mathbf{h}(x)=\sum_{k\in\mathbb{Z}^3}\mathbf{\hat{h}}_{\rm per}(k,\alpha)e^{2\pi i(k+\alpha)\cdot x}.$$
Now, define the following: 
\begin{align*}
\hat{h}_{\rm pot}(k,\alpha)&=-\frac{i}{2\pi}\frac{(k+\alpha)\cdot\mathbf{\hat{h}}_{\rm per}(k,\alpha)}{|k+\alpha|^2},\\
\mathbf{\hat{h}}_{\rm curl}(k,\alpha)&=\frac{i}{2\pi}\frac{(k+\alpha)\times\mathbf{\hat{h}}_{\rm per}(k,\alpha)}{|k+\alpha|^2}.
\end{align*}
By the vector triple product formula, we observe that:
\begin{align*}
	&2\pi i(\alpha+k)\,\hat{h}_{\rm pot}(k,\alpha)+2\pi i(\alpha+k)\times\mathbf{\hat{h}}_{\rm curl}(k,\alpha)\\
	&=\frac{(\alpha+k)\left[(\alpha+k)\cdot\mathbf{\hat{h}}_{\rm per}(k,\alpha)\right]}{|k+\alpha|^2}-\left[\frac{(\alpha+k)[(\alpha+k)\cdot\mathbf{\hat{h}}_{\rm per}(k,\alpha)]}{|k+\alpha|^2}-\frac{\mathbf{\hat{h}}_{\rm per}(k,\alpha)[(\alpha+k)\cdot(\alpha+k)]}{|k+\alpha|^2}\right]\\
	&=\mathbf{\hat{h}}_{\rm per}(k,\alpha).
\end{align*} 
It follows that $\mathbf{h}(x)=\nabla h_{\rm pot}(x)+\nabla\times \mathbf{h}_{\rm curl}(x)$, where:
\begin{align*}
    h_{\rm pot}(x)&=\sum_{k\in\mathbb{Z}^3}\hat{h}_{\rm pot}(k,\alpha)e^{2\pi i(k+\alpha)\cdot x},\\
    \mathbf{h}_{\rm curl}(x)&=\sum_{k\in\mathbb{Z}^3}\mathbf{\hat{h}}_{\rm curl}(k,\alpha)e^{2\pi i(k+\alpha)\cdot x}.
\end{align*}
This is the Helmholtz decomposition for $\alpha$-quasiperiodic fields, for $\alpha\in Y^*$, $\alpha\neq\mathbf{0}$.

When $\alpha=\mathbf{0}$, we have $\mathbf{h}(x)=\displaystyle\sum_{k\in\mathbb{Z}^3}\mathbf{\hat{h}}(k)e^{2\pi ik\cdot x}$, or equivalently: $$\mathbf{h}(x)=\mathbf{\hat{h}}(0)+\sum_{\substack{k\in\mathbb{Z}^3\\
k\neq0}}\mathbf{\hat{h}}(k)e^{2\pi i\,k\cdot x},$$ with $\mathbf{\hat{h}}(0)=\displaystyle\int_Y\mathbf{h}(x)$.  Then, the Helmholtz decomposition for $\mathbf{h}\in L^2_\#(0,Y,\mathbb{C}^3)$ is given by: $$\mathbf{h}(x)=\nabla h_{\rm pot}(x)+\nabla\times \mathbf{h}_{\rm curl}(x)+\mathbf{c},\,\,\,\mathbf{c}\in\mathbb{C}^3,$$
where: 
\begin{align*}
    h_{\rm pot}(x)&=\sum_{\substack{k\in\mathbb{Z}^3\\
k\neq0}}-\frac{i}{2\pi}\frac{k}{|k|^2}\cdot\mathbf{\hat{h}}(k)e^{2\pi ik\cdot x},\\
    \mathbf{h}_{\rm pot}(x)&=\sum_{\substack{k\in\mathbb{Z}^3\\
k\neq0}}\frac{i}{2\pi}\frac{k}{|k|^2}\times\mathbf{\hat{h}}(k)e^{2\pi ik\cdot x}.
\end{align*}

\section{For $\mathbf{h}\in J_{\#}(\alpha,Y,\mathbb{C}^3)$, $\nabla h_{pot}=0$ in (\ref{Helmoltz1}):}
\label{app:nhp=0}

If $\alpha\neq\mathbf{0}$, from Appendix~\ref{app:helm}, we have $\mathbf{h}(x)=\nabla h_{\rm pot}(x)+\nabla\times \mathbf{h}_{\rm curl}(x)$.  Taking divergence on both sides, and since $\mathbf{h}\in J_{\#}(\alpha,Y,\mathbb{C}^3)$, we obtain that $\Delta h_{\rm pot}=0$ in $Y$ and, since $h_{\rm pot}$ is $\alpha$-quasiperiodic, we have: $$\int_Y|\nabla h_{\rm pot}|^2=\int_{\partial Y}h_{\rm pot}\overline{\partial_{\mathbf{n}}h_{\rm pot}}=0.$$ 
A similar argument works for to the case $\alpha=\mathbf{0}$.

\section{Necessary Lemmas}
\label{app:lemmas}
\begin{lemma}\label{lem:app:c1}
For $\mathbf{u}$ and $\mathbf{v}$ in $J_{\#}(\alpha,Y,\mathbb{C}^3)$, we have:
$$\int_Y\,\nabla\times\mathbf{u}\cdot\nabla\times\overline{\mathbf{v}}\,dx=\int_Y\,\nabla\mathbf{u}:\nabla\overline{\mathbf{v}}\,dx.$$
\end{lemma}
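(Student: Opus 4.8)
The plan is to establish the identity first for smooth fields via a pointwise algebraic identity and an integration by parts, and then remove the smoothness hypothesis; a cleaner alternative, which I would actually prefer, is to run the whole computation on the Fourier side, where no regularity bookkeeping is needed. For the first route, I would start from the componentwise expansion $(\nabla\times\mathbf{u})_i=\epsilon_{ijk}\partial_j u_k$ together with the contraction $\epsilon_{ijk}\epsilon_{ilm}=\delta_{jl}\delta_{km}-\delta_{jm}\delta_{kl}$, which yield the pointwise relation
$$\nabla\times\mathbf{u}\cdot\nabla\times\overline{\mathbf{v}}=\nabla\mathbf{u}:\nabla\overline{\mathbf{v}}-\sum_{j,k}\partial_j u_k\,\partial_k\overline{v_j}.$$
Since $\mathrm{div}\,\overline{\mathbf{v}}=0$ in $Y$, the last sum rearranges into a pure divergence, $\sum_{j,k}\partial_j u_k\,\partial_k\overline{v_j}=\sum_{j,k}\partial_j\big(u_k\,\partial_k\overline{v_j}\big)=\nabla\cdot\big((\mathbf{u}\cdot\nabla)\overline{\mathbf{v}}\big)$, so after integrating over $Y$ and applying the divergence theorem the claim reduces to $\int_{\partial Y}\big((\mathbf{u}\cdot\nabla)\overline{\mathbf{v}}\big)\cdot\mathbf{n}\,ds=0$.

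The next step is to observe that $(\mathbf{u}\cdot\nabla)\overline{\mathbf{v}}$ is $Y$-periodic: $\mathbf{u}$ is $\alpha$-quasiperiodic while $\overline{\mathbf{v}}$, and hence $\nabla\overline{\mathbf{v}}$, is $(-\alpha)$-quasiperiodic, so the product is invariant under the lattice translations. Because the outward normals on opposite faces of the cube $Y$ are antipodal, the boundary contributions cancel in pairs and the surface integral vanishes. This proves the identity for smooth elements of $J_{\#}(\alpha,Y,\mathbb{C}^3)$, and the general case then follows from the density of smooth $\alpha$-quasiperiodic (for $\alpha=\mathbf{0}$, periodic and mean-zero) divergence-free fields in $J_{\#}(\alpha,Y,\mathbb{C}^3)$ together with the continuity of both sides as bilinear forms in the $J_{\#}$-norm.

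The cleaner alternative is to argue directly in Fourier space, which also handles the low regularity automatically. Writing $\mathbf{u}=\sum_{\zeta}\hat{\mathbf{u}}_\zeta e^{i\zeta\cdot x}$ and $\mathbf{v}=\sum_{\zeta}\hat{\mathbf{v}}_\zeta e^{i\zeta\cdot x}$, with $\zeta$ ranging over $2\pi\mathbb{Z}^3+\alpha$ (for $\alpha=\mathbf{0}$ the mode $\zeta=0$ is absent by the mean-zero constraint, and in any case would contribute nothing), the divergence-free conditions read $\zeta\cdot\hat{\mathbf{u}}_\zeta=\zeta\cdot\hat{\mathbf{v}}_\zeta=0$ for every $\zeta$. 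By Parseval,
$$\int_Y\nabla\times\mathbf{u}\cdot\nabla\times\overline{\mathbf{v}}\,dx=\sum_{\zeta}(\zeta\times\hat{\mathbf{u}}_\zeta)\cdot\overline{(\zeta\times\hat{\mathbf{v}}_\zeta)},\qquad \int_Y\nabla\mathbf{u}:\nabla\overline{\mathbf{v}}\,dx=\sum_{\zeta}|\zeta|^2\,\hat{\mathbf{u}}_\zeta\cdot\overline{\hat{\mathbf{v}}_\zeta},$$
and the vector triple product identity $(\zeta\times a)\cdot\overline{(\zeta\times b)}=|\zeta|^2\,a\cdot\overline{b}-(\zeta\cdot a)\,\overline{(\zeta\cdot b)}$ (valid since $\zeta$ is real), combined with the orthogonality relations, makes the two sums equal term by term.

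The only genuine subtlety lies in the first approach: justifying that the trace of $(\mathbf{u}\cdot\nabla)\overline{\mathbf{v}}$ on $\partial Y$ is well defined for merely $H^1_{loc}$ fields and that the face-cancellation survives passing to the limit. This is precisely why I would favour the Fourier argument, in which every step is legitimate already at the $H^1$ level and the proof collapses to a one-line identity; beyond this bookkeeping I expect no real obstacle.
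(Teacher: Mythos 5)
Your preferred Fourier-side argument is essentially the paper's own proof: the paper expands $\mathbf{u},\mathbf{v}$ in quasiperiodic Fourier modes, applies Parseval, and uses the identity $|(k+\alpha)\times\hat{\mathbf{u}}^k|$-type (Lagrange) expansion so that the cross term becomes $\int_Y(\nabla\cdot\mathbf{u})(\nabla\cdot\overline{\mathbf{v}})\,dx=0$ by the divergence-free condition, exactly as you do. Your argument is correct (including the remark that the $\zeta=0$ mode is harmless for $\alpha=\mathbf{0}$), so nothing further is needed.
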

\begin{proof}
Let us write:
$$\mathbf{u}(y)=\sum_{k\in\mathbb{Z}^3}e^{2\pi\, i(k+\alpha)\cdot y}\mathbf{\hat{u}^k}\hspace{1cm}\text{and}\hspace{1cm}\mathbf{v}(y)=\sum_{k\in\mathbb{Z}^3}e^{2\pi\, i(k+\alpha)\cdot y}\mathbf{\hat{v}^k}.$$
Then: 
\begin{align*}
    &\int_Y\,\nabla\times\mathbf{u}\cdot\nabla\times\overline{\mathbf{v}}\,dx=\int_Y\,\sum_{k\in\mathbb{Z}^3}2\pi\,i\,e^{2\pi\,i(k+\alpha)\cdot y}(k+\alpha)\times\mathbf{\hat{u}^k}\cdot\sum_{m\in\mathbb{Z}^3}\overline{2\pi\,i\,e^{2\pi\,i(m+\alpha)\cdot y}(m+\alpha)\times\mathbf{\hat{v}^k}}\,dx\\
    &\quad=4\pi|Y|\sum_{k\in\mathbb{Z}^3}(k+\alpha)\times\mathbf{\hat{u}^k}\cdot(k+\alpha)\times\mathbf{\hat{v}^k}\\
    &\quad=4\pi|Y|\sum_{k\in\mathbb{Z}^3}\left(\left|k+\alpha\right|^2\mathbf{\hat{u}^k}\cdot\mathbf{\hat{v}^k}-(k+\alpha)\cdot\mathbf{\hat{u}^k}(k+\alpha)\cdot\mathbf{\hat{v}^k}\right)\\
     &\quad=\int_Y\,\nabla\mathbf{u}:\nabla\overline{\mathbf{v}}\,dx-\int_Y\,(\nabla\cdot\mathbf{u})(\nabla\cdot\overline{\mathbf{v}})\,dx=\int_Y\,\nabla\mathbf{u}:\nabla\overline{\mathbf{v}}\,dx.
\end{align*}
\end{proof}

\begin{lemma} (See \cite{Bouchitte2017}, Lemma~4.7 for proof.)  Let $\mathbf{u}\in {L^2_{\#}(Y,\mathbb{C}^3)}$ such that ${\rm curl}\,\mathbf{u}\in {L^2_{\#}(Y,\mathbb{C}^3)}$ and ${\rm div}\,\mathbf{u}\in  {L^2_{\#}(Y)}$.  Then $\mathbf{u}\in W^{1,2}_{\#}(Y,\mathbb{C}^3) $ and:
\begin{equation*}
\int_{Y}|\nabla\mathbf{u}|^2\,{\rm dx}=\int_{Y}|{\rm curl}\,\mathbf{u}|^2\,{\rm dx}+\int_{Y}|{\rm div}\,\mathbf{u}|^2\,{\rm dx}.
\end{equation*}
\end{lemma}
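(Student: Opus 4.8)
The plan is to argue via Fourier series on the periodic cell $Y=(0,1]^3$, exactly in the spirit of the proof of Lemma~\ref{lem:app:c1} above. First I would expand
\[
\mathbf{u}(x)=\sum_{k\in\mathbb{Z}^3}\hat{\mathbf{u}}_k\,e^{2\pi i k\cdot x},\qquad \hat{\mathbf{u}}_k\in\mathbb{C}^3,
\]
so that, in the sense of periodic distributions, the Fourier coefficients of $\partial_i u_j$, of ${\rm curl}\,\mathbf{u}$ and of ${\rm div}\,\mathbf{u}$ are, respectively, $2\pi i\,k_i(\hat{\mathbf{u}}_k)_j$, $\ 2\pi i\,(k\times\hat{\mathbf{u}}_k)$ and $2\pi i\,(k\cdot\hat{\mathbf{u}}_k)$. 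The hypotheses ${\rm curl}\,\mathbf{u}\in L^2_{\#}(Y,\mathbb{C}^3)$ and ${\rm div}\,\mathbf{u}\in L^2_{\#}(Y)$ say precisely, by Parseval, that $\sum_{k}|k\times\hat{\mathbf{u}}_k|^2<\infty$ and $\sum_{k}|k\cdot\hat{\mathbf{u}}_k|^2<\infty$.

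The key algebraic step is the pointwise Lagrange-type identity, valid for every \emph{real} vector $k\in\mathbb{R}^3$ and every complex vector $v\in\mathbb{C}^3$,
\[
|k\times v|^2+|k\cdot v|^2=|k|^2|v|^2,
\]
which one obtains by writing $v=\mathrm{Re}\,v+i\,\mathrm{Im}\,v$ and applying the classical real Lagrange identity to each of the two real summands. Applying this with $v=\hat{\mathbf{u}}_k$ and summing over $k$ gives
\[
\sum_{k\in\mathbb{Z}^3}|k|^2|\hat{\mathbf{u}}_k|^2=\sum_{k\in\mathbb{Z}^3}|k\times\hat{\mathbf{u}}_k|^2+\sum_{k\in\mathbb{Z}^3}|k\cdot\hat{\mathbf{u}}_k|^2<\infty .
\]
Since square-summability of $\{\,|k|\,\hat{\mathbf{u}}_k\,\}_k$ is exactly the characterization of $W^{1,2}_{\#}(Y,\mathbb{C}^3)$ for periodic fields, this yields $\mathbf{u}\in W^{1,2}_{\#}(Y,\mathbb{C}^3)$; and since $\|\nabla\mathbf{u}\|_{L^2(Y)}^2=(2\pi)^2\sum_k|k|^2|\hat{\mathbf{u}}_k|^2$ by Parseval (the $k=0$ mode contributing zero to each of the three terms), multiplying the displayed identity by $(2\pi)^2$ produces exactly
\[
\int_Y|\nabla\mathbf{u}|^2\,dx=\int_Y|{\rm curl}\,\mathbf{u}|^2\,dx+\int_Y|{\rm div}\,\mathbf{u}|^2\,dx .
\]

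The step I expect to be the main obstacle is not the computation above but the rigorous justification that the distributional operators ${\rm curl}\,\mathbf{u}$ and ${\rm div}\,\mathbf{u}$ have the claimed Fourier coefficients and that their membership in $L^2$ is equivalent to square-summability of those coefficients --- i.e., making legitimate the passage from ``$\mathbf{u}\in L^2_{\#}$ with ${\rm curl}$ and ${\rm div}$ in $L^2$'' to the summability bounds. This is a standard density argument (test against trigonometric polynomials and use Parseval) and is carried out in detail in \cite{Bouchitte2017}, Lemma~4.7; once this identification is granted, the remainder is the elementary Parseval bookkeeping sketched here.
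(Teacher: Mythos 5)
Your argument is correct, and it is essentially the same Fourier--Parseval route the paper itself uses: the paper defers this periodic statement to \cite{Bouchitte2017}, Lemma~4.7, but its own proof of the $\alpha$-quasiperiodic analogue (Lemma~\ref{lem:bbf}) is exactly your computation with $k$ replaced by $k+\alpha$, including the Lagrange identity $|k\times v|^2+|k\cdot v|^2=|k|^2|v|^2$ applied coefficientwise. Your handling of the complex-valued coefficients via real and imaginary parts, and the identification of the distributional curl and div coefficients, are both sound, so no gap remains beyond the standard density bookkeeping you already flag.
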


\begin{lemma}\label{lem:bbf}  Let $\mathbf{u}\in {L^2_{\#}(\alpha,Y,\mathbb{C}^3)}$ such that ${\rm curl}\,\mathbf{u}\in L^2_{\#}(\alpha,Y,\mathbb{C}^3)$ and ${\rm div}\,\mathbf{u}\in  {L^2_{\#}(\alpha,Y)}$.  Then $\mathbf{u}\in {W^{1,2}_{\#}(\alpha,Y,\mathbb{C}^3)}$ and:
\begin{equation}
	\label{W12alphaperY3-norm}
\int_{Y}|\nabla\mathbf{u}|^2\,{\rm dx}=\int_{Y}|{\rm curl}\,\mathbf{u}|^2\,{\rm dx}+\int_{Y}|{\rm div}\,\mathbf{u}|^2\,{\rm dx}.
\end{equation}
\end{lemma}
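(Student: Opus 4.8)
The plan is to obtain \eqref{W12alphaperY3-norm} by reducing the $\alpha$-quasiperiodic case to the periodic case already established in the preceding lemma (the one cited from \cite{Bouchitte2017}, Lemma~4.7), via the gauge transformation $\mathbf{u}(x)=\mathbf{v}(x)e^{2\pi i\alpha\cdot x}$ with $\mathbf{v}$ periodic. First I would write $\mathbf{u}=\mathbf{v}\,e^{2\pi i\alpha\cdot x}$ where $\mathbf{v}\in L^2_{\#}(0,Y,\mathbb{C}^3)$, and compute the effect of the differential operators on this product: $\nabla\mathbf{u}=(\nabla\mathbf{v})e^{2\pi i\alpha\cdot x}+2\pi i\,\mathbf{v}\otimes\alpha\,e^{2\pi i\alpha\cdot x}$, ${\rm div}\,\mathbf{u}=({\rm div}\,\mathbf{v}+2\pi i\,\alpha\cdot\mathbf{v})e^{2\pi i\alpha\cdot x}$, and ${\rm curl}\,\mathbf{u}=({\rm curl}\,\mathbf{v}+2\pi i\,\alpha\times\mathbf{v})e^{2\pi i\alpha\cdot x}$. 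From the hypotheses that ${\rm curl}\,\mathbf{u}$ and ${\rm div}\,\mathbf{u}$ lie in $L^2_{\#}(\alpha,Y,\mathbb{C}^3)$ (resp.\ $L^2_{\#}(\alpha,Y)$), and the fact that multiplication by $e^{\pm 2\pi i\alpha\cdot x}$ is an isometry on $L^2(Y)$, it follows that ${\rm curl}\,\mathbf{v}+2\pi i\,\alpha\times\mathbf{v}$ and ${\rm div}\,\mathbf{v}+2\pi i\,\alpha\cdot\mathbf{v}$ are in $L^2_{\#}(0,Y,\mathbb{C}^3)$; since $\mathbf{v}\in L^2$ already, we deduce ${\rm curl}\,\mathbf{v}\in L^2_{\#}(0,Y,\mathbb{C}^3)$ and ${\rm div}\,\mathbf{v}\in L^2_{\#}(0,Y)$, so the periodic lemma applies and gives $\mathbf{v}\in W^{1,2}_{\#}(0,Y,\mathbb{C}^3)$ with $\int_Y|\nabla\mathbf{v}|^2=\int_Y|{\rm curl}\,\mathbf{v}|^2+\int_Y|{\rm div}\,\mathbf{v}|^2$. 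Multiplying a $W^{1,2}_{\#}(0,Y)$ function by the smooth factor $e^{2\pi i\alpha\cdot x}$ gives $\mathbf{u}\in W^{1,2}_{\#}(\alpha,Y,\mathbb{C}^3)$.

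The remaining work is the Fourier-side computation that converts the periodic identity for $\mathbf{v}$ into the stated identity for $\mathbf{u}$. I would expand $\mathbf{u}(y)=\sum_{k\in\mathbb{Z}^3}\hat{\mathbf{u}}^k e^{2\pi i(k+\alpha)\cdot y}$ and use Parseval: $\int_Y|\nabla\mathbf{u}|^2=4\pi^2|Y|\sum_k|k+\alpha|^2|\hat{\mathbf{u}}^k|^2$, $\int_Y|{\rm curl}\,\mathbf{u}|^2=4\pi^2|Y|\sum_k|(k+\alpha)\times\hat{\mathbf{u}}^k|^2$, and $\int_Y|{\rm div}\,\mathbf{u}|^2=4\pi^2|Y|\sum_k|(k+\alpha)\cdot\hat{\mathbf{u}}^k|^2$; the pointwise vector identity $|\zeta\times w|^2+|\zeta\cdot w|^2=|\zeta|^2|w|^2$ applied with $\zeta=k+\alpha$, $w=\hat{\mathbf{u}}^k$ then yields \eqref{W12alphaperY3-norm} term by term. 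This is essentially the same manipulation already carried out in Lemma~\ref{lem:app:c1}, so it can be written very briefly, or one can simply observe that it is contained in that proof.

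I do not expect a genuine obstacle here; the only point requiring a little care is justifying that the Fourier series of $\nabla\mathbf{u}$ (and of ${\rm curl}$, ${\rm div}$) may be differentiated termwise and that Parseval applies, which is exactly what the conclusion $\mathbf{u}\in W^{1,2}_{\#}(\alpha,Y,\mathbb{C}^3)$ — obtained via the gauge reduction to the periodic lemma — licenses. An alternative, fully self-contained route that avoids even citing Lemma~4.7 of \cite{Bouchitte2017} is to note that the hypotheses force $\sum_k|(k+\alpha)\times\hat{\mathbf{u}}^k|^2<\infty$ and $\sum_k|(k+\alpha)\cdot\hat{\mathbf{u}}^k|^2<\infty$, whence by the pointwise identity $\sum_k|k+\alpha|^2|\hat{\mathbf{u}}^k|^2<\infty$, which is precisely the statement $\mathbf{u}\in W^{1,2}_{\#}(\alpha,Y,\mathbb{C}^3)$, and summing the identity over $k$ gives \eqref{W12alphaperY3-norm}; here one uses $|k+\alpha|\geq|\alpha|>0$ only if $\alpha\neq\mathbf 0$, but no lower bound is actually needed since the $k$ with $k+\alpha=0$ (possible only when $\alpha=\mathbf 0$, already covered by the previous lemma) contributes zero to every sum.
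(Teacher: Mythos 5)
Your proposal is correct, and your ``alternative, fully self-contained route'' is in fact exactly the paper's own proof: the authors expand $\mathbf{u}(y)=\sum_{k\in\mathbb{Z}^3}e^{2\pi i(k+\alpha)\cdot y}\mathbf{c}^k$, read off the Fourier coefficients $2\pi i(k+\alpha)\times\mathbf{c}^k$ and $2\pi i(k+\alpha)\cdot\mathbf{c}^k$ of ${\rm curl}\,\mathbf{u}$ and ${\rm div}\,\mathbf{u}$, and use the pointwise identity $|(k+\alpha)\times\mathbf{c}^k|^2+|(k+\alpha)\cdot\mathbf{c}^k|^2=|k+\alpha|^2|\mathbf{c}^k|^2$ to conclude both $\sum_k|k+\alpha|^2|\mathbf{c}^k|^2<\infty$ (hence $\mathbf{u}\in W^{1,2}_{\#}(\alpha,Y,\mathbb{C}^3)$) and the norm identity \eqref{W12alphaperY3-norm} by Parseval, just as you describe. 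Your primary route --- the gauge transformation $\mathbf{u}=\mathbf{v}\,e^{2\pi i\alpha\cdot x}$ reducing to the periodic Lemma~4.7 of \cite{Bouchitte2017} --- is also sound (the product-rule formulas for $\nabla$, ${\rm div}$, ${\rm curl}$ are right, and $\mathbf{v}\in L^2$ lets you strip off the zeroth-order terms), but it is a detour: after invoking the periodic lemma you still must run the quasiperiodic Fourier/Parseval computation to get \eqref{W12alphaperY3-norm} for $\mathbf{u}$, since the periodic identity for $\mathbf{v}$ does not transform directly. What the gauge argument buys is independence from termwise-differentiation bookkeeping for $\mathbf{u}$ itself and a reduction to a cited result; what the direct Fourier argument buys is that regularity and the identity come out in one stroke, which is why the paper (and your closing observation, echoing Lemma~\ref{lem:app:c1}) takes that path.
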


\begin{proof}
Let us write: $$\mathbf{u}(y)=\sum_{k\in\mathbb{Z}^3}e^{2\pi\, i(k+\alpha)\cdot y}\mathbf{c^k}.$$  We then have that:
\begin{align*}
{\rm curl}\,\mathbf{u}&=\sum_{k\in\mathbb{Z}^3}2\pi\,i\,e^{2\pi\,i(k+\alpha)\cdot y}(k+\alpha)\times\mathbf{c^k},\\
{\rm div}\,\mathbf{u}&=\sum_{k\in\mathbb{Z}^3}2\pi\,i\,e^{2\pi\,i(k+\alpha)\cdot y}(k+\alpha)\cdot\mathbf{c^k}.
\end{align*}
Since $\left|(k+\alpha)\times \mathbf{c^k}\right|^2+\left|(k+\alpha)\cdot \mathbf{c^k}\right|^2=\left|k+\alpha\right|^2\left|\mathbf{c^k}\right|^2$, we infer that $\sum_{k\in\mathbb{Z}^3}\left|k+\alpha\right|^2\left|\mathbf{c^k}\right|^2<\infty$, thus $\mathbf{u}\in {W^{1,2}_{\#}(\alpha,Y)}^3$.  Moreover, (\ref{W12alphaperY3-norm}) follows.
\end{proof}

\section{For $\mathbf{u}\in J_{\#}(\alpha,Y,\mathbb{C}^3)$, the null space of $\nabla \times \mathbf{u}$ is $\{0\}$:}
\label{app:h=0inY}

Let $\mathbf{u}\in J_{\#}(\alpha,Y,\mathbb{C}^3)$ such that $\nabla \times \mathbf{u}=0$.  Then, from Lemma~\ref{lem:app:c1}, we have: $$\int_Y|\nabla\mathbf{u}|^2=\int_Y|{\rm curl}\,\mathbf{u}|^2=0.$$ 
Then $\mathbf{u}$ must be a constant in $Y$.  If $\alpha\neq\mathbf{0}$, since $\mathbf{u}$ is $\alpha$-quasiperiodic, we conclude it must be zero.  If $\alpha=\mathbf{0}$, since $\int_{Y}\mathbf{u}\,d\mathbf{x}=0$, then we can also conclude that $\mathbf{u}=0$. 

\section{Periodic and $\alpha$-quasiperiodic Green's functions and their relation to the free space Green's function}
\label{app:extension4.4}
Consider $G^0$ and $G^\alpha$, defined in (\ref{Green-periodic}) and (\ref{Green-quasi}), respectively, and the free-space Green's function given by:
\begin{equation*}
    \Gamma(x,y)=-\frac{1}{4\pi|x-y|}.
\end{equation*}
Observe that, in the unit cell $Y$, we have: $$\Delta(\Gamma(x,y)-G^0(x,y))=\delta(x-y)-(\delta(x-y)-1)=1$$ and, from the regularity of the elliptic problem, we have that $R^0(x)=\Gamma(x,y)-G^0(x,y)$ is smooth in $Y$, see \cite{AMMARI2005401}.  A similar argument works for $G^\alpha$, $\alpha\neq\mathbf{0}$.  In that case:$$\Delta G^\alpha(x,y)=\sum_{n\in\mathbb{Z}^3}\delta(x-y-n)e^{i\alpha\cdot n}\,\,\,\text{ in }\mathbb{R}^3, $$ which implies that, in the unit cell $Y$, we have:$$\Delta(\Gamma(x,y)-G^\alpha(x,y))=0,$$ from where $R^\alpha(x)=\Gamma(x,y)-G^\alpha(x,y)$ is smooth in $Y$.  The generalization of Lemma~$4.4$ of \cite{Mitrea1996} to the periodic and $\alpha$-quasiperiodic cases follows from the above.

\end{appendix}


\section*{Acknowledgements}
This research work is supported in part by NSF Grants DMS-1813698, DMREF-1921707, and DMS-2110036.

\bibliographystyle{plain}
\bibliography{Photonicrefs1}






\end{document}